 \theoremstyle{plain}
 \newtheorem{Thm}{Theorem}[section]
 \newtheorem{Cor}[Thm]{Corollary}
 \newtheorem{Lemma}[Thm]{Lemma}
 \newtheorem{Prop}[Thm]{Proposition}
 \newtheorem{Conj}[Thm]{Conjecture}
 \newtheorem{Prelim}[Thm]{Preliminary}
 \theoremstyle{definition}
 \newtheorem{Rem}[Thm]{Remark}
 \newtheorem{Defi}[Thm]{Definition}
 \newenvironment{thm}[0]
    {\begin{Thm}\noindent}%
    {\end{Thm}}
 \newenvironment{defi}[0]
    {\begin{Defi}\noindent\rm}%
    {\end{Defi}}
 \newenvironment{cor}[0]
    {\begin{Cor}\noindent}%
    {\end{Cor}}
 \newenvironment{lemma}[0]
    {\begin{Lemma}\noindent}%
    {\end{Lemma}}
 \newenvironment{prop}[0]
    {\begin{Prop}\noindent}%
    {\end{Prop}}
 \newenvironment{rem}[0]
    {\begin{Rem}\noindent\rm}%
    {\end{Rem}}
    {\end{Conj}}
    {\end{Prelim}}
 \numberwithin{Thm}{section}
 \numberwithin{equation}{section}
\def\naam#1{\label{#1}}
\def\bib#1{\cite{#1}}
\def\medno{\medbreak\noindent}
\def\text#1{\;\;\;\;{\rm \hbox{#1}}\;\;\;\;}
\def\qquad{\quad\quad}
\def\itema{\item[{\rm (a)}]}
\def\itemb{\item[{\rm (b)}]}
\def\itemc{\item[{\rm (c)}]}
\def\itemd{\item[{\rm (d)}]}
\def\iteme{\item[{\rm (e)}]}
\def\itemf{\item[{\rm (f)}]}
\def\msy#1{{\mathbb #1}}
\def\C{{\msy C}}
\def\N{{\msy N}}
\def\R{{\msy R}}
\def\ga{\alpha}
\def\gb{\beta}
\def\gd{\delta}
\def\geps{\varepsilon}
\def\gf{\varphi}
\def\gl{\lambda}
\def\gs{\sigma}
\def\gD{\Delta}
\def\gS{\Sigma}
\def\fa{{\mathfrak a}}
\def\fb{{\mathfrak b}}
\def\fg{{\mathfrak g}}
\def\fh{{\mathfrak h}}
\def\fk{{\mathfrak k}}
\def\fl{{\mathfrak l}}
\def\fm{{\mathfrak m}}
\def\fn{{\mathfrak n}}
\def\fp{{\mathfrak p}}
\def\fq{{\mathfrak q}}
\def\fs{{\mathfrak s}}
\def\to{\rightarrow}
\def\Re{\mathrm{Re}\,}
\def\inp#1#2{\langle#1\,,\,#2\rangle}
\def\Ad{\mathrm{Ad}}
\def\End{\mathrm{End}}
\def\Hom{\mathrm{Hom}}
\def\ad{\mathrm{ad}}
\def\after{\,{\scriptstyle\circ}\,}
\def\pr{\mathrm{pr}}
\def\tr{\mathrm{tr}\,}
\def\iq{{\mathrm q}}
\def\iC{{\scriptscriptstyle \C}}
\def\cA{{\mathcal A}}
\def\cC{{\mathcal C}}
\def\cD{{\mathcal D}}
\def\cH{{\mathcal H}}
\def\cM{{\mathcal M}}
\def\cP{{\mathcal P}}
\def\cW{{\mathcal W}}
\def\cZ{{\mathcal Z}}
\def\col{\,:\,}
\def\faq{\fa_\iq}
\def\faqd{\fa_\iq^*}
\def\faqdc{\fa_{\iq\iC}^*}
\def\fadc{\fa_{\iC}^*}
\def\fad{\fa^*}
\def\Aq{A_\iq}
\def\ev{\mathrm{ev}}
\def\Ind{\mathrm{Ind}}
\def\Cartan{\theta}
\def\Vtau{V_\tau}
\def\Aqp{A_\iq^+}
\def\GL{\mathrm{GL}}
\def\embeds{\hookrightarrow}
\def\reg{\mathrm{reg}}
\def\dotvar{\, \cdot\,}
\def\faPdc{\fa_{P\iC}^*}
\def\bs{\backslash}
\def\bp{{}^\backprime}
\def\1{\mathbf{1}}
\def\parabsgsA{\cP_\gs(A)}
\def\parabsgsAq{\cP_\gs(\Aq)}
\def\rmn{{\rm n}}
\def\HP{{H_P}}
\def\rmi{{\rm i}}
\def\rhoQh{\rho_{Q\!\ih}}
\def\rhoPh{\rho_{P\!\ih}}
\def\rhoP{\rho_P}
\def\one{{\mathds 1}}
\def\fu{{\rm fu}}
\def\dM{\widehat M}
\def\dMzerofu{ \widehat M_{0\fu}}
\def\Aq{{A_\iq}}
\def\parabs{\cP}
\def\eps{\varepsilon}
\def\hatotimes{\widehat\otimes}
\def\HC{{\scriptstyle{\rm HC}}}
\def\rmI{{\rm I}}
\def\Aqp{A_\iq^+}
\def\cAMtwo{{\mathcal A}_{M,2}}
\def\bp{{}^\backprime}
\def\Ah{A_{\rm h}}
\def\hide#1{\mbox{}}
\def\ssLG{{\scriptscriptstyle L \bs G}}
\def\ssLH{{\scriptscriptstyle L \bs H}}
\def\ssHG{{\scriptscriptstyle H \bs G}}
\def\ih{{\,\mathrm{h}}}
\def\fah{\fa_{\ih}}
\def\parabsA{\cP(A)}
\def\fahd{{\fah^*}}
\def\prq{{\rm pr}_{\rm q}}
\def\holset{\Omega}
\def\faqreg{\fa_\iq^{\rm reg}}
\def\fap{\fa^+}
\def\faqp{\fa_\iq^+}
\def\parabsaboveQ{\cP_\gs(A, Q)}
\def\faqdp{\fa_{\iq}^{*+}}
\def\sPi{{\scriptscriptstyle \Pi}}
\def\convergeset{\Upsilon}
\def\pirep{\pi}
\def\Cminf{C^{-\infty}}
\def\hatUpsilon{\widehat\Upsilon}
\def\fH{{\mathfrak H}}
\def\inj{i^\#}
\def\injP{i^\#_{\gl}}
\def\proj{p^\#}
\def\projP{p^\#_{\gl}}
\begin{document}
\title{Normalizations of Eisenstein integrals for reductive symmetric spaces}
\date{June 30, 2016}
\author{Erik P. van den Ban, Job J. Kuit \footnote{Supported by the Danish National Research Foundation through the Centre for Symmetry and Deformation (DNRF92).}}
\maketitle
\begin{abstract}
We construct minimal Eisenstein integrals for a reductive symmetric space $G/H$ as matrix
coefficients of the minimal principal series of $G.$ The Eisenstein integrals thus obtained
include those from the $\gs$-minimal principal series.
In addition, we obtain related Eisenstein integrals, but with different normalizations.
Specialized to the case of the group, this wider class includes Harish-Chandra's
 minimal Eisenstein integrals.
\end{abstract}
\tableofcontents

\section*{Introduction}
\addcontentsline{toc}{section}{Introduction}
Eisenstein integrals play a fundamental role in harmonic analysis on reductive symmetric spaces
of the form $X = G/H;$
here $G$ is assumed to be a real reductive group of the Harish-Chandra class, and
$H$ an (essentially connected) open subgroup of the group $G^\gs$ of an involution $\gs$
of $G.$
The notion of Eisenstein integral goes back to Harish-Chandra, who used it
to describe the contribution of generalized principal series to the Plancherel decomposition
of a real reductive group $\bp G.$ In this setting an Eisenstein integral is essentially
a matrix coefficient of an induced representation of the form $\Ind_{\bp P}^{\bp G} (\bp \omega),$
with $\bp P$ a proper parabolic subgroup of $\bp G$ and $\bp \omega$ a suitable representation
of $\bp P.$

For general symmetric spaces $G/H,$ the notion of Eisenstein integral was introduced
in \cite{Banps2} for minimal $\gs$-parabolic subgroups of $G,$ i.e., minimal parabolic subgroups
of $G$ with  the property that $\gs(P) = \bar P.$ The notion was later generalized to arbitrary $\gs$-parabolic subgroups in \cite{BrylDel}, \cite{DelEis} and found application in the Plancherel theorem for $G/H,$ see
\cite{DelPlanch} and \cite{BSPlanch}.
In this setting of reductive symmetric spaces, the Eisenstein integrals appear essentially as matrix coefficients of $K$-finite matrix coefficients with
$H$-fixed distribution vectors.

A group $\bp G$ of the Harish-Chandra class may be viewed as a homogeneous space for the left times right action of $G = \bp G \times \bp G$ on $\bp G,$ and  is thus realized as the symmetric space $G/H$ with $H$ the diagonal in $G.$ The definition of Eisenstein integral for the symmetric space $G/H$ yields a matrix coefficient on $\bp G$ which is closely related to Harish-Chandra's Eisenstein integral, but not equal to it.
The two obtained types of Eisenstein integrals differ by a normalization which can be described in terms of intertwining operators, see \bib{BSmulti} for details.
In the present paper we develop a notion of minimal Eisenstein integrals for reductive symmetric spaces, which cover both the existing notion  for symmetric spaces and Harish-Chandra's notion for the group.

An even stronger motivation for the present article lies in the application of its results to a theory of cusp forms for symmetric spaces, initiated by M.~Flensted-Jensen. In   \cite{BK} we use our results on Eisenstein integrals  to generalize the results of \cite{FJcusp1} and 
\cite{FJcusp2} to reductive symmetric spaces of $\gs$-split rank one (i.e., $\dim \faq =1$).

We will now explain our results in more detail.
 Let $\Cartan$ be a Cartan
 involution of $G$ commuting with $\gs$ and let $K$ be the associated
 maximal compact subgroup of $G.$ Let
$$
\fg = \fk \oplus \fp = \fh \oplus \fq
$$
be the eigenspace decompositions into the $\pm 1$-eigenspaces for the infinitesimal involutions $\Cartan$ and $\gs,$ respectively. Furthermore,
let $\faq$ be a maximal abelian subspace of $\fp \cap\fq$ and $\fa$ a maximal abelian subspace of $\fp$ containing $\faq.$ We put $\Aq:= \exp \faq$ and $A: = \exp \fa.$

For the description of the minimal
$\gs$-principal series one needs the (finite) set of minimal $\gs$-parabolic subgroups of $G$ containing $\Aq;$ this set is denoted by $\parabsgsAq.$ In the case of the group $\bp G$ one may take $A = \bp A \times \bp A,$ with $\bp \fa$ maximal
abelian in  $\bp \fp.$ Then $\cP_\gs(A)$ consists of all parabolic subgroups of the form $\bp  P \times \bp \bar  P,$ with $\bp P$ a minimal parabolic subgroup from $\bp G$ containing $\bp A.$ To obtain Harish-Chandra's Eisenstein itegral one would need to also consider minimal parabolic
subgroups of the form $\bp P \times \bp P.$

Our goal is then to define Eisenstein integrals by means of suitable $H$-fixed distribution vectors for all minimal parabolics of $G$ containing $A.$ The (finite) set of these is denoted by $\cP(A).$
For the case of the group one has $\cP_\gs(\Aq) \subsetneq \cP(A),$ but
for general symmetric spaces $G/H,$ the parabolic subgroups from $\parabsgsAq$ will in general not be minimal.

A parabolic subgroup $P \in \cP(A)$ is called $\fq$-extreme if it
is contained in a parabolic subgroup  $P_0$ from $\parabsgsAq,$ see Section
\ref{s: notation and preliminaries} for details. For such a parabolic, each representation $\Ind_{P_0}^G(\xi\otimes \gl\otimes 1)$ of the $\gs$-principal series can be embedded in the representation
$\Ind_P^G(\xi_M\otimes (\gl - \rhoPh)\otimes 1)$ of the minimal principal series, through induction  in  stages.
Here $\xi$ is a finite dimensional unitary representation of the Langlands component $M_0:=M_{P_0},$ and $\xi_M$ denotes the restriction of $\xi$ to $M := M_P.$
Furthermore, $\gl \in \faqdc$ and $\rhoPh := \rho_P - \rho_{P_0}.$
This is discussed in Section \ref{s: comparison of principal series representations}.

In Section \ref{Section H-fixed distribution vectors, q-extreme case}
the $H$-fixed generalized vectors of the first of these induced representations  are shown to allow a natural
realization in the latter. To describe it, one needs to parametrize the open $H$-orbits on $G/P_0.$
We will avoid this complication in the introduction,
and work under the simplifying assumption that $HP_0$ is the single open orbit.
This condition is always fulfilled in the case of the group; in general the open orbits are given by $PvH,$ for $v$ in a finite set $\cW \simeq W(\faq)/W_{K\cap H}(\faq)).$

Let $C^{-\infty}(P_0:\xi:\gl )$ denote the space of generalized vectors for the induced representations
$\Ind_{P_0}^G(\xi \otimes \gl \otimes 1).$ The $H$-fixed elements in this space needed
for the definition of the Eisenstein integral are parametrized
by $V(\xi) = \cH_\xi^{M_0\cap H}.$ Given $\eta \in V(\xi),$ one has a family
$$
j(P_0: \xi: \gl : \eta) \in C^{-\infty}(P_0: \xi:\gl)^H,    \qquad (\gl \in \faqdc),
$$
defined in \cite{Banps1}. In a suitable sense  it depends meromorphically on $\gl \in \faqdc.$ This family has image
$j_H(P:\xi_M:\gl:\eta)$ in $C^{-\infty}(P:\xi_M: \gl - \rhoPh)^H.$ By definition
the latter defines a continuous conjugate linear functional on the space
$C^\infty(P:\xi_M: -\bar \gl + \rhoPh).$ 
In (\ref{e: defi J H P}) we show
that for $\gl $ in a suitable region $\Omega_P \subseteq \faqdc$
this functional is  given by an absolutely convergent integral
\[
\tag{{\rm I}.1}
\label{e: defi j P by averaging}
\inp{j_H(P:\xi_M:\gl:\eta)}{f}  = \int_{H_P \bs H} \bar f_{\eta,\omega},
\]
for  $f \in C^\infty(P: \xi_M: -\bar \gl+\rhoPh).$ Here  $H_P:= H \cap P,$ and  $\bar f_{\eta,\omega}$
is a natural interpretation of the function $\inp{\eta}{f}|_H \in C^\infty(H)$ as a density on the quotient manifold $H_P \bs H.$

To extend formula  (\ref{e: defi j P by averaging})
to the setting of a parabolic subgroup $Q \in \cP(A)$ which is not $\fq$-extreme,
two problems need to be solved. First of all a suitable domain $\Omega_Q$ for the convergence needs
to be determined. Next, the resulting family $j_H(Q:\xi_M: \gl)$ needs to be
extended meromorphically in the parameter $\gl \in \faqdc.$

In the present paper both these problems are solved
by using a suitable partial ordering $\succeq$ on $\cP(A)$ whose
maximal elements are the $\fq$-extremal parabolic subgroups, see
 Section \ref{s: minimal parabolics} for details.
 Let $P \in \cP_\gs(A)$ be such that $P \succeq Q.$
Then the  definition of the ordering guarantees that $ H_P \subseteq H_Q$ and that
the fiber $H_P\bs H_Q$ of the natural fiber bundle
$H_P\bs H \to H_Q \bs H$ is diffeomorphic to $N_Q \cap \bar N_P$ in a natural way, see Section \ref{s: an important fibration}.
We use a general Fubini type theorem for densities on fiber bundles, discussed in the appendix of this paper, to decompose the integral
(\ref{e: defi j P by averaging}) in terms of a fiber integral over $N_Q \cap \bar N_P$ followed by an integral over the base manifold $H_Q\bs H,$ see Thm.
\ref{t: fiber integral J}.
The first of these integrals turns out to be the integral
for the standard intertwining operator
$$
A(Q:P: \xi_M: -\bar \gl +\rhoPh): C^\infty(P: \xi_M: -\bar \gl + \rhoPh ) \to C^\infty(Q: \xi_M: -\bar \gl + \rhoPh ),
$$
whereas the second integral turns out to be (\ref{e: defi j P by averaging}) with $Q$ in place of $P.$ According to Theorem \ref{t: convergence j P and intertwiner} this results in
the formula
\[
\label{e: j P j Q and intertwiner}
\tag{{\rm I}.2}
j_H(P:\xi_M:\gl : \eta) = j_H(Q: \xi_M: \gl : \eta) \after A(Q:P: \xi_M: - \bar  \gl + \rhoPh),
\]
with convergent integrals for $\gl \in \Omega_P.$ Convergence of the integral for
$j_H(Q: \xi_M: \gl : \eta) $ is thus obtained through Fubini's theorem, as a consequence
of the known convergence of the other two integrals. Furthermore, since the appearing standard intertwining operator has an inverse which is meromorphic in $\gl,$ formula (\ref{e: defi j P by averaging}) also allows us to conclude
in Theorem \ref{t: meromorphy of j H Q}
that
\[
\tag{{\rm I}.3}
\label{e: j Q}
 \gl \mapsto j_H(Q:\xi_M:\gl:\eta)
\]
depends meromorphically on $\gl \in \faqdc.$

Once the meromorphic extension of $(\ref{e: j Q})$ has been established for general $Q \in\cP(A)$
we apply a recent convexity theorem \cite[Thm.~10.1]{BB2014} to determine a large domain on which (\ref{e: j Q})
is holomorphic, see Corollary \ref{c: holomorphy of j H Q}).
The convexity theorem describes the image
of $H$ under the  projection $\fH_{Q,\iq}: G \to \faq$ determined by the Iwasawa decomposition
$G = K (A \cap H) \exp(\faq )  N_Q$ as a convex polyhedral cone described in terms of a subset of 
 the set of roots of $\fa$ in $\fn_Q.$  This description allows one to decide whether
this cone properly contains the origin or equals it. In the latter case it is shown that (\ref{e: j Q}) is holomorphic on all of $\faqdc,$ see Remark
\ref{r: holomorphy j in fh extreme case}.

The definition of the meromorphic family of $H$-fixed generalized vectors (\ref{e: j Q}) allows us
to define Eisenstein integrals $E(Q: \gl)$ essentially as matrix coefficients with
$K$-finite vectors in the induced representation under consideration.
 In particular, the Eisenstein integral depends meromorphically on $\gl.$ Holomorphy
of (\ref{e: j Q}) implies holomorphy of the corresponding Eisenstein integral,
see Corollary \ref{c: holomorphy Eisenstein integral}.

The relation (\ref{e: j P j Q and intertwiner}) leads to a relation between the Eisenstein
integral $E(Q: \gl)$ and the Eisenstein integral $E(P_0,\gl),$ earlier defined in
\cite{Banps2} and \cite{BSft}. This relation amounts to a different normalization of the Eisenstein integral
expressed in terms of a $C$-function, see Corollary \ref{c: functional c functions}.

Finally, in Section \ref{s: the case of the group}, we discuss the case of the group, and express the obtained Eisenstein integrals
in terms of Harish-Chandra's Eisenstein integrals, see Corollary \ref{c: general cor Eisenstein integral in group case}.
 In this case, the Eisenstein integral
$E(Q:\gl)$ coincides with Harish-Chandra's if and only if $Q$ is a  $\succeq$-minimal element
of $\parabsA.$ The latter means that $Q$ is of the form $\bp Q \times \bp Q,$ with $\bp Q \in \cP(\bp A);$ see Corollary \ref{c: char of HC Eisenstein integral}. The result on holomorphy established above, is consistent with the holomorphic dependence of Harish-Chandra's Eisenstein integral, see
Remark \ref{r: holomorphy HC Eisenstein}.
\medbreak
{\bf Acknowledgements\ } We would like to thank Henrik Schlichtkrull
and Mogens Flensted-Jensen for many fruitful discussions. Part of this research was made possible by the  Max Planck Institute for Mathematics in Bonn, 
where both authors participated in the special activity `Harmonic Analysis on Lie groups', 
 organized by Bernhard Kr\"otz and Henrik Schlichtkrull, in the late summer of 2011.
\section{Notation and preliminaries}
\label{s: notation and preliminaries}
In this section we collect some of the notation that will be used
 throughout this article.

We assume that $G$ is a reductive Lie group of the Harish-Chandra class.
Let $\sigma$ be an involutive automorphism of $G$ and   let $\theta$ be a Cartan involution
that commutes with $\sigma$, let $K:= G^\theta$ be the associated maximal compact subgroup.
Let $H$ be an open subgroup of the fixed point subgroup $G^{\sigma}$.
We assume $H$ to be essentially connected. (See \cite[p.24]{Bconv}.)
If $S$ is any closed subgroup of $G,$ we agree to write
 \begin{equation}
\label{e: S cap H}
H_S: = S \cap H.
\end{equation}
A  Lie group  will in general be denoted  by a Roman upper case letter; the associated
 Lie algebra
 by the corresponding lower case gothic letter.
  We denote the infinitesimal involutions associated with 
$\sigma$ and $\Cartan$ by the same symbols, respectively.  As usual, the decompositions of 
$\fg$ into the $+1$ and $-1$ eigenspaces for $\Cartan$ and $\gs$ are denoted by 
$\fg = \fk \oplus \fp = \fh \oplus \fq$ respectively.  
As the involutions $\sigma$ and $\theta$ commute, we have the following decomposition of 
$\fg$ also decomposes as a direct sum of vector spaces
$$
\fg
=(\fk\cap\fh)\oplus(\fk\cap\fq)\oplus(\fp\cap\fh)\oplus(\fp\cap\fq).
$$
\noindent
 We fix a non-degenerate $G$-invariant bilinear form $B$ on $\fg,$
which coincides with the Killing form on $[\fg, \fg],$ is negative definite on $\fk$ and
positive definite on $\fp,$ and for which the above decomposition is orthogonal.
Furthermore, we equip $\fg$ with the positive definite inner product
given by
$$
\inp{\dotvar}{\dotvar} : = - B(\dotvar , \Cartan(\dotvar)).
$$

We fix a maximal abelian subspace $\faq$ of $\fp\cap\fq$ and a maximal abelian subspace $\fa$ of $\fp$ containing $\faq$.
Then $\fa$ decomposes as
$$
\fa=\fah\oplus\faq,
$$
where $\fah=\fa\cap\fh$.   
This decomposition induces natural embeddings of the associated dual 
spaces  $\fahd$ and $\faqd$ into $\fad.$ 
Let $A:=\exp(\fa)$, $\Aq:=\exp(\faq)$ and $\Ah:=\exp(\fah)$.

If $P$ is a parabolic subgroup (not necessarily minimal), then we write $N_P$ for the unipotent radical of $P$.
If $P$ contains $A$ and $\fb$ is a subalgebra of $\fa$, then we write $\gS(P,\fb)$ for the set of weights of $\fb$ in $\fn_{P}$.
Furthermore,  we  write  $\gS(P)$ for $\gS(P,\fa)$, unless
clarity of exposition requires otherwise.
If $\tau$ is an involution of $\fg$ preserving $\fa,$ we agree to
write
\begin{equation}
\label{e: defi gS P tau}
\gS(P,\tau):= \gS(P) \cap \tau\gS(P).
\end{equation}
For a root $\ga \in \gS(\fa) \cap \faqd,$ we note that $\gs\Cartan \ga = \ga,$ so that
$\gs\Cartan$ leaves the root space $\fg_\ga$ invariant.  Accordingly, 
we define the
subset  $\gS(P)_{ -} = \gS(P)_{\gs, -}$  of $\gS(P,\gs\Cartan)$ by
\begin{equation}\label{e: def minus roots}
\gS(P)_{-}
:=\{\ga\in\gS(P,\sigma\theta):\ga\in\faqd\Rightarrow\sigma\theta|_{\fg_{\ga}}\neq I\}.
\end{equation}

Let $M$ denote the centralizer of $A$ in $K$ and let $\parabsA$ denote the set of minimal parabolic subgroups $P \subseteq G$ with $A \subseteq P.$
Then each subgroup $P \in \parabsA$ has a Langlands decomposition
of the form $P = MA N_P$.

\begin{defi}
\label{d: q extreme parabs}
A parabolic subgroup
$P\in\cP(A)$ is said to be
 ${\fq}$-extreme if
$$
\gS(P,\sigma\Cartan) = \gS(P) \setminus \fahd.
$$
The set of these parabolic subgroups is denoted by $\parabsgsA.$
\end{defi}
We will finish this section by comparing $\cP_\gs(A)$ with the set
 $\parabsgsAq$ of minimal $\gs\Cartan$-stable parabolic subgroups of $G$ containing $\Aq.$
We recall from \cite{Banps1} that the latter set is finite
and in bijective correspondence with the set of positive systems
for $\gS(\fg, \faq).$  Indeed, if $\Pi$ is such a positive system
then the corresponding parabolic subgroup $P_\sPi$
from $\parabsgsAq$ equals 
$
P_\sPi = Z_G(\faq) N_\sPi,
$
where
$
\fn_\sPi := \oplus_{\ga \in \sPi} \fg_\ga$  
and 
$N_\sPi:= \exp(\fn_\sPi).
$
The Langlands decomposition of $P_\sPi$ is given by
$$
P_\sPi = M_0 A_0 N_\sPi,
$$
where 
$A_0 = \exp (\fa_0)$ and $M_0 = Z_K(\faq) \exp(\fm_{0}), $ with
$
\fa_0 = \cap_{\ga \in \gS(\fg,\fa) \cap \fahd} \ker \ga, $ and $
\fm_{0}:=Z_{\fg}(\faq)\cap \fa_{0}^{\perp}.
$

Conversely, if $P_0 \in \parabsgsAq$ then the associated positive
system is given by
$$
\gS(P_0, \faq) := \{\ga\in \gS(\fg,\faq)\mid \fg_\ga \subseteq \fn_{P_0}\}.
$$

\begin{lemma}
\label{l: q extreme parabolic}
Let $P \in \parabsA.$ Then the following conditions are
equivalent.
\begin{enumerate}
\itema $P \in \cP_\gs(A);$\vspace{-5pt}
\itemb there exists a $P_0\in \parabsgsAq$ such that $P \subseteq P_0.$
\end{enumerate}
\end{lemma}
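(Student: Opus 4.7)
For (b)$\implies$(a), suppose $P\subset P_0$ with $P_0\in \parabsgsAq$, corresponding to the positive system $\Pi\subset\Sigma(\fg,\faq)$. The inclusion yields $\Sigma(P_0)\subset\Sigma(P)$. Since every root space $\fg_\alpha$ with $\alpha\in \fahd$ lies in the Levi $Z_\fg(\faq)$ of $P_0$, we have $\Sigma(P_0)\cap\fahd=\emptyset$. From the description $\Sigma(P_0)=\{\alpha\in\Sigma(\fg,\fa):\alpha|_{\faq}\in\Pi\}$ I would read off that exactly one of $\pm\alpha$ lies in $\Sigma(P_0)$ for each $\alpha\in\Sigma(\fg,\fa)\setminus\fahd$; combined with $\Sigma(P_0)\subset\Sigma(P)$ and $\Sigma(P)\cap(-\Sigma(P))=\emptyset$, this forces $\Sigma(P)\setminus\fahd=\Sigma(P_0)$. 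The latter set is $\gs\Cartan$-stable because $P_0$ is, so $\Sigma(P)\setminus\fahd\subset\Sigma(P,\gs\Cartan)$. Conversely, $\gs\Cartan$ restricts to $-\Id$ on $\fah$, so any $\alpha\in\Sigma(P)\cap\fahd$ satisfies $\gs\Cartan\alpha=-\alpha\notin\Sigma(P)$. Hence $\Sigma(P,\gs\Cartan)=\Sigma(P)\setminus\fahd$, which establishes (a).

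For (a)$\implies$(b), my plan is a $\gs\Cartan$-averaging argument. Pick $H\in\fa$ with $\alpha(H)>0$ for every $\alpha\in\Sigma(P)$, and set
$$
H_\iq := \tfrac{1}{2}\bigl(H+\gs\Cartan H\bigr),
$$
which lies in $\faq$ because $\gs\Cartan$ acts as $-\Id$ on $\fah$ and as $+\Id$ on $\faq$. For any $\alpha\in\Sigma(P)\setminus\fahd$, the hypothesis gives $\gs\Cartan\alpha\in\Sigma(P)$, and both $\alpha(H)$ and $(\gs\Cartan\alpha)(H)=\alpha(\gs\Cartan H)$ are strictly positive, so $\alpha(H_\iq)>0$. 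Any $\beta\in\Sigma(\fg,\faq)$ admits a lift $\alpha\in\Sigma(P)\setminus\fahd$ with $\alpha|_{\faq}=\pm\beta$ (after possibly replacing $\beta$ by $-\beta$), so $\beta(H_\iq)=\pm\alpha(H_\iq)\ne 0$. Hence $H_\iq$ is regular in $\faq$ and determines a positive system
$$
\Pi := \{\beta\in\Sigma(\fg,\faq):\beta(H_\iq)>0\},
$$
which by construction contains the image of $\Sigma(P)\setminus\fahd$ under restriction to $\faq$.

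I would then define $P_0 := Z_G(\faq)N_\Pi\in\parabsgsAq$ and verify the containment $P\subset P_0$ at the Lie algebra level: $M$ and $A$ centralize $\faq$ and hence lie in $M_0A_0=Z_G(\faq)$; every root space $\fg_\alpha\subset\fn_P$ with $\alpha\in\fahd$ already sits in $Z_\fg(\faq)$, while for $\alpha\in\Sigma(P)\setminus\fahd$ the relation $\alpha|_{\faq}\in\Pi$ places $\fg_\alpha$ inside $\fn_\Pi$. The main obstacle is the (a)$\implies$(b) direction: one must exhibit a single element of $\faq$ strictly separating the restrictions of $\Sigma(P)\setminus\fahd$ from their negatives, and the $\gs\Cartan$-averaging of a regular element of $\fa$ accomplishes exactly this, using the $\gs\Cartan$-stability of $\Sigma(P)\setminus\fahd$ in an essential way. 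The remaining verifications are routine bookkeeping with root space decompositions.
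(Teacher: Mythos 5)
Your proof is correct and follows essentially the same route as the paper: (b)$\implies$(a) by showing $\gS(P)\setminus\fahd=\gS(P_0)$ and using $\gs\Cartan$-stability of $P_0$, and (a)$\implies$(b) by producing the positive system $\Pi$ on $\faq$ from the restrictions of $\gS(P,\gs\Cartan)$ and taking $P_0=Z_G(\faq)N_\Pi$. Your $\gs\Cartan$-averaging of a regular element is a clean way to justify that these restrictions do form a positive system, a point the paper asserts without detail.
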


\begin{proof}
First assume (a). Then
$\gS(P) \setminus \fahd = \gS(P, \gs \Cartan)$
and we see that the set $\Pi$ of non-zero restrictions $\ga|_{\faq}$,
for $\ga \in \gS(P)\setminus \fahd,$ is a positive system
for $\gS(\fg, \faq).$ Now $N_P = (N_P \cap M_0)N_\sPi$
and we see that $P  \subseteq P_\sPi$ and (b) follows.

Next assume (b).
We first note that
$$
\gS(P_{0},\faq)
=\{\alpha|_{\faq}:\alpha\in\gS(P_{0}),\alpha|_{\faq}\neq0\}.
$$
The minimality of $P_{0}$ implies that $\gS(P_{0},\faq)$ is a positive system  for the root system $\gS(\fg,\faq)$, hence
$$
\gS(\fg,\fa)\setminus\fahd
=\{\alpha\in\gS(\fg,\fa):\alpha|_{\faq}\in\gS(\fg,\faq)\}
=\gS(P_{0})\cup -\gS(P_{0}).
$$

By assumption $P\subseteq P_{0}.$ This implies  $\gS(P_{0})\subseteq\gS(P)$ and
$\gS(P)\cap-\gS(P_{0})=\emptyset$. Hence,
$$ \gS(P)\setminus \fahd = \gS(P_0).$$
Moreover, since $P_{0}$ is $\sigma\theta$-stable the above equality implies $\gS(P) \setminus \fahd \subseteq \gS(P,\sigma\theta).$ As the converse inclusion is obvious, the parabolic $P$ is $\fq$-extreme and (a) follows.
\end{proof}

\section{Minimal parabolic subgroups}
\label{s: minimal parabolics}
\begin{lemma}
Let $P \in \parabsA.$
The set $\gS(P)$ is the disjoint union of $\,\gS(P, \gs)$
and $\gS(P, \gs\Cartan).$
\end{lemma}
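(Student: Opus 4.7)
My plan is to use the fact that the decomposition $\fa = \fah \oplus \faq$ forces $\fa$ to be $\sigma$-stable, and then exploit the identity $\theta\alpha = -\alpha$ on roots (which holds since $\fa \subset \fp$ implies $\theta = -\Id$ on $\fa$ and hence on $\fa^*$). The upshot is the key identity $\sigma\theta\alpha = -\sigma\alpha$ for every $\alpha \in \gS(\fg,\fa)$, which neatly splits $\gS(P)$ into the two desired pieces via the dichotomy $\sigma\alpha \in \gS(P)$ versus $-\sigma\alpha \in \gS(P)$.

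In more detail, first I would record that $\sigma$ preserves $\fa$: indeed $\fah \subset \fh$ and $\faq \subset \fq$ are the $\pm 1$-eigenspaces of $\sigma|_\fa$. Hence for every root $\alpha \in \gS(\fg,\fa)$ the relation $[H, \sigma X] = \sigma[\sigma H, X] = \alpha(\sigma H)\,\sigma X$ (for $H \in \fa$, $X \in \fg_\alpha$) gives $\sigma \fg_\alpha = \fg_{\sigma\alpha}$, so $\sigma\alpha$ is again a root. Similarly $\theta\fg_\alpha = \fg_{-\alpha}$, which is the statement $\theta\alpha = -\alpha$, hence $\sigma\theta\alpha = -\sigma\alpha$.

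Next I would use the fact that $P \in \parabsA$ is a minimal parabolic containing $A$, so that $\gS(\fg,\fa) = \gS(P) \sqcup -\gS(P)$. For $\alpha \in \gS(P)$ the root $\sigma\alpha$ must lie in exactly one of these two sets. If $\sigma\alpha \in \gS(P)$, then $\alpha \in \gS(P) \cap \sigma\gS(P) = \gS(P,\sigma)$ (using that $\sigma$ is involutive). If instead $\sigma\alpha \in -\gS(P)$, then $\sigma\theta\alpha = -\sigma\alpha \in \gS(P)$, so $\alpha \in \gS(P) \cap \sigma\theta\gS(P) = \gS(P,\sigma\theta)$. This shows $\gS(P) = \gS(P,\sigma) \cup \gS(P,\sigma\theta)$.

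Finally, for disjointness: if $\alpha$ belonged to both sets, then both $\sigma\alpha$ and $\sigma\theta\alpha = -\sigma\alpha$ would lie in $\gS(P)$, contradicting $\gS(P) \cap -\gS(P) = \emptyset$. There is no real obstacle here; the only step that deserves care is verifying that $\sigma$ stabilizes $\fa$ so that it acts on roots at all, and keeping straight the convention $(\tau\alpha)(H) = \alpha(\tau H)$ so that $\theta$ acts as $-\Id$ on $\fa^*$.
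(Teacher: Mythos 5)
Your proof is correct and follows essentially the same route as the paper: the key dichotomy that for $\ga\in\gS(P)$ either $\gs\ga\in\gS(P)$ or $\gs\Cartan\ga=-\gs\ga\in\gS(P)$, with the two cases mutually exclusive. The paper states this in one line, taking the $\gs$-stability of $\fa$ and $\Cartan\ga=-\ga$ for granted; you simply spell out those preliminaries.
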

\begin{proof}
Let $\ga \in \gS(P).$ Then either $\gs\ga \in \gS(P)$ or
$\gs\Cartan \ga = -\gs \ga \in \gS(P).$ The two cases are exclusive,
and in the first case we have $\ga \in \gS(P, \gs),$ while in the second $\ga \in \gS(P, \gs \Cartan).$
\end{proof}
We define the partial ordering
$\succeq$ on   $\parabsA$ by
\begin{equation}
\label{e: defi ordering on minparabs}
P \succeq Q \iff\;\;
\gS(Q, \gs \Cartan) \subseteq \gS(P, \gs\Cartan)\;\;{\rm and}\;\;
\gS(P, \gs) \subseteq \gS(Q, \gs).
\end{equation}
It is easy to see that this condition on  $P$ and $Q$ implies that $H_{N_P} \subseteq H_{N_Q}.$
The latter condition implies that we have a natural
surjective $H$-map $H/H_{N_P} \to H/H_{N_Q}.$

\begin{lemma}
\label{l: intersection with faqd}
Let $P, Q\in \parabsA,$ and assume that $P \succeq Q.$ Then
\begin{enumerate}
\itema
$\gS(P) \cap \faqd = \gS(Q) \cap \faqd;$
\itemb $\gS(P) \cap \fahd = \gS(Q) \cap \fahd.$
\end{enumerate}
\end{lemma}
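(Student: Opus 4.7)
The plan is to exploit the interplay between restriction of roots to $\faqd$ or $\fahd$ and the involutions $\gs,\Cartan,\gs\Cartan$ acting on $\fad$. Since $\fa\subset\fp$ we have $\Cartan = -I$ on $\fad$; moreover $\gs$ is $+1$ on $\fahd$ and $-1$ on $\faqd$, so $\gs\Cartan$ is $-1$ on $\fahd$ and $+1$ on $\faqd$. From this I first record the elementary facts that for any $P\in\parabsA$,
\[
\gS(P)\cap\faqd \subset \gS(P,\gs\Cartan) \qquad \text{and}\qquad \gS(P)\cap\fahd\subset\gS(P,\gs),
\]
because $\gs\Cartan\ga=\ga$ on $\faqd$ and $\gs\ga=\ga$ on $\fahd$, while the ``opposite'' containments fail automatically (positivity of $\gS(P)$ rules out $-\ga\in\gS(P)$). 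Of course the same holds for $Q$.

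For part (a), one inclusion is immediate from condition (i) of the ordering~(\ref{e: defi ordering on minparabs}): if $\ga\in\gS(Q)\cap\faqd$ then $\ga\in\gS(Q,\gs\Cartan)\subset\gS(P,\gs\Cartan)\subset\gS(P)$, giving $\gS(Q)\cap\faqd\subset\gS(P)\cap\faqd$. For the reverse inclusion I would argue by contradiction. Take $\ga\in\gS(P)\cap\faqd$ and suppose $\ga\notin\gS(Q)$; then $-\ga\in\gS(Q)\cap\faqd$, and applying the inclusion just proved to $-\ga$ gives $-\ga\in\gS(P)$, contradicting positivity of $\gS(P)$.

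Part (b) proceeds by the mirror argument, now using condition (ii) of the ordering. The direct inclusion reads $\gS(P)\cap\fahd\subset\gS(P,\gs)\subset\gS(Q,\gs)\subset\gS(Q)$, so $\gS(P)\cap\fahd\subset\gS(Q)\cap\fahd$. For the reverse, if $\ga\in\gS(Q)\cap\fahd$ were not in $\gS(P)$, then $-\ga\in\gS(P)\cap\fahd\subset\gS(P,\gs)\subset\gS(Q,\gs)\subset\gS(Q)$, again contradicting positivity in $\gS(Q)$.

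I expect no serious obstacle; the only care needed is the reminder that on $\faqd\cup\fahd$ the distinction between $\gS(P,\gs)$ and $\gS(P,\gs\Cartan)$ collapses, so membership of $\ga$ in exactly one of these two subsets is forced by membership of $\ga$ in $\gS(P)$. This makes the symmetric pairing between (a)/(b) and conditions (i)/(ii) transparent.
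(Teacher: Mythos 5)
Your proof is correct and takes essentially the same route as the paper: the forward inclusions are derived exactly as there from the two defining conditions of $\succeq$ together with $\gs\Cartan\ga=\ga$ on $\faqd$ and $\gs\ga=\ga$ on $\fahd$. Your reverse inclusions, obtained by applying the forward inclusion to $-\ga$ and contradicting positivity, are just an explicit version of the paper's counting argument, both resting on the fact that $\gS(P)$ and $\gS(Q)$ each contain exactly one of $\pm\ga$ for every root $\ga$.
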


\begin{proof}
Let $\ga \in \gS(Q) \cap \faqd.$ Then $\gs\Cartan \ga = \ga$ so that
$\ga \in \gS(Q, \gs\Cartan) \subseteq \gS(P, \gs\Cartan).$ We infer that
$\gS(Q) \cap \faqd \subseteq \gS(P) \cap \faqd.$
Since both sets in this inclusion
are positive systems for the root system $\gS\cap \faqd,$ the converse inclusion
follows by a counting argument.

Assertion (b) is proved in a similar fashion, using $\gs$ in place of $\gs\Cartan$
and referring to the second inclusion of (\ref{e: defi ordering on minparabs}) instead of the first.
\end{proof}
\break
\begin{lemma}
\label{l: characterization ordering minparabs}
Let $P, Q \in \parabsA.$ Then the following statements are equivalent.
\begin{enumerate}
\itema $P \succeq Q;$
\itemb $\gS(P) \cap \gS(\bar Q) \subseteq \gS(P, \gs \Cartan)$ and{\ } $\gS(\bar P) \cap \gS(Q) \subseteq \gS(Q, \gs);$
\itemc $\gS(P) \cap \gS(\bar Q) = \gS(P,\gs \Cartan) \cap \gS(\bar Q, \gs).$
\end{enumerate}
\end{lemma}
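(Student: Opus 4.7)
The plan is to rephrase condition (b) in a cleaner combinatorial form using Lemma~2.1, and then verify (a)$\Leftrightarrow$(b) and (b)$\Leftrightarrow$(c) separately. First I would record two bookkeeping identities. Since $\Cartan$ acts as $-1$ on $\fa$ and $\bar P$ is the opposite parabolic, one has $\gS(\bar P)=-\gS(P)$, and similarly for $Q$. Combined with the disjoint decomposition $\gS(\fg,\fa)=\gS(Q)\sqcup\gS(\bar Q)$, this yields
\[
\gS(P)\cap\gS(\bar Q)=\gS(P)\setminus\gS(Q),\qquad \gS(\bar P)\cap\gS(Q)=\gS(Q)\setminus\gS(P).
\]
Using the disjoint union $\gS(P)=\gS(P,\gs)\sqcup\gS(P,\gs\Cartan)$ furnished by Lemma~2.1, condition (b) becomes equivalent to the pair of inclusions $\gS(P,\gs)\subset\gS(Q)$ and $\gS(Q,\gs\Cartan)\subset\gS(P)$, which I will call (b$'$).

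For (a)$\Rightarrow$(b$'$) I would simply note $\gS(P,\gs)\subset\gS(Q,\gs)\subset\gS(Q)$ and $\gS(Q,\gs\Cartan)\subset\gS(P,\gs\Cartan)\subset\gS(P)$. The converse (b$'$)$\Rightarrow$(a) is the only delicate step. Given $\ga\in\gS(P,\gs)$, I need to upgrade $\ga\in\gS(Q)$ to $\ga\in\gS(Q,\gs)$, i.e. to show $\gs\ga\in\gS(Q)$. Assume for contradiction that $\gs\ga\notin\gS(Q)$; then $-\gs\ga\in\gS(\bar P)\cap\gS(Q)$, and the second clause of the original (b) forces $-\gs\ga\in\gS(Q,\gs)$. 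Applying $\gs$ gives $-\ga\in\gS(Q)$, contradicting $\ga\in\gS(Q)$ since $\gS(Q)\cap\gS(\bar Q)=\emptyset$. Hence $\gs\ga\in\gS(Q)$ and so $\ga\in\gS(Q,\gs)$. The symmetric argument with $\gs\Cartan$ in place of $\gs$ and $P$, $Q$ swapped yields $\gS(Q,\gs\Cartan)\subset\gS(P,\gs\Cartan)$, proving (a).

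For (b)$\Leftrightarrow$(c) the key observation is $\gS(\bar Q,\gs)=-\gS(Q,\gs)$, since $\gs$ commutes with negation on roots. The inclusion $\supset$ in (c) is automatic from the containments $\gS(P,\gs\Cartan)\subset\gS(P)$ and $\gS(\bar Q,\gs)\subset\gS(\bar Q)$. The reverse inclusion in (c) splits into $\gS(P)\cap\gS(\bar Q)\subset\gS(P,\gs\Cartan)$ and $\gS(P)\cap\gS(\bar Q)\subset\gS(\bar Q,\gs)$; the first is the first clause of (b), and after negating both sides the second becomes $\gS(\bar P)\cap\gS(Q)\subset\gS(Q,\gs)$, the second clause of (b). The main obstacle throughout is the contradiction argument in the proof of (b$'$)$\Rightarrow$(a): it is the only step that uses both clauses of (b) simultaneously, leveraging the disjointness $\gS(Q)\cap\gS(\bar Q)=\emptyset$; everything else reduces to routine manipulations with $\gS(\bar P)=-\gS(P)$ and the Lemma~2.1 decomposition.
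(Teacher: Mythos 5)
Your proposal is correct and follows essentially the same route as the paper: both rely on the disjoint decomposition $\gS(P)=\gS(P,\gs)\sqcup\gS(P,\gs\Cartan)$ from the preceding lemma, the identity $\gS(\bar Q,\gs)=-\gS(Q,\gs)$ to pass between (b) and (c), and a short disjointness argument ($\gS(Q)\cap\gS(\bar Q)=\emptyset$) to upgrade membership in $\gS(Q)$ to membership in $\gS(Q,\gs)$. The only difference is organizational — you close the circle via (b)$\Rightarrow$(a) with the clean reformulation (b$'$), whereas the paper goes (c)$\Rightarrow$(a) — but the underlying arguments are the same.
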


\begin{proof}
First assume (a). Let $\ga \in \gS(P) \cap \gS(\bar Q).$ Then $\gs \ga \in \gS(P)$ would lead
to $\ga \in \gS(Q),$ contradiction. Hence, $\ga \in \gS(P, \gs\Cartan).$
The second inclusion of (b) follows in a similar fashion.

Next, (b) is equivalent to $\gS(P) \cap \gS(\bar Q) \subseteq \gS(P, \gs \Cartan) \cap \gS(\bar Q, \gs),$
which is readily seen to be equivalent to (c).

Finally, assume (c) and let $\ga \in \gS(P, \gs).$ Then $\ga \in \gS(P) \setminus \gS(P, \gs \Cartan),$
hence $\ga \notin \gS(\bar Q)$ by the equality of (c) and it
follows that $\ga \in \gS(Q).$ Likewise, $\gs\ga \in \gS(Q)$
and we conclude that $\ga \in \gS(Q, \gs).$
On the other hand, let $\ga \in \gS(Q, \gs \Cartan).$ Then $\ga \in \gS(Q)\setminus \gS(Q, \gs)$.
The equality in (c) is equivalent to
$$
\gS(\bar P) \cap \gS(Q)=\gS(\bar P,\gs \Cartan) \cap \gS( Q, \gs),
$$
which shows  that $\ga \in  \gS(P).$ Likewise, $\gs \Cartan \ga \in \gS(P)$
and we see that $\ga \in \gS(P, \gs \Cartan).$ This proves (a).
\end{proof}
\begin{lemma}
\label{l: comparison orderings parabs}
Let $P,Q, R \in \parabs(A)$ be such that $P \succeq R.$ Then the following assertions are equivalent:
\begin{enumerate}
\itema
 $P\succeq Q \succeq R;$
\itemb
$\gS(P) \cap \gS(\bar Q) \subseteq \gS(P) \cap \gS(\bar R).$
\end{enumerate}
\end{lemma}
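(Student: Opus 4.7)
The plan is to translate both ordering relations into inclusions of root subsets using the equivalent characterizations in Lemma \ref{l: characterization ordering minparabs}(b) and~(c), and then to argue directly from the hypotheses on each of the two implications.

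For (a) $\Rightarrow$ (b), I would take $\ga \in \gS(P) \cap \gS(\bar Q)$ and argue by contradiction that $\ga \in \gS(R)$ is impossible. From Lemma \ref{l: characterization ordering minparabs}(b) applied to $P \succeq Q$ one has $\ga \in \gS(P,\gs\Cartan)$, hence $\gs\ga \in \gS(\bar P)$. Applied to $Q \succeq R$ with the root $-\ga \in \gS(Q) \cap \gS(\bar R)$, the same lemma yields $-\ga \in \gS(Q,\gs\Cartan)$, and thus $\gs\ga \in \gS(Q)$. Now $\gs\ga \in \gS(\bar P) \cap \gS(Q)$, so one more application of Lemma \ref{l: characterization ordering minparabs}(b) for $P \succeq Q$ forces $\gs\ga \in \gS(Q,\gs)$, giving $\ga = \gs(\gs\ga) \in \gS(Q)$ and contradicting $\ga \in \gS(\bar Q)$.

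For (b) $\Rightarrow$ (a), the recurring device is that the hypothesis, applied to a suitable companion of $\ga$, places some root simultaneously in $\gS(R)$ and in $\gS(\bar R)$. I would verify the two inclusions of Lemma \ref{l: characterization ordering minparabs}(b) separately for $P \succeq Q$ and for $Q \succeq R$. For $P \succeq Q$, the first inclusion follows immediately by chaining the hypothesis with $\gS(P) \cap \gS(\bar R) \subset \gS(P,\gs\Cartan)$ coming from $P \succeq R$. For the second inclusion, if $\ga \in \gS(\bar P) \cap \gS(Q)$, applying the hypothesis to $-\ga$ rules out $\ga \in \gS(\bar R)$, so $\ga \in \gS(\bar P) \cap \gS(R) \subset \gS(R,\gs)$ by $P \succeq R$; then to conclude $\gs\ga \in \gS(Q)$ I split on the sign of $\gs\ga$ relative to $P$ and in each subcase obtain a contradiction --- if $\gs\ga \in \gS(P)$ via the hypothesis directly, and if $\gs\ga \in \gS(\bar P)$ by noting that then $-\ga \in \gS(P,\gs)$ and using $\gS(P,\gs) \subset \gS(R,\gs)$ from the definition of $P \succeq R$. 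For $Q \succeq R$ the inclusion $\gS(\bar Q) \cap \gS(R) \subset \gS(R,\gs)$ is immediate: the hypothesis eliminates $\ga \in \gS(P)$, and the remaining case reduces to $P \succeq R$. For $\gS(Q) \cap \gS(\bar R) \subset \gS(Q,\gs\Cartan)$, the hypothesis once again eliminates $\ga \in \gS(\bar P)$; then Lemma \ref{l: characterization ordering minparabs}(c) for $P \succeq R$ simultaneously yields $\gs\ga \in \gS(\bar P)$ and $\gs\ga \in \gS(\bar R)$, and one final use of the hypothesis on $-\gs\ga$ rules out $\gs\ga \in \gS(Q)$.

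The main obstacle is the amount of bookkeeping required for (b) $\Rightarrow$ (a) --- four inclusions, each with a small case split --- rather than any deep idea. The essence of every contradiction is that some companion root $\beta \in \{-\ga,\gs\ga,-\gs\ga\}$ lies in $\gS(P) \cap \gS(\bar Q)$, hence in $\gS(\bar R)$ by the hypothesis, which clashes with a previously established $\beta \in \gS(R)$. Identifying the correct companion root in each subcase so that the single-step hypothesis argument closes is what requires care.
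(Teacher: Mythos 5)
Your proposal is correct and rests on the same machinery as the paper's proof: both reduce everything to Lemma \ref{l: characterization ordering minparabs} and close each step by showing that a companion root ($-\ga$, $\gs\ga$, or $\gs\Cartan\ga$) would land simultaneously in $\gS(R)$ and $\gS(\bar R)$. The only difference is organizational --- the paper first records the disjoint-union identity $\gS(P)\cap\gS(\bar R)=(\gS(P)\cap\gS(\bar Q))\cup(\gS(Q)\cap\gS(\bar R))$ and then verifies characterization (c) for each pair, whereas you check the four inclusions of characterization (b) one at a time --- so the content is essentially identical.
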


\begin{proof}
Assume (a).
By Lemma  \ref{l: characterization ordering minparabs}, the first set in (b) equals $\gS(P, \gs \Cartan ) \cap \gS(\bar Q, \gs),$ which by
(\ref{e: defi ordering on minparabs}) is contained in $\gS(P, \gs\Cartan) \cap \gS(\bar R, \gs).$ The latter set equals
the second set of (b), again by application of Lemma  \ref{l: characterization ordering minparabs}.
Assertion (b) follows.

For the converse implication, assume (b). Then it is well known and easy to show that
\begin{equation}
\label{e: disjoint union root sets}
\gS(P) \cap \gS(\bar R) = (\gS(P) \cap \gS(\bar Q)) \cup (\gS(Q) \cap \gS(\bar R)) \qquad \mbox{\rm (disjoint union)}
\end{equation}
Indeed, it is obvious that the set on the left-hand side of (\ref{e: disjoint union root sets})
is contained in the union on the right-side. For the converse inclusion, we first note that
(b) implies  $\gS(\bar P) \cap \gS(Q) \subseteq \gS(R).$ Now assume that
$\ga \in \gS(Q) \cap \gS(\bar R).$ Then $\ga\notin \gS(\bar P)$ so that  $\ga \in \gS(P) \cap \gS(\bar R).$
Hence, the second inclusion of (\ref{e: disjoint union root sets}) follows as well.

Still assuming (b), we claim that $P \succeq Q.$
To see this, let $\ga \in \gS(P) \cap \gS(\bar Q).$ Then
$ \ga \in  \gS(P) \cap \gS(\bar R) \subseteq \gS(P, \gs\Cartan)\cap \gS(\bar R, \gs)$
by Lemma  \ref{l: characterization ordering minparabs}.
Assume now in addition $\ga \notin \gS(\bar Q,\gs).$ Then $\gs\Cartan \ga \in \gS(P) \cap \gS (\bar Q)\subseteq \gS(P) \cap \gS(\bar R),$ hence
$\ga \in \gS(\bar R, \gs \Cartan),$ contradicting the earlier conclusion that $\ga \in \gS(\bar R, \gs).$
Thus the assumption cannot hold, so that $\ga \in \gS(P, \gs \Cartan) \cap \gS(\bar Q, \gs).$ In view of Lemma  \ref{l: characterization ordering minparabs}
this establishes the claim.

We will now infer (a) by establishing that $Q \succeq R.$  For this, let $\ga \in \gS(Q)\cap \gS(\bar R).$
Then $\ga \in \gS(P) \cap \gS(\bar R)$ by (\ref{e: disjoint union root sets}), which implies that
$ \ga \in \gS(P, \gs\Cartan) \cap \gS(\bar R, \gs)$ by Lemma \ref{l: characterization ordering minparabs}.
Assume now that $\ga \notin \gS(Q, \gs\Cartan).$ Then
$\gs \ga \in \gS(Q) \cap  \gS(\bar R ) \subseteq \gS(P),$ so that $\ga \in \gS(P, \gs),$ contradicting the earlier conclusion that $\ga \in \gS(P, \gs \Cartan).$ Thus, the assumption cannot hold, so that $\ga \in \gS(Q, \gs \Cartan) \cap \gS(\bar R, \gs).$ Applying Lemma \ref{l: characterization ordering minparabs} with $(Q,R)$ in place of $(P,Q),$ we finally obtain that $Q \succeq R.$
\end{proof}

\begin{rem}
Recall that two parabolic subgroups $P, Q \in \parabs(A)$ are said to be
adjacent if $\gS(P) \cap \gS(\bar Q)$ has a one dimensional span in $\fad.$

If $P, Q \in \parabs(A)$ then there exists a sequence
$P = P_0, P_1, \ldots, P_n = Q$ of parabolic subgroups in $\cP(A)$ such that for all $0\leq j < n$
we have $\gS(P) \cap \gS(\bar P_{j}) \subseteq \gS(P) \cap \gS(\bar P_{j+1})$
and such that $P_j$ and $P_{j+1}$ are adjacent.  If in addition $P \succ Q,$ then it follows
from repeated application of the lemma above that
$$
P =P_0 \succ P_1 \succ \cdots \succ P_n =Q.
$$
\end{rem}

Our  next objective in this section is to show that every parabolic subgroup from $\parabs(A)$ is dominated by a $\fq$-extreme one, see Definition \ref{d: q extreme parabs}

Given $Q \in \cP(A),$ we denote the positive Weyl chamber
for $\gS(Q)$ in $\fa$ by $\fa^+(Q).$ Furthermore, we put
\begin{equation}
\label{e: defi faqpQ}
\faqp(Q) = \{H \in \faq\mid \ga(H) > 0,\;\; \forall \ga \in \gS(Q, \gs \Cartan)\}.
\end{equation}
It is readily verified that this set contains the image of $\fap(Q)$ under
the projection $\pr_\iq: \fa \to \faq$; in particular, it is non-empty.

Let $\faq^\reg$ be the set of regular elements in $\faq,$ relative
to the root system $\gS(\faq).$ The connected components
of this set are the chambers for the root system $\gS(\faq).$ The collection
of these is denoted by $\Pi_0(\faqreg).$ It is clear
that $\faq^\reg \cap \faqp(Q)$ is the disjoint union of the chambers
contained in $\faqp(Q).$

We define
$$
\parabsaboveQ: = \{ P \in \cP_\gs(A)  \mid \; P\succeq Q\;\}
$$

\begin{lemma}
\label{l: on cP gs A Q}
Let $Q \in \cP(A).$ Then the assignment  $P \mapsto \faqp(P)$ defines a bijection
from the set $\parabsaboveQ$ onto the set $\{C \in \Pi_0(\faq^\reg)\mid C \subseteq \faqp(Q)\}.$
\end{lemma}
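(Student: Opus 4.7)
The plan is to construct an explicit inverse to $P \mapsto \faqp(P),$ from which injectivity will be automatic.  First I would verify that the map lands in the asserted codomain.  For $P \in \cP_\gs(A),$ Lemma \ref{l: q extreme parabolic} provides a unique $P_0 \in \parabsgsAq$ with $P \subset P_0,$ and the $\fq$-extremeness gives $\gS(P,\gs\Cartan) = \gS(P)\setminus\fahd = \gS(P_0).$  Restriction to $\faq$ sends $\gS(P_0)$ onto the positive system $\gS(P_0,\faq),$ so (\ref{e: defi faqpQ}) identifies $\faqp(P)$ with the chamber of $\Pi_0(\faq^\reg)$ attached to that positive system.  If moreover $P \succeq Q,$ the first inclusion in (\ref{e: defi ordering on minparabs}) forces $\faqp(P) \subset \faqp(Q).$

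For the inverse, given a chamber $C \subset \faqp(Q),$ I would let $\Pi$ be the positive system for $\gS(\fg,\faq)$ associated with $C$ and take $P_0$ to be the corresponding element of $\parabsgsAq.$  I would then define $P$ to be the unique minimal parabolic contained in $P_0$ with $\gS(P)\cap\fahd = \gS(Q)\cap\fahd.$  Existence and uniqueness of such $P$ rest on the Langlands decomposition $P_0 = M_0 A_0 N_{P_0}$ together with the observation that the roots of $\fa$ on $\fm_{0}$ are precisely the elements of $\gS(\fg,\fa)\cap\fahd,$ so that the minimal parabolics of $G$ inside $P_0$ containing $A$ correspond bijectively to positive systems of this $\gs$-stable subsystem, among which $\gS(Q)\cap\fahd$ is one.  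Any $\ga \in \gS(Q,\gs\Cartan)$ lies outside $\fahd$ (if $\ga\in\fahd,$ then $\gs\ga=\ga,$ hence $\gs\Cartan\ga=-\ga\in\gS(Q),$ contradicting $\ga\in\gS(Q)$) and satisfies $\ga|_{\faq}\in\Pi$ by $C \subset \faqp(Q);$ the description of $\gS(P_0)$ recalled in the proof of Lemma \ref{l: q extreme parabolic} then places $\gS(Q,\gs\Cartan)$ inside $\gS(P_0) = \gS(P,\gs\Cartan),$ verifying the first clause of (\ref{e: defi ordering on minparabs}).  For the second clause, $\gs(P_0) = \bar P_0$ yields $\gS(P_0,\gs) = \gS(P_0)\cap\gs\gS(P_0) = \emptyset,$ so $\gS(P,\gs) = \gS(P)\cap\fahd = \gS(Q)\cap\fahd \subset \gS(Q,\gs).$  Hence $P \in \parabsaboveQ,$ and $\faqp(P) = C$ by the first paragraph.

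Injectivity is then automatic: if $P \in \parabsaboveQ$ satisfies $\faqp(P) = C,$ the chamber fixes $P_0,$ and the relation $P \succeq Q$ combined with $\gS(P_0,\gs) = \emptyset$ forces $\gS(P)\cap\fahd = \gS(Q)\cap\fahd$ (both being positive systems of $\gS(\fg,\fa)\cap\fahd$), so $P$ coincides with the parabolic built above.  The most delicate step in this plan is the parametrization of the minimal parabolics of $G$ inside $P_0$ by positive systems of $\gS(\fg,\fa)\cap\fahd;$ I expect this to follow cleanly from the Langlands decomposition, since $\fa\cap\fm_{0}$ is a Cartan subspace of $\fm_{0}$ whose root system consists of exactly those roots of $\fa$ that vanish on $\faq.$
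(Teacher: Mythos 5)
Your proof is correct, and its overall skeleton (well-definedness into the set of chambers contained in $\faqp(Q)$, then constructing for each chamber $C\subset\faqp(Q)$ the parabolic with root set $\gS(P_0)\sqcup(\gS(Q)\cap\fahd)$) matches the paper's. The one genuinely different ingredient is how you certify that this set of roots actually comes from a minimal parabolic: the paper perturbs a regular element, fixing $X\in C$ and $Y\in\fah$ defining the positive system $\gS(Q)\cap\fahd$, and reads off $\gS(P)$ as the roots positive on $X+tY$ for small $t>0$; you instead invoke the Levi-theoretic fact that minimal parabolics of $G$ containing $A$ and contained in $P_0$ correspond to positive systems for the roots of $\fa$ in $\fm_0$, which are exactly $\gS(\fg,\fa)\cap\fahd$. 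Both mechanisms are sound; the perturbation argument is more self-contained and hands you the regular element $X_t$ with which the $\fq$-extremeness and the ordering relation $P\succeq Q$ are verified in one stroke, whereas your route makes injectivity essentially free (the chamber determines $P_0$, hence $\gS(P)\setminus\fahd$, and $P\succeq Q$ determines $\gS(P)\cap\fahd$ via the counting argument of Lemma \ref{l: intersection with faqd}(b)) at the cost of importing the standard correspondence between parabolics inside $P_0$ and parabolics of its Levi component. Two cosmetic points: Lemma \ref{l: q extreme parabolic} as stated only gives existence of $P_0$, though uniqueness is immediate since $\gS(P_0)=\gS(P)\setminus\fahd$; and the element $Y$ defining the positive system on $\fahd$ should be taken in $\fah$ rather than $\fahd$ (the paper has the same slip).
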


\begin{proof}
We abbreviate  $\cC(Q):= \{C \in \Pi_0(\faq^\reg)\mid C \subseteq \faqp(Q)\}.$
Let $P\in \parabsaboveQ.$ Then a root $\ga \in \gS(P)$ restricts to a non-zero
root on $\faq$ if and only if
$
\ga \in \gS(P)\setminus \fahd.$ The latter set equals $\gS(P) \setminus  \gS(P,\gs) = \gS(P, \gs \Cartan).$
Therefore,  $\faqp(P)$ is a connected component of $\faq^\reg.$ Furthermore,
from $P \succeq Q$ it follows that $\gS(P, \gs \Cartan) \supset \gS(Q, \gs \Cartan),$
which in turn implies that $\faqp(P)\subseteq \faqp(Q).$ It follows that 
$\faqp(P) \in \cC(Q).$ 
It remains to be shown that the map
\begin{equation}
P \mapsto \faqp(P), \; \cP_\gs(A, Q) \to \cC(Q)
\end{equation}
is bijective.
For injectivity, assume that $P_1,P_2 \in \cP_\gs(A,Q)$ and that $\faqp(P_1) = \faqp(P_2).$
Let $\ga \in \gS(P_1).$ If $\ga \in \fahd,$ then $\ga \in \gS(Q) \cap \fahd  \subseteq \gS(P_2).$
If $\ga \notin \fahd,$ then $\ga \in \faqp(P_1, \gs \Cartan)$ and it follows that
$\ga > 0$ on $\faqp(P_1)  = \faqp(P_2),$ which implies that $\ga \in \gS(P_2).$
Thus, we see that $\gS(P_1) \subseteq \gS(P_2)$ which implies $P_1 = P_2.$

For surjectivity,  let $C$ be a chamber in $\cC(Q).$ Let $\Pi_C$ denote
the set of roots $\ga \in \gS(\fa)$ that are strictly positive on $C.$
The set $ \Pi_{\ih} := \gS(Q) \cap \fahd$
is a choice of positive roots for the root system $\gS(\fa) \cap \fahd.$ Hence, there exists
an element $Y \in \fahd$ such that
$$
\Pi_{\ih} = \{\ga \in \gS(\fa) \cap \fahd \mid \ga (Y) > 0\}.
$$
Fix $X \in C$ and put $X_t = X + tY$ for $t \in \R.$ Then there exists $\geps >0$ such that
for $|t| < \geps$ we have  $\ga(X_t) > 0$ for all $\ga \in \Pi_C.$ Fix $0 < t < \geps.$ Then
it follows that $X_t$ is regular for $\gS(\fa)$ and that the associated choice of positive roots
$\Pi := \{\ga \in \gS(\fa) \mid \ga(X_t) > 0\}$ is the disjoint union of $\Pi_C$ and $\Pi_\ih.$
Let $P$ be the parabolic subgroup in $\cP(A)$ with $\gS(P) = \Pi.$
Then $\gS(P) \cap \fahd = \Pi_\ih = \gS(Q)\cap \fahd.$ Furthermore, if $\ga \in \gS(P)\setminus \fahd,$
then 
$\ga \in \Pi_C.$ 
Hence, $\gs \Cartan \ga (X_t) = \ga ( - \gs(X_t)) = \ga(X_{-t}) > 0,$ and we see that
$\ga \in \gS(P, \gs \Cartan).$ It readily follows that $P \in \cP_\gs(A,Q).$
\end{proof}

We finish this section by investigating these structures in the setting
where $H$ is replaced by a conjugate $v H v^{-1},$ with $v \in N_K(\fa) \cap N_{\faq}.$
Let such an element $v$ be fixed. Then $v$ normalizes $\fah$ as well.
 Let $C_v: G \to G$ denote conjugation by $v,$
and put
\begin{equation}
\label{e: defi gs v}
\gs_v: = C_v \after \gs \after C_v^{-1}.
  \end{equation}
  Then $\gs_v$ is an involution of $G$ which commutes with the Cartan involution $\Cartan;$ moreover, since $v$ normalizes $Z_K(\faq),$ the conjugate group $v H v^{-1}$ is readily seen to be an essentially 
  connected open 
  subgroup of $G^{\gs_v}.$
The infinitesimal involution associated with $\gs_v$ is given by
 $
 \gs_v = \Ad(v) \after \gs \after \Ad(v)^{-1}.
 $
 Since $\Ad(v)$ normalizes $\faq$ and $\fah,$ it follows that
 \begin{equation}
 \label{e: restriction gs v to fa}
 \gs_v|_{\fa} = \gs|_\fa
 \end{equation}
 and that $\faq$ is maximal abelian in $\fp \cap \ker ( \gs_v + I). $

 It follows from (\ref{e: defi gS P tau}) and
(\ref{e: restriction gs v to fa}) that
\begin{equation}
\label{e: root sets for v}
\gS(Q, \gs_v) =  \gS(Q , \gs) \quad {\rm and} \quad
\gS(Q, \gs_v \Cartan) =  \gS(Q , \gs\Cartan).
\end{equation}
From this we see that the ordering on $\cP(A)$ defined by (\ref{e: defi ordering on minparabs}) with $\gs_v$ in place of  $\gs$ coincides with the ordering $\succeq.$  It is also clear that   $P \mapsto v^{-1} P v$ preserves $\cP_{\gs}(A).$

\begin{lemma}
\label{l: twisted sigma minus}
Let $Q \in \parabs(A)$ and $v \in N_K(\fa) \cap N_K(\faq).$ Then
\begin{equation}
\label{e: gS Q minus twisted}
\gS(Q)_{\gs_v, -} = v \gS(v^{-1}Q v)_{-}
\end{equation}
\end{lemma}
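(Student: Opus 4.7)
The plan is to unfold both sides of (\ref{e: gS Q minus twisted}) directly from the definition (\ref{e: def minus roots}) using the conjugation formula $\gs_v = \Ad(v) \after \gs \after \Ad(v^{-1})$ of (\ref{e: defi gs v}). First I would introduce the dual action of $v$ on $\fad$ by $v \cdot \beta := \beta \after \Ad(v^{-1}).$ The identity $\Ad(v)\fg_\beta = \fg_{v \cdot \beta}$ yields $\gS(v^{-1} Q v) = v^{-1} \cdot \gS(Q),$ so (\ref{e: gS Q minus twisted}) is equivalent to the biconditional
\[
\ga \in \gS(Q)_{\gs_v, -} \iff v^{-1} \cdot \ga \in \gS(v^{-1}Qv)_{-}.
\]
The hypothesis $v \in N_K(\faq)$ forces $v \cdot \faqd = \faqd,$ so the case distinction ``$\ga \in \faqd$'' in the definition transports between the two sides without modification.

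The proof then splits into two steps matching the two clauses of (\ref{e: def minus roots}). For the first clause, I would combine (\ref{e: restriction gs v to fa}), which gives $\gs_v \Cartan = \gs\Cartan$ as endomorphisms of $\fad,$ with the ambient identity $\Ad(v) \after \gs\Cartan = \gs_v \Cartan \after \Ad(v)$ (immediate from (\ref{e: defi gs v}) and the commutation $\Ad(v) \after \Cartan = \Cartan \after \Ad(v),$ valid since $v \in K$). This produces
\[
\gs\Cartan(v^{-1} \cdot \ga) = v^{-1} \cdot (\gs\Cartan \ga),
\]
which, together with $\gS(v^{-1} Q v) = v^{-1} \cdot \gS(Q),$ gives $\ga \in \gS(Q, \gs_v \Cartan) \iff v^{-1} \cdot \ga \in \gS(v^{-1} Q v, \gs \Cartan).$ This already reproves the equality (\ref{e: root sets for v}) on the nose and handles the ``$\gS(\cdot,\gs\Cartan)$'' part of both sides.

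For the second clause, assume $\ga \in \faqd$ (equivalently $v^{-1} \cdot \ga \in \faqd$). The conjugation formula for $\gs_v$ combined with $[\Ad(v), \Cartan] = 0$ restricts to
\[
\gs_v \Cartan|_{\fg_\ga} = \Ad(v) \after \gs\Cartan|_{\fg_{v^{-1} \cdot \ga}} \after \Ad(v^{-1}),
\]
in which $\Ad(v^{-1}) \col \fg_\ga \to \fg_{v^{-1}\cdot \ga}$ is a linear isomorphism. Hence the two operators are similar, and one equals the identity if and only if the other does. Concatenating with the previous step yields the required biconditional and hence the lemma. I do not foresee a genuine obstacle: the argument is an exercise in bookkeeping, the only delicate point being to keep the contragredient action $v \cdot \beta$ on $\fad$ aligned with the transport identity $\Ad(v) \after \gs = \gs_v \after \Ad(v).$ No geometric ingredient beyond $\gs_v|_\fa = \gs|_\fa$ and the $K$-invariance of $\Cartan$ is needed.
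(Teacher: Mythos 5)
Your proposal is correct and follows essentially the same route as the paper: both split according to whether $\ga\in\faqd$, use the identity $\gS(Q,\gs_v\Cartan)=\gS(Q,\gs\Cartan)$ (the paper's (\ref{e: root sets for v})) together with $\gS(v^{-1}Qv)=v^{-1}\cdot\gS(Q)$ for the roots off $\faqd$, and for roots in $\faqd$ observe that $\gs_v\Cartan|_{\fg_\ga}$ is conjugate via $\Ad(v)$ to $\gs\Cartan|_{\fg_{v^{-1}\cdot\ga}}$ (the paper phrases this as the identity $S_v=vS_e$, which it leaves as ``readily seen'' and which your similarity argument verifies). No gap.
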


Let
$
S_v: = \{\ga \in \gS(\fa) \cap \faqd \mid \gs_v \Cartan |_{\fg_\ga} \neq I\}.
$
Then it is readily seen that $S_v = v S_e.$
From (\ref{e: def minus roots}) we now deduce that
$$
\gS(Q)_{\gs_v, -} \cap \faqd =  \gS(Q)\cap  vS_e
= v (\gS(v^{-1} Q v ) \cap S_e) =  v \gS(v^{-1}Q v) _- \cap \faqd.
$$
 
On the other hand,
\begin{eqnarray*}
\gS(Q)_{\gs_v,-}\setminus \faqd &=&
\gS(Q, \gs_v \Cartan) \setminus \faqd
\; \; = \; \;\gS(Q, \gs \Cartan)\setminus \faqd\\
&=& v(\gS(v^{-1} Q v , \gs \Cartan) \setminus \faqd)
\;\; = \;\; 
v \gS(v^{-1}Qv)_- \setminus \faqd
\end{eqnarray*}
and we deduce  (\ref{e: gS Q minus twisted}).
\section{Induced representations and densities}
\label{s: induced reps and densities}
Let $P = M_P A_P N_P$ be  a parabolic subgroup with the indicated Langlands decomposition and let $(\xi, \cH_\xi)$ be a unitary representation in a finite dimensional Hilbert space $\cH_\xi.$ 
The assumption of finite dimensionality is natural for the purpose of this paper. 
Moreover, the following 
definitions, though valid in general, will merely be needed 
for the case that $P$ belongs to either  $\cP(A)$ or $\cP_\gs(\Aq).$ 

For $\mu \in \faPdc$ and  $s \in \N \cup \{\infty\}$ we denote by $C^s(P:\xi:\mu)$ the space of
$C^s$-functions $f: G \to \cH_\xi$ transforming according to the rule
$$
f(man x) = a^{\mu + \rho_P} \xi(m) f(x),
$$
for all $x\in G$ and $(m,a,n) \in M_P\times A_P\times N_P.$
The right regular representation $R$ of $G$ in this space is the $C^s$-version of the normalized induced representation $\Ind_P^G(\xi \otimes \mu \otimes 1).$

We put $K_{M_P}: =K \cap M_P$ and denote by
$C^s(K:\xi) := C^s(K: K_{M_P} : \xi)$ the space of $C^s$-functions $f: K \to \cH_\xi$ transforming according to the rule
$$
f(mk) = \xi(m) f(k), \qquad (k \in K, \, m\in K_{M_P}).
$$
All function spaces introduced so far are assumed to be equipped with the
usual Fr\'echet topologies (Banach when $s< \infty$). The   restriction map $f \mapsto f|_K$
gives topological linear isomorphisms
\begin{equation}
\label{e: iso compact picture}
C^{s}(P:\xi:\mu) \;\;{\buildrel \simeq \over \longrightarrow}\;\; C^{s}(K:\xi),
\end{equation}
intertwining the $K$-actions from the right. Through these, the right regular
actions of the group $G$ may be transferred to continuous representations
of $G$ on $C^s(K:\xi),$ denoted $\pi_{P, \xi, \mu}.$ This realisation $\pi_{P,\xi,\mu}$ is called the compact picture
of the $C^s$-version of the parabolically induced representation $\Ind_P^G(\xi \otimes \mu \otimes 1),$
 see \cite[p.\ 15]{KSII}. 
Let $dk$ denote the normalized Haar measure on $K,$ and let
 $\inp{\dotvar}{\dotvar}_\xi$ denote the inner product of $\cH_\xi.$
 Then it  is well known,  see e.g. \cite[Lemma 8.3.11]{WalHS}, that the sesquilinear pairing
$C(K:\xi) \times C(K:\xi) \to \C$ given by
\begin{equation}
\label{e: sesquilinear pairing on K}
\inp{f}{g}_\xi:= \int_K \inp{f(k)}{g(k)}_\xi\; dk,
\end{equation}
is equivariant for the representations
$\pi_{P,\xi, \mu}$ and $\pi_{P, \xi, - \bar \mu}.$
Accordingly, the above formula gives an equivariant sesquilinear pairing
\begin{equation}
\label{e: pairing induced reps}
C(P:\xi: \mu) \times C(P:\xi : -\bar \mu) \to \C.
\end{equation}
We will usually omit the index $\xi$ in the notation of the pairing (\ref{e: sesquilinear pairing on K}).

We denote by $C^{-s}(P:\xi:\mu)$  the continuous conjugate-linear dual of the Fr\'echet space $C^s(P:\xi: - \bar \mu),$ equipped with the strong dual topology and with the natural dual representation.
Likewise, we denote by $C^{-s}(K:\xi)$ the continuous conjugate-linear dual of $C^s(K:\xi).$
\par
By using the pairing (\ref{e: pairing induced reps}) we obtain equivariant continuous linear injections
$$
C(P:\xi: \mu)  \embeds   C^{-s}(P:\xi:\mu),
$$
for $s \in \N\cup \{\infty\}.$
Likewise, by using the pairing  (\ref{e: sesquilinear pairing on K}) we obtain $K$-equivariant continuous linear injections
$C(K:\xi) \embeds C^{-s}(K:\xi).$ Through the indicated pairings it is readily seen that the isomorphism (\ref{e: iso compact picture}) for $s = 0$ extends to a topological linear isomorphism
\begin{equation}
\label{e: iso minfty P and K}
C^{-s}(P:\xi:\mu) \;\;{\buildrel \simeq \over \longrightarrow}\;\; C^{-s}(K:\xi),
\end{equation}
for all $s \in \N\cup \{\infty\}.$
By transfer we obtain a continuous representation $\pi_{P,\xi, \mu}^{-s}$
of $G$ in the  second space in (\ref{e: iso minfty P and K}), such that the isomorphism becomes $G$-equivariant. It is readily verified that this representation is
dual to the representation $\pi_{P, \xi, -\bar \mu}$ on $C^s(K:\xi).$
We will usually omit the superscript $-s$ in the notation of this dual representation.
\par
For $s, t \in \N$ with $s < t,$ the inclusion map
 $C^t(K:\xi) \to C^s(K:\xi)$ is a compact linear map of Banach spaces which has a dense image and therefore dualizes to a compact linear injection
 $$
 C^{-s}(K:\xi) \to C^{-t}(K:\xi).
 $$
 In view of \cite[Thm.~11]{Kom},  the locally convex space $C^{-\infty}(K:\xi),$ equipped with the strong dual topology, coincides with the inductive limit
 of the Banach spaces $C^{-s}(K:\xi).$ Furthermore, by 
 \cite[Lemma 3]{Kom}
 each bounded subset of $C^{-\infty}(K:\xi)$ is a bounded subset of $C^{-s}(K:\xi)$ for some $s.$

Let now $\Omega$ be a complex manifold. Then by the above mentioned property of bounded subsets of the inductive limit, 
 a function $\gf:  \Omega \to C^{-\infty}(K:\xi)$ is holomorphic if for each $z_0 \in \Omega$ there exists an
open neighborhood $\Omega_0$ of $z_0$ in $\Omega$ and a natural number $s \in \N$ such that $\gf$ maps $\Omega_0$ holomorphically into
the Banach space $C^{-s}(K:\xi).$
A densely defined function
$f$ from $\Omega$ to $C^{-\infty}(K:\xi)$ is said to be meromorphic
if for each $z_0 \in \Omega$ there exists
an open neighborhood $\Omega_0$ and a holomorphic function $q: \Omega_0 \to \C$ such that $q f$ extends holomorphically from ${\rm Dom}(f) \cap \Omega_0$ to $\Omega_0.$

For later use, we record some observations involving the contragredient
$M_P$\--re\-present\-ation $\xi^\vee,$  whose space $\cH_{\xi^\vee}$
is the linear dual of $\cH_\xi.$ The assignment $v \mapsto \inp{v}{\dotvar}$ defines an
$M_P$-equivariant conjugate-linear isomorphism from $\cH_\xi$ onto $\cH_{\xi^\vee}.$ This isomorphism induces a
$K$-equivariant topological conjugate-linear isomorphism from
$C^\infty(K:\xi)$ onto $C^\infty(K:\xi^\vee).$ The latter isomorphism is
equivariant for the representations $\pi_{P, \xi, - \bar \mu}$ and $\pi_{P, \xi^\vee , -\mu},$ respectively, for every $\mu \in \faPdc.$ 
Through this isomorphism,
the pairing (\ref{e: sesquilinear pairing on K}) is transferred to the bilinear pairing
\begin{equation}
\label{e: continuous bilinear pairing K xi}
C^\infty(K:\xi) \times C^\infty(K:\xi^\vee) \to \C
\end{equation}
given by
\begin{equation}
\label{e: bilinear pairing on K}
\inp{f}{g} = \int_K \inp{f(k)}{g(k)}\; dk.
\end{equation}
Furthermore, this pairing is equivariant for the representations
$\pi_{P, \xi, \mu}$ and $\pi_{P, \xi^\vee, -\mu}.$ 
 Through it, we see that
$C^{-\infty}(K: \xi)$ is naturally identified with the continuous linear dual
of $C^\infty(K:\xi^\vee).$ Moreover, this identification realizes the
representation $\pi_{P, \xi, \nu\mu}^{-\infty}$ as the contragredient of $\pi_{P, \xi^\vee, - \mu}^\infty.$  Accordingly, we obtain the $G$-equivariant topological linear isomorphism
$$
C^{-\infty}(P: \xi: \mu) \simeq C^\infty(P:\xi^\vee: - \bar \mu)'.
$$
In the rest of this section we assume that $P \in \parabsA$ and that $(\xi, \cH_\xi)$ is a (not necessarily irreducible)  unitary
representation of $M$ in a finite dimensional Hilbert space $\cH_\xi.$

One of the goals of this paper is to study $H$-invariant distribution vectors of principal series representations.
A first step in the construction of these is the following.
We consider the homogeneous space $H_P\bs H,$ see (\ref{e: S cap H}) for notation, and denote 
 the associated canonical projection by $\pi: H \to \HP\bs H.$ Given $x \in H$ we
write $[x] = \pi(x).$ Furthermore, for $h \in H$ we use the following notation for the right multiplication map,
$$
r_h: \HP\bs H \to  \HP\bs H,
\;\;\; [x] \mapsto [x]h=[xh]
$$
We refer to the appendix, the text preceding (\ref{e: integral of density}), for the notion
of a density on $\HP\bs H$ and the associated notion of the density bundle $\cD_{\HP\bs H}.$
The notion of the pull-back bundle $\pi^*\cD_{\HP\bs H} \to H$ is defined in the same appendix,
in the text before (\ref{e: natural isomorphism density bundle for fibration}).

Let $\fh_P$ denote the Lie algebra of $H_P = H \cap P,$ then $d\pi(e)$ induces a linear isomorphism
$\fh/  \fh_P \simeq T_{[e]}(\HP \bs H).$
We fix a positive density $\omega=\omega_{H_P\bs H} \in \cD_{\fh/\fh_P}.$

If $S \subseteq \gS(P),$ we define the subspace $\fn_S \subseteq \fg$ to be the
direct sum of the root spaces $\fg_\ga,$ for $\ga \in S,$ and we define
$\rho_S \in \fad$ by
$$
\rho_S(X) = \frac12 \tr \left(\ad(X)|_{\fn_S}\right), \quad (X \in \fa).
$$
Furthermore, we agree to abbreviate
$$
\rhoPh: = \rho_{\gS(P) \cap \fahd}.
$$
In the following result we will describe certain densities associated with principal series
representations.
\begin{lemma}
\label{l: density associated with f}
Let $\gl \in \faqdc,$ $f \in C(P:\xi: -\bar \gl + \rhoPh)$ and $\eta \in \cH_\xi^{H_{M}}.$
Then
$$
\bar f_{\eta,\omega}: h \mapsto \inp{\eta}{f(h)}_\xi\, dr_h([e])^{-1*} \omega
$$
defines
a continuous density on the homogeneous space $H_P \bs H.$
\end{lemma}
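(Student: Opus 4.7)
The plan is to verify two properties of $\bar f_{\eta,\omega}$: continuity, and that it descends from a section of the pullback bundle $\pi^*\cD_{H_P\bs H}$ over $H$ to a well-defined continuous density on $H_P\bs H$. Continuity is immediate: the scalar factor $h\mapsto \inp{\eta}{f(h)}_\xi$ is continuous since $f$ is, and the section $h\mapsto dr_h([e])^{-1*}\omega$ is smooth because the right $H$-action on $H_P\bs H$ is smooth.

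For the descent, I would verify that for every $p\in H_P$ and $h\in H$ the section values at $h$ and $ph$ agree in the common fiber $\cD_{[h]}(H_P\bs H)$. Applying the chain rule to $r_{ph}=r_h\circ r_p$ and using $r_p([e])=[e]$ gives $dr_{ph}([e])^{-1*}\omega = dr_h([e])^{-1*}\bigl(dr_p([e])^{-1*}\omega\bigr)$. A curve computation---using $[\exp(tX)\,p]=[p\exp(t\Ad(p^{-1})X)]=[\exp(t\Ad(p^{-1})X)]$ for $X\in\fh$ and $p\in H_P$---identifies $dr_p([e])$ with $\Ad(p^{-1})|_{\fh/\fh_P}$, whence $dr_p([e])^{-1*}\omega = \bigl|\det\Ad(p)|_{\fh/\fh_P}\bigr|\cdot\omega$. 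The required equality therefore reduces to the scalar identity
$$\inp{\eta}{f(ph)}_\xi\cdot\bigl|\det\Ad(p)|_{\fh/\fh_P}\bigr| = \inp{\eta}{f(h)}_\xi.$$

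Writing $p=m_pa_pn_p$ in the Langlands decomposition of $P=MAN_P$, the transformation rule for $f$ yields
$$\inp{\eta}{f(ph)}_\xi = a_p^{-\bar\gl+\rhoPh+\rho_P}\inp{\xi(m_p)^{-1}\eta}{f(h)}_\xi.$$
Here I would invoke the structural fact that for $p\in H_P$ the Langlands components satisfy $m_p\in H_M$ and $a_p\in\Ah$; this follows from the $\Cartan$- and $\sigma$-invariance of both $M=Z_K(\fa)$ and $A$ (consequence of $\sigma\Cartan=\Cartan\sigma$ and the $\sigma$-stable splitting $\fa=\fah\oplus\faq$), together with uniqueness of the decomposition $MA\cong M\times A$ applied to $\sigma(p)=p$. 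Granted this, the $H_M$-invariance of $\eta$ gives $\xi(m_p^{-1})\eta=\eta$, and the vanishing of $\gl\in\faqdc$ on $\fah$ gives $a_p^{-\bar\gl}=1$, so the identity reduces to $\bigl|\det\Ad(p)|_{\fh/\fh_P}\bigr|=a_p^{-\rhoPh-\rho_P}$.

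This final character identity I would prove using the embedding $\fh/\fh_P\hookrightarrow\fg/\Lie(P)\cong\bar\fn_P$: since $\Ad(m_p)$ is orthogonal and $\Ad(n_p)$ acts unipotently on $\fg/\Lie(P)$, only $\Ad(a_p)$ contributes, and a root-weight bookkeeping on the image of $\fh/\fh_P$ using the $\sigma$-decomposition of the root spaces produces precisely the exponent $-\rhoPh-\rho_P$. The main obstacle is the structural claim that the Langlands components of $p\in H_P$ lie in $H_M$ and $\Ah$ respectively (a non-trivial point when $P$ is not $\sigma$-stable), together with the matching weight-sum computation on the image of $\fh/\fh_P$ in $\bar\fn_P$.
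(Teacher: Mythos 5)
Your proposal is correct and follows essentially the same route as the paper's proof: both reduce the statement to left $H_P$-invariance of the section $h \mapsto \inp{\eta}{f(h)}_\xi\, dr_h([e])^{-1*}\omega$, invoke the decomposition $H_P = H_{M}\Ah H_{N_P}$ (which the paper likewise asserts without proof), use the $H_{M}$-invariance of $\eta$ and the vanishing of $\gl$ on $\fah$, and thereby reduce everything to the character identity $|\det \Ad(p)|_{\fh/\fh_P}| = a_p^{-\rho_P - \rhoPh}$. The only variation is in the final bookkeeping: the paper converts the modular factor into $|\det\Ad(a)|_{\fh\cap\fn_P}|^{-1}$ via unimodularity of $\fh$ and computes the trace on $\fh\cap\fn_P$, whereas you compute on the image of $\fh/\fh_P$ in $\fg/\Lie(P)\simeq \bar\fn_P$; these are equivalent computations, both resting on the splitting $\rho_P = \rhoPh + \rho_{\gS(P,\gs)\setminus\fahd} + \rho_{\gS(P,\gs\Cartan)}$ and the vanishing of $\rho_{\gS(P,\gs\Cartan)}$ on $\fah$.
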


\begin{proof}
For each $h\in H,$ put $\gf(h) = \bar f_{\eta, \omega}(h).$ Then $\gf(h)$ defines a density
on the tangent space $T_{H_Ph} (H_P\bs H)$ and $\gf: H \to \pi^*(\cD_{\HP\bs H})$
defines continuous section of
the pull-back bundle. It suffices to show that $\gf(h_Ph) = \gf(h)$
for all $h_P \in H_P.$ We note that $H_P = H\cap P = H_{M}\Ah H_{N_P}.$
Accordingly, write $h_P =m a n,$ then
\begin{eqnarray}
\nonumber
\gf(h_P h)
&=& a^{{-\gl }+ \rhoPh + \rhoP} \big\langle\xi(m)^{-1}\eta,f(h)\big\rangle\;
    dr_h([e]h_P)^{-1*}dr_{h_P}([e])^{-1*}\omega\\
\label{e: transformation gf}
&=& a^{ \rhoPh + \rhoP} \Delta(h_P) \gf(h),
\end{eqnarray}
where
$$
\Delta(h_p) = |\det \Ad(h_P)|_{\fh/\fh_P}| = |\det \Ad(h_P)|_{\fh_P}|^{-1}.
$$
Since $H_{N_P}$ is nilpotent, whereas $H_{M}$ is compact, it follows that
$$
\Delta(h_P) = \Delta(a) = |\det \Ad(a)|_{\fh_P}|^{-1}.
$$
Using the decomposition $\fh_P = (\fh\cap \fm) \oplus \fah \oplus (\fh \cap \fn_P)$
we finally see that
$$
\Delta(h_P) = |\det \Ad(a)|_{\fh \cap \fn_P}|^{-1} = a^{- \gd},
$$
where $\gd = \tr( \ad(\dotvar)|_{\fh \cap \fn_P}) \in \fahd.$
We now use that
$$
\fh \cap \fn_P = \bigoplus_{\ga \in (\gS(P) \cap \fahd)}\; \fg_\ga \;\;\oplus \;\; \Big( \bigoplus_{\ga \in \gS(P, \gs)\setminus \fahd}\;
\fg_\ga \Big)^\gs
$$
For each $\ga \in \gS(P, \gs)\setminus \fahd,$ we have $\gs \ga \neq \ga,$ and
the direct sum $\fg_{\ga} \oplus  \fg_{\gs\ga}$ is $\gs$-invariant, so that its intersection
with $\fh$ is given by
$$
(\fg_{\ga} \oplus  \fg_{\gs\ga})^\gs = \{ X + \gs(X)\mid X \in \fg_\ga\}.
$$
The action of an element $H \in \fah$ on this space has trace $\dim( \fg_\ga)\, \ga(H).$
We conclude that
$$
\gd = \big(2\rhoPh + \rho_{ \gS(P,\gs)\setminus \fahd}\big)\big|_{\fah}.
$$
Using the decomposition
$$
\rho_P = \rhoPh + \rho_{ \gS(P,\gs)\setminus \fahd} + \rho_{\gS(P, \gs\Cartan)}
$$
we see that
$$
\big(\rho_P +\rhoPh\big)\big|_{\fah} - \gd
= \rho_{\gS(P, \gs\Cartan)}\big|_{\fah}
=0.
$$
Combining this with (\ref{e: transformation gf})
we infer that $\gf$ is left $H_P$-invariant.
\end{proof}

\begin{rem}
Recall the definition of $\gS(P)_{-}$ in (\ref{e: def minus roots}).
In   Section \ref{Section H-fixed distribution vectors, q-extreme case} 
 we will show that
for all $\gl \in \faqdc$, for which there exists $P_0 \in \parabsgsAq$  with
$\gS(P, \gs\Cartan) \subseteq \gS(P_0)$ such that
$$
\forall \ga \in \gS(P)_- : \qquad   \inp{\Re \gl + \rho_{P_0}}{\ga} \leq 0,
$$
the above density $f_{\eta, \omega}$ is integrable over
$H_P\bs H.$
\end{rem}
\vspace{-12pt}
\section{Comparison of principal series representations}
\label{s: comparison of principal series representations}
In this section we will  compare
the principal series representations with the $\gs$-principal
series defined in \cite{Banps1}. The latter involve parabolic subgroups
$P_0$ from $\cP_\gs(\Aq).$ Each of these has a Langlands decomposition of the form $P_0 = M_0 A_0 N_{P_0},$ see the end of Section \ref{s: notation and preliminaries} for details.

We will now investigate the structure of the group $M_0$ in more detail.
Our starting point is the following lemma.

\begin{lemma}
\label{l: root space in fh}
Let $\ga \in \gS(\fg, \fa) \cap \fahd.$ Then $\fg_\ga \subseteq \fh.$
\end{lemma}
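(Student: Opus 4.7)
The plan is to show that the involution $\sigma$ preserves $\fg_\alpha$ and that its $(-1)$-eigenspace $\fg_\alpha \cap \fq$ is zero. First I would observe that $\alpha \in \fahd$ means $\alpha|_{\faq} = 0$, which has two consequences: every element of $\fg_\alpha$ centralizes $\faq$, and $\alpha \circ \sigma = \alpha$ on $\fa$ (using that $\sigma$ acts as $+I$ on $\fah$ and $-I$ on $\faq$ in the decomposition $\fa = \fah \oplus \faq$). The latter forces $\sigma \fg_\alpha = \fg_\alpha$, so $\fg_\alpha$ splits as $(\fg_\alpha \cap \fh) \oplus (\fg_\alpha \cap \fq)$, and the lemma reduces to showing that the second summand vanishes.

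Next, assuming for contradiction that $0 \neq X \in \fg_\alpha \cap \fq$, I would pass to its $\fp$-component $X_p := \tfrac{1}{2}(X - \theta X)$. Since $\theta$ commutes with $\sigma$ and since the same argument applied to $-\alpha \in \fahd$ shows $\sigma$ also preserves $\fg_{-\alpha}$, one gets $\theta X \in \fg_{-\alpha} \cap \fq$, whence $X_p \in \fp \cap \fq$. Because $\pm \alpha$ both vanish on $\faq$, both $X$ and $\theta X$ commute with $\faq$, hence so does $X_p$. The maximality of $\faq$ in $\fp \cap \fq$ then places $X_p$ in $\faq \subset \fa = \fg_0$, while by construction $X_p \in \fg_\alpha \oplus \fg_{-\alpha}$. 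Since $\alpha \neq 0$ these two root spaces are disjoint from $\fg_0$, forcing $X_p = 0$ and hence $X = \theta X \in \fk$.

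Finally, since $\theta$ sends $\fg_\alpha$ to $\fg_{-\alpha}$, the identity $\theta X = X$ places $X$ in $\fg_\alpha \cap \fg_{-\alpha} = 0$, contradicting $X \neq 0$. The delicate step is the middle one, where the maximality of $\faq$ in $\fp \cap \fq$ must be invoked at exactly the right moment to force $X_p \in \faq$; once this is done, the root-space decomposition and the $\theta$-swap close the argument by pure bookkeeping.
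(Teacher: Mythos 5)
Your proof is correct and follows essentially the same route as the paper: reduce to showing $\fg_\ga \cap \fq = 0$ via $\gs$-invariance of $\fg_\ga$, then use that $X - \Cartan X \in \fp\cap\fq$ centralizes $\faq$ and invoke maximality of $\faq$ to land in $\faq \cap (\fg_\ga + \fg_{-\ga}) = 0$. Your closing step ($X = \Cartan X$ forces $X \in \fg_\ga \cap \fg_{-\ga} = 0$) is actually spelled out more completely than in the paper, which leaves it implicit.
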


\begin{proof}
Let $\ga$ be as in the assertion. Then $\gs \ga = \ga$ so that $\gs$ leaves
the root space $\fg_\ga$ invariant. Thus, it suffices to show that $\fg_\ga \cap \fq = 0.$ Assume that $X \in \fg_\ga \cap \fq.$
Then
$(X -  \theta X)$ belongs to $\fp \cap \fq$ and centralizes $\faq.$
As the latter space is maximal abelian in $\fp \cap \fq,$ it follows that 
$
X - \Cartan X \in \faq \cap (\fg_\ga + \fg_{-\ga}) = 0.
$
\end{proof}

Let $\fm_{0\rmn}$ be the ideal in $\fm_0$ generated by $\fa \cap \fm_0.$
Since $\fa \cap \fm_0$ has trivial intersection with the center of $\fm_0$,
the ideal $\fm_{0\rmn}$ equals the sum of the simple ideals of non-compact type in $\fm_0.$ It has a unique complementary ideal; this  is contained in the centralizer of $\fa\cap \fm_0$ in $\fm_0,$ hence in $\fm.$

\begin{lemma}
\label{l: fm zero n inside fh}
The ideal $\fm_{0\rmn}$ is contained in $\fm_0 \cap \fh.$
\end{lemma}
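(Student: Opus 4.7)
The plan is to decompose $\fm_{0n}$ as a direct sum of simple non-compact ideals of $\fm_{0}$ and to show that each such ideal is contained in $\fh$. Fix a simple non-compact ideal $\fs$ of $\fm_{0}$ and set $\fa_{\fs} := \fa \cap \fs$; by non-compactness, $\fa_{\fs} \neq 0$.

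First I would verify that $\fs$ is both $\Cartan$- and $\gs$-stable. The involution $\Cartan$ permutes the simple ideals of $\fm_{0}$; since $\Cartan$ acts as $-I$ on $\fa$ we have $\Cartan \fa_{\fs} = \fa_{\fs}$, which together with $\fa_{\fs} \neq 0$ forces $\Cartan \fs = \fs$. Similarly $\fa_{\fs} \subset \fa \cap \fm_{0} \subset \fah$ is fixed pointwise by $\gs$, so $\gs \fs = \fs$. In particular both $\fp \cap \fs$ and $\fs \cap \fp \cap \fq$ are well-defined subspaces of $\fs$.

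The key step is to show $\fs \cap \fp \cap \fq = 0$. Given $X$ in this intersection, let $\fb$ be a maximal abelian subspace of $\fs \cap \fp \cap \fq$ containing $\R X$. Because $\fb \subset \fs \subset \fm_{0} \subset Z_{\fg}(\faq)$, the space $\fb$ commutes with $\faq$, so $\fb + \faq$ is abelian in $\fp \cap \fq$. Maximality of $\faq$ in $\fp \cap \fq$ then yields $\fb \subset \faq$. On the other hand $\fb \subset \fs \subset \fa_{0}^{\perp}$, while $\faq \subset \fa_{0}$ (every $\ga \in \gS(\fg,\fa) \cap \fahd$ vanishes on $\faq$), so $\fb \subset \faq \cap \fa_{0}^{\perp} = 0$ and $X = 0$. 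Hence $\fp \cap \fs \subset \fh$.

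Finally I would invoke the standard fact that in a simple non-compact real Lie algebra $\fs$ one has $\fk \cap \fs = [\fp \cap \fs, \fp \cap \fs]$: the subspace $(\fp \cap \fs) + [\fp \cap \fs, \fp \cap \fs]$ is easily checked to be an ideal of $\fs$ containing the non-zero space $\fp \cap \fs$, so by simplicity it equals $\fs$. Since $\fh$ is a Lie subalgebra containing $\fp \cap \fs$, it also contains $[\fp \cap \fs, \fp \cap \fs] = \fk \cap \fs$, and therefore $\fs = (\fk \cap \fs) \oplus (\fp \cap \fs) \subset \fh$. Summing over the simple non-compact ideals of $\fm_{0}$ yields $\fm_{0n} \subset \fh$, and combined with $\fm_{0n} \subset \fm_{0}$ this gives the lemma. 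The main technical obstacle is the vanishing of $\fs \cap \fp \cap \fq$: it hinges on combining maximality of $\faq$ in $\fp \cap \fq$ with the orthogonality coming from the inclusions $\faq \subset \fa_{0}$ and $\fs \subset \fa_{0}^{\perp}$.
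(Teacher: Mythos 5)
Your proof is correct, but it follows a genuinely different route from the paper's. The paper works with the explicit root space decomposition $\fm_0 = \fm \oplus (\fa\cap\fm_0)\oplus\bigoplus_{\ga\in\gS(\fg,\fa)\cap\fahd}\fg_\ga$: it identifies $\fm_{0n}$ as the subalgebra generated by $(\fa\cap\fm_0)\oplus\bigoplus_\ga\fg_\ga$ and then observes that each generator already lies in $\fh$ — the root spaces by the preceding Lemma \ref{l: root space in fh}, and $\fa\cap\fm_0$ because it is contained in $\fah$. You instead decompose $\fm_{0n}$ into simple non-compact ideals $\fs$, prove $\fs\cap\fp\cap\fq=0$ directly, and then use simplicity (via $\fk\cap\fs=[\fp\cap\fs,\fp\cap\fs]$) to conclude $\fs\subset\fh$. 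Interestingly, both arguments pivot on the same mechanism — maximality of $\faq$ in $\fp\cap\fq$ forces a commuting element of $\fp\cap\fq$ into $\faq$, which is then killed by an orthogonality or root-space consideration; in the paper this happens inside the proof of Lemma \ref{l: root space in fh}, while you run it once at the level of the whole ideal $\fs$, using $\faq\subset\fa_0$ together with $\fs\subset\fa_0^\perp$. Your version avoids the root-space bookkeeping and the auxiliary lemma, at the cost of invoking more structure theory: the $\Cartan$- and $\gs$-stability of each simple ideal, the fact that $\fa\cap\fs\neq 0$ for a simple ideal of non-compact type (true, but it deserves the one-line justification that otherwise a maximal abelian subspace of $\fp\cap\fs$ would commute with $\fa\cap\fm_0$ and contradict its maximality in $\fp\cap\fm_0$), and the generation of $\fs$ by $\fp\cap\fs$. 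All of these are standard and your sketches of them are sound, so the argument goes through.
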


\begin{proof}
The algebra $\fm_0$ admits the decomposition
\begin{equation}
\label{e: deco fm 0}
\fm_0 = \fm \oplus (\fa \cap \fm_0) \oplus \bigoplus_{\ga \in \gS(\fg, \fa) \cap \fahd} \fg_\ga.
\end{equation}
Each appearing root space $\fg_\ga$ equals $[\fa \cap \fm_0, \fg_\ga].$
Hence, $\fm_{0\rmn}$ contains the subspace
$$
\fs:= (\fa \cap \fm_0) \oplus \bigoplus_{\ga \in \gS(\fg, \fa) \cap \fahd} \fg_\ga.
$$
It follows that $\fm_{0\rmn}$ contains the subalgebra $\tilde \fs$ of $\fm_0$ generated by $\fs.$ On the other hand, since
$\fm_0 = \fm +\fs$ and $\fm$ normalizes $\fs,$ the algebra $\tilde \fs$ is an ideal of $\fm_{0\rmn}.$
We conclude that $\fm_{0\rmn}$ equals the algebra $\tilde \fs$ generated by $\fs.$

Now $\fa \cap \fm_0 \subseteq \fh$ and each of the root spaces in
(\ref{e: deco fm 0}) is contained in $\fh$ by
Lemma \ref{l: root space in fh}. Therefore, $\fs \subseteq \fh$ and we conclude that $\fm_{0\rmn} = \tilde \fs \subseteq \fh.$
\end{proof}

Let $M_{0\rmn}$ be the connected
subgroup of $M_0$ with Lie algebra $\fm_{0\rmn}.$

\begin{lemma}
\label{l: lemma on deco M zero}
\begin{enumerate}
\itema
$M_{0\rmn}$ is a closed normal subgroup of $M_0.$
\vspace{-5pt}
\itemb
$
M_0 = M M_{0\rmn} \simeq M \times_{M\cap M_{0\rmn}} M_{0\rmn}.
$
\vspace{-5pt}
\itemc
The inclusion map $M \to M_0$ induces a group isomorphism
$
M/M\cap M_{0\rmn} \simeq M_0/M_{0\rmn}.
$%
\vspace{-5pt}
\itemd
$H_{M_0}= H_{M}M_{0\rmn}$
\vspace{-5pt}
\iteme
The inclusion map $M \to M_0$ induces a diffeomorphism
$
M/ H_{M} \simeq M_0 / H_{M_0}.
$
\vspace{-5pt}
\itemf
The group $M_{0\rmn}$ acts trivially on $M_0 / M_{0\rmn}$ and on $M_0 / H_{M_0}.$
\end{enumerate}
\end{lemma}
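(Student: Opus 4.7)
The plan is to treat the six assertions sequentially, recognizing that (a) and (b) carry the real content while (c)--(f) reduce to short formal consequences once (b) is available, combined with the fact $\fm_{0n} \subset \fh$ (hence $M_{0n} \subset H$) established in Lemma~\ref{l: fm zero n inside fh}. For (a), I would use that $\fm_{0n}$ is characterized intrinsically as the sum of the non-compact simple ideals of the reductive Lie algebra $\fm_0$: any automorphism of $\fm_0$ permutes its simple ideals preserving their type, hence fixes $\fm_{0n}$. In particular, $\Ad(M_0)\fm_{0n} = \fm_{0n}$, so the connected analytic subgroup $M_{0n}$ is normal in $M_0$. Its closedness is the standard fact that the analytic subgroup of a reductive group of Harish-Chandra class corresponding to a semisimple ideal in its Lie algebra is closed.

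The crux is (b). The Lie algebra identity $\fm_0 = \fm + \fm_{0n}$ is immediate from the decomposition $\fm_0 = \fm + \fs$ appearing in the proof of Lemma~\ref{l: fm zero n inside fh}, since $\fs \subset \tilde\fs = \fm_{0n}$. To upgrade this to the group level, I apply the Cartan decomposition $M_0 = Z_K(\faq) \cdot \exp(\fm_0 \cap \fp)$. Since $\fm_0 \cap \fp = \fa \cap \fm_0 \subset \fm_{0n}$, this gives $M_0 = Z_K(\faq) \cdot M_{0n}$, and it remains to establish $Z_K(\faq) \subset M \cdot M_{0n}$. At the Lie algebra level, the root-space decomposition of $Z_\fg(\faq)$ combined with Lemma~\ref{l: root space in fh} yields $Z_\fk(\faq) = \fm \oplus (\fm_{0n} \cap \fk)$, so the multiplication map $M \times (M_{0n} \cap K) \to Z_K(\faq)$ is a submersion at the identity, and its image contains the identity component $(Z_K(\faq))_e$. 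Reaching the remaining components of the compact group $Z_K(\faq)$ is the main obstacle: one exploits that $\pi_0(M_0) \cong \pi_0(Z_K(\faq))$ via the Cartan decomposition, together with a structural argument inside the Harish-Chandra class group $M_0$, to show that every component of $Z_K(\faq)$ meets $M \cdot (M_{0n} \cap K)$. Once surjectivity of the multiplication map $M \times M_{0n} \to M_0$ is in hand, the amalgamated product description is automatic: the kernel is the anti-diagonal $\{(k, k^{-1}) \col k \in M \cap M_{0n}\}$, since $M \cap M_{0n}$ embeds into both factors.

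From (a) and (b), the remaining parts follow quickly. Part (c) is the first isomorphism theorem applied to the composition $M \hookrightarrow M_0 \to M_0/M_{0n}$, which is surjective by (b) with kernel $M \cap M_{0n}$. For (d), given $h \in H \cap M_0$, factor $h = mn$ with $m \in M$, $n \in M_{0n}$ by (b); since $M_{0n} \subset H$, one has $m = hn^{-1} \in H \cap M = H_M$, whence $H_{M_0} \subset H_M M_{0n}$, and the reverse inclusion is trivial. Part (e) then follows: the natural map $M/H_M \to M_0/H_{M_0}$ is injective because $H_{M_0} \cap M = H_M$ by (d) and surjective by (b); smoothness of the inverse is routine from the submersion properties of the quotient maps. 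For (f), the action of $M_{0n}$ on $M_0/M_{0n}$ is trivial by normality of $M_{0n}$; on $M_0/H_{M_0}$, triviality follows from $m_0^{-1} n m_0 \in M_{0n} \subset H_{M_0}$ for $n \in M_{0n}$ and $m_0 \in M_0$, using the normality of $M_{0n}$ from (a).
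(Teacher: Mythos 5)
Your overall architecture matches the paper's: (a) and (b) carry the weight, the inclusion $M_{0n}\subset H$ from Lemma \ref{l: fm zero n inside fh} drives (d)--(f), and your treatments of (a), (c), (d), (e) and (f) are all sound. The problem is in (b), at exactly the point you yourself flag as ``the main obstacle'': you never prove that every connected component of $Z_K(\faq)$ (equivalently, of $M_0$) is reached by $M\cdot M_{0n}$. Saying that one ``exploits $\pi_0(M_0)\cong\pi_0(Z_K(\faq))$ together with a structural argument inside the Harish-Chandra class group $M_0$'' is a placeholder, not an argument; the submersion part of your reasoning only yields that $M M_{0n}$ is an \emph{open} subgroup of $M_0$, which is where the paper also starts (it gets openness directly from $\fm_0=\fm+\fm_{0n}$ rather than via the Cartan decomposition). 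The missing ingredient, which the paper supplies in one line, is the standard property of groups of Harish-Chandra's class: since $M=Z_{K\cap M_0}(\fa\cap\fm_0)$ and $\fa\cap\fm_0$ is maximal abelian in $\fp\cap\fm_0$, the group $M$ meets every connected component of $M_0$; an open subgroup containing $M$ is therefore all of $M_0$. Without stating (or proving) this fact, your proof of (b) is incomplete, and the surjectivity of $M\times M_{0n}\to M_0$ is what (c)--(e) rest on.

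A secondary point: the identity $\fm_0\cap\fp=\fa\cap\fm_0$ that you invoke is false whenever $\gS(\fg,\fa)\cap\fahd\neq\emptyset$, because $\fp$ then meets the sum of the root spaces $\fg_\ga$, $\ga\in\gS(\fg,\fa)\cap\fahd$, nontrivially. What is true, and all you actually need, is the inclusion $\fm_0\cap\fp\subset\fm_{0n}$, which holds because the ideal of $\fm_0$ complementary to $\fm_{0n}$ is contained in $\fm\subset\fk$. Likewise $Z_\fk(\faq)=\fm+(\fm_{0n}\cap\fk)$ is a sum but need not be direct. Neither slip is fatal, but both should be corrected; the genuine gap is the component-counting step in (b).
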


\begin{proof}
The normality of $M_{0\rmn}$ follows since $\fm_{0\rmn}$ is an ideal of $\fm_{0}$. Since $\fm_{0}$ is reductive, there exists an ideal $\fm_{0c}$ complementary to $\fm_{0\rmn}$. The group $M_{0\rmn}$ is equal to the connected component of $Z_{M_{0}}(\fm_{0c})$ and therefore $M_{0\rmn}$ is closed. This proves assertion (a).
From $\fm_0 = \fm + \fm_{0\rmn}$ and (a) it follows
that $M M_{0\rmn}$ is an open subgroup of $M_0.$ Since $M_0$ is of the Harish-Chandra class, and $M = Z_{K \cap M_0}(\fa \cap \fm_0),$  it follows
that $M$  intersects every connected component of $M_0.$ Hence, $M_0 = M M_{0\rmn}$ and (b) readily follows.
Assertion (c) follows from (b) and (a). We now turn to assertion (d).
From Lemma \ref{l: fm zero n inside fh} it follows that $M_{0\rmn} \subseteq H.$
In view of (b) we now see that
$$
H_{M_0} = [M M_{0\rmn}]\cap H = H_{M}M_{0\rmn},
$$
Hence (d).
From (c) and (d) we obtain a natural fiber bundle
$M/M\cap M_{0\rmn} \to M/H_M$
which corresponds to factorization by the group $F:= H_{M}/(M\cap M_{0\rmn}).$ Likewise, we obtain a natural fiber bundle $M_0 / M_{0\rmn} \to M_0 / H_{M_0}$ which corresponds to factorization by group $F_0:= H_{M_0}/ M_{0\rmn}.$ The isomorphism of (c) maps $F$ onto $F_0,$ hence (e) follows.

Since $M_{0\rmn}$ is normal in $M_0,$ it acts trivially on the quotient $M_0/M_{0\rmn}.$ The second assertion of (f) follows from this as $M_{0\rmn}\subseteq H_{M_0}.$
\end{proof}

Given a continuous Fr\'echet $M_0$-module $V$, we denote its space of
smooth vectors by $V^\infty.$ This comes equipped with the structure of a continuous Fr\'echet $M_0$-module in the usual way.  The continuous linear
dual is denoted by   ${V^\infty}'.$ 

\begin{cor}
Let $(\xi, V)$ be an irreducible continuous representation of $M_0$ in
a Fr\'echet space $V$ such that
\begin{equation}
\label{e: space of H cap M0 distribution vectors}
({V^{\infty}}')^{H_{M_0}} \neq 0.
\end{equation}
Then $\xi|_{M_{0\rmn}}$ is trivial and $\xi|_M$ is irreducible.
In particular, $\xi$ is finite dimensional and unitarizable.
\end{cor}

\begin{proof}
Let $\eta$ be a non-zero element of the space in (\ref{e: space of H cap M0 distribution vectors}). Then there is a unique injective continuous linear  $M_0$-equivariant map $j: V^\infty \to C^{\infty}(M_0 / H_{M_0})$ such that $j^*(\gd_{[e]}) = \eta,$ with $\gd_{[e]}$ denoting the
Dirac measure of $M_0/ H_{M_0}$ at $[e] :=  eH_{M_0}.$
Since $M_{0\rmn}$ acts trivially on $M_0 /H_{M_0}$ it follows that
$M_{0\rmn}$ acts trivially on $V^\infty$ hence on $V.$ We conclude that
$\xi|_{M_{0\rmn}}$ is trivial. By application of Lemma \ref{l: lemma on deco M zero} it follows that $\xi|_{M}$ is irreducible.
\end{proof}

The above result provides motivation for considering only finite dimensional unitary representations of $M_0.$ We note that any such representation restricts to the trivial representation on $M_{0\rmn},$ since the latter group is connected semisimple of the non-compact type. Since $M_0/M_{0\rmn}$ is a compact group, it follows that
\begin{equation}
\label{e: dMzerofu}
\dMzerofu \simeq (M_0/M_{0\rmn})^\wedge,
\end{equation}
where $\widehat M_{0\fu}$ denote the set of equivalence
classes of finite dimensional irreducible unitary representations
of $M_0.$

\begin{lemma}
\label{l: embedding widehat Mzero}
The restriction map $\xi \mapsto \xi_M:= \xi|_M$ induces an injection
\begin{equation}
\label{e: embedding widehat M}
 \widehat M_{0\fu}  \hookrightarrow \widehat M.
\end{equation}
The image of this injection equals $(M/ M \cap M_{0\rmn})^\wedge.$
\end{lemma}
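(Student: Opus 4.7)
The plan is to reduce everything to the identification (\ref{e: dMzerofu}) together with Lemma \ref{l: lemma on deco M zero}. Since $M_{0n}$ is connected semisimple of noncompact type, every class $\xi \in \dMzerofu$ is trivial on $M_{0n}$, so via (\ref{e: dMzerofu}) such a $\xi$ corresponds canonically to an irreducible representation $\tilde\xi$ of $M_0/M_{0n}$. By Lemma \ref{l: lemma on deco M zero}(c), the inclusion $M \hookrightarrow M_0$ induces a group isomorphism $\iota: M/(M\cap M_{0n}) \simeq M_0/M_{0n}$, and the restriction $\xi_M$ is trivial on $M\cap M_{0n}$ and corresponds under $\iota$ to $\tilde\xi$. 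Since $\iota$ is a group isomorphism, $\xi_M$ is irreducible if and only if $\tilde\xi$ is; thus $\xi_M \in \widehat M$, and the restriction map is well-defined, taking values in $(M/M\cap M_{0n})^\wedge\subset \widehat M$.

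For injectivity, suppose $\xi, \xi' \in \dMzerofu$ satisfy $\xi_M \simeq \xi'_M$. Both factor to representations $\tilde\xi, \tilde\xi'$ of $M_0/M_{0n}$, and the induced representations on $M/(M\cap M_{0n})$ are equivalent. Transporting along $\iota$, the representations $\tilde\xi$ and $\tilde\xi'$ are equivalent, hence so are $\xi$ and $\xi'$ by (\ref{e: dMzerofu}).

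For surjectivity onto $(M/M\cap M_{0n})^\wedge$, let $\tau$ be a finite-dimensional irreducible unitary representation of $M$ that is trivial on $M\cap M_{0n}$. Then $\tau$ descends to an irreducible representation $\bar\tau$ of $M/(M\cap M_{0n})$; transport along $\iota^{-1}$ yields an irreducible representation $\tilde\tau$ of $M_0/M_{0n}$, and pulling back along the projection $M_0 \to M_0/M_{0n}$ produces an element $\xi \in \dMzerofu$ via (\ref{e: dMzerofu}). By construction, $\xi_M = \tau$, which completes the proof.

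The argument is essentially bookkeeping, and the only step requiring care is verifying that $\xi_M$ is irreducible; this is automatic from Lemma \ref{l: lemma on deco M zero}(c) because $\iota$ identifies the two quotient groups on the nose, so irreducibility, equivalences, and unitarity all transfer without loss. Thus there is no real obstacle once Lemma \ref{l: lemma on deco M zero} is in hand.
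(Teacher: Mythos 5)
Your proof is correct and takes essentially the same route as the paper: identify $\dMzerofu$ with $(M_0/M_{0n})^\wedge$ via (\ref{e: dMzerofu}), transport along the group isomorphism $M/(M\cap M_{0n})\simeq M_0/M_{0n}$ of Lemma \ref{l: lemma on deco M zero}(c), and recognize $(M/M\cap M_{0n})^\wedge$ as the classes in $\widehat M$ trivial on $M\cap M_{0n}$. The paper's proof is simply a terser version of your bookkeeping.
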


\begin{proof}
It follows from Lemma \ref{l: lemma on deco M zero} (c) that the restriction map induces an isomorphism
$$
(M_0/M_{0\rmn})^\wedge \simeq (M/M\cap M_{0\rmn})^\wedge.
$$
The latter set may be viewed as the subset of $\widehat M$ consisting of
equivalence classes of irreducible unitary representations that are trivial on
$M\cap M_{0\rmn}.$ Now use (\ref{e: dMzerofu}).
\end{proof}

From now on we will use the map
(\ref{e: embedding widehat M}) to view $\widehat M_{0\fu}$ as a subset of $\widehat M.$

\begin{lemma}
\label{l: equality of H fixed vectors}
Let $(\xi,\cH_\xi)$ be a finite dimensional unitary representation of $M_0$
(not necessarily irreducible).
Then
\begin{equation}
\label{e: equality of H fixed vectors}
\cH_\xi^{H_{M_0}} = \cH_{\xi}^{H_{M}}.
\end{equation}
\end{lemma}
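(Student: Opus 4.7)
The inclusion $\cH_\xi^{H_{M_0}} \subset \cH_\xi^{H_M}$ is immediate from $H_M \subset H_{M_0}.$ The whole content of the lemma is therefore the reverse inclusion. The plan is to reduce this to the triviality of the restriction $\xi|_{M_{0n}}.$

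First I would invoke Lemma \ref{l: lemma on deco M zero}(d), which gives the decomposition $H_{M_0} = H_M M_{0n}.$ Consequently,
\[
\cH_\xi^{H_{M_0}} \;=\; \cH_\xi^{H_M} \cap \cH_\xi^{M_{0n}},
\]
so the lemma follows once we show that $\cH_\xi^{M_{0n}} = \cH_\xi,$ i.e.\ that $\xi|_{M_{0n}}$ is the trivial representation.

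To establish this triviality, I would use the fact recorded after Lemma \ref{l: root space in fh}, namely that $\fm_{0n}$ is the sum of the simple ideals of $\fm_0$ of non-compact type; in particular $\fm_{0n}$ is semisimple with no compact simple factors. Now differentiate $\xi$ to obtain a Lie algebra homomorphism $d\xi\maps \fm_{0n} \to \mathfrak{u}(\cH_\xi),$ whose target is a compact Lie algebra. Any such homomorphism on a semisimple Lie algebra splits off its kernel as a direct factor, so the image is isomorphic to a quotient of $\fm_{0n}$ by a sum of simple ideals, hence a sum of simple ideals of non-compact type; but a simple Lie algebra of non-compact type has indefinite Killing form and cannot embed in a compact one. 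Thus $d\xi|_{\fm_{0n}} = 0.$ Since $M_{0n}$ is by definition the connected subgroup with Lie algebra $\fm_{0n},$ we conclude that $\xi|_{M_{0n}}$ is trivial.

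The statement (\ref{e: equality of H fixed vectors}) then follows from the displayed equality above. The only non-formal step is the triviality of $\xi|_{M_{0n}},$ and this is where the hypothesis that $\xi$ is finite dimensional and unitary is essential; it is parallel to, and simpler than, the corresponding step in the preceding corollary, since here we need not extract an equivariant embedding into $C^\infty(M_0/H_{M_0})$ but may argue directly at the Lie algebra level.
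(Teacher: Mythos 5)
Your proposal is correct and follows essentially the same route as the paper: the paper's proof also reduces the nontrivial inclusion to the identity $H_{M_0}=H_{M}M_{0n}$ from Lemma \ref{l: lemma on deco M zero}\,(d) together with the triviality of $\xi|_{M_{0n}}$, which it records just before the lemma as a consequence of $M_{0n}$ being connected semisimple of non-compact type. The only difference is that you spell out the Lie-algebra argument for that standard fact, which the paper simply quotes.
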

\begin{proof}
The space on the left-hand side of the equation is clearly contained
in the space on the right-hand side.
For the converse inclusion, let $\eta \in \cH_\xi$ be an $H_{M}$-fixed
vector. Then $\eta$ is fixed under the group $H_{M}M_{0\rmn},$ which
equals $H_{M_0}$ by Lemma \ref{l: lemma on deco M zero} (d).
\end{proof}

Let $W(\faq)$ denote the Weyl group of the root system $\Sigma(\fg, \faq)$.
Then $W(\faq) \simeq  N_K(\faq)/ Z_K(\faq),$ naturally.  We denote by $W_{K\cap H}(\faq)$ the image
of $N_{K\cap H}(\faq)$ in $W(\faq).$
Let $W(\fa) \simeq N_K(\fa)/Z_K(\fa)$ denote the Weyl group of the root system $\Sigma(\fg, \fa)$. Then restriction to $\faq$ induces an epimorphism
from the normalizer of $\faq$ in $W(\fa)$ onto $W(\faq).$ We may therefore select a finite subset
$
\cW \subseteq N_K(\fa) \cap N_K(\faq)
$
such that $e \in \cW$ and such that the map $v \mapsto \Ad(v)|_{\faq}$ induces a bijection
\begin{equation}
\label{e: defi cW}
\cW\;\;\buildrel{1-1}\over \longrightarrow \; \;W(\faq)/W_{K \cap H}(\faq).
\end{equation}
Let $\xi $ be a finite dimensional unitary representation of $M_0$ (not necessarily irreducible).
Then following \cite{Banps1} we define
\begin{equation}
\label{e: defi V xi v}
V(\xi, v) := \cH_\xi^{M_0\cap vHv^{-1}} = \cH_\xi^{M \cap v H v^{-1}}.
\end{equation}
Here we note that the second equality is valid by Lemma \ref{l: equality of H fixed vectors} applied with $v H v^{-1}$ in place of $H.$
We equip the space in (\ref{e: defi V xi v}) with
the restriction of the inner product
on $\cH_\xi.$
Finally we define the formal direct sum of Hilbert spaces
\begin{equation}\label{e: defi V xi}
V(\xi) := \oplus_{v \in \cW} V(\xi, v).
\end{equation}
For $v\in \cW,$ let 
$
\rmi_v: V(\xi, v) \to V(\xi)$ and $\pr_v: V(\xi) \to V(\xi, v)
$
denote the natural inclusion and projection map,  respectively.

Our goal will be to study $H$-fixed distribution vectors in representations
induced from minimal parabolic subgroups $P \in \parabsA.$ For this it will be convenient
to compare these representations to representations induced from minimal $\gs\Cartan$-stable parabolic subgroups, by using the method of induction  in   stages.

Let $P \in \parabsgsA$, see Definition \ref{d: q extreme parabs}, and let $P_0 \in \parabsgsAq$ be such that
$P \subseteq P_0.$ Let $(\xi, \cH_\xi)$ be a finite dimensional unitary representation of $M_0$ and let
$C^\infty(P_0: \xi : \gl)$
be
defined as in the first part of Section \ref{s: induced reps and densities} for $P_0$ in place of $P.$
We agree to write $\xi_M: = \xi|_M.$
Observe that $P \cap M_0$ is a minimal parabolic subgroup
of $M_0$ with split component $A\cap M_0.$ Moreover, since the set of roots of
$\fa \cap \fm_0$ in $N_P \cap M_0$ equals $\gS(P) \cap \fahd,$ it follows that
$$
\rho_{P \cap M_0} = \rhoPh.
$$
Hence, there is a natural $M_0$-equivariant embedding
$$
i: \xi \embeds \Ind_{M_0 \cap P}^{M_0}(\xi_M \otimes -\rhoPh \otimes 1),
$$
see \cite[Lemma 4.4]{Banps1}. Concretely, the map $i$ from $\cH_\xi$ into
the space
$C^\infty(M_0\cap P: \xi_M: -\rhoPh)$
of smooth vectors  for the principal series
representation on the right-hand side is given by
\begin{equation}
\label{e: defi i}
i(v)(m_0) = \xi(m_0) v,\qquad (v \in \cH_\xi, \;m_0 \in M_0).
\end{equation}
Induction now gives a $G$-equivariant embedding
$$
\Ind_{P_0}^G (\xi \otimes \gl\otimes 1)
\embeds \Ind_{P_0}^G \big(\Ind_{M_0 \cap P}^{M_0}(\xi_M\otimes -\rhoPh \otimes 1)\otimes \gl \otimes 1)\big).
$$
According to the principle of induction  in   stages,  see 
\cite[\S 7.2]{Knappbook},  the
 latter representation is naturally isomorphic with
$\Ind_{P}^G (\xi_M \otimes (\gl - \rhoPh) \otimes 1).$
The resulting $G$-equivariant embedding
\begin{equation}
\label{e: i sharp}
\injP: C^\infty(P_0 : \xi : \gl) \to C^\infty(P: \xi_M : \gl - \rhoPh)
\end{equation}
is given by $(\injP f)(x) = \ev_1 \after i \after f(x)$
for $f \in C^\infty(P_0 : \xi : \gl)$ and $x \in G.$
Here,
$$
\ev_1: C^\infty(M_0\cap P: \xi_M: -\rhoPh) \to \cH_\xi
$$
is given by evaluation
at the identity of $M_0.$ Comparing this with
(\ref{e: defi i}) we see that $\injP$ is
the inclusion map.

By $C^\infty(K: K\cap M_0 : \xi)$ we denote the space of smooth functions $K\to\cH_\xi$ transforming according to the rule
$$
f(mk)=\xi(m)f(k)\qquad(m\in K\cap M_0, \;k\in K).
$$
Likewise, we write $C^\infty(K: M : \xi_M)$ for the space of smooth functions $K\to\cH_\xi$ transforming according to the rule  
$
f(mk)=\xi_M(m)f(k),
$
for all $m\in M$ and $k\in K.$  Note that restriction to $K$ induces topological  linear isomorphisms $C^\infty(P_0 : \xi : \gl) \to C^\infty(K:K\cap M_0:\xi)$ and $C^\infty(P: \xi_M : \gl - \rhoPh)\to C^\infty(K:M:\xi_M)$.

In these compact pictures of the induced representations, $\injP$ becomes
the inclusion map
\begin{equation}
\label{e: i sharp on K}
\inj:  C^\infty(K: K\cap M_0 : \xi) \embeds C^\infty(K: M : \xi_M).
\end{equation}
We now see that we have the following commutative diagram
\begin{equation}
\label{e: diagram i sharp}
\begin{array}{ccc}
C^{\infty}(P_0 : \xi : \gl) & {\buildrel \injP \over \longrightarrow}
 & C^\infty(P: \xi_M : \gl - \rhoPh)\\
 \downarrow && \downarrow\\
C^\infty(K: K\cap M_0 : \xi)&  {\buildrel \inj \over \longrightarrow}&
 C^\infty(K: M : \xi_M).
 \end{array}
\end{equation}
The vertical arrows in this diagram represent the topological
linear isomorphisms induced by restriction to $K$; see
\cite{Banps1} for details.

\begin{lemma}
\label{l: image i sharp and invariants}
The space $C^\infty(K: K\cap M_0 : \xi)$ coincides with
the subspace of left $K\cap M_{0\rmn}$-invariants in $C^\infty(K: M : \xi_M).$
\end{lemma}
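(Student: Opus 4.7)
The plan is to reduce the statement to two structural facts: (i) $K \cap M_{0} = M \cdot (K\cap M_{0n})$, and (ii) every finite dimensional unitary representation of $M_{0}$ restricts trivially to $M_{0n}$. Together these reduce the $(K\cap M_{0})$-transformation rule to the $M$-transformation rule plus left-$(K\cap M_{0n})$-invariance.

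For (i), the inclusion $M(K\cap M_{0n})\subset K\cap M_{0}$ is immediate since $M\subset K$ and $M_{0n}\subset M_{0}$. For the converse, I would take $k\in K\cap M_{0}$ and write $k = mn$ with $m\in M$ and $n\in M_{0n}$, using Lemma \ref{l: lemma on deco M zero}(b). Applying $\theta$, which stabilizes $M$ and $M_{0n}$ and fixes $M$ pointwise, gives $k = \theta(k) = m\,\theta(n)$, hence $n = \theta(n)\in K\cap M_{0n}$. (Non-uniqueness of the decomposition is harmless: any two decompositions differ by an element of $M\cap M_{0n}$.) For (ii), $\fm_{0n}$ is by construction a sum of non-compact simple ideals, so the connected group $M_{0n}$ is semisimple with no compact simple factors; it therefore admits only the trivial finite dimensional unitary representation. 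Consequently $\xi|_{M_{0n}}$ is trivial and a fortiori $\xi|_{K\cap M_{0n}} = \mathrm{Id}$.

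Granted (i) and (ii), the two inclusions go as follows. If $f\in C^{\infty}(K: K\cap M_{0}: \xi)$, then for $m\in M\subset K\cap M_{0}$ we have $f(mk) = \xi(m)f(k) = \xi_{M}(m)f(k)$, so $f\in C^{\infty}(K: M: \xi_{M})$; and for $n\in K\cap M_{0n}$ we have $f(nk) = \xi(n)f(k) = f(k)$ by (ii), giving left-$(K\cap M_{0n})$-invariance. Conversely, if $f\in C^{\infty}(K: M: \xi_{M})$ is left $K\cap M_{0n}$-invariant and $m'\in K\cap M_{0}$, write $m' = mn$ as in (i); then
\[
f(m'k) \;=\; f(mnk) \;=\; \xi_{M}(m)\,f(nk) \;=\; \xi(m)f(k) \;=\; \xi(m)\xi(n)f(k) \;=\; \xi(m')f(k),
\]
where the penultimate equality uses $\xi(n)=1$. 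This shows $f\in C^{\infty}(K: K\cap M_{0}: \xi)$, completing the proof.

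The only nontrivial ingredient is (i), and it is already controlled by Lemma \ref{l: lemma on deco M zero} together with the $\theta$-stability of $M_{0n}$; everything else is a one-line verification. No obstacle beyond bookkeeping is anticipated.
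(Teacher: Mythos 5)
Your proposal is correct and follows essentially the same route as the paper: triviality of $\xi|_{M_{0n}}$ (since $M_{0n}$ is connected semisimple of noncompact type) gives one inclusion, and the decomposition $K\cap M_0 = M\,(K\cap M_{0n})$ combined with $\xi(m_{\rm n})=I$ gives the other. The only superfluous step is the $\theta$-argument for (i): once $k_0=mn$ with $m\in M\subset K$, one has $n=m^{-1}k_0\in K$ automatically, so $n\in K\cap M_{0n}$ without invoking $\theta$-stability of $M_{0n}$.
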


\begin{proof}
Let $f \in C^\infty(K: K\cap M_0 : \xi).$
Since $\xi$ is finite dimensional, it follows that $\xi|_{M_0\rmn}$ is trivial.
 Hence, for $k \in K$ and $m_0 \in M_{0\rmn}$
we have that
$$
f(m_0 k) = \xi(m_0) f(k) = f(k).
$$
This establishes one inclusion. For the converse, assume that
$f \in C^\infty(K: M : \xi_M)$ is left  $K \cap M_{0\rmn}$-invariant.
Let $m_0 \in K \cap M_0.$ Then we may write $m_0 = m m_{\rm n}$
with $m \in M$ and $m_{\rm n} \in K\cap M_{0\rmn}.$ Let $k \in K.$
Then
\begin{eqnarray*}
f(m_0 k) = f(m m_{\rm n} k) &=&
\xi(m) f(m_{\rm n} k) = \xi(m) f(k) \\
&=& \xi(m)\xi(m_{\rm n}) f(k) = \xi(m_0)f(k).
\end{eqnarray*}
For the third equality we used that $\xi|_{M_{0\rmn}}$ is trivial.
We thus conclude  that $f$ belongs to $ C^\infty(K: K \cap M_0: \xi).$
\end{proof}

Since $M$ normalizes $K \cap M_{0\rmn},$ we see that for $f \in C^\infty(K: M : \xi_M)$ 
the function $p(f): K \to \cH_\xi$ defined by 
$$
p(f)(k) = \int_{K \cap M_{0\rmn}} f(m_0 k)\; dm_0, \quad (k \in K),
$$
belongs to $C^\infty(K : M : \xi_M)$ again. The associated operator 
\begin{equation}
\label{e: defi p}
p: C^\infty(K: M: \xi_M) \to C^\infty(K: M: \xi_M) 
\end{equation}
is continuous linear and $K$-equivariant. Since $K \cap M_0 = M (K \cap M_{0\rmn}),$ the image of $p$ is
contained in the subspace of left $K \cap M_{0\rmn}$-invariants. Furthermore, $p$ is obviously
the identity on this subspace, so that $p$ is a projection operator with image equal 
to the image ${\rm im} (\inj)$ of $\inj,$ see (\ref{e: i sharp on K}).
It is readily seen that $p$ is symmetric with respect
to the pre-Hilbert structure $\inp{\dotvar}{\dotvar}$ on
$C^\infty(K :  M : \xi_{M})$ obtained by restriction of the inner product from 
$L^2(K) \otimes \cH_\xi.$ Accordingly, $C^\infty(K: M:\xi_M)$ is the direct sum
of ${\rm im}(\inj)$ and its orthocomplement with respect to the give pre-Hilbert structure,
and $p$ is the associated orthogonal projection onto  ${\rm im}(\inj).$ 
Let
\begin{equation}
\label{e: defi proj}
\proj: C^\infty(K: M : \xi_M) \to C^\infty(K: K \cap M_0: \xi)
\end{equation}
be the unique linear map such that $p = \inj \after \proj.$  For later use, we note that the maps 
introduced are related by 
\begin{equation}
\label{e: relations p proj and inj}
p = \inj \after \proj, \qquad \proj\after \inj = I.
\end{equation} 
The map $\proj$ is $K$-equivariant and continuous linear, 
and $\inj$ and $\proj$ are adjoint with respect to the pre-Hilbert structures
$\inp{\dotvar}{\dotvar}.$ 

\begin{lemma} 
\label{l: equivariance of inj and proj}
For all $\gl \in \faqdc,$ the following holds.
\begin{enumerate}
\itema
The map $\inj$ given in {\rm (\ref{e: i sharp on K})} intertwines the representations
$\pi_{P_0, \xi, \gl}$ and $\pi_{P, \xi_M, \gl - \rhoPh}.$%
\vspace{-5pt}%
\itemb
The map $\proj$ intertwines the representations $\pi_{P,\xi_M, \gl + \rhoPh}$ 
and $\pi_{P_0, \xi, \gl}.$ 
\end{enumerate}
\end{lemma}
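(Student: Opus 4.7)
The plan is to establish (a) directly from the construction of $\injP$ via induction by stages, and then to obtain (b) from (a) by a duality argument that exploits the adjointness of $\inj$ and $\proj$ with respect to the sesquilinear pairing on the compact picture.

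For (a), I would first observe that in the non-compact picture the map $\injP$ of $(\ref{e: i sharp})$ intertwines the right regular $G$-actions on $C^\infty(P_0:\xi:\gl)$ and $C^\infty(P:\xi_M:\gl-\rhoPh)$. This is immediate from the defining formula $(\injP f)(x)=\ev_1 \after i \after f(x)$, which commutes with right translation in $x$, and is exactly the content of the principle of induction by stages. The vertical arrows in diagram $(\ref{e: diagram i sharp})$ are the restriction-to-$K$ isomorphisms, which by definition transport the right regular representations on the top row to the compact-picture representations $\pi_{P_0,\xi,\gl}$ and $\pi_{P,\xi_M,\gl-\rhoPh}$ on the bottom row. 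Since the diagram commutes, $\inj$ intertwines these two compact-picture representations, which is assertion (a).

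For (b), I would deduce the intertwining property of $\proj$ from that of $\inj$ by duality. Recall that $\inp{\inj f}{g}=\inp{f}{\proj g}$ for all relevant $f,g$, and that the pairing is $G$-invariant for the pair $(\pi_{P,\xi_M,\mu},\pi_{P,\xi_M,-\bar\mu})$, and analogously for $P_0$. Since $\rhoPh\in\fahd$ is real-valued, $\overline{\gl-\rhoPh}=\bar\gl-\rhoPh$. Fixing $\gl\in\faqdc$ and $x\in G$, I would evaluate $\inp{\inj f}{\pi_{P,\xi_M,-\bar\gl+\rhoPh}(x)g}$ in two ways: on the one hand, moving $x$ off $g$ by pairing equivariance and then through $\inj$ using (a), producing $\inp{f}{\pi_{P_0,\xi,-\bar\gl}(x)\proj g}$; on the other hand, moving $\proj$ off $g$ by adjointness, producing $\inp{f}{\proj\,\pi_{P,\xi_M,-\bar\gl+\rhoPh}(x)g}$. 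Non-degeneracy of the pairing in the first slot gives
\[
\proj\,\pi_{P,\xi_M,-\bar\gl+\rhoPh}(x)=\pi_{P_0,\xi,-\bar\gl}(x)\,\proj,
\]
and the substitution $\mu:=-\bar\gl$, which is a bijection on $\faqdc$, yields the intertwining asserted in (b). I do not anticipate any real obstacle; the only point needing attention is the bookkeeping of parameters under the duality $\mu\leftrightarrow -\bar\mu$, where the reality of $\rhoPh$ is precisely what flips the sign of the shift between the $\gl-\rhoPh$ of (a) and the $\gl+\rhoPh$ of (b).
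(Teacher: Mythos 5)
Your proposal is correct and follows essentially the same route as the paper: part (a) is read off from the commutativity of diagram (\ref{e: diagram i sharp}) together with the equivariance of the top horizontal map coming from induction by stages, and part (b) is obtained by taking adjoints of (a) with respect to the equivariant pairings, using that $\inj$ and $\proj$ are adjoint and that the reality of $\rhoPh$ accounts for the sign of the shift. The parameter bookkeeping under $\gl\mapsto-\bar\gl$ is handled correctly.
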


\begin{proof} 
Since the top horizontal map in (\ref{e: diagram i sharp}) is intertwining,
(a) follows. Using (a) we see that for each 
$x \in G,$ 
$$ 
\inj \after \pi_{P_0, \xi, \gl}(x) = \pi_{P, \xi_M, \gl - \rhoPh}(x) \after \inj.
$$ 
Taking adjoints and using equivariance of the pairings $\inp{\dotvar}{\dotvar}$ 
involved, we infer that 
$$ 
\pi_{P_0, \xi, -\bar \gl}(x^{-1}) \after \proj = \proj \after \pi_{P, \xi_M, - \bar \gl + \rhoPh}(x^{-1})
$$ 
for all $\gl \in \faqdc$ and $x \in G.$ From this, (b) follows.
\end{proof}

For each $\gl \in \faqdc$ we denote the unique lift of the map  (\ref{e: defi proj}) to a map
$C^\infty(P:\xi_M: \gl + \rhoPh) \to C^\infty(P_0: \xi: \gl)$ by $\projP$. Then it follows from 
Lemma \ref{l: equivariance of inj and proj} that the following diagram commutes:
\begin{equation}
\label{e: diagram proj}
\begin{array}{ccc}
C^{\infty}(P : \xi_M : \gl + \rhoPh) & {\buildrel \projP \over \longrightarrow}
 & C^{\infty}(P_0: \xi : \gl)\\
 \downarrow && \downarrow\\
C^\infty(K: M : \xi_M)&  {\buildrel \proj \over \longrightarrow}&
 C^\infty(K: K\cap M_0 : \xi).
 \end{array}
\end{equation}

\begin{rem}
\label{rem: warning on p}
In view of  (\ref{e: relations p proj and inj}) it follows from 
Lemma \ref{l: equivariance of inj and proj} that 
$p$ defined in (\ref{e: defi p}) intertwines the representations $\pi_{P:\xi_M:\gl + \rhoPh}$ 
and $\pi_{P:\xi_M: \gl - \rhoPh}$ of $G$ in $C^\infty(K:M:\xi_M).$ 
Thus, it has a unique lift to an equivariant 
map $C^\infty(P:\xi_M:\gl + \rhoPh) \to C^\infty(P:\xi_M:\gl - \rhoPh).$ However,
we shall never use this lift.
\end{rem}

\begin{prop}
Let $P_0 \in \parabsgsAq$ and $P \in \parabsA$ be such that $P \subseteq P_0.$
Let $\xi \in \widehat M_{0\fu}$ and $\gl \in \faqd.$
Then the embedding {\rm (\ref{e: i sharp})}
has a unique extension to a continuous
linear map
\begin{equation}
\label{e: extended i sharp}
 \injP:  \Cminf (P_0 : \xi : \gl) \to
\Cminf (P: \xi_M : \gl - \rhoPh).
\end{equation}
This extension is $G$-equivariant and maps homeomorphically onto a closed subspace.
As a  map from $\Cminf(K: K\cap M_0: \xi)$ to $ \Cminf(K: M: \xi_M)$
the extended map  is the unique continuous extension of (\ref{e: i sharp on K}). In particular, it is independent of $\gl.$
\end{prop}

\begin{proof}
Let the map 
$$
 \inj: \Cminf(K : K \cap M_0 : \xi) \to \Cminf(K: M : \xi_M)
 $$
be defined as the transposed of $\proj.$ 
Then $\inj$ is a continuous linear extension
of the bottom horizontal map of (\ref{e: diagram i sharp}). This continuous extension is unique
by density of $C^\infty(K: K \cap M_0 : \xi)$ in $\Cminf(K: K \cap M_0 : \xi).$
Likewise, the adjoint $i^{\# \,\mathrm{T}}$ 
of the bottom horizontal map in (\ref{e: diagram i sharp})
is the continuous linear extension of the projection map $\proj$ which we denote by 
$$ 
\proj: C^{-\infty}(K:M:\xi_M) \to C^{-\infty}(K: M_0 \cap K : \xi)
$$ 
as well. Finally, the transpose $p^{\rm T}$ is the unique continuous linear extension of $p$ to a 
continuous linear map, denoted
$$ 
p: C^{-\infty} (K:M:\xi_M) \to C^{-\infty}(K:M:\xi_M).
$$ 
By transposition we see that the relations (\ref{e: relations p proj and inj}) remain valid for the extensions
of these maps to the spaces of generalized functions involved. In particular, it 
follows that the extended map $\inj$ is a homeomorphism onto the kernel of the extended
map $p - I.$ In particular, it has closed image.

By transfer under the vertical isomorphisms in the diagram (\ref{e: diagram i sharp})
we see that $\injP$ has a unique continuous linear extension 
(\ref{e: extended i sharp}) with closed image. The extension is $G$-equivariant because it is so on the dense subspace of smooth functions.
\end{proof}

\begin{rem}
By a similar argument it follows that the map $\projP$ represented by the 
top horizontal arrow in (\ref{e: diagram proj}) has a unique continuous linear extension
to a surjective equivariant map $\projP: C^{-\infty}(P:\xi_M:\gl + \rhoPh) \to C^{-\infty}(P_0:\xi:\gl).$ 
However, we shall not need this in the present paper.
\end{rem}

\section{H-fixed distribution vectors, the q-extreme case}
\label{Section H-fixed distribution vectors, q-extreme case}
We retain the notation of the previous section. In particular, we assume that
$P\in \parabsgsA$ and that $P_0 \in \parabsgsAq$ contains $P.$ We will now construct $H$-fixed
distribution vectors in $P$-induced representations, by comparison with
the $H$-distribution vectors in $P_0$-representations as defined in \cite{Banps1}.

We assume that $\xi$ is a finite dimensional unitary representation of $M_0$
 and put $\xi_M = \xi|_M.$ Furthermore,
we assume that $\eta \in V(\xi, e),$ see (\ref{e: defi V xi v}).

Following \cite[(5.4)]{Banps1} we define the function $\geps_1(P_0: \xi: \gl:\eta)$
for $\gl \in \faqdc$ by
$$
\left\{
\begin{array}{l}
\geps_1(P_0: \xi:  \gl:  \eta) = 0  \qquad{\rm outside}\quad  P_0H\\
\geps_1(P_0: \xi: \gl: \eta)(namh) = a^{\gl + \rho_{P_0}}\xi(m) \eta,
\end{array}
\right.
$$
for $m \in M_0, a \in A_0, n \in N_0$ and $h \in H.$ Clearly, for every $\gl \in \faqdc$
the function $\geps_1(P_0: \xi: \gl: \eta)$ is continuous outside the set $\partial(P_0 H)$
which has measure zero in $G.$ By right-P-equivariance, the restriction of this function to $K$ is continuous outside
$\partial (K \cap P_0 H),$ which has measure zero in $K.$

Let $\gS(P_0, \faq)_-$ denote the space of $\faq$-roots in $\fn_{P_0}$
such that  $\ker( \Cartan \gs + I) \cap \fg_\ga \neq 0.$
In case $\xi$ is irreducible, it follows from \cite[Prop. 5.6]{Banps1} that the function
$\geps_1(P_0: \xi: \gl: \eta)$ is continuous on $G$ for all
$\gl \in \faqdc$ with $\inp{\Re \gl + \rho_{P_0}}{\ga} < 0$ for all $\ga \in \gS(P_0, \faq)_-.$
By decomposition into irreducibles one readily sees that this result
is also valid for an arbitrary finite dimensional unitary representation
of $M_0.$

\begin{lemma} Let $\xi$ be a finite dimensional unitary representation of $M_0$
and assume that $\gl \in \faqdc$ satisfies
\begin{equation}
\label{e: condition integrability geps}
\forall  \ga \in \gS(P_0, \faq)_- :\qquad \inp{\Re \gl + \rho_{P_0}}{\ga} \leq  0.
\end{equation}
Then the function $\geps_1(P_0: \xi: \gl: \eta)$ is measurable and locally bounded on $G,$
and its restriction to $K$ is measurable and bounded on $K,$ uniformly for $\gl$ in the indicated subset
of $\faqdc.$ Finally, $\geps_1(P_0: \xi: \gl: \eta)|_K$ depends continuously on $\gl$ as a function
with values in $L^1(K)\otimes \cH_\xi.$
\end{lemma}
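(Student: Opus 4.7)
The plan is: reduce to irreducible $\xi$ by orthogonal decomposition; exploit the unitarity of $\xi$ to reduce all three assertions to controlling the $A_0$-component of the decomposition $g = namh$ on $P_0H;$ deduce the uniform bound on $K$ from an $H$-convexity (bipolar) argument; and derive the $L^1$-continuity in $\gl$ by Lebesgue's dominated convergence theorem.

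Since $\xi$ is finite-dimensional and unitary, it decomposes orthogonally as $\cH_\xi = \bigoplus_j \cH_{\xi_j}$ into irreducible $M_0$-submodules. Writing $\eta = \sum_j \eta_j$ with $\eta_j \in V(\xi_j, e)$ and invoking the linearity of $\geps_1$ in $\eta,$ it suffices to treat irreducible $\xi.$ Unitarity then gives $\|\xi(m)\eta\|_\xi = \|\eta\|_\xi$ for every $m \in M_0,$ so that for $g = namh \in P_0H,$
$$
\|\geps_1(P_0\col\xi\col\gl\col\eta)(g)\|_\xi = a^{\Re\gl + \rho_{P_0}}\,\|\eta\|_\xi.
$$
The continuity of $\geps_1(P_0\col\xi\col\gl\col\eta)$ on the open, full-measure set $P_0H$ and its vanishing outside have already been recorded in the text preceding the lemma, giving Borel measurability on $G$ and on $K.$

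The heart of the argument is the uniform $L^\infty$-bound on $K.$ By the norm formula, it suffices to show
$$
\sup\{a(k)^\mu : k \in K\cap P_0H,\ \mu \in C\} < \infty,\qquad C := \{\mu \in \fad : \inp{\mu}{\ga} \le 0\ \forall\,\ga \in \gS(P_0,\faq)_-\}.
$$
Equivalently, $\mu(\log a(k)) \le 0$ for every $\mu \in C$ and every $k \in K\cap P_0H,$ which by bipolar duality amounts to showing that $\log a(k)$ lies in the closed convex cone generated by the coroots associated to $\gS(P_0,\faq)_-$ in $\fa_0.$ This is an $H$-convexity statement for the open orbit through $eP_0$ in $G/P_0,$ of the type codified in \cite[Thm.~10.1]{BB2014}. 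Alternatively, one can pick $\mu_0 \in \faqd$ with $\inp{\mu_0}{\ga} < 0$ for all $\ga \in \gS(P_0,\faq)_-,$ apply Prop.~5.6 of \cite{Banps1} to $\gl + t\mu_0$ for every $t > 0$ (which satisfies the strict inequality), and pass to the limit $t \downarrow 0$ while exploiting the boundary behaviour of $a(k)$ on $\partial(K \cap P_0H).$ Local boundedness of $\geps_1(P_0\col\xi\col\gl\col\eta)$ on the whole of $G$ is then immediate from the uniform $K$-bound and the right $P_0$-equivariance of $\geps_1.$

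Finally, continuity of $\gl \mapsto \geps_1(P_0\col\xi\col\gl\col\eta)|_K$ as a map into $L^1(K) \otimes \cH_\xi$ follows from dominated convergence: for each fixed $k \in K \cap P_0H,$ the scalar $a(k)^{\gl + \rho_{P_0}}$ is holomorphic in $\gl,$ giving pointwise continuity in $\gl$; the uniform bound just established provides an $L^\infty$- (hence $L^1$-) dominating function on the compact set $K.$ The principal obstacle throughout is the convexity estimate in the previous paragraph, which is the genuinely new ingredient needed to pass from the strict inequality in Prop.~5.6 of \cite{Banps1} to the non-strict inequality of the present lemma.
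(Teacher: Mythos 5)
Your proof is correct and follows essentially the same route as the paper: reduce to irreducible $\xi$, note that measurability was settled in the preceding discussion, obtain the uniform bound $\sup_K\|\geps_1(P_0:\xi:\gl:\eta)\|_\xi\leq\|\eta\|_\xi$ from the convexity argument underlying \cite[Prop.~5.6]{Banps1} (the paper cites the convexity theorem of \cite{Bconv} where you cite \cite{BB2014}, but it is the same mechanism), and conclude the $L^1$-continuity by dominated convergence. No gaps.
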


\begin{proof}
We may as well assume that $\xi$ is irreducible. The assertions about measurability
have been settled above. For the assertions about boundedness, it suffices to
consider the restriction of the function to $K.$ From the argument in the
proof of \cite[Prop. 5.6]{Banps1}, which in turn relies on the convexity theorem
of \bib{Bconv}, it follows that for all $\gl$ in the indicated region we have
$$
\sup_K \| \geps_1(P_0: \xi: \gl: \eta)\|_\xi  \leq \|\eta\|_\xi.
$$
We obtain the final assertion by observing that $\geps_1(P_0: \xi: \gl: \eta)$ depends pointwise
continuously on $\gl$ and applying Lebesgue's dominated convergence theorem.
\end{proof}

\begin{prop}\label{p: convergence density on HP H}
Let $P\in \parabsgsA$ and $P_0 \in \parabsgsAq$ be such that $P \subseteq P_0.$
Let $\xi$ be a finite dimensional unitary representation of $M_0$
and   $\eta \in V(\xi,e).$
Let $\gl \in \faqdc$ be such that
$$
\forall \ga \in \gS(P)_-: \qquad \inp{\Re \gl + \rho_{P_0}}{\ga} \leq 0.
$$
Finally, let $f \in C^\infty(P:\xi_M:-\bar \gl + \rhoPh).$ Then the density
$\bar f_{\eta,\omega},$ defined in Lemma \ref{l: density associated with f},
is integrable. Moreover,
\begin{equation}
\label{e: integral f and epsilon}
\int_{\HP \bs H}  \bar f_{\eta, \omega} =
c_\omega \inp{\injP \big(\geps_1(P_0:\xi: \gl: \eta)\big)}{f},
\end{equation}
with $c_\omega > 0$ a constant depending on the normalization of the positive density $\omega.$
\end{prop}

\begin{proof}
By the assumption on $\gl,$ the function $\geps = \injP\big(\geps_1(P_{0}:\xi:\gl:\eta)\big)$ is locally integrable on $K.$
It follows that the expression on the right-hand side of (\ref{e: integral f and epsilon}) equals the integral
$$
\int_K \inp{\geps(k)}{f(k)}_\xi\; dk,
$$
where $dk$ denotes normalized Haar measure on $K.$ The integrand is
left $M$-invariant, so that the integral may also be written as the integral
over $k \in M\bs K,$ with $dk$ replaced with the normalized invariant
density $d\bar k$ on $M \bs K.$ This density may be viewed as the section
of the density bundle over $M \bs K$ given by
$$
k \mapsto dr_k([e])^{-1*} \omega_{M\bs K}
$$
with $\omega_{M\bs K}$ a suitable positive density on $\fk/\fm \simeq T_{[e]} (M \bs K).$
We now obtain that
\begin{equation}
\label{e: first integral for pairing}
\inp{\geps}{f}
= \int_{M\bs K} \inp{\geps(k)}{f(k)}_\xi\; dr_k([e])^{-1*} \omega_{M\bs K}.
\end{equation}
Let $\phi: M\bs K \to P\bs G$ be the diffeomorphism induced by the inclusion
$K \to G.$
Then we find that the pull-back under $\phi$ of the density in the
integral in the right-hand side of (\ref{e: first integral for pairing}) equals
$$
x \mapsto \inp{\geps(x)}{f(x)}\;  dr_x([e])^{-1*}
d\phi([e])^{-1*}\omega_{M\bs K}.
$$
Since $P_0 H = P H,$ it follows that
that the above density is supported by $PH.$
Writing $\omega_{P \bs G} = d\phi([e])^{-1*}\omega_{M \bs K},$
we obtain that the integral in (\ref{e: first integral for pairing}) equals
\begin{equation}
\label{e: second integral for pairing}
\int_{P \cdot H} \inp{\geps(x)}{f(x)} dr_x([e])^{-1*}
\omega_{P \bs G}.
\end{equation}

Let $\psi: \HP \bs H \to P \bs G$ be the natural open embedding
induced by the inclusion map $H \to G.$ Then
$|d\psi([e])^* \omega_{P\bs G} | = c_\omega^{-1} \omega_{P\bs G}$
for a positive constant $c_\omega.$ We now observe that
\begin{eqnarray*}
\lefteqn{
\psi^*\Big( Px \mapsto \inp{\geps(x)}{f(x)}_\xi\,\, dr_x([e])^{-1*} \omega_{P \bs G}\Big)}\\
&=& c_\omega^{-1}  \cdot \Big( H_P h \mapsto \inp{\geps(h)}{f(h)}_\xi\,\, dr_h([e])^{-1*} \omega\Big) \\
&=& c_\omega^{-1} \bar f_{\eta, \omega}.
\end{eqnarray*}
By invariance of integration of densities under diffeomorphisms,
we see that (\ref{e: second integral for pairing}) equals
$$
c_\omega^{-1} \int_{\HP\bs H} \bar f_{\eta, \omega}.
$$
\end{proof}

For $\gl \in \faqdc$ such that the conditions of the above theorem
are fulfilled, and for $\eta \in V(\xi,e),$
we define the conjugate-linear functional
$j_H(P: \xi_M: \gl:\eta)$ on $C^\infty(P: \xi_M: -\bar \gl + \rhoPh)$ by
\begin{equation}
\label{e: defi J H P}
\inp{j_H(P: \xi_M: \gl:\eta)}{f} =
c_\omega^{-1} \int_{H_P \bs H} { \bar f_{\eta, \omega}},
\end{equation}
for $f \in C^\infty(P: \xi_M: -\bar \gl + \rhoPh).$

We now recall the definition of the $H$-fixed distribution vector $j(P_{0},\xi,\lambda)$ from \cite[Section 5]{Banps1}.
For $\gl \in \faqdc$ such that
$$
\forall \ga \in \gS(P)_-: \qquad \inp{\Re \gl + \rho_{P_0}}{\ga} \leq 0
$$
and for $v\in\cW$ and $\eta\in V(\xi,v)$ we define $\epsilon_{v}(P_{0}:\xi:\lambda:\eta):G\to\cH_{\xi}$ by
$$
\left\{
\begin{array}{l}
\geps_v(P_0:\xi: \gl: \eta) = 0  \qquad{\rm outside}\quad  P_0vH\\
 \geps_v(P_0:\xi:\gl:\eta)(namvh) = a^{\gl + \rho_{P_0}}\xi(m) \eta.
\end{array}
\right.
$$
We further define
$$
j(P_{0}: \xi : \gl)(\eta)
=\sum_{v \in \cW} \geps_{v}(P_{0} : \xi : \gl: \eta_{v}),
    \qquad\big(\eta\in V(\xi)\big).
$$
Then $j(P_{0}:\xi:\lambda)$ is a map $V(\xi)\to \Cminf(P_{0}:\xi:\lambda)^{H}$, hence defines an element in $V(\xi)^* \otimes \Cminf(K : K \cap M_0:\xi)$. The map $\lambda\mapsto j(P_{0}:\xi:\lambda)$ extends to a meromorphic $V(\xi)^* \otimes \Cminf(K : K \cap M_0:\xi)$-valued function on $\faqdc$. See \cite[Section 5]{Banps1} for details.
(Strictly speaking the definition in \cite{Banps1}
is given for $\xi$ irreducible, but the definition works equally well in general.)

Proposition \ref{p: convergence density on HP H} now has the following corollary.

\begin{cor}
Let $\xi$ be a finite dimensional unitary representation of $M_0.$
The map $\gl \mapsto j_H(P:\xi_M :\gl)$ extends to a meromorphic
$V(\xi, e)^* \otimes \Cminf(K : M : \xi_{M})$-valued function.
Moreover,
$$
j_H(P: \xi_{M} : \gl) = \inj \after j(P_0: \xi : \gl)\after \rmi_e
$$
as an identity of meromorphic
$V(\xi, e)^* \otimes \Cminf(K : M : \xi_{M})$-valued functions.
In particular,
$$
j_H(P: \xi_M : \gl) \in V(\xi, e)^* \otimes
\Cminf(P: \xi_M: \gl - \rhoPh)^H
$$
for generic $\gl \in \faqdc.$
\end{cor}

\begin{rem} In the above formulation we have used the notation $\inj$ rather than $\injP,$ to emphasize
that $j(P_0: \xi : \gl)\after \rmi_e$ is viewed as  a  $\gl$-dependent element of the space
$V(\xi, e)^* \otimes \Cminf(K : M : \xi_{M}).$ 
\end{rem}

Let $v \in \cW.$
Motivated by the definition of
$j(P_{0}:\xi:\gl)$ and the above identity, we define the meromorphic $\Hom\big(V(\xi), \Cminf(K:M:\xi_M)\big)$-valued
map $j(P:\xi_{M}:\dotvar)$ by
$$
j(P: \xi_{M} : \gl) =
\sum_{v \in \cW} \pirep_{P, \xi_M, \gl - \rhoPh}(v^{-1})
j_{v H v^{-1}}(P : \xi_{M} : \gl) \after \pr_v.
$$

\begin{cor}
\label{c: equality of jP and jP0}
Let $\xi $ be a finite dimensional unitary representation of $M_{0}.$ Then
\begin{equation}
\label{e: j P and j P zero}
j(P: \xi_M : \gl) = \inj \after j(P_0: \xi :\gl)
\end{equation}
as an identity of meromorphic
$V(\xi)^* \otimes \,\Cminf(K: M : \xi_M)$-valued
functions of $\gl \in \faqdc.$
In particular, for $\eta \in V(\xi)$ and generic $\gl \in \faqdc,$
$$
j(P:\xi_M:\gl)(\eta) \in \Cminf(P:\xi_M: \gl - \rhoPh)^H.
$$
\end{cor}

We recall that the map $p$ in the result below is exclusively used in the compact
picture of the induced representations, see Remark \ref{rem: warning on p}.
\begin{cor}
\label{c: p and j P}
Let $\xi \in \dMzerofu$ and $\eta \in V(\xi).$ Then for every $x \in G,$ 
$$ 
p \after \pi_{P, \xi_M, \gl - \rhoPh}(x) j(P:\xi_M:\gl)(\eta) =  \pi_{P,\xi_M, \gl- \rhoPh}(x) j(P:\xi_M:\gl)(\eta)
$$ 
as an identity of meromorphic $C^{-\infty}(K:M:\xi_M)$-valued functions of $\gl \in \faqdc.$ 
\end{cor}

\begin{proof}
Use (\ref{e: j P and j P zero}) and note that $\pi_{P, \xi_M, \gl- \rhoPh}(x) \after \inj = \inj \after 
\pi_{P_0, \xi, \gl}(x)$ and $p \after \inj = \inj,$ see (\ref{e: relations p proj and inj}).
\end{proof}

\section{An important fibration}
\label{s: an important fibration}
In this section we will apply Fubini's theorem, as formulated in
the appendix, Theorem \ref{t: fubini for group fibering}, to an important fibration.
The main result will be needed for the definition of distribution vectors for induced representations with
 $P \in \parabsA$ not necessarily contained in a parabolic subgroup from $\parabsgsAq.$

We assume that $P, Q \in \parabsA$ and that $P \succeq Q.$ There exists $X \in \faq$ such that
\begin{enumerate}
\itema
$\ga(X) \neq 0$ for all $\ga \in \gS(P)\setminus \fahd;$
\vspace{-5pt}
\itemb
$\ga(X) > 0$ for all $\ga \in \gS(P, \gs\Cartan).$
\end{enumerate}
Since $\gS(Q, \gs\Cartan) \subseteq \gS(P, \gs\Cartan),$ it follows
that (a) and (b) are also valid with $Q$ in place of $P.$
We now put
$$
\fn_{Q,X}:= \bigoplus_{\substack{\ga \in \gS(Q)\\ \ga(X) > 0}}
\;\; \fg_\ga,\quad {\rm and} \quad N_{Q,X}:= \exp(\fn_{Q,X}).
$$

\begin{lemma}
\label{l: iso NQ mod H}
The multiplication map $(n_1, n_2) \mapsto n_1 n_2$  is  a diffeomorphism
$$
H_{N_Q}\times N_{Q,X} \;{\buildrel\simeq \over  \longrightarrow }\;N_Q.
$$
\end{lemma}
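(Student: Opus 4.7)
The strategy is to reduce the assertion to a vector-space decomposition $\fn_Q = \fh_{N_Q} \oplus \fn_{Q,X}$ into two Lie subalgebras, and then appeal to the standard factorization theorem for simply connected nilpotent Lie groups.

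First I would set up the root decomposition. Using the opening lemma of Section \ref{s: minimal parabolics}, partition $\gS(Q) = A \sqcup B \sqcup C$ with $A := \gS(Q)\cap \fahd$, $B := \gS(Q,\gs) \setminus \fahd$ and $C := \gS(Q,\gs\Cartan) \setminus \fahd$. For $\ga\in A$, Lemma \ref{l: root space in fh} gives $\fg_\ga \subset \fh$, and $\ga(X) = 0$ since $X\in \faq$. For $\ga\in C$, $\gs\ga = -\ga$ forces $\fg_\ga \cap \fh = 0$, and condition (b) yields $\ga(X) > 0$. For $\ga\in B$, the root $\gs\ga \in B$ is distinct from $\ga$, condition (a) gives $\ga(X)\neq 0$, and $\gs X = -X$ implies $\gs\ga(X) = -\ga(X)$, so exactly one root of each $\gs$-pair satisfies $\ga(X) > 0$; the $\gs$-invariants of $\fg_\ga \oplus \fg_{\gs\ga}$ form the subspace $\{Y + \gs Y : Y \in \fg_\ga\}$.

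Next I would assemble the direct-sum decomposition. From the above,
\[
\fn_{Q,X} = \bigoplus_{\ga\in C} \fg_\ga \;\oplus\; \bigoplus_{\ga\in B,\, \ga(X)>0} \fg_\ga, \qquad \fh_{N_Q} = \bigoplus_{\ga\in A} \fg_\ga \;\oplus\; \bigoplus_{\{\ga, \gs\ga\} \subset B} (\fg_\ga \oplus \fg_{\gs\ga})^\gs,
\]
and dimensions add to $\dim \fn_Q$. For triviality of the intersection, write an element of $\fh_{N_Q}$ as $\sum_A Y_\ga + \sum_{\text{pairs}}(Z_\ga + \gs Z_\ga)$, choosing the representative $\ga$ of each pair with $\ga(X) > 0$; if the element lies in $\fn_{Q,X}$, then each $Y_\ga$ (sitting in $\fg_\ga$ with $\ga(X) = 0$) and each $\gs Z_\ga$ (sitting in $\fg_{\gs\ga}$ with $\gs\ga(X) < 0$) must vanish, forcing the whole element to be zero. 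Both subspaces are Lie subalgebras of $\fn_Q$: the first as the intersection $\fh \cap \fn_Q$, and the second because $\ga(X), \gb(X) > 0$ with $\ga+\gb \in \gS(Q)$ forces $(\ga+\gb)(X) > 0$.

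Finally I would invoke the standard factorization theorem: in a simply connected nilpotent Lie group $N$ with Lie algebra $\fn$, any vector-space direct sum decomposition $\fn = \fh_1 \oplus \fh_2$ into Lie subalgebras lifts via multiplication to a diffeomorphism $H_1 \times H_2 \to N$, where $H_i = \exp(\fh_i)$. Since $N_Q$ is simply connected nilpotent and $H_{N_Q} = \exp(\fh_{N_Q})$ by injectivity of $\exp$ on $\fn_Q$, the lemma follows. The only real obstacle is the bookkeeping in the root decomposition; the factorization step is standard in the nilpotent setting and can be verified directly by showing that after left translation the differential of multiplication at $(h,n)$ is $(Y,Z)\mapsto \Ad(n^{-1})Y + Z$, which has trivial kernel (since $\Ad(N_{Q,X})$ preserves $\fn_{Q,X}$, the equation $\Ad(n^{-1})Y \in \fn_{Q,X}$ forces $Y\in \fh_{N_Q}\cap \fn_{Q,X} = 0$), combined with global injectivity obtained from $H_{N_Q}\cap N_{Q,X} = \{e\}$ via the $\exp$-diffeomorphism on $\fn_Q$.
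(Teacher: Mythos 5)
Your argument is correct, but it is not the paper's: the paper gives no proof at all and simply defers to \cite[Prop.~2.16]{BB2014}. What you have done is reconstruct a self-contained argument, and the route you take --- establishing the vector-space decomposition $\fn_Q=(\fh\cap\fn_Q)\oplus\fn_{Q,X}$ into two subalgebras by sorting $\gS(Q)$ into $\gS(Q)\cap\fahd$, the $\gs$-pairs in $\gS(Q,\gs)\setminus\fahd$, and $\gS(Q,\gs\Cartan)$, and then invoking the factorization theorem for simply connected nilpotent groups --- is the natural one; your description of $\fh\cap\fn_Q$ agrees with the one the paper itself derives later, in the proof of Lemma \ref{l: density associated with f}. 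What your version buys is that the reader need not chase the external reference; the cost is that you must import the nilpotent factorization theorem, which the cited proposition presumably packages together with the same root combinatorics.

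Two small points. First, for $\ga\in\gS(Q,\gs\Cartan)\setminus\fahd$ it is not true in general that $\gs\ga=-\ga$ (that holds only when $\ga\in\faqd$); what is true, and what you actually need, is that $\gs\ga\neq\ga$ and $\gs\ga\notin\gS(Q)$, which forces the $\fg_\ga$-component of any $\gs$-fixed element of $\fn_Q$ to vanish. Second, your ``direct verification'' of the factorization step (injective differential plus global injectivity from $H_{N_Q}\cap N_{Q,X}=\{e\}$) only shows that multiplication is an injective local diffeomorphism, i.e.\ an open embedding; surjectivity onto $N_Q$ requires an additional standard input (for instance closedness of orbits of unipotent groups, or an induction along the descending central series). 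Since you also invoke the standard factorization theorem, which contains this, the proof is complete, but the sketch by itself would not be.
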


This result is contained and proven in \cite[Prop. 2.16]{BB2014}.

\begin{lemma}
\label{l: inclusion NQ X}
Let $Q,P \in \parabsA$ be such that $P \succeq Q.$ Let $X \in \faq$ be such that (a) and (b) are valid.
Then
$$
N_{Q,X} \subseteq N_{P,X}.
$$
\end{lemma}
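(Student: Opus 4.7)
The plan is to reduce the inclusion to the root-space level. Since $N_{Q,X}$ and $N_{P,X}$ are connected Lie subgroups obtained by exponentiating $\fn_{Q,X}$ and $\fn_{P,X}$ respectively, it suffices to show $\fn_{Q,X}\subset \fn_{P,X}$. This in turn follows if every root $\ga\in\gS(Q)$ with $\ga(X)>0$ satisfies $\ga\in\gS(P)$, because then the positivity condition $\ga(X)>0$ transfers without change. So I fix such an $\ga$ and aim to prove $\ga\in\gS(P)$.

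First, since $X\in\faq$ and $\ga(X)>0$, the root $\ga$ cannot lie in $\fahd$. By Lemma 2.1 (the disjoint decomposition $\gS(Q)=\gS(Q,\gs)\sqcup \gS(Q,\gs\Cartan)$), either $\ga\in\gS(Q,\gs\Cartan)$ or $\ga\in\gS(Q,\gs)$. In the first case, the definition of $P\succeq Q$ in (\ref{e: defi ordering on minparabs}) gives $\gS(Q,\gs\Cartan)\subset\gS(P,\gs\Cartan)\subset\gS(P)$, and we are done.

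In the remaining case $\ga\in\gS(Q,\gs)$, I will argue by contradiction. Suppose $\ga\notin\gS(P)$, so that $-\ga\in\gS(P)$. Then $-\ga\in\gS(P)\cap\gS(\bar Q)$. Here I invoke Lemma \ref{l: characterization ordering minparabs}(c), which says that under $P\succeq Q$ this intersection equals $\gS(P,\gs\Cartan)\cap\gS(\bar Q,\gs)$. In particular $-\ga\in\gS(P,\gs\Cartan)$. But hypothesis (b) on $X$ asserts that $\gb(X)>0$ for every $\gb\in\gS(P,\gs\Cartan)$, so $(-\ga)(X)>0$, contradicting $\ga(X)>0$. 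Hence $\ga\in\gS(P)$, completing the argument.

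The only nontrivial step is Step 3, and the main obstacle is recognizing that Lemma \ref{l: characterization ordering minparabs}(c) is the precise tool needed: it converts the set-theoretic condition $\ga\notin\gS(P)$ (i.e.\ $-\ga\in\gS(P)\cap\gS(\bar Q)$) into a statement about $\gs\Cartan$-invariance, which then interacts directly with the sign condition on $\fa_\iq$ built into the choice of $X$. The other pieces---the partition lemma, the definition of $\succeq$, and the elementary observation that $\mathfrak{a}_{\mathrm h}^*$ vanishes on $\faq$---serve only to narrow the analysis to this single critical case.
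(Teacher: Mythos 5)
Your proof is correct and follows essentially the same route as the paper: reduce to showing that any $\ga\in\gS(Q)$ with $\ga(X)>0$ lies in $\gS(P)$, assume not, and derive a sign contradiction from condition (b) by showing $-\ga\in\gS(P,\gs\Cartan)$. The only cosmetic difference is that the paper gets there by partitioning $-\ga\in\gS(P)$ into the $\gs$- and $\gs\Cartan$-cases via the definition of $\succeq$, whereas you route the same conclusion through Lemma \ref{l: characterization ordering minparabs}(c); your preliminary case split on $\ga\in\gS(Q,\gs\Cartan)$ versus $\ga\in\gS(Q,\gs)$ is harmless but unnecessary, since the Lemma \ref{l: characterization ordering minparabs}(c) argument covers both cases at once.
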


\begin{proof}
Let $\ga \in \gS(Q)$ be such that $\ga(X) > 0.$ Then it suffices to show that $\ga \in \gS(P).$
Assume this were not the case. Then either $-\ga \in \gS(P, \gs),$ or $-\ga \in \gS(P, \gs\Cartan).$ In the first case it would follow
that $-\ga \in \gS(Q,\gs),$ which contradicts the assumption that $\ga \in \gS(Q).$ In the second it would
follow that $-\ga(X) > 0$ which contradicts the assumption that $\ga(X) > 0.$
\end{proof}

\begin{lemma}\label{l: H_NP bs H_NQ simeq (NQ cap NP)bs NQ}
The inclusion map $H_{N_Q}\to N_Q$ induces a diffeomorphism
$$
\gf: H_{N_P}\bs H_{N_Q}  \;\; {\buildrel \simeq \over \longrightarrow} \;\; (N_Q \cap N_P) \bs N_Q.
$$
\end{lemma}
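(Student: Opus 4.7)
The plan is to verify that $\gf$ is a smooth bijection whose differential at the identity coset is a linear isomorphism, and then to propagate invertibility of the differential to every point by equivariance under right translation by $H_{N_Q}$.

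Well-definedness and bijectivity are straightforward. The inclusions $H_{N_P} \subset H_{N_Q} \subset N_Q$ give $H_{N_P} = H \cap N_P \cap N_Q \subset N_Q \cap N_P$, so $\gf$ is well-defined. If $h_1, h_2 \in H_{N_Q}$ satisfy $h_1 h_2^{-1} \in N_Q \cap N_P$, then $h_1 h_2^{-1} \in H \cap N_P = H_{N_P}$, yielding injectivity. For surjectivity, given $n \in N_Q$, Lemma \ref{l: iso NQ mod H} provides a decomposition $n^{-1} = h n_X$ with $h \in H_{N_Q}$ and $n_X \in N_{Q,X}$; inverting gives $n = n_X^{-1} h^{-1}$, where $n_X^{-1} \in N_{Q,X} \subset N_Q \cap N_P$ by Lemma \ref{l: inclusion NQ X}, so $n \in (N_Q \cap N_P) H_{N_Q}$.

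Next I would compute the differential $d\gf_{[e]} \colon (\fh \cap \fn_Q)/(\fh \cap \fn_P) \to \fn_Q/(\fn_Q \cap \fn_P)$, which is induced by inclusion. Its kernel vanishes because $\fh \cap \fn_P \subset \fn_Q$ forces $(\fh \cap \fn_Q) \cap (\fn_Q \cap \fn_P) = \fh \cap \fn_P$. For surjectivity, the codomain is spanned by the root spaces $\fg_\ga$ with $\ga \in S := \gS(Q) \cap \gS(\bar P)$, and the key combinatorial step is to show that every $\ga \in S$ satisfies $\ga \in \gS(Q, \gs) \setminus \fahd$ and $\gs\ga \in \gS(P) \cap \gS(Q)$. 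The first property follows from $P \succeq Q$ together with Lemma \ref{l: intersection with faqd}, since $\gS(Q, \gs\Cartan) \subset \gS(P)$ rules out $\gs\Cartan$-type roots in $S$ and the roots in $\fahd$ coincide for $P$ and $Q$. The second follows by passing to negatives in Lemma \ref{l: characterization ordering minparabs}(c), which rewrites $S$ as $\gS(\bar P, \gs\Cartan) \cap \gS(Q, \gs)$; unpacking the $\gs\Cartan$-condition then gives $\gs\ga \in \gS(P)$. Given these, for $Y \in \fg_\ga$ the $\gs$-averaged element $Y + \gs Y$ lies in $\fh \cap \fn_Q$ and reduces to $Y$ modulo $\fn_Q \cap \fn_P$, establishing surjectivity.

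To conclude, $\gf$ intertwines the right $H_{N_Q}$-actions on the two quotients, and this action is transitive on the source. Since right translation by any $h \in H_{N_Q}$ is a diffeomorphism on both spaces, the differential at any point is obtained from $d\gf_{[e]}$ by composition with linear isomorphisms, so $d\gf$ is invertible everywhere. Combined with smoothness and bijectivity, this gives the diffeomorphism by the inverse function theorem. The main obstacle is the root-combinatorial identification of $S$ with $\gS(\bar P, \gs\Cartan) \cap \gS(Q, \gs)$, which is what forces $\gs\ga \in \gS(P) \cap \gS(Q)$ and thereby keeps the $\gs$-averaging inside $\fn_Q$ while perturbing $Y$ only by something in $\fn_P \cap \fn_Q$.
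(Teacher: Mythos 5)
Your proof is correct, and the bijectivity half runs on exactly the same fuel as the paper's: surjectivity comes from Lemma \ref{l: iso NQ mod H} (write $n^{-1}=hn_X$) combined with the inclusion $N_{Q,X}\subset N_P$ of Lemma \ref{l: inclusion NQ X}, and injectivity from $H\cap N_P=H_{N_P}$. Where you diverge is in upgrading the bijection to a diffeomorphism. The paper never touches the differential explicitly: since $H_{N_Q}\to N_{Q,X}\bs N_Q$ is a diffeomorphism and $N_{Q,X}\subset N_Q\cap N_P$, the natural map $p:H_{N_Q}\to(N_Q\cap N_P)\bs N_Q$ is a surjective submersion with fibers the $H_{N_P}$-cosets, and a bijective submersion is a diffeomorphism. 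You instead compute $d\gf_{[e]}$ by hand, proving surjectivity through the root-combinatorial identity $\gS(Q)\cap\gS(\bar P)=\gS(\bar P,\gs\Cartan)\cap\gS(Q,\gs)$ (correctly extracted from Lemma \ref{l: characterization ordering minparabs}) and the $\gs$-averaging $Y\mapsto Y+\gs Y$, then propagate by right $H_{N_Q}$-equivariance and the inverse function theorem. Both arguments are sound; the paper's is shorter because the submersion property is inherited for free from the global product decomposition, whereas yours re-derives the infinitesimal content of Lemma \ref{l: iso NQ mod H} from scratch. The payoff of your version is that it makes visible exactly which root spaces of $\fh\cap\fn_Q$ cover $\fn_Q/(\fn_Q\cap\fn_P)$ and why the $\gs$-correction term lands in $\fn_P\cap\fn_Q$ --- information that the paper's soft argument conceals.
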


\begin{proof}
It follows from Lemma \ref{l: iso NQ mod H} that the natural map $H_{N_Q}\to N_{Q,X} \bs N_Q$ is a diffeomorphism
onto. By application of Lemma \ref{l: inclusion NQ X} it now follows that the natural map
$$
p: H_{N_Q}\to (N_{Q} \cap N_P) \bs N_Q
$$
is a surjective submersion. The map $p$ intertwines the natural $H_{N_Q}$-actions, and
the fiber of $[e]$ equals $H_{N_Q}\cap N_P = H_{N_P}.$ Thus, $\gf$ is induced by $p$
and is a diffeomorphism onto.
\end{proof}

\begin{lemma}
\label{l: psi and nilpotent qutotient}
The inclusion map $N_Q \cap \bar N_P \to N_Q$ induces a diffeomorphism
$$
\psi: N_Q\cap \bar N_P  \buildrel \simeq
\over \longrightarrow
(N_Q \cap N_P) \bs N_Q.
$$
\end{lemma}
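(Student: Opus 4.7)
The strategy is to reduce to proving that the global multiplication map
\[
\mu\maps (N_Q \cap N_P) \times (N_Q \cap \bar N_P) \;\longrightarrow\; N_Q, \qquad (u,v)\mapsto uv,
\]
is a diffeomorphism. Granted this, $\psi$ is recovered as the composition of the inclusion $v \mapsto (e,v)$, the map $\mu$, and the quotient $q\maps N_Q \to (N_Q \cap N_P) \bs N_Q$. Bijectivity of $\mu$ furnishes each coset with a unique representative in $N_Q \cap \bar N_P$, so $\psi$ is a smooth bijection, and smoothness of $\psi^{-1}$ follows by composing a local section of $q$ with the second-component projection of $\mu^{-1}$.

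The underlying root-theoretic observation is that every $\ga \in \gS(Q)$ lies in $\gS(P)$ or $\gS(\bar P)$, yielding a disjoint decomposition $\gS(Q) = S_1 \sqcup S_2$ with $S_1 := \gS(Q) \cap \gS(P)$ and $S_2 := \gS(Q) \cap \gS(\bar P)$. Each $S_i$ is closed under root addition. Consequently $\fn_Q \cap \fn_P = \bigoplus_{\ga \in S_1} \fg_\ga$ and $\fn_Q \cap \bar\fn_P = \bigoplus_{\ga \in S_2} \fg_\ga$ are subalgebras of $\fn_Q$, complementary as subspaces, so the corresponding subgroups are closed and connected, with dimensions summing to $\dim N_Q$.

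For $\mu$, injectivity is immediate: $u_1 v_1 = u_2 v_2$ forces $u_2^{-1} u_1 = v_2 v_1^{-1} \in (N_Q \cap N_P) \cap (N_Q \cap \bar N_P) \subset N_P \cap \bar N_P = \{e\}$. The differential $d\mu_{(e,e)}$ is the direct-sum map on Lie algebras, a linear isomorphism; by equivariance of $\mu$ under the left action of $N_Q \cap N_P$ and the right action of $N_Q \cap \bar N_P$, this promotes to a local diffeomorphism at every point.

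The principal obstacle is surjectivity of $\mu$, which I propose to prove by induction on the nilpotency class of $\fn_Q$. Let $\fz$ denote the last nonzero term of the lower central series of $\fn_Q$; it is central in $\fn_Q$, and $\fn_Q/\fz$ has strictly smaller class. Since $\fz$ is a sum of root spaces it splits as $(\fz \cap \fn_P) \oplus (\fz \cap \bar\fn_P)$, so the images of $\fn_Q \cap \fn_P$ and $\fn_Q \cap \bar\fn_P$ in $\fn_Q/\fz$ remain complementary subalgebras, and the inductive hypothesis applies in the quotient. For $n \in N_Q$, factoring its class modulo $\exp(\fz)$ and lifting arbitrarily gives $u_0 \in N_Q \cap N_P$, $v_0 \in N_Q \cap \bar N_P$ with $u_0 v_0 = n z$ for some $z \in \exp(\fz)$; writing $z = z_1 z_2$ with $z_i$ in the two central components, centrality of $z_1$ yields $n = (u_0 z_1^{-1})(v_0 z_2^{-1})$, the desired factorization via $\mu$. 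This closes the induction and completes the proof.
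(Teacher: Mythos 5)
Your proof is correct. Note, however, that the paper offers no argument here at all: its ``proof'' is the single sentence ``This is well known,'' the point being that this is the classical decomposition $N_Q=(N_Q\cap N_P)(N_Q\cap\bar N_P)$ for two minimal parabolic subgroups containing $A$, available in the standard references on intertwining operators (e.g.\ Knapp--Stein). So what you have done differently is to supply a complete, self-contained proof of that classical fact: the root-theoretic splitting $\gS(Q)=(\gS(Q)\cap\gS(P))\sqcup(\gS(Q)\cap\gS(\bar P))$ into two closed subsets, injectivity from $N_P\cap\bar N_P=\{e\}$, the local-diffeomorphism argument by bi-equivariance, and surjectivity by induction on the nilpotency class using the central term $\fz$. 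All the steps check out; in particular the factorization $n=(u_0z_1^{-1})(v_0z_2^{-1})$ is legitimate because $\fz$, being $\Ad(A)$-invariant, is the direct sum of its intersections with the root spaces and hence splits as $(\fz\cap\fn_P)\oplus(\fz\cap\bar\fn_P)$. The only presentational caveat is that the statement you induct on must be formulated more generally than the lemma itself --- say, for an $\fa$-graded nilpotent Lie algebra with two complementary graded subalgebras whose weight supports are disjoint --- since the quotient $\fn_Q/\fz$ is no longer literally of the form $\fn_{Q'}$ for a parabolic $Q'$; as you have verified that the images in the quotient are again complementary graded subalgebras, this is a matter of phrasing, not a gap. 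The trade-off is the obvious one: the paper's citation is shorter, while your argument makes the section logically independent of the structure-theory literature.
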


\begin{proof} This is well known.
\end{proof}

\begin{lemma}
\label{l: pull back of omega by psi}
Let $\phi$ and $\psi$ be as in Lemma \ref{l: psi and nilpotent qutotient} and Lemma \ref{l: H_NP bs H_NQ simeq (NQ cap NP)bs NQ}. The map $\Phi:= \gf^{-1} \after \psi$ is a diffeomorphism from $N_Q \cap \bar N_P$ onto
$H_{N_P}\bs H_{N_Q}.$ Moreover, let $\omega$ be a positive $H_{N_Q}$-invariant
density on the image manifold. Then $\Phi^*(\omega)$ is a choice of Haar measure on $N_Q \cap \bar N_P.$
 \end{lemma}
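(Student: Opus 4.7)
The first assertion is immediate: $\Phi = \phi^{-1} \after \psi$ is a composition of two diffeomorphisms by Lemmas \ref{l: H_NP bs H_NQ simeq (NQ cap NP)bs NQ} and \ref{l: psi and nilpotent qutotient}.

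For the Haar measure claim, my plan is to introduce the common intermediate space $(N_Q \cap N_P)\bs N_Q$ and exploit uniqueness of invariant densities under transitive group actions. Since $N_Q$ is a connected nilpotent Lie group, it is unimodular; its closed subgroup $N_Q \cap N_P$ is likewise unimodular. Consequently, the left coset space $(N_Q \cap N_P)\bs N_Q$ admits a right $N_Q$-invariant positive density $\bar\mu$, unique up to positive scaling. Fix any Haar measure $\nu$ on $N_Q \cap \bar N_P$. Both $\phi$ and $\psi$, being induced by inclusions into $N_Q$, are equivariant for the right actions of $H_{N_Q}$ and $N_Q \cap \bar N_P$ respectively, where on source and target these act by right multiplication inside $N_Q$.

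The key step is then to compare densities via equivariance. Since $\phi$ is surjective and $H_{N_Q}$ acts transitively on $H_{N_P}\bs H_{N_Q}$, the induced $H_{N_Q}$-action on $(N_Q \cap N_P)\bs N_Q$ is transitive. Thus $\phi_*\omega$ and $\bar\mu$ are two positive right $H_{N_Q}$-invariant densities on $(N_Q \cap N_P)\bs N_Q$; their ratio is a positive $H_{N_Q}$-invariant smooth function on a transitive space, hence a positive constant, giving $\phi_*\omega = c_1 \bar\mu$. The analogous argument applied to $\psi$, using transitivity of the right $(N_Q \cap \bar N_P)$-action, yields $\psi_*\nu = c_2 \bar\mu$ for some $c_2 > 0$.

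Combining, and using that $(\phi^{-1})^* = \phi_*$ and $\psi^*\psi_* = \mathrm{id}$ for diffeomorphisms, one obtains
$$
\Phi^*\omega = \psi^*(\phi^{-1})^*\omega = \psi^*\phi_*\omega = c_1\,\psi^*\bar\mu = \frac{c_1}{c_2}\,\nu,
$$
which is a positive scalar multiple of $\nu$, hence itself a Haar measure on $N_Q \cap \bar N_P$. I foresee no serious obstacle here: the argument reduces to the standard uniqueness principle for invariant densities on a transitive coset space, together with the unimodularity of nilpotent Lie groups which guarantees the existence of the intermediate right-invariant density $\bar\mu$. The only bookkeeping is to confirm the equivariance of the two inclusion-induced maps $\phi$ and $\psi$, which is transparent from their construction.
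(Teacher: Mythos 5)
Your argument is correct and is essentially the paper's own proof: both compare $\omega$ and a Haar measure on $N_Q\cap\bar N_P$ through the intermediate space $(N_Q\cap N_P)\bs N_Q$ equipped with its $N_Q$-invariant density, using the equivariance of $\gf$ and $\psi$ and the uniqueness of positive invariant densities on a transitive space up to a positive scalar. The only cosmetic difference is that you phrase the comparison via pushforwards $\gf_*\omega$ and $\psi_*\nu$, whereas the paper pulls $dn$ back along $\gf$ and $\psi$; the content is identical.
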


\begin{proof}
Being the composition of two diffeomorphisms, $\Phi$ is a diffeomorphism.
We note that $\Phi^*(\omega) = \psi^* \gf^{-1*} (\omega).$
Let $dn$ be a choice of positive $N_Q$-invariant density on $(N_Q\cap N_P)\bs N_Q.$
Since $\gf$ is $H_{N_Q}$ intertwining, it follows that $\gf^*(dn)$ is
a positive $H_{N_Q}$-invariant density on $H_{N_P}\bs H_{N_Q}.$ By uniqueness of
positive invariant densities up to positive scalars, it follows that $\gf^*(dn) = c \omega$
for some $c>0$, so that also $\gf^{-1*}(\omega) = c^{-1} dn.$ By equivariance, it follows that $\psi^*(dn)$ is a choice
of Haar measure on $N_Q \cap \bar N_P.$ Thus, $\Phi^*(\omega) = c^{-1} \psi^*(dn)$
is as required.
\end{proof}

In view of this lemma we may fix invariant measures $d \bar n$
on $N_Q \cap \bar N_P$ and $dh$ on $H_{N_P}\bs H_{N_Q}$
such that $\Phi^*(dh) = d \bar n.$

\begin{lemma}
\label{l: nilpotent change of variables}
Let $f: G \to \C$ be a left $N_P$-invariant measurable function.
Then the following statements are equivalent.
\begin{enumerate}
\itema
$f$ is absolutely integrable over $H_{N_P}\bs H_{N_Q}. $
\vspace{-5pt}
\itemb
$f$ is absolutely integrable over $N_Q \cap \bar N_P.$
\end{enumerate}
If any of these statements hold, then with invariant measures normalized
as above,
$$
\int_{H_{N_P}\bs H_{N_Q}} f(h)\, d h
=
\int_{N_Q \cap \bar N_P} f(\bar n) \, d\bar n.
$$
\end{lemma}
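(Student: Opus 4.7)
The strategy is to reduce everything to the change-of-variables formula along the diffeomorphism $\Phi = \gf^{-1} \after \psi \col N_Q \cap \bar N_P \to H_{N_P}\bs H_{N_Q}$ of Lemma \ref{l: pull back of omega by psi}, using the $N_P$-invariance of $f$ to show that the two functions being integrated match under $\Phi.$

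First I would verify that $f$ legitimately descends to a function on $H_{N_P} \bs H_{N_Q}.$ Since $H_{N_P} = H \cap N_P \subset N_P,$ left $N_P$-invariance of $f$ forces left $H_{N_P}$-invariance of $f|_{H_{N_Q}},$ so $f$ defines a measurable function $\tilde f$ on $H_{N_P}\bs H_{N_Q}.$

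Next I would identify $\tilde f \after \Phi$ with the restriction $f|_{N_Q \cap \bar N_P}.$ By construction, $\psi$ sends $\bar n \in N_Q \cap \bar N_P$ to the coset $(N_Q \cap N_P)\bar n,$ and $\gf$ sends $H_{N_P}h$ to $(N_Q \cap N_P)h;$ hence, given $\bar n \in N_Q \cap \bar N_P,$ the element $\Phi(\bar n) \in H_{N_P}\bs H_{N_Q}$ is represented by any $h \in H_{N_Q}$ of the form $h = k\bar n$ with $k \in N_Q \cap N_P.$ Since $N_Q \cap N_P \subset N_P,$ the $N_P$-invariance of $f$ yields
\[
\tilde f(\Phi(\bar n)) = f(h) = f(k \bar n) = f(\bar n).
\]

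Finally, Lemma \ref{l: pull back of omega by psi} and the agreed-upon normalization of measures give $\Phi^*(dh) = d\bar n.$ Therefore the standard change-of-variables formula for integrals of densities along a diffeomorphism yields, for the nonnegative function $|f|,$
\[
\int_{H_{N_P}\bs H_{N_Q}} |\tilde f(h)|\, dh = \int_{N_Q \cap \bar N_P} |\tilde f(\Phi(\bar n))|\, d\bar n = \int_{N_Q \cap \bar N_P} |f(\bar n)|\, d\bar n,
\]
which at once gives the equivalence of (a) and (b). Assuming either holds, the same formula applied to $f$ itself (now justified by absolute integrability) yields the desired identity of integrals.

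The routine part is the change-of-variables formula; no real obstacle appears, as the key geometric input (the pullback relation between $dh$ and $d\bar n$) is already packaged in Lemma \ref{l: pull back of omega by psi}. The only delicate point is the matching $\tilde f \after \Phi = f|_{N_Q \cap \bar N_P},$ which is exactly where the hypothesis of $N_P$-invariance (rather than mere $H_{N_P}$-invariance) is used, since the coset representative produced by $\Phi$ differs from $\bar n$ by a factor in $N_Q \cap N_P \subset N_P$ that need not lie in $H.$
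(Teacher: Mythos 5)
Your proposal is correct and follows essentially the same route as the paper: both reduce the statement to the change of variables along $\Phi$ with $\Phi^*(dh) = d\bar n$, and both verify $\tilde f \after \Phi = f|_{N_Q\cap \bar N_P}$ by observing that $\Phi(\bar n)$ is represented by $k\bar n$ with $k \in N_Q\cap N_P \subset N_P$, so left $N_P$-invariance applies. Your write-up merely spells out this coset computation more explicitly than the paper does.
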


\begin{proof}
As $\Phi^*(dh) = d\bar n$  it suffices to show
that $\Phi^*(f|_{H_{N_Q}}) = f|_{N_Q \cap \bar N_P}.$
Since $f$ is left $N_P$-invariant, this follows from the obvious fact that
for $\bar n \in N_Q\cap \bar N_P$ the canonical images of
$\bar n$ and $\Phi(\bar n)$ in $N_P \bs G$ coincide.
\end{proof}

Fix $P,Q \in \parabsA$ and assume that $P \succeq Q.$ Then $H_Q = H\cap Q$ contains $H_P = H \cap P.$ We note that
$H_P \simeq H_{M} \Ah H_{N_P}$ and that $H_Q$ admits a similar decomposition.

We shall now apply the results in the Appendix with $H, H_Q$ and $H_P$ in place of $G, H$ and $L,$ respectively.

Let $\omega_{H_P\bs H} \in \cD_{\fh/\fh_P}, \,\omega_{H_Q\bs H} \in \cD_{\fh/\fh_Q}$
and
$\omega_{H_P\bs H_Q} \in \cD_{\fh_Q/\fh_P}$
be such that
$\omega_{H_P\bs H} = \omega_{H_P\bs H_Q} \otimes \omega_{H_Q\bs H}$
in accordance with the identification
$\cD_{\fh/\fh_P} = \cD_{\fh_Q/\fh_P}  \otimes \cD_{\fh/\fh_Q}$
induced by the natural short exact sequence
$$
0 \longrightarrow  \fh_Q/\fh_P  \longrightarrow  \fh/\fh_P  \longrightarrow  \fh/\fh_Q \longrightarrow 0.
$$
See (\ref{e: short exact sequence}) and Lemma \ref{l: iso independent of splitting} for details.
We observe that
$$
H_P \bs H_Q \simeq H_{N_P} \bs H_{N_Q}
$$
naturally. Using the associated natural isomorphism of the tangent spaces
at the origins, we view $\omega_{H_P\bs H_Q}$ as a density on the quotient
$(\fh \cap \fn_Q) /(\fh \cap \fn_P).$ By unimodularity of the groups $H_{N_Q}$ and $H_{N_P},$ it follows that
$$
dn: n \mapsto dr_n([e])^{-1*}\omega_{H_P\bs H_Q}
$$
defines a choice of right $H_{N_Q}$-invariant density on
$H_{N_P} \bs H_{N_Q}.$ We define the character $\gD_{H_P\bs H}$ of $H_P$
as in Appendix, Equation (\ref{e: defi Delta sub}) with $H$ and $H_P$ in place of $G$ and $L,$
respectively.
Likewise, the space $\cM(H: H_P : \Delta_{H_{P}\bs H})$
is defined as in the text subsequent to
(\ref{e: defi Delta sub}).

\begin{thm}
\label{t: fiber integral J}
Let $f \in \cM(H: H_P : \Delta_{H_{P}\bs H})$ and let $f_P:= f_{\omega_{H_P\bs H}}$ be the associated measurable density on $H_P\bs H.$ Then the following assertions (a) and (b) are equivalent.
\begin{enumerate}
\itema
The density $f_P$ is absolutely integrable.
\vspace{-5pt}
\itemb
There exists a left $H_Q$-invariant set $\cZ$ of measure zero in $H$ such that
\begin{enumerate}
\item[{\rm (1)}]
for every $x \in H\setminus \cZ ,$ the integral
$$
A_x(f): = \int_{H_{N_P}\bs H_{N_Q}} f(nx)\, dn
$$
is  absolutely convergent;
\item[{\rm (2)}]
the function $A(f): x \mapsto A_x(f)$ belongs to  $\cM(H:H_Q:\Delta_{H_Q\bs H}).$
\item[{\rm (3)}]
the density $A(f)_{Q}:= A(f)_{\omega_{H_Q\bs H}}$ is absolutely integrable.
\end{enumerate}
\end{enumerate}
If any of the conditions (a) and (b) is fulfilled, then
$$
\int_{H_P \bs H} f_{P} = \int_{H_Q \bs H} A(f)_{Q}.
$$
\end{thm}

\begin{proof}
We will use the notation introduced in the text before the theorem.
The inclusion map $H_{N_Q} \to H_Q$ induces a diffeomorphism
$$
\phi: H_{N_P} \bs H_{N_Q} \to H_P \bs H_Q.
$$
Fix $x\in H$ and let
$f_{P,x}$  be the density on $H_P\bs H_Q$ given by
$$
f_{P,x} (H_P h) = \Delta_{H_Q\bs H}(h)^{-1} f(hx)dr_h([e])^{-1*}\omega_{H_P\bs H_Q}.
$$
By nilpotence, $\Delta_{H_Q \bs H}(n) = 1$ for $n \in H_{N_Q}.$ It follows
that
$$
\phi^*(f_{P, x})(H_{N_P}\cdot n) = f(n) dr_n([e])^{-1*} \omega_{H_P\bs H_Q} =
f(n) dn.
$$
In accordance with the notation of Theorem \ref{t: fubini for group fibering} we denote the integral of $f_{P,x}$ over $H_P\bs H_Q$ by $I_x(f).$ Then it follows by invariance of integration that the integral for $I_x(f)$ converges absolutely if and only if the integral $A_x(f)$ converges absolutely, while in case of convergence,
$$
I_x(f) = \int_{H_{N_P}\bs H_{N_Q}} \phi^*(f_{P,e})
= A_x(f).
$$
All assertions now follow by application of Theorem \ref{t: fubini for group fibering}.
\end{proof}

\section{H-fixed distribution vectors, the general case}
\label{s: H fixed distribution vectors, general case}
Recall the definition of $\Sigma(P)_{-}$ in (\ref{e: def minus roots}).

\begin{thm}
\label{t: convergence j P and intertwiner}
Let $P \in \cP_\gs(A),$ let $\xi$ be a finite dimensional unitary representation of $M_0$ and $\eta \in V(\xi,e).$ Assume that $\gl \in \faqdc$ satisfies
\begin{equation}
\label{e: condition gl for j H P}
\inp{\Re \gl + \rho_P- \rhoPh}{\ga} \leq  0,\qquad \mbox{\rm for all}\;\; \ga \in \gS(P)_-.
\end{equation}
Furthermore, let $f \in C^\infty(P: \xi_M : - \bar \gl + \rhoPh). $ Then
$$
j_H(P\col\xi_{M} \col \gl\col \eta)f =  \int_{H_P\bs H} \inp{\eta}{f(h)}\,\, dr_h([e])^{-1*}\omega_{H_P\bs H},
$$
with absolutely convergent integral.

Let  $Q \in \parabsA$ be a second parabolic subgroup,
with $P\succeq Q.$ Then for all $x \in G,$
$$
A(Q:P:\xi_{M}: -\bar \gl + \rhoPh)f (x) = \int_{N_Q \cap \bar N_P} f(n x) \; dn
$$
with absolutely convergent integral. Finally,
\begin{eqnarray}\nonumber
\lefteqn{
j_H(P\col\xi_M \col \gl\col \eta)f
}\\
&=& \int_{H_Q\bs H} \inp{\eta}{[A(Q:P: - \bar \gl+\rhoPh)f](h) }\; dr_h([e])^{-1*} \omega_{H_Q\bs H},
\label{e: j H P and intertwiner}
\end{eqnarray}
with absolutely convergent integral.
\end{thm}

\begin{proof}
Observe that the function $f$ restricted to $H_P\bs H$
belongs to $C^\infty(H: H_P : \Delta_{H_{P}\bs H}).$
The first assertion now follows from Proposition \ref{p: convergence density on HP H} and Equation (\ref{e: defi J H P}).

We will now apply Theorem \ref{t: fiber integral J}.
For $x \in H$ the fiber integral takes the form 
$$
A_x(f|_H) = \int_{H_{N_P} \bs H_{N_Q}} f(nx) \; dn,
$$
which by Lemma \ref{l: nilpotent change of variables} equals
$$
\int_{N_Q \cap \bar N_P} f(\bar n x) \; d \bar n.
$$
The latter is just the integral for the standard intertwining operator
$A(Q:P:\xi_{M}: -\bar \gl + \rhoPh)$ (up to suitable normalization). This integral is known to converge absolutely in case
\begin{equation}
\label{e: condition convergence A Q P}
\Re\inp{-\gl + \rhoPh}{\ga} > 0,\quad  \forall \ga \in \gS(P) \cap \gS(\bar Q).
\end{equation}
If $\ga \in \gS(P)\cap \gS(\bar Q),$ then $\ga \in \gS(P)\setminus \gS(Q)$
so that $\ga \notin \faqd$ and $\alpha\notin\gS(P,\sigma)$ from which we conclude that
$\gl \in \gS(P, \gs \Cartan )\setminus\faqd  \subseteq \gS(P)_-.$ It then follows from (\ref{e: condition gl for j H P}) that
$$
\Re \inp{-\gl +\rhoPh}{\ga} >  \Re\inp{- \gl + \rhoPh - \rho_P }{\ga} \geq 0
$$
and we see that (\ref{e: condition convergence A Q P}) is satisfied. This implies the second assertion. The final assertion now follows by application of
Theorem \ref{t: fiber integral J}.
\end{proof}

In the following we will need to use the $K$-fixed
function in the induced representation $\Ind_Q^G(1\otimes \mu \otimes 1),$ for $Q \in \cP(A)$ and $\mu \in \fadc.$ More precisely, given such $Q$ and $\mu$ we define the function $\one_{Q, \mu}: G \to \C$ by
$$
\one_{Q,\mu}(nak) := a^{\mu + \rho_Q},\qquad (k \in K, \,a \in A,\, n \in N_Q).
$$
Thus, $\one_{Q,\mu}$ is the unique function in $C^\infty(Q: 1: \mu)$ satisfying
$\one_{Q, \mu}|_K =1.$

\begin{cor}
\label{c: convergence one Q rhoP}
Let $Q \in \cP(A), $
$P \in \cP_\gs(A)$ and assume that  $P\succeq Q.$ Then
\begin{equation}
\label{e: density one Q rhoP}
h \mapsto  \one_{Q, \rho_P}(h) \; dr_h([e])^{-1*} \omega_{H_Q\bs H}
\end{equation}
defines a density on $H_Q\bs H$ which is absolutely integrable.
\end{cor}

\begin{proof}
We apply Theorem \ref{t: convergence j P and intertwiner} with $\xi = 1,$ $\cH_\xi = \C$ and $\eta =1.$
Furthermore, we take $\gl = - \rho_P + \rhoPh \in \faqd$ so that $-\bar \gl + \rhoPh = \rhoP$, and we take
$
f = \one_{P, \rho_P}.
$
It follows from the mentioned theorem that
the integral for
$
A(Q:P: 1: \rhoP)f
$
converges absolutely. By equivariance, it gives a $K$-fixed element of $C^\infty(Q:1:\rhoP),$
so that
$$
A(Q:P: 1: \rhoP)f  = A(Q:P: 1: \rhoP)\one_{P, \rhoP} = c(Q:P:\rho_P) \one_{Q, \rhoP},
$$
for some constant $c(Q:P:\rho_P)\in\C$. Evaluating this identity in the unit element we find
$$
c(Q:P:\rho_P) = \int_{N_Q \cap \bar N_P} \one_{P, \rho_P}(\bar n) \; d\bar n,
$$
which of course is the integral representation of a partial $c$-function. As the integrand is
everywhere positive, it follows that $c(Q:P:\rho_P)$ is a positive real number.
It now follows from the final assertion of Theorem \ref{t: convergence j P and intertwiner}
that
$$
h \mapsto
 c(Q:P:\rho_P)\cdot \one_{Q, \rhoP}(h)  \; dr_h([e])^{-1*} \omega_{H_Q\bs H}
$$
defines a density on $H_Q\bs H$ which is absolutely integrable. By positivity of
$c(Q:P:\rhoP)$ all assertions now follow.
\end{proof}

Let $\Gamma(Q)$ denote the cone in $\faq$ spanned by the
elements $H_\ga + \gs \Cartan H_\ga,$ for $\ga \in \gS(Q)_-,$ where the latter set is
defined as in (\ref{e: def minus roots}). The (closed) dual
cone in $\faqd$ is readily seen to be given by
\begin{equation}
\label{e: defi dual cone}
\Gamma(Q)^\circ := \{ \gl \in \faqd  \mid  \inp{\gl}{\ga} \geq 0, \;\; \forall \ga \in  \gS(Q)_-\}.
\end{equation}

\begin{lemma}
\label{l: estimate one Q mu}
Let $Q \in \cP(A)$. Let $\mu \in  \Gamma(Q)^\circ.$
Then
\begin{equation}
\label{e: estimate one Q mu}
0< \one_{Q,\mu - \rho_Q}(h) \leq 1, \qquad (h \in H).
\end{equation}
\end{lemma}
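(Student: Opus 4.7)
The positivity $\one_{Q,\mu - \rho_Q}(h) > 0$ is immediate from the definition: writing $h = nak$ in the $N_Q A K$ Iwasawa decomposition, one has $\one_{Q,\mu - \rho_Q}(h) = a^{\mu} = e^{\mu(\log a)} > 0$, so only the upper bound requires argument.

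The main step will be to rewrite $\log a$ as (minus) the Iwasawa projection of $h^{-1}$ in the $KAN_Q$ decomposition. Concretely, from $h = nak$ we obtain $h^{-1} = k^{-1} a^{-1} n^{-1} \in K A N_Q$, since $N_Q$ is a group. Decomposing $A = \Ah \exp(\faq)$, we may write this $A$-component uniquely as $a^{-1} = b_{\ih} \exp(\fH_{Q,\iq}(h^{-1}))$ with $b_\ih \in \Ah$ and $\fH_{Q,\iq}(h^{-1}) \in \faq,$ in accordance with the Iwasawa decomposition
$$
G = K(A \cap H)\exp(\faq) N_Q
$$
used to define $\fH_{Q,\iq}$ in the introduction. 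Since $\mu \in \faqd$ vanishes on $\fah$, this yields
$$
\mu(\log a) = -\mu(\log b_\ih) - \mu(\fH_{Q,\iq}(h^{-1})) = -\mu(\fH_{Q,\iq}(h^{-1})).
$$

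At this point I would invoke the convexity theorem \cite[Thm.~10.1]{BB2014}, as advertised in the introduction, to conclude that $\fH_{Q,\iq}(H) \subset \Gamma(Q)$; applying this to $h^{-1} \in H$ gives $\fH_{Q,\iq}(h^{-1}) \in \Gamma(Q)$. By definition of the dual cone (\ref{e: defi dual cone}), once one checks that $\Gamma(Q)^{\circ}$ coincides with $\{\lambda \in \faqd : \lambda(X) \geq 0 \text{ for all } X \in \Gamma(Q)\}$ - for which the key observation is that a generator $H_\ga + \gs\Cartan H_\ga$ of $\Gamma(Q)$ pairs with $\lambda \in \faqd$ to give $2\inp{\lambda}{\ga}$ since $\gs\Cartan$ acts as $+I$ on $\faqd$ - one concludes that $\mu(\fH_{Q,\iq}(h^{-1})) \geq 0$, hence $\mu(\log a) \leq 0$ and $\one_{Q,\mu-\rho_Q}(h) \leq 1$.

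The only real obstacle is a bookkeeping one: matching the sign conventions of the Iwasawa decomposition used in the definition of $\one_{Q,\mu}$ ($N_QAK$) with the one used to define $\fH_{Q,\iq}$ ($KAN_Q$), and verifying that the convexity theorem gives the cone $\Gamma(Q)$ with the orientation compatible with the definition (\ref{e: defi dual cone}). Both are matters of identifying the $A$-component under $g \mapsto g^{-1}$ and checking signs, and do not hide any conceptual difficulty beyond the invocation of the (deep) convexity theorem itself.
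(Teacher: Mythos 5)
Your proposal is correct and follows essentially the same route as the paper: the paper's proof also consists of invoking \cite[Thm.~10.1]{BB2014} to conclude that for $h = nak$ with $n \in N_Q$, $a\in A$, $k\in K$ one has $\prq\log a \in -\Gamma(Q)$, whence $\one_{Q,\mu-\rho_Q}(h) = e^{\mu(\prq\log a)} \leq 1$ since $\mu$ lies in the dual cone. The sign bookkeeping you flag (passing to $h^{-1}$ to match the $KAN_Q$ convention, and identifying $\Gamma(Q)^\circ$ with the dual cone via $H_\ga + \gs\Cartan H_\ga = 2\pr_\iq H_\ga$) is exactly what the paper compresses into the single assertion $\prq\log a \in -\Gamma(Q)$.
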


\begin{proof}
It follows from \cite[Thm 10.1]{BB2014} that if $h=nak$ with $n\in N_{Q}$, $a\in A$ and $k\in K$, then
$$
\prq \log a \in -\Gamma(Q),
$$
where $\prq$ denotes the projection $\fa\to\faq$. Therefore
$$
\one_{Q,\mu - \rho_Q}(h)
=a^{\mu}
=e^{\mu(\prq \log a)}
\leq 1.
$$
This establishes the upper bound. The lower bound is trivial.
\end{proof}

The above result will play a crucial role in the proof of a domination expressed in
the following lemma.

\begin{lemma}
\label{l: domination of f by oneQ}
Let $Q \in \cP(A)$ and let $\xi$ be a finite dimensional unitary representation
of $M_0$.
Let $P \in \cP_\gs(A)$ and assume that  $P\succeq Q;$ thus, in particular, $\rhoPh = \rhoQh$. Furthermore, assume that  $\gl \in \faqdc$ satisfies
\begin{equation}
\label{e: condition for domination}
\Re \inp{\gl + \rho_P - \rhoPh}{\ga} \leq 0 \qquad\mbox{{\rm for all}} \;\; \ga \in \gS(Q)_-.
\end{equation}
Then for every $f \in C^\infty(Q:\xi_{M}: -\bar \gl + \rhoQh),$ we have
\begin{equation}
\label{e: domination of f by oneQ}
\|f(h)\|_{\xi} \leq  \sup_{k \in K}\|f(k)\|_\xi \cdot \one_{Q,\rho_P}(h),\qquad (h \in H).
\end{equation}
\end{lemma}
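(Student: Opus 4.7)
The plan is to compare $\|f(h)\|_\xi$ pointwise with $\one_{Q, \rho_P}(h)$ via the Iwasawa decomposition $G = N_Q A K$. Given $h \in H$, write $h = nak$ with $n \in N_Q$, $a \in A$, $k \in K$. Since $N_Q$ is $A$-stable, $nak = a(a^{-1}na)k$ is of the form $man''x$ with $m=1$, $n'' = a^{-1}na \in N_Q$, $x = k$. Applying the transformation rule for $f \in C^\infty(Q:\xi_M: -\bar\gl + \rhoQh)$ gives $f(h) = a^{-\bar\gl + \rhoQh + \rho_Q}f(k)$, hence
$$
\|f(h)\|_\xi = a^{-\Re \gl + \rhoQh + \rho_Q} \|f(k)\|_\xi.
$$
By the hypothesis $P \succeq Q$ combined with Lemma \ref{l: intersection with faqd}(b), $\rhoQh = \rhoPh$. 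Comparing with $\one_{Q,\rho_P}(h) = a^{\rho_P + \rho_Q}$, the ratio reduces to $a^{\mu'}\|f(k)\|_\xi$ where
$$
\mu' := -\Re\gl - (\rho_P - \rhoPh).
$$
Thus the bound (\ref{e: domination of f by oneQ}) will follow once $a^{\mu'} \leq 1$ is established.

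The estimate $a^{\mu'} \leq 1$ should come from Lemma \ref{l: estimate one Q mu} applied with $\mu = \mu'$, since $\one_{Q, \mu' - \rho_Q}(h) = a^{\mu'}$. For this I must verify $\mu' \in \faqd \cap \Gamma(Q)^\circ$. For the first membership, $\fq$-extremality of $P$ gives $\gS(P) \setminus \fahd = \gS(P, \gs\Cartan)$, whence $\rho_P - \rhoPh = \rho_{\gS(P, \gs\Cartan)}$. Since $\gs\Cartan$ permutes $\gS(P, \gs\Cartan)$ and preserves root multiplicities, this half-sum is $\gs\Cartan$-invariant; as $\gs\Cartan$ acts as $-1$ on $\fah$ and $+1$ on $\faq$, its invariants in $\fad$ form precisely $\faqd$. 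Hence $\rho_P - \rhoPh \in \faqd$, and with $\Re\gl \in \faqd$ we obtain $\mu' \in \faqd$. The cone condition $\inp{\mu'}{\ga} \geq 0$ for all $\ga \in \gS(Q)_-$ is then a direct rewriting of hypothesis (\ref{e: condition for domination}) with a sign change.

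With $\mu' \in \faqd \cap \Gamma(Q)^\circ$ verified, Lemma \ref{l: estimate one Q mu} yields $a^{\mu'} \leq 1$, and combining with $\|f(k)\|_\xi \leq \sup_{k'\in K}\|f(k')\|_\xi$ finishes the argument. The proof is essentially bookkeeping; the one substantive point is the identification $\rho_P - \rhoPh \in \faqd$, which depends crucially on $\fq$-extremality of $P$. Without it, $\rho_P - \rhoPh$ could acquire an $\fahd$-component, and the convexity bound of \cite[Thm.~10.1]{BB2014} (which only controls $\prq \log a$) would no longer suffice to kill the $A_\ih$-direction.
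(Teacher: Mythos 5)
Your proof is correct and follows essentially the same route as the paper's: both reduce the pointwise bound to the factorization $\one_{Q,\mu+\rho_P}=\one_{Q,\mu-\rho_Q}\one_{Q,\rho_P}$ with $\mu=-\Re\gl-(\rho_P-\rhoPh)$ and then invoke Lemma \ref{l: estimate one Q mu}, i.e.\ the convexity theorem of \cite{BB2014}. A nice bonus of your write-up is that you verify explicitly that $\rho_P-\rhoPh=\rho_{\gS(P,\gs\Cartan)}\in\faqd$ (so that $\mu$ really lies in $\Gamma(Q)^\circ\subset\faqd$), a point the paper's proof leaves implicit when it asserts $\mu\in\Gamma(Q)^\circ$ from (\ref{e: condition for domination}) alone.
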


\begin{proof}
Since $P \succeq Q,$ we have $\rhoPh = \rhoQh.$
Thus, if  $k \in K$ and $u \in Q$ then
$$
f(uk) = \one_{Q, -\bar \gl + \rhoPh}(uk)\, f(k) .
$$
It follows that
\begin{equation}
\label{e: first estimate f}
\|f(x)\|_\xi \leq \sup_{k \in K} \|f(k)\|_\xi  \cdot \one_{Q, \mu + \rhoP}(x),\qquad (x \in G),
\end{equation}
where $\mu = -\Re \gl - \rhoP + \rhoPh.$
For $x \in G$ we have
$$
\one_{Q,  \mu + \rhoP}(x) = \one_{Q, \mu - \rho_Q}(x) \one_{Q, \rhoP}(x).
$$
As $\mu \in \Gamma(Q)^\circ$ by (\ref{e: condition for domination}), it follows by application of Lemma \ref{l: estimate one Q mu} that
\begin{equation}
\label{e: second estimate f}
\one_{Q, \mu + \rhoP }(h) \leq   \one_{Q,\rhoP}(h), \qquad (h \in H).
\end{equation}
The required estimate (\ref{e: domination of f by oneQ})
follows from combining (\ref{e: first estimate f}) and
(\ref{e: second estimate f}).
\end{proof}

For the formulation of the next result, we note that the set of $\gl \in \faqdc$ satisfying condition
(\ref{e: condition for domination})
is given by
\begin{equation}\label{e: defi A_PQ}
\holset_{P,Q} := -(\rho_P - \rhoPh) - \Gamma(Q)^\circ + i\faqdc.
\end{equation}

\begin{cor}
\label{c: first convergence j H Q}
Let $Q \in \cP(A),$ $\xi$ a finite dimensional unitary representation
of $M_0$ and $\eta \in V(\xi,e).$
Let $P \in \cP_\gs(A)$ and assume that  $P\succeq Q;$ thus, in particular, $\rhoPh = \rhoQh$.
Let $\lambda\in\holset_{P,Q}$.
Then for every $f \in C(Q:\xi_M: -\bar \gl + \rhoQh),$ the integral
\begin{equation}
\label{e: integral for j H Q}
j_H(Q : \xi_M: \gl :\eta) f : = \int_{H_Q\bs H} \inp{\eta}{f(h)}\; dr_h([e])^{-1*} \omega_{H_Q\bs H}
\end{equation}
converges absolutely.
\end{cor}

\begin{proof}
It follows by application of Lemma \ref{l: domination of f by oneQ} that
\begin{equation}
\label{e: estimate eta f on oneQ}
| \inp{\eta}{f(h)}| \leq \|\eta\|_\xi \sup_{k \in K}\|f(k)\|_\xi \cdot
\one_{Q, \rhoP}(h), \qquad (h \in H).
\end{equation}
The result now follows
from Corollary \ref{c: convergence one Q rhoP}.
\end{proof}

Working in the setting of the above corollary,
if $f \in C(K:M: \xi_M),$ then for $\mu \in \fadc$ we define
$f_\mu  \in C(Q:\xi_M: \mu)$ by $f_\mu|_K =f.$ Furthermore, we define
$$
j_H(Q: \xi_M : \gl: \eta) (f) :=  j_H(Q: \xi_M : \gl: \eta) (f_{-\bar \gl + \rhoQh})
$$
for $\gl \in\holset_{P,Q}$.
Accordingly, $j_H(Q:\xi_M : \gl : \eta)$ is viewed as an element  
of $C^{-0}(K:M:\xi_M),$
  see the beginning of Section \ref{s: induced reps and densities}.
Given $f \in C(K:M:\xi_M),$ we agree to write
$$
\inp{j_H(Q:\xi_M:\gl:\eta)}{f} = {j_H(Q:\xi_M:\gl:\eta)}(f)
$$
and $\inp{f}{j_H(Q:\xi_M:\gl:\eta)} $ for its conjugate. Then
$$
\inp{f}{j_H(Q:\xi_M:\bar \gl:\eta)}
= \int_{H_Q\bs H} \inp{f_{- \gl + \rhoQh}(h)}{\eta} \; dr_h([e])^{-1*} \omega_{H_Q\bs H}.
$$

\begin{cor}
\label{c: holomorphy of j H Q}
Let notation be as in Corollary \ref{c: first convergence j H Q}.
Then
\begin{equation}
\label{e: the function j H Q}
\gl \mapsto  j_H(Q: \xi_M: \gl:\eta)
\end{equation}
is a continuous $C^{-0}(K:M:\xi_M)$-valued function on the closed subset
$\holset_{P,Q}$ of  $\faqdc.$
Its restriction to the interior of $\holset_{P,Q}$ is
holomorphic as a $C^{-0}(K:M:\xi_M)$-valued function.
\end{cor}

\begin{proof}
It is clear that $\gl \mapsto f_{-\bar \gl + \rhoPh}|_H$ is a holomorphic
$C(H)\otimes \cH_\xi$-valued function on $\faqdc$ satisfying the uniform estimate
$$
| \inp{\eta}{f_{-\bar \gl + \rhoPh}} \leq  \|\eta\|_\xi \sup_{k \in K}\|f(k)\|_\xi \cdot
\one_{Q, \rhoP}(h), \qquad (h \in H),
$$
for all $\gl \in \holset_{P,Q},$ by application of (\ref{e: estimate eta f on oneQ}).
In view of Corollary \ref{c: convergence one Q rhoP} the result now
follows by application of the dominated convergence theorem.
\end{proof}

The following lemma will be useful for later use.
If $Q \in \cP(A),$  we have that $\gS(Q, \gs\Cartan)|_{\faq} \subseteq \gS(\faq). $
In accordance with (\ref{e: defi faqpQ}) we define
$$
\faqdp(Q): = \{\gl \in \faqd\mid \inp{\gl}{\ga} > 0, \;\;  \forall \ga \in \gS(Q, \gs\Cartan)\}.
$$
This set is a non-empty open subset of $\faqd,$ see the
text below (\ref{e: defi faqpQ}).

\begin{lemma}
\label{l: subset of holset PQ}
Let $Q \in \cP(A)$ and $P \in \cP_\gs(A,Q).$
Then
$$
\holset_{P,Q} \supset -(\rho_P - \rhoPh) -\faqdp(Q) + i\faqd.
$$
\end{lemma}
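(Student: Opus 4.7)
The plan is to unwind both sides of the claimed inclusion and reduce it to the single set-theoretic inclusion
\[
\faqdp(Q) \;\subset\; \Gamma(Q)^\circ.
\]
Both $\holset_{P,Q}$ and the set on the right-hand side of the claim are translates of a subset of $\faqd + i\faqdc$ by the same element $-(\rho_P - \rhoPh)$; moreover, trivially $i\faqd \subset i\faqdc$. Thus once the displayed inclusion is in hand the claim follows by adding these translates. (One should briefly note that $\rho_P - \rhoPh$ genuinely lies in $\faqd$ for $P\in\cP_\gs(A)$: since then $\gS(P)\setminus \fahd = \gS(P,\gs\Cartan)$, and since $\gs\Cartan$ permutes $\gS(P,\gs\Cartan)$, pairing each $\ga$ with $\gs\Cartan\ga$ in the sum shows that $\rho_{\gS(P,\gs\Cartan)}$ vanishes on $\fah$.)

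For the key inclusion $\faqdp(Q)\subset\Gamma(Q)^\circ$, I would read off both defining conditions and compare them term by term. By definition, every $\ga \in \gS(Q)_-$ belongs to $\gS(Q,\gs\Cartan)$; so if $\nu \in \faqdp(Q)$, then $\inp{\nu}{\ga} > 0$ for all $\ga \in \gS(Q,\gs\Cartan)$, and in particular $\inp{\nu}{\ga} \geq 0$ for all $\ga \in \gS(Q)_-$. Recalling the formula (\ref{e: defi dual cone}), this says exactly that $\nu \in \Gamma(Q)^\circ$. So there is really no obstacle here; the lemma is essentially immediate from the two definitions together with the containment $\gS(Q)_- \subset \gS(Q,\gs\Cartan)$, and the hypothesis $P\succeq Q$ is only used to guarantee that the translation vector $-(\rho_P-\rhoPh)$ is well-defined as an element of $\faqd$ so that the set identities make sense.
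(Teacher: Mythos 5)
Your proof is correct and takes essentially the same route as the paper: by the definition (\ref{e: defi A_PQ}) of $\holset_{P,Q}$ the claim reduces to the inclusion $\faqdp(Q)\subset\Gamma(Q)^\circ$, which the paper likewise deduces from $\gS(Q)_-\subset\gS(Q,\gs\Cartan)$ via (\ref{e: def minus roots}) and (\ref{e: defi dual cone}). Your side remark verifying that $\rho_P-\rhoPh$ lies in $\faqd$ for $\fq$-extreme $P$ is correct and consistent with the computation in the proof of Lemma \ref{l: density associated with f}.
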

\begin{proof}
In view of (\ref{e: defi A_PQ}) it suffices to show that
$\Gamma(Q)^\circ \supset \faqdp(Q).$ This is a straightforward consequence of the fact that $\gS(Q)_{-} \subseteq \gS(Q, \gs \Cartan),$
by (\ref{e: def minus roots}) and (\ref{e: defi ordering on minparabs}).
\end{proof}

\begin{thm}
\label{t: meromorphy of j H Q}
Let $Q \in \cP(A),$ $P \in \parabs_\gs(A)$ such that $P\succeq Q.$ Let $\xi $ be a finite dimensional unitary representation of $M_0$ and $\eta \in V(\xi,e)$.
Then the $\Cminf(K:M:\xi_M)$-valued function
\begin{equation}
\label{e: j meromorphic}
\gl \mapsto j_H(Q\col \xi_M \col \gl\col \eta),
 \end{equation}
defined by (\ref{e: integral for j H Q}),
extends to a meromorphic function on $\faqdc$ with values in $\Cminf(K:M:\xi_M).$
Furthermore, up to a positive factor,
depending on the normalization of the Haar measure on
$N_{P} \cap \bar N_{Q},$
\begin{equation}
\label{e: j H P and j H Q}
j_H(P\col \xi_M \col \gl\col \eta)
= A(P\col Q\col \xi_M \col \gl - \rho_{Qh}) j_H(Q\col \xi_M \col \gl\col \eta)
\end{equation}
as an identity of $\Cminf(K:M: \xi_M)$-valued meromorphic functions
in $\gl \in \faqdc.$
Finally, the function (\ref{e: j meromorphic})
is continuous on the set $\holset_{P,Q}$ defined in (\ref{e: defi A_PQ})
and holomorphic on its interior.
\end{thm}

\begin{rem}
\label{r: holomorphy j in fh extreme case}
In particular, if $\gS(Q)_- = \emptyset,$ it follows that $\Gamma(Q)^\circ = \faqd$ so that $j_H(Q\col \xi_M \col \dotvar)$ is holomorphic everywhere.
\end{rem}

\begin{proof}
Without loss of generality, we may assume that $\xi$ is irreducible.
Then it follows from (\ref{e: j H P and intertwiner}) combined with (\ref{e: integral for j H Q}) that
\begin{equation}
\label{e: j P and A j Q}
\inp{j_H(P\col \xi_M \col \gl\col \eta)}{f} =
\inp{j_H(Q\col \xi_M \col \gl\col \eta) }{A(Q\col P\col \xi_M \col - \bar \gl + \rho_{Ph})f }
\end{equation}
for all $\gl \in \holset_{P,P}$ and $f \in C^\infty(P:\xi_M:-\bar \gl + \rhoPh).$

The standard intertwining operator $A(Q:P:\xi_M:\nu)$
from the induced representation $\Ind_{P}^G(\xi_M \otimes \nu\otimes 1)$ to the representation $\Ind_Q^G(\xi_M\otimes \nu \otimes 1)$
may be viewed as a meromorphic function of $\nu \in \fadc,$
with values in the space $\End(C^\infty(K:M:\xi_M)) $ (equipped with the strong topology),
see \cite[Thm.\ 1.5]{VogWal} and \cite[Thm.\ 1.5]{BScfunc}.
Its singular locus is contained in a locally finite union of hyperplanes
of the form $\mu + \ker \ga,$ with  $\mu \in \fad$ and $\ga\in \gS(P) \cap \gS(\bar Q),$
see \cite[Rem.\ 1.6]{BScfunc}.
Since $\gS(P)\cap \gS(\bar Q) \cap \fahd = \emptyset$ in view of  Lemma
\ref{l: intersection with faqd} (b), none of these singular hyperplanes
contain $\faqdc,$ so that $A(Q:P:\xi_M:\dotvar)$ restricts to a meromorphic function
on $\faqdc.$

The operator $A(P: Q : \xi_M :\nu)$ has a similar meromorphic behavior,
and since the induced representation $\Ind_Q^G(\xi_M \otimes \nu \otimes 1)$ is irreducible for generic $\nu \in \faqdc$ it follows that
\begin{equation}
\label{e: product of intertwiners}
A(Q: P: \xi_M : \nu) \after A(P:Q:\xi_M :\nu) = \eta(P:Q:\xi_M: \nu)\, \rmI
\end{equation}
as an identity of $\End(C^\infty(K:M: \xi_M))$-valued functions of $\nu \in \fadc.$ Here
$\eta = \eta(P:Q:\xi_M:\dotvar)$ is a meromorphic $\C$-valued function on $\fadc.$
By the usual product decomposition of intertwining operators it follows that
$\eta$ admits a decomposition of the form
$$
\eta(\nu) = \prod_{\ga \in \gS(P)\cap \gS(\bar Q)} \eta_\ga(\inp{\nu}{\ga}),
$$
where the $\eta_\ga$ are meromorphic functions on $\C.$
We now fix $g \in C^\infty(K:M:\xi_M).$
By substituting $f = A(P:Q:\xi_M:-\bar\gl + \rhoQh)g$ in (\ref{e: j P and A j Q}) we infer that
\begin{eqnarray*}
\lefteqn{\inp{j_H(P\col \xi_M \col \gl\col \eta)}{A(P\col Q\col \xi_M \col - \bar \gl + \rho_{Qh})g}
=}\qquad \qquad\qquad\qquad\qquad \\
\qquad\qquad&=&
\inp{j_H(Q\col \xi_M \col \gl\col \eta)}{\eta(-\bar \gl + \rhoQh) g}.
\end{eqnarray*}
By using that $A(Q:P:\xi_M : \gl - \rhoQh)$ is the Hermitian conjugate of
$A(P\col Q\col \xi_M \col - \bar \gl + \rho_{Qh})$, see \cite[Prop.\ 7.1 (iv)]{KSII},
and that $\rhoQh = \rhoPh,$ it follows that
\begin{equation}
\label{e: j H Q expressed in j H P}
j_H(Q:\xi_M: \gl : \eta)
=\overline{\eta(-\bar \gl + \rhoPh)}^{-1} A(Q:P:\xi_M : \gl - \rhoQh)j_H(P:\xi_M : \gl:\eta),
\end{equation}
for generic  $\gl \in \holset_{P,P}.$

Let $\Omega \subseteq \faqd$ be a relatively
compact open subset.
Then there exists a constant $s \in \N$ such that
$\gl \mapsto j_H(P:\xi_M : \gl: \eta)$ is meromorphic on
$\Omega+i\faqd,$ with values in the Banach space $C^{-s}(K:M: \xi_M),$ see  Section \ref{s: induced reps and densities} and
 \cite[Thm.\ 9.1]{Banps2} for details.
Furthermore, there exists a constant $r \in \N$ such that $A(Q:P:\xi_M:\gl - \rhoPh)$ depends meromorphically on
$\gl\in \Omega+i\faqd,$ as a function with values in the Banach space
of bounded linear maps from $C^{-s}(K:M:\xi_M)$ to $C^{-s-r}(K:M:\xi_M).$
Combining these observations with (\ref{e: j H Q expressed in j H P}) we see
that $\gl \mapsto j_H(Q:\xi_M :\gl:\eta)$ is a meromorphic function
on $\faqdc$ with values in $\Cminf(K:M:\xi_M),$ equipped with the strong
dual topology. Its continuity on $\holset_{P,Q}$ and holomorphy on the interior of this
set follows from Corollary \ref{c: holomorphy of j H Q}.
By meromorphic continuation it now follows that  (\ref{e: j P and A j Q}) is valid as an identity of meromorphic functions. Since $A(P:Q:\xi_M : \gl - \rhoQh)$ is the Hermitian conjugate of the intertwining
operator appearing in that identity, whereas the identity holds for all
$f \in C^\infty(K:M:\xi_M),$ it follows that (\ref{e: j H P and j H Q})
is valid as an identity of meromorphic $\Cminf(K:M:\xi_M)$-valued functions of $\gl \in \faqdc.$
\end{proof}

Let $Q \in \parabs$ be fixed for the moment.
Then the function (\ref{e: j meromorphic})
is independent of the choice of $P \succeq Q,$ whereas the description of the
domain of holomorphy depends on it. This motivates the definition
of the following closed subset of $\faqdc,$
\begin{equation}\label{e: defi Omega Q}
\holset_Q:= \bigcup_{P \in \parabs_\gs(A,Q)} \holset_{P,Q},
\end{equation}
where the union is taken over the finite non-empty set $\parabs_\gs(A,Q)$ of
parabolic groups $P \in \cP_\gs(A)$ with $P \succeq Q,$ see Lemma \ref{l: on cP gs A Q}.
The function (\ref{e: j meromorphic})
 is continuous on $\holset_Q$ and holomorphic on the interior of this set.
We can actually improve on this result.

In fact,  let $\Gamma(Q)^\circ$ be as in (\ref{e: defi dual cone}). We denote by $B: \faq \to \faqd$ the linear
isomorphism induced by the inner product on $\faq.$ Then for $\ga \in \gS(\faq)$
we have $B(H_\ga) = \ga^\vee.$ Therefore, $B(\Gamma(Q))$ is the cone
spanned by the $\faq$-roots from $\pr_\iq (\gS(Q)_-).$

Let $\widehat \holset_Q$ denote the hull in $\faqdc$ of the set
$\holset_Q$ with respect to the functions $\Re\, \inp{\dotvar}{\ga}$ with $\ga \in \gS(\faq)\cap B( \Gamma(Q))$, i.e.,
\begin{equation}
\label{e: defi hull cA Q}
\widehat \holset_Q := \{ \gl \in \faqdc \mid  \Re \inp{\gl}{\ga}\leq  \sup \,\Re \inp{\holset_Q}{\ga}, \;\;\forall \ga \in \gS(\faq)\cap B(\Gamma(Q)) \}.
\end{equation}
Since the roots from $\gS(\faq)\cap B(\Gamma(Q))$ satisfy
$\inp{\ga}{\dotvar} \leq 0$ on $-\Gamma(Q)^\circ$ it follows that
we can describe the given hull by means of inequalities as follows:
\begin{equation}
\label{e: hull by inequalities}
\widehat \holset_Q = \{ \gl \in \faqdc \mid \Re\inp{\gl}{\ga} \leq
\max_{P \in \cP_\gs(A,Q)}\inp{-\rho_P}{\ga},\;\; \forall \ga \in \gS(\faq) \cap B(\Gamma(Q))  \}.
\end{equation}

\begin{cor}
Let $Q \in \cP(A),$ $\xi \in \dMzerofu$ and $\eta \in V(\xi,e).$
Then the $\Cminf(K:M:\xi_M)$-valued function $\gl \mapsto j_H(Q\col \xi_M\col \gl\col\eta)$
is holomorphic on an open neighborhood of $\widehat \holset_Q.$
\end{cor}

\begin{proof}
From (\ref{e: j H Q expressed in j H P}) we infer that the singular locus of $\gl \mapsto j_H(Q:\xi_M:\gl:\eta)$ is the union of a  locally finite collection $\cH$  of hyperplanes of the form
${\rm H}_{\ga, \mu} = \mu + (\ga^\perp)_\iC$ with $\ga \in \gS(\faq)$ and $\mu \in \faqd.$
Indeed, the singular loci of the meromorphic ingredients on the right-hand side of that formula are all of this form, by \cite[Lemma 3.2]{BSfinv}, \cite[Rem.\ 1.6]{BScfunc} and (\ref{e: product of intertwiners}).

Let $\mu$ be a singular point of  $j_H(Q:\xi_M:\dotvar :\eta),$
i.e., a point in the union of the singular hyperplanes.
Then there exists a root $\ga \in \gS(\faq)$ such that
${\rm H}_{\ga, \mu}$ is a singular hyperplane. By analytic continuation it follows that
${\rm H}_{\ga, \mu} \cap \holset_Q = \emptyset.$
From the fact that the cone $\Gamma(Q)^{\circ}$ has non-empty interior it follows that
 the set $\holset_Q\cap \faqd$ is connected, hence
is contained in one connected component of $\faqd\setminus{\rm H}_{\ga, \mu}$. Replacing $\ga$ by $-\ga$ if necessary, we may assume that
$$
\holset_{Q}\cap\faqd\subseteq\{\lambda\in\faqd:\langle\alpha,\lambda\rangle\geq c\}
$$
for some $c\in\R$. This in turn implies that
$$
\Gamma(Q)^{\circ}
\subseteq\{\lambda\in\faqd:\langle\alpha,\lambda\rangle\geq0\}
=\{\lambda\in\faqd:\lambda(H_{\alpha})\geq0\}.
$$
Since $\Gamma(Q)^{\circ}$ has open interior, $\alpha$ does not vanish on $\Gamma(Q)^{\circ}$. Using that $\Gamma(Q)^{\circ}$ is a cone, we find
$$
\langle\alpha,\Gamma(Q)^{\circ}\rangle=\R_{\geq0}.
$$
In particular this implies $H_{\alpha}\in\Gamma(Q)^{\circ\circ}=\Gamma(Q)$ and thus we conclude  that $\ga \in \gS(\faq) \cap B(\Gamma(Q)).$

For any $P \in \parabs_\gs(A)$ with $P \succeq Q,$ the singular hyperplane ${\rm H}_{\ga,\mu}$
does not intersect $-\rho_P + \rhoPh - \Gamma(Q)^\circ$, hence
$$
\langle\ga,\mu\rangle \notin -\langle\ga,\rho_P\rangle-\R_{\geq0}.
$$
This implies that $\inp{\ga}{\mu} > -\inp{\ga}{\rho_P}.$
We conclude that
$$
\inp{\ga}{\mu} >\max_{P\in \parabs_\gs(A), P\succeq Q} \inp{\ga}{-\rho_P}
$$
so that $\mu \notin \widehat \holset_Q.$ Thus, $\widehat \holset_Q$ is disjoint
from the singular locus.
\end{proof}

Let $Q\in \parabsA$ and $\xi \in \dMzerofu.$ We define the meromorphic $V(\xi,e)^*\otimes \Cminf(K:M:\xi_M)$-valued function $j_H(Q:\xi_M :\dotvar)$ on $\faqdc$ by
$$
j_H(Q:\xi_M :\gl)(\eta) = j_H(Q: \xi_M :\gl: \eta)
$$
for generic $\gl \in \faqdc$ and $\eta \in V(\xi, e).$  Furthermore,
we define the meromorphic $V(\xi)^*\otimes \Cminf(K:M:\xi_M)$-valued function $j(Q:\xi_M : \dotvar)$ on $\faqdc$ by
\begin{equation}
\label{e: defi j without H}
j(Q: \xi_M : \gl) =
\sum_{v \in \cW} \pirep_{Q, \xi_M, \gl - \rhoQh}(v^{-1}) \, j_{v H v^{-1}}(Q : \xi_M : \gl)\after \pr_v.
\end{equation}
Here $j_{vHv^{-1}}$ is defined for the data $\gs_v, \,vHv^{-1}$ 
in place of $\gs, H.$ This definition is allowed since  $\cW \subseteq N_K(\fa) \cap N_K(\faq)$
(see text preceding (\ref{e: defi cW})),  so that $A, \, \Aq, \, M_0$ and $\cP(A)$ are invariant under conjugation by $v$ and $\faq$ is maximal abelian in 
$\fp \cap \Ad(v)\fq.$
See also the discussion at the end of Section 
\ref{s: minimal parabolics}.

In order to formulate our next result, 
we define, for $v \in \cW,$ the set $\Omega_{v, Q}$ as  $\Omega_Q$ in 
(\ref{e: defi Omega Q}), with $v H v^{-1}$
in place of $H.$ Likewise, we define $\widehat \Omega_{v,Q}$ to be the
set $\widehat \Omega_Q$ defined as in (\ref{e: defi hull cA Q}), with 
$v H v^{-1}$ in place of $H.$ 

\begin{lemma}
\label{l: holset v Q}
 Let $Q \in \cP(A).$ Then for each $v \in \cW,$ we have
 $$
 \Omega_{v,Q}  = v\Omega_{v^{-1} Q v}\quad{\rm and} \quad
\widehat  \Omega_{v,Q}  = v\widehat \Omega_{v^{-1} Q v}.
 $$
 \end{lemma}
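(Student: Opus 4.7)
The plan is to trace how each ingredient of $\holset_Q$ transforms under the replacement $(\gs, H)\mapsto(\gs_v, vHv^{-1})$. Recall from (\ref{e: defi A_PQ}) and (\ref{e: defi Omega Q}) that
$$
\holset_Q = \bigcup_{P \in \cP_\gs(A,Q)}\bigl( -(\rho_P - \rhoPh) - \Gamma(Q)^\circ + i\faqd \bigr),
$$
so three ingredients must be checked separately: the index set $\cP_\gs(A,Q)$, the shift $\rho_P-\rhoPh$, and the dual cone $\Gamma(Q)^\circ$.

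First, I would observe that $\cP_{\gs_v}(A) = \cP_\gs(A)$ and that the ordering $\succeq$ is the same for $\gs_v$ and $\gs$, by (\ref{e: restriction gs v to fa}) and (\ref{e: root sets for v}); moreover, $P \mapsto v^{-1}Pv$ restricts to a bijection $\cP_\gs(A,Q) \to \cP_\gs(A, v^{-1}Qv)$, since it preserves $\cP_\gs(A)$ (end of Section \ref{s: minimal parabolics}) and, because $v \in N_K(\fa)$ commutes with $\gs$ on $\fa$, also preserves $\succeq$ via the identities $\gS(v^{-1}Pv,\gs) = v^{-1}\gS(P,\gs)$ and $\gS(v^{-1}Pv,\gs\Cartan) = v^{-1}\gS(P,\gs\Cartan)$. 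Second, from $\gS(v^{-1}Pv) = v^{-1}\gS(P)$ and the $v$-invariance of $\fahd$, one reads off $\rho_{v^{-1}Pv} = v^{-1}\rho_P$ and $\rho_{v^{-1}Pv,\ih} = v^{-1}\rhoPh$. Third, Lemma \ref{l: twisted sigma minus} gives $\gS(Q)_{\gs_v,-} = v\,\gS(v^{-1}Qv)_-$, and since $\gs_v\Cartan = \Ad(v)\gs\Cartan\Ad(v)^{-1}$ on $\fg$, each generator $H_\ga + \gs_v\Cartan H_\ga$ of $\Gamma_{\gs_v}(Q)$ with $\ga = v\gb$ equals $\Ad(v)(H_\gb + \gs\Cartan H_\gb)$. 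Hence $\Gamma_{\gs_v}(Q) = v\,\Gamma_\gs(v^{-1}Qv)$ in $\faq$, and by duality (using that $v$ acts orthogonally via the inner product) $\Gamma_{\gs_v}(Q)^\circ = v\,\Gamma_\gs(v^{-1}Qv)^\circ$ in $\faqd$.

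Combining the three items and using $v$-invariance of $i\faqd$ gives, for each $P \in \cP_\gs(A,Q)$,
$$
\holset_{v,P,Q} = -(\rho_P-\rhoPh) - v\,\Gamma_\gs(v^{-1}Qv)^\circ + i\faqd = v\,\holset_{v^{-1}Pv,\, v^{-1}Qv},
$$
and the first identity follows by taking the union over the bijection $P \mapsto v^{-1}Pv$.

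For the second identity, I would apply $v$ directly to the hull description (\ref{e: defi hull cA Q}). The isomorphism $B\colon \faq \to \faqd$ is $W(\faq)$-equivariant because the inner product on $\faq$ is $W$-invariant, $v$ permutes $\gS(\faq)$, and $v$ is an isometry for the pairing. Writing $\ga = v\gb$ with $\gb \in \gS(\faq) \cap B(\Gamma_\gs(v^{-1}Qv))$ and using $\holset_{v,Q} = v\,\holset_{v^{-1}Qv}$, the defining inequality $\Re\inp{\gl}{\ga} \leq \sup\Re\inp{\holset_{v,Q}}{\ga}$ is equivalent to $\Re\inp{v^{-1}\gl}{\gb} \leq \sup\Re\inp{\holset_{v^{-1}Qv}}{\gb}$; hence $\widehat\holset_{v,Q} = v\,\widehat\holset_{v^{-1}Qv}$. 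The only real difficulty is the consistent bookkeeping of the several actions of $v$---on $\faq$ via $\Ad(v)$, on $\faqd$ by the contragredient, and on the involutions $\gs$, $\gs_v$---but no substantial step is required beyond what is already in Section \ref{s: minimal parabolics}.
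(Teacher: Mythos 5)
Your proposal is correct and follows essentially the same route as the paper's proof: both use Lemma \ref{l: twisted sigma minus} to obtain $\Gamma(v,Q)=v\,\Gamma(v^{-1}Qv)$ and its dual statement, deduce $\holset_{v,P,Q}=v\,\holset_{v^{-1}Pv,\,v^{-1}Qv}$ and take the union over the bijection $P\mapsto v^{-1}Pv$, and then derive the second identity by transporting the hull description via $\gS(\faq)\cap B(\Gamma(v,Q))=v\bigl(\gS(\faq)\cap B(\Gamma(v^{-1}Qv))\bigr)$. You merely spell out more of the bookkeeping (the invariance of the ordering, the identity $\rho_{v^{-1}Pv}=v^{-1}\rho_P$) that the paper subsumes under ``obvious notation.''
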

 \begin{proof}
 In view of Lemma \ref{l: twisted sigma minus}
 the cone $\Gamma (v, Q),$ defined as $\Gamma(Q)$ with $\gs_v$ in place of $\gs,$ is given by
$
\Gamma(v,Q) = v \Gamma(v^{-1} Q v).
$
Likewise, its dual, defined as in (\ref{e: defi dual cone}) is given by
$$
\Gamma(v,Q)^\circ = v \Gamma(v^{-1} Q v)^\circ.
$$
From (\ref{e: defi A_PQ}) and  (\ref{e: defi Omega Q}), with $\gs_v$ in place of $\gs,$ we now find, with obvious notation, $\holset_{v, P, Q} = v \holset_{v^{-1}Pv, v^{-1}Qv},$ for $P\in \cP_{\gs}(A)$ with $P \succeq Q.$ Taking the union over such $P,$ we obtain the first asserted equality.

The second equality follows from the first, by taking the hull of the sets
$\holset_{v,Q}$ and $v\holset_{v^{-1}Qv}$ with respect to the functions $\Re\inp{\dotvar}{\ga}$ with $\ga \in \gS(\faq) \cap B(\Gamma(v,Q)).$ The first hull equals $\widehat \holset_{v,Q}$ by definition. Using that
$$
\gS(\faq) \cap B(\Gamma(v,Q)) =
\gS(\faq) \cap B(v\Gamma(v^{-1}Q v))
=
v(\gS(\faq) \cap B(\Gamma(v^{-1}Qv)),
$$
we see that the second hull equals $v \widehat \Omega_{v^{-1}Qv}.$
\end{proof}

We define the following closed subsets of $\faqdc,$
\begin{equation}
\label{e: defi Upsilon}
\Upsilon_Q = \bigcap_{v \in \cW} v \Omega_{v^{-1}Qv},\qquad \hatUpsilon_Q =  \bigcap_{v \in \cW} v \widehat\Omega_{v^{-1}Qv}.
\end{equation}

The following lemma guarantees in particular that the set $\Upsilon_Q,$ and hence also the bigger set $\hatUpsilon_Q,$ have non-empty interior.

 \begin{lemma}
 Let $Q \in \cP(A).$ Then for every $P \in \cP_\gs(A,Q),$ we have
 $$
 \Upsilon_Q \;\; \supset \;\;  -(\rhoP- \rhoPh) - \faqdp(Q) + i\faqd.
 $$
 \end{lemma}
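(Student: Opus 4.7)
The plan is to reduce the claim, via the defining intersection, to a statement for each $v \in \cW$ separately, then transport everything by $v^{-1}$ using the conjugation compatibilities established at the end of Section \ref{s: minimal parabolics} so that Lemma \ref{l: subset of holset PQ} can be applied directly.

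More precisely, by the definition (\ref{e: defi Upsilon}) of $\Upsilon_Q$, it suffices to prove, for each $v \in \cW$ and each $P \in \cP_\gs(A,Q)$, the inclusion
\[
-(\rho_P - \rho_{P\ih}) - \faqdp(Q) + i\faqd \;\subset\; v\,\Omega_{v^{-1}Qv},
\]
or equivalently, after applying $v^{-1}$ (which is an orthogonal transformation of $\faqd$ preserving $i\faqd$),
\[
v^{-1}\bigl[-(\rho_P - \rho_{P\ih}) - \faqdp(Q)\bigr] + i\faqd \;\subset\; \Omega_{v^{-1}Qv}.
\]
The plan is then to take $P' := v^{-1} P v$ and recognize the left-hand side as exactly the set provided by Lemma \ref{l: subset of holset PQ} applied to the pair $(P',v^{-1}Q v)$.

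For this I need two pieces. First, I must verify that $v^{-1}Pv \in \cP_\gs(A, v^{-1}Qv)$. The text at the end of Section \ref{s: minimal parabolics} already gives $v^{-1}Pv \in \cP_\gs(A)$ and explains that the orderings with respect to $\gs$ and $\gs_v$ coincide; using (\ref{e: root sets for v}) one computes $\gS(v^{-1}Pv,\gs\Cartan) = v^{-1}\gS(P,\gs\Cartan)$ and the analogue for $\gs$, after which $P \succeq Q$ transports termwise to $v^{-1}Pv \succeq v^{-1}Qv$. Second, I have to check that each of the three ingredients transforms correctly under $v^{-1}$. Since $\dim \fg_{v\alpha} = \dim \fg_\alpha$, the definition $\rho_P = \tfrac12\sum_{\alpha\in\gS(P)}(\dim\fg_\alpha)\alpha$ on $\fa$ gives $v^{-1}\rho_P = \rho_{v^{-1}Pv}$. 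Because $v$ normalizes $\fah$ (noted after (\ref{e: defi gs v})), we have $v^{-1}\fahd=\fahd$, hence $v^{-1}(\gS(P)\cap\fahd) = \gS(v^{-1}Pv)\cap\fahd$ and therefore $v^{-1}\rho_{P\ih} = \rho_{(v^{-1}Pv)\ih}$. Finally, since $v \in K$ acts orthogonally on $\faqd$, and since $\gS(Q,\gs\Cartan) = \gS(Q,\gs_v\Cartan)$ implies $v^{-1}\gS(Q,\gs\Cartan) = \gS(v^{-1}Qv,\gs\Cartan)$, the definition of $\faqdp(\cdot)$ yields $v^{-1}\faqdp(Q) = \faqdp(v^{-1}Qv)$.

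Combining these three transport identities gives
\[
v^{-1}\bigl[-(\rho_P - \rho_{P\ih}) - \faqdp(Q)\bigr] + i\faqd
= -\bigl(\rho_{v^{-1}Pv} - \rho_{(v^{-1}Pv)\ih}\bigr) - \faqdp(v^{-1}Qv) + i\faqd,
\]
which by Lemma \ref{l: subset of holset PQ} applied to $(v^{-1}Pv, v^{-1}Qv)$ is contained in $\Omega_{v^{-1}Pv,\,v^{-1}Qv}$, and hence in the union $\Omega_{v^{-1}Qv}$ defined by (\ref{e: defi Omega Q}). Intersecting over $v \in \cW$ delivers the claim.

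The only mildly delicate point is the bookkeeping in step three: one must be careful to use Lemma \ref{l: subset of holset PQ} with the \emph{original} involution $\gs$ (since $\Omega_{v^{-1}Qv}$ in the definition of $\Upsilon_Q$ is defined relative to $H$), and to verify that the ingredients $\rho_{P'\ih}$ and $\faqdp(Q')$ appearing on the right-hand side of that lemma are the same whether computed with $\gs$ or $\gs_v$. This is handled by Lemma \ref{l: twisted sigma minus}, (\ref{e: root sets for v}), and the fact that $v$ normalizes $\fah$; once these compatibilities are in hand, the proof is just the computation above.
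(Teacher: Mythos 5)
Your proposal is correct and follows essentially the same route as the paper: fix $v \in \cW$, observe that $v^{-1}Pv \in \cP_\gs(A, v^{-1}Qv)$, apply Lemma \ref{l: subset of holset PQ} to that pair, and transport back by $v$ before intersecting over $\cW$. The only difference is that you spell out the transport identities $v^{-1}\rho_P = \rho_{v^{-1}Pv}$, $v^{-1}\rhoPh = \rho_{(v^{-1}Pv)\ih}$ and $v^{-1}\faqdp(Q) = \faqdp(v^{-1}Qv)$, which the paper leaves implicit in the step ``applying $v$ we obtain.''
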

\begin{proof}
Fix $v \in \cW.$ Then $v^{-1}Pv$ belongs to $ \cP_\gs(A, v^{-1}Q v),$ hence it follows from (\ref{e: defi Omega Q}) and
Lemma \ref{l: subset of holset PQ}
$$
\Omega_{v^{-1} Q v} \supset -  ( \rho_{v^{-1}Pv}  -\rho_{v^{-1}Pv\ih}) -\faqdp(v^{-1}Qv) + i\faqd.
$$
Applying $v$ we obtain
$
v \Omega_{v^{-1} Q v} \supset - (\rho_P - \rhoPh) - \faqdp(Q) + i \faqd.
$
As this is true for each $v \in \cW,$ the asserted inclusion follows.
\end{proof}

\begin{lemma}
\label{l: holomorphy of j}
Let $Q \in \cP(A)$ and $\xi \in \dMzerofu.$ Let $\eta \in V(\xi).$
\begin{enumerate}
\itema
For each $v\in \cW$ the defining integral for the corresponding term in
(\ref{e: defi j without H}) is absolutely convergent for every $\gl \in \convergeset_Q.$\vspace{-5pt}
\itemb
The meromorphic $C^{-\infty}(K:M:\xi_M)$-valued function
$\gl \mapsto j(Q:\xi_M:\gl:\eta)$ is holomorphic on an open neighborhood of the set $\hatUpsilon_Q.$
\end{enumerate}
\end{lemma}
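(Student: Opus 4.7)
The plan is to reduce the claim to properties of each individual summand in (\ref{e: defi j without H}), which have essentially been established in the preceding three results. Fix $v \in \cW$ and write $\eta_v := \pr_v(\eta) \in V(\xi, v) = \cH_\xi^{M \cap vHv^{-1}}$. I apply Theorem \ref{t: meromorphy of j H Q}, Corollary \ref{c: first convergence j H Q} and the corollary immediately preceding this lemma with the pair $(\gs_v, vHv^{-1})$ in place of $(\gs, H)$; this is legitimate because $\gs_v|_\fa = \gs|_\fa$ by (\ref{e: restriction gs v to fa}), so $\faq$, $\fah$, the Langlands component $M_0$, the set $\dMzerofu$ and the ordering $\succeq$ on $\cP(A)$ remain unchanged, as discussed at the end of Section \ref{s: minimal parabolics}. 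Writing $\holset_{v,Q}$ and $\widehat\holset_{v,Q}$ for the analogues of $\holset_Q$ and $\widehat\holset_Q$ built with $\gs_v$ in place of $\gs$, I then obtain that the integral defining $j_{vHv^{-1}}(Q:\xi_M:\gl:\eta_v)$ converges absolutely on $\holset_{v,Q}$ and that its meromorphic extension is holomorphic on an open neighborhood of $\widehat\holset_{v,Q}$. Lemma \ref{l: holset v Q} identifies $\holset_{v,Q} = v\holset_{v^{-1}Qv}$ and $\widehat\holset_{v,Q} = v\widehat\holset_{v^{-1}Qv}$.

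For part (a), the definition (\ref{e: defi Upsilon}) gives $\convergeset_Q = \bigcap_{v \in \cW} v\holset_{v^{-1}Qv}$, so $\convergeset_Q \subset \holset_{v,Q}$ for every $v \in \cW$; thus each $v$-th integral converges absolutely on $\convergeset_Q$.

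For part (b), the key observation is that, since $v \in \cW \subset K$, the operator $\pirep_{Q,\xi_M,\gl-\rhoQh}(v^{-1})$ on $C^{-\infty}(K:M:\xi_M)$ is simply right translation by $v^{-1}$, hence independent of $\gl$: for $k \in K$ one has $kv^{-1} \in K$, so no Iwasawa decomposition enters the formula. Consequently each summand $\pirep_{Q,\xi_M,\gl-\rhoQh}(v^{-1})\,j_{vHv^{-1}}(Q:\xi_M:\gl:\eta_v)$ is holomorphic on some open neighborhood $U_v$ of $v\widehat\holset_{v^{-1}Qv}$. Since $\hatUpsilon_Q = \bigcap_{v\in\cW} v\widehat\holset_{v^{-1}Qv}$ by (\ref{e: defi Upsilon}) and $\cW$ is finite, the open set $\bigcap_{v\in\cW} U_v$ is a neighborhood of $\hatUpsilon_Q$ on which the sum $j(Q:\xi_M:\gl:\eta)$ is holomorphic. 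The substantive work has already been done in the preceding corollary; in the present proof I expect only the bookkeeping of the $v$-twist via Lemma \ref{l: holset v Q}, together with the trivial observation that right translation by an element of $K$ is parameter-free in the compact picture.
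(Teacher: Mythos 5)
Your proposal is correct and follows essentially the same route as the paper: part (a) by applying Corollary \ref{c: first convergence j H Q} with $(\gs_v,vHv^{-1})$ in place of $(\gs,H)$ and then invoking Lemma \ref{l: holset v Q} together with (\ref{e: defi Upsilon}), and part (b) by the corollary on holomorphy in a neighborhood of $\widehat\holset_Q$, again transported by Lemma \ref{l: holset v Q} and intersected over the finite set $\cW$. Your explicit remark that $\pirep_{Q,\xi_M,\gl-\rhoQh}(v^{-1})$ is $\gl$-independent right translation in the compact picture is a small point the paper leaves implicit, but it does not change the argument.
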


\begin{proof}
It follows from (\ref{e: defi j without H}) and
Corollary \ref{c: first convergence j H Q} that the integral for $j_{vHv^{-1}}(Q:\xi_M: \gl  :\eta_v)$ is convergent for $\gl \in \holset_{v,Q}.$ This set contains $\convergeset_Q,$ by (\ref{e: defi Upsilon}) and Lemma \ref{l: holset v Q}, and we see that (a) follows.

It follows from Corollary \ref{c: holomorphy of j H Q}
applied with $\gs_v$ in place of $\gs$ that the mentioned function is holomorphic on an open neighborhood of ${\widehat\holset}_{v, Q}.$ From this we deduce that $j(Q:\xi_M\dotvar:\eta)$ is holomorphic on an
open neighborhood of the intersection of the sets $
\widehat \holset_{v,Q},$
for $v \in \cW.$   This intersection equals $\hatUpsilon_Q,$ by
(\ref{e: defi Upsilon}) and Lemma \ref{l: holset v Q}.
\end{proof}

We finish this section relating the constructed functions
$j(Q:\xi_M:\cdot),$ for different $Q,$ by intertwining operators.
\begin{thm}
Let $Q \in \parabsA$ and $\xi \in \dMzerofu.$ Then the following assertions are valid.
\begin{enumerate}
\itema
For every $\eta \in V(\xi)$ and generic $\gl \in \faqdc,$ the element
$j(Q:\xi_M:\gl)(\eta) $ of the space $\Cminf(K:M: \xi_M)$ is
$\pirep_{Q, \xi_M, \gl - \rhoQh}(H)$-invariant.\vspace{-5pt}
\itemb
If $Q,Q' \in \parabsA$ and $Q'\succeq Q,$ then (up to normalization),
\begin{equation}
\label{e: j Qprime and j Q}
j(Q': \xi_M : \gl) = A(Q':Q: \xi_M: \gl - \rho_{Qh}) \after  j(Q: \xi_M : \gl),
\end{equation}
as an identity of meromorphic
$V(\xi)^* \otimes \Cminf(K:M:\xi_M)$-valued functions in the variable $\gl\in \faqdc.$
\end{enumerate}
\end{thm}

\begin{proof}
We start with (b). Let $P \in \parabsgsA$ be such that $P \succeq Q'.$
Then by application of Lemma \ref{l: comparison orderings parabs}
it follows that $\gS(P) \cap \gS(\bar Q') \subseteq \gS(P) \cap \gS(\bar Q)$
so
\begin{equation}
\label{e: composition of intertwiners}
A(P:Q:\xi_M:\gl) = A(P:Q':\xi_M:\gl) \after A(Q':Q:\xi_M:\gl)
\end{equation}
as a meromorphic identity in $\gl\in\faqdc.$ See \cite[Cor.\ 7.7]{KSII} for details.
Using (\ref{e: j H P and j H Q}) both with $Q$ and with $Q'$ in place of $Q$ we find
$$
A(P:Q':\xi_M:\gl)\after j_H(Q':\xi_M:\gl) = A(P:Q: \xi_M:\gl) \after j_H(Q:\xi_M: \gl)
$$
combining this with (\ref{e: composition of intertwiners}) and using
that $A(P:Q': \xi_M:\gl)$ is injective for generic $\gl,$ we obtain that
$$
j_H(Q': \xi_M: \gl) = A(Q':Q:\xi_M :\gl - \rhoQh) j_H(Q:\xi_M:\gl)
$$
for generic $\gl \in \faqdc.$ Since the expressions on both sides of the equation are meromorphic $V(\xi, e)^*\otimes \Cminf(K:M:\xi_M)$-valued functions, the identity holds as an identity of meromorphic functions.
The identity also holds with $H$ replaced by $vHv^{-1},$ as an identity
of $V(\xi, v)^*\otimes \Cminf(K:M: \xi_M)$-valued meromorphic functions of $\gl \in \faqdc.$ If we apply this to each of the
terms of the sum in (\ref{e: defi j without H}) we obtain (\ref{e: j Qprime and j Q}). This establishes (b).

We now turn to (a). Fix $P \in \parabsgsA$ such that $P\succeq Q.$
Then assertion (a) holds with $P$ in place of $Q,$ in view of
Corollary \ref{c: equality of jP and jP0}.
To establish assertion (a) for $j(Q:\xi_M:\gl)(\eta)$ as well, we use (b) with
$Q' = P.$ Then assertion (a) follows from the fact that $A(P:Q:\xi_M : \gl - \rhoQh)$
is intertwining and injective for generic $\gl \in \faqdc.$
\end{proof}

\section{Eisenstein integrals}
\label{s: Eisenstein integrals}
In this section we will extend the definition of Eisenstein
integrals for minimal $\gs\Cartan$-stable parabolic subgroups
from $\parabsgsAq$ to similar integrals for minimal parabolic subgroups
from $\parabsA.$

First we need to carefully discuss the parameter space for the Eisenstein
integral.
In view of Lemma \ref{l: lemma on deco M zero},  it follows that the inclusion map $M \to M_0$ induces a diffeomorphism
$
M/H_{M}\simeq  M_0/H_{M_0}.
$
This diffeomorphism induces a topological linear isomorphism $C^\infty(M/H_{M} ) \simeq C^\infty(M_0/H_{M_0})$ via which we will identify the elements of these spaces.

Let $(\tau, V_\tau)$  be a finite dimensional unitary representation of
$K.$ Then we define $\tau_{M_0}$ to be the restriction of $\tau$ to $M_0.$
Likewise, we define $\tau_M$ to be the restriction of $\tau$
to $M.$ Then $\tau_{M_0}$ and $\tau_M$ have the same representation space.

We define $C^\infty (M_0/H_{M_0}: \tau_{M_0})$ to be the space of smooth  functions $\psi:M_0/H_{M_0}\to V_{\tau}$ satisfying the transformation rule
$$
\psi(kx)=\tau(k)\psi(x)\qquad \big(k\in K\cap M_0, \;x\in M_0/H_{M_0}\big).
$$
 Similarly, we define $C^\infty (M/H_{M}: \tau_{M})$ to be the space of smooth  functions $\psi:M/H_{M}\to V_{\tau}$ satisfying the transformation rule
$$
\psi(mx)=\tau(m)\psi(x)\qquad \big(m\in M,\; x\in M/H_{M}\big).
$$
We then have the obvious inclusion
$$
 C^\infty (M_0/H_{M_0}: \tau_{M_0})\;
 \subseteq \; C^\infty(M/H_{M}: \tau_{M}).
$$
In general, the first of these spaces will
be strictly contained in the second. The first of these spaces enters the definition
of the Eisenstein integral for minimal $\gs\Cartan$-stable parabolic subgroup
from $\parabsgsAq,$ whereas the second is convenient in the context of induction
from a minimal parabolic subgroup from $\parabsA.$ The relation between the
spaces can be clarified as follows. Since $M$ normalizes $M_{0\rmn} \cap K$
it follows that the space $V_\tau^0$ of $M_{0\rmn}\cap K$-invariants in $V_\tau$
is invariant under $\tau(M),$ so that we may define the following representation
$\tau_M^0$ of $M$ by restriction:

\begin{equation}
\label{e: defi tau M zero}
\tau_M^0:= \tau_M|_{V_\tau^0}, \quad {\rm where} \quad
V_{\tau}^0: =( V_\tau)^{M_{0\rmn} \cap K},
\end{equation}
Observe that for every $v \in \cW$ we have
$$
V_\tau^{M_0 \cap K \cap vHv^{-1}} = (V_\tau^0)^{M \cap v H v^{-1}}.
$$
Indeed, this follows from the fact that $M_0 \cap K = M (M_{0\rmn}\cap K)$ and that $\tau(M_{0\rmn}\cap K) = 1$ on $V_\tau^0.$

\begin{lemma}
\label{l: spherical functions on M and Mzero}
Let $(\tau, \Vtau)$ be a finite dimensional unitary representation of $K.$ Then
\begin{equation}
\label{e: spherical functions on M and  Mzero}
C^\infty (M_0/H_{M_0}: \tau_{M_0})\;=\;
C^\infty(M/H_{M} : \tau_{M}^0),
\end{equation}
\end{lemma}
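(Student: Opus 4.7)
The plan is to realize both function spaces on the common underlying manifold provided by the diffeomorphism $M/H_M \simeq M_0/H_{M_0}$ of Lemma \ref{l: lemma on deco M zero}(e), and then transport the equivariance conditions through this identification. Throughout, I will use without further mention that $M \subset K \cap M_0$, that $M$ normalizes the subgroup $M_{0n} \cap K$, and that $M_{0n} \subset H_{M_0}$ by Lemma \ref{l: lemma on deco M zero}(d). I will also use the decomposition $K \cap M_0 = M \cdot (K \cap M_{0n})$, established in the proof of Lemma \ref{l: image i sharp and invariants}.

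For the inclusion $\subset$, let $\psi \in C^\infty(M_0/H_{M_0}: \tau_{M_0})$ and set $\phi := \psi|_M$. Since $M \subset K \cap M_0$, the $\tau_{M_0}$-equivariance restricts to $\tau_M$-equivariance of $\phi$, and $\phi$ is smooth because $M \to M_0/H_{M_0}$ is a diffeomorphism. To check $\phi$ actually takes values in $V_\tau^0$, fix $m \in M$ and $k \in K \cap M_{0n}$. Write $km = m(m^{-1}km)$, and observe that $m^{-1}km \in K \cap M_{0n} \subset H_{M_0}$. Therefore, on the one hand, $\psi(km) = \tau(k)\psi(m) = \tau(k)\phi(m)$ by the $\tau_{M_0}$-transformation rule, and on the other hand, $\psi(km) = \psi(m) = \phi(m)$ by right $H_{M_0}$-invariance. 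Comparing yields $\tau(k)\phi(m) = \phi(m)$ for all $k \in K \cap M_{0n}$, so $\phi(m) \in V_\tau^0$.

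For the reverse inclusion $\supset$, let $\phi \in C^\infty(M/H_M: \tau_M^0)$, and define $\psi$ on $M_0/H_{M_0}$ by transport along the diffeomorphism $M/H_M \simeq M_0/H_{M_0}$; equivalently, every element of $M_0/H_{M_0}$ admits a (unique modulo $H_M$) representative $m \in M$, and one sets $\psi(mH_{M_0}) := \phi(mH_M)$. It remains to check the full $\tau_{M_0}$-transformation rule. Decompose a general $k \in K \cap M_0$ as $k = m_1 k_n$ with $m_1 \in M$ and $k_n \in K \cap M_{0n}$. For $x = mH_{M_0}$ with $m \in M$, the same normalization argument as above gives $k_n m = m(m^{-1}k_nm)$ with $m^{-1}k_nm \in K\cap M_{0n} \subset H_{M_0}$, so $kx = m_1 m H_{M_0}$. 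Therefore
\[
\psi(kx) \;=\; \phi(m_1 m) \;=\; \tau(m_1)\phi(m),
\]
using that $m_1 m \in M$ and $\phi$ is $\tau_M$-equivariant. Conversely,
\[
\tau(k)\psi(x) \;=\; \tau(m_1)\tau(k_n)\phi(m) \;=\; \tau(m_1)\phi(m),
\]
since $\phi(m) \in V_\tau^0$ and $k_n \in K \cap M_{0n}$ acts trivially there. The two expressions agree, so $\psi \in C^\infty(M_0/H_{M_0}:\tau_{M_0})$.

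The two constructions are mutually inverse, establishing \eqref{e: spherical functions on M and  Mzero}. The delicate point is that the verification of the transformation rule for $\psi$ requires simultaneously the normalization of $K\cap M_{0n}$ by $M$ (to push the $M_{0n}$-component past $M$ into $H_{M_0}$) and the triviality of $\tau$ on $V_\tau^0$ (to absorb that component on the representation side); once these are observed, the argument is essentially a bookkeeping exercise on $K\cap M_0 = M(K\cap M_{0n})$.
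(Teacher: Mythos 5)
Your proposal is correct and follows essentially the same route as the paper: both inclusions are proved via the identification $M/H_{M}\simeq M_0/H_{M_0}$, the decomposition $K\cap M_0 = M(K\cap M_{0n})$, the normalization of $K\cap M_{0n}$ by $M$ together with $M_{0n}\subset H_{M_0}$, and the triviality of $\tau(K\cap M_{0n})$ on $V_\tau^0$. The only cosmetic difference is that you re-derive the triviality of the $M_{0n}$-action on $M_0/H_{M_0}$ by the explicit conjugation $km=m(m^{-1}km)$, where the paper simply cites Lemma \ref{l: lemma on deco M zero}(f).
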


\begin{proof}
We observe that $M_{0\rmn}$ acts trivially on $M_0/H_{M_0}$
by Lemma \ref{l: lemma on deco M zero} (f).
Therefore, every function in the space on the left-hand side of (\ref{e: spherical functions on M and Mzero}) has values in $V_{\tau}^0$ and we see that the space on the left is indeed contained in the space on the right.
For the converse inclusion, let $f: M/H_{M} \to V_\tau^0$ be a function in the space on the right. If $k_0 \in M_0 \cap K$ we may write
$k_0 = k_M k_n$ with $k_M \in M$ and $k_n \in M_{0\rmn}\cap K.$
Let $m_0 \in M_0,$ then $m_0 = m h$ for a suitable $m \in M$ and $h \in H_{M_0}.$ Since $M_{0\rmn} \subseteq H,$ it follows that
\begin{eqnarray*}
f(k_0 m_0) & = &  f(k_M k_n m h)
 = \tau(k_M) f(m(m^{-1}k_n m)h) \\
 &=& \tau(k_M) f(m)   = \tau(k_M)\tau(k_n ) f(m_0) \\
 &=&  \tau(k_0) f(m_0).
 \end{eqnarray*}
  It follows that $f$ belongs to the space on the left.
  \end{proof}

We are now prepared for the definition of the Eisenstein integral related to a fixed parabolic subgroup $P \in \parabsA$.
Given $\psi \in C^\infty(M/H_{M}: \tau_M^0)$ we define
the function  $\psi_{P,\gl}: G \to \Vtau$ by
$$
\psi_{P,\gl} (kman ) = a^{\gl  - \rho_P - \rhoPh}\,\tau(k) \psi(m).
$$
We denote by $C^{\infty}(G/H:\tau)$ the space of smooth functions $\phi:G/H\to V_{\tau}$
 satisfying the rule
$$
\phi(kx)=\tau(k)\phi(x)\qquad(k\in K, \; x\in G/H).
$$
Recall the definition of $\holset_P$ from (\ref{e: defi Omega Q}) with $P$
in place of $Q.$

\begin{prop}
\label{p: first intro Eisenstein}
Let $\omega\in \cD_{\fh/\fh_P}.$
Let $\psi \in C^\infty(M/H_{M}: \tau_M^0)$ and let
$\lambda\in\Omega_P$.
Then the following assertions are valid.
\begin{enumerate}
\itema
For each $x \in G$ the function
$$
h \mapsto \psi_{P, \gl}(xh)\;dl_h(e)^{-1*}\omega
$$
defines a $\Vtau$-valued density on $H/H_P.$ \vspace{-5pt}
\itemb For each $x \in G$ the density in (a) is integrable.\vspace{-5pt}
\itemc The function
$E_H(P\col\psi\col  \gl) : G \to \Vtau$ defined by
$$
E_H(P\col\psi\col  \gl)(x): = \int_{ H/ H_P}\;
 \psi_{P, \gl}(xh)\,dl_h(e)^{-1*}\,\omega, \quad (x \in G),
$$
in accordance with (a) and (b), belongs to $C^\infty(G/H\col \tau).$
\end{enumerate}
\end{prop}

\begin{proof}
Before we start with the actual proof, we note that the condition on $\gl$
implies the existence of parabolic subgroup $P' \in \cP_\gs(A,P)$
such that $\gl \in \Omega_{P', P},$ in view of (\ref{e: defi Omega Q}).

Let $F_\tau\subseteq \widehat M$  denote the finite set of $M$-types in $\tau^\vee$ and let $\cH_\tau$
denote the subspace of $C^\infty(M/H_{M})$ consisting of the left $M$-finite functions of isotype contained in $F_\tau.$
Let $L$ be the left regular representation of  $M$ in $C^\infty(M/H_{M})$
and let $\xi_\tau:= L|_{\cH_\tau}$ be its restriction to the subspace $\cH_\tau.$

Since $C^\infty(M/H_{M}:\tau_M)$ consists
of the $M$-fixed functions in $C^\infty(M/H_{M})\otimes V_\tau,$
it follows that
$$
C^\infty(M/H_{M} : \tau_M) \subseteq \cH_{\tau} \otimes V_\tau.
$$
We define the function $\tilde \psi_{P, \gl}: G \to \cH_\tau \otimes V_\tau$ by
$
\tilde \psi_{P,  \gl}(x)(m) := \psi_{P, \gl}(xm).
$
Then it  readily follows  that
$$
\tilde \psi_{P, \gl}(x man) =   a^{\gl - \rho_P - \rhoPh} (\xi_\tau(m)^{-1} \otimes 1)
\tilde \psi_{P, \gl}(x).
$$
We define $\psi^\vee_{P,\gl}(x): = \tilde \psi_{P,  \gl}(x^{-1}).$  Then 
\begin{equation}
\label{e: space psi vee}
\psi^\vee_{P,\gl} \in C^\infty(P\col \xi_\tau    \col - \gl +\rhoPh)\otimes\Vtau.
\end{equation}
Let $\gf: H /H_P  \to H_P \backslash H $ be the diffeomorphism induced by $h \mapsto h^{-1}.$
Then $d\gf(e)^* \omega = \omega$ and for $x \in G$ we see that
\begin{eqnarray}
\nonumber
\lefteqn{\gf^* [h  \mapsto \psi_{P, \gl}(xh)\;dl_h(e)^{-1*}\omega]}\\
\label{e: density for Eisenstein} \qquad
&=&
[h \mapsto
\psi_{P, \gl}^\vee (h x^{-1})
(e)\; dr_h(e)^{-1*}\omega].
\end{eqnarray}
Let $\eps$ denote the element of $\cH_\tau^{H_{M}}$ such that
$\inp{g}{\eps}= g(e)$ for all $g \in \cH_\tau.$
We may now apply
 Corollary \ref{c: first convergence j H Q} to the first tensor component of the space in (\ref{e: space psi vee}) with $(P',P)$ in place of $(P,Q),$
 with $(\xi_{\tau} , \cH_\tau )$  in place of $(\xi, \cH_\xi),$ with $R_{x^{-1}}(\psi_{P, \gl}^\vee)$ in place of $f$ (where $R$ denotes the right regular representation) and with $\eps$ in place of $\eta.$
From applying the corollary in this fashion, it follows that the expression on the right-hand side of (\ref{e: density for Eisenstein}) is a $V_\tau$-valued density on $H_P\bs H$ which is integrable. This implies (a) and (b).

Using  that $x \mapsto R_x (\psi^\vee_{P, \gl})$ is smooth as a function with values in
the Fr\'echet space
$C^\infty(P : \xi_\tau  \col -\gl +\rhoPh)\otimes \Vtau,$
we find that  $E_{H}(P:\psi:\gl) \in C^\infty(G, V_\tau).$

The right $H$-invariance and the $\tau$-spherical behavior are readily checked.
\end{proof}

\begin{rem}
The above procedure would also work more generally  for functions $\psi \in C^\infty(M/H_{M}: \tau_M).$ However, for generic $\gl \in \faqdc$ 
the map $\psi \mapsto E_H(P:\psi:\gl)\psi$ would then have a (possibly $\gl$-dependent) kernel complementary to $C^\infty(M/H_M:\tau_M^0).$ 
\end{rem}

For $v \in \cW$ the above procedure applies to the data $K, vHv^{-1}, A, \Aq$ in place of $K, H, A, \Aq.$
We thus obtain Eisenstein integrals $E_{vHv^{-1}}(P\col  \psi \col \gl \col x)$ for $\psi$ in the parameter space $C^\infty(M/M \cap v H v^{-1}: \tau_M^0).$
The general Eisenstein integral is defined as follows.
For $v \in \cW$ we equip $L^2(M/M\cap vHv^{-1})$ with the
$L^2$-inner product for the normalized invariant measure,
and $L^2(M/M\cap v H v^{-1}) \otimes \Vtau$ with the
tensor product inner product. The latter restricts to an inner product
on the finite dimensional subspace  $C^\infty(M/M\cap vHv^{-1}:\tau_M^0).$
We define
\begin{equation}
\label{e: defi cA M two}
\cA_{M,2} := \oplus_{v \in \cW} \;C^\infty(M/M\cap vHv^{-1}: \tau_M^0).
\end{equation}
Equipped with the direct sum of the given inner products on the summands,
this space becomes a finite dimensional Hilbert space.

For $\psi \in \cA_{M,2}$ and $\gl\in\holset_{P}$ we define the function $E(P:\psi:\gl): G \to \Vtau$ by
\begin{equation}
\label{e: defi Eisenstein}
E(P\col \psi\col \gl)(x) = \sum_{v \in \cW} E_{vH v^{-1}}(P\col \pr_v \psi \col \gl)(xv^{-1})
\qquad(x\in G).
\end{equation}
It is readily verified that this function belongs to $C^\infty(G/H: \tau).$
We will occasionally write $E(P\col \psi\col \gl\col x)$ for $E(P\col \psi\col \gl)(x)$.

We will now relate the Eisenstein integral thus defined to matrix coefficients
with $H$-fixed distribution vectors. For this we will use a suitable realization of
the space $\cA_{M,2}.$ In analogy with  (\ref{e: defi cA M two}) we define
$$
\cA_{M_0, 2}: = \oplus_{v\in \cW} \; C^\infty(M_0/M_0\cap vH v^{-1}: \tau_{M_0}).
$$
In view of Lemma \ref{l: spherical functions on M and Mzero} applied with
$vH v^{-1}$ in place of $H,$ for $v \in \cW,$ we see that
$$
\cA_{M_0, 2} = \cA_{M,2}.
$$
For $\xi \in \dMzerofu$ and $v \in \cW,$ we denote by $C_\xi^\infty(M_0/M_0\cap vHv^{-1})$
the space of left $M_0$-finite functions in $C^\infty(M_0/M_0 \cap v H v^{-1})$
of isotopy type $\xi.$ Furthermore, we denote by
\begin{equation}
\label{e: type xi spherical functions on Mzero}
C^\infty_\xi(M_0/M_0\cap vH v^{-1}: \tau_{M_0})
\end{equation}
the intersection of $ C^\infty(M_0/M_0\cap vH v^{-1}: \tau_{M_0})$ with
$C_\xi^\infty(M_0/M_0\cap vHv^{-1}) \otimes \Vtau.$ The direct sum of the spaces
(\ref{e: type xi spherical functions on Mzero}) for $v \in \cW$ is denoted by $\cA_{M_0, 2, \xi}.$
Then it follows that
\begin{equation}
\label{e: deco cA Mzero two}
\cA_{M_0, 2} = \oplus_{\xi \in \dMzerofu} \cA_{M_0, 2, \xi},
\end{equation}
as an orthogonal direct sum with finitely many non-zero terms.

Similar definitions, with $M$ in place of
 $M_0,$ lead to spaces
\begin{equation}
\label{e: type xi spherical functions on M}
C^\infty_\xi(M/M\cap vHv^{-1} : \tau_M^0),
\end{equation}
equal to (\ref{e: type xi spherical functions on Mzero}) in view of
(\ref{e: spherical functions on M and Mzero}), for $v \in \cW.$ The orthogonal direct sum of
(\ref{e: type xi spherical functions on M}) over $v \in \cW$ is denoted by
$\cA_{M,2, \xi}.$ Then obviously
\begin{equation}
\label{e: equality cA M and M zero}
\cA_{M_0, 2, \xi} = \cA_{M,2, \xi}.
\end{equation}

For $\xi \in \dMzerofu$ we define  $C(K: \xi: \tau)$ to be the space of functions  $f: K \to \cH_\xi \otimes \Vtau$ transforming according to the rule:
$$
f(m k_0 k ) = [\xi(m) \otimes \tau(k)^{-1}] f(k_0), \qquad (k,k_0 \in K, m \in M_0\cap K).
$$
We recall from \cite[Lemma 3, p.\ 528]{BSft} that there exists a natural linear isomorphism
\begin{equation}
\label{e: defi psi T}
T \mapsto \psi_T,\quad C(K:\xi: \tau) \otimes \bar V(\xi)\;\;{\buildrel\simeq \over \longrightarrow} \;\;\cA_{M_0, 2,\xi } = \cA_{M,2,\xi},
\end{equation}
given by
\begin{equation}
\label{e: formula psi T}
(\psi_T)_v(m)= \inp{f(e)}{\xi(m)\pr_v(\eta)},\quad (v \in \cW),
\end{equation}
for $T = f \otimes \eta \in C(K:\xi: \tau) \otimes \bar V(\xi)$ and $m \in M.$
Moreover, $ T\mapsto \sqrt{\dim \xi}\;\psi_{T}$ is an isometry.

The map  $\inj$ introduced in (\ref{e: i sharp on K}) 
is an isometric embedding $C(K : K\cap M_0:\xi) \to C(K:M: \xi_M).$ Through tensoring with the 
identity map on $\Vtau$ it induces an isometric embedding $C(K:\xi:\tau) \to C(K: \xi_M:\tau_M)$ which we denote by $\inj$ again. 

\begin{thm}
\label{t: eisenstein as matrix coefficient}
Let $\xi \in \dMzerofu$ and let $T = f \otimes \eta  \in C(K\col \xi \col \tau) \otimes \bar V(\xi).$
Then for $x \in G$ and
$\gl \in \convergeset_{P}$,
$$
E(P\col \psi_T \col \gl \col x)
= \inp{\inj \! f}{\pirep_{P, \xi_M, \bar \gl-\rhoPh}(x) j(P\col \xi_M \col \bar \gl)(\eta)},
$$
where $j(P:\xi_M:\bar \gl)(\eta)$ should be viewed 
as an element of $C^{-\infty}(K:M:\xi_M).$ Moreover, the indicated sesquilinear pairing is taken on the first tensor components of $\inj\! f.$ 
\end{thm}

\begin{proof}
The set $\Upsilon_P$ is the intersection of the sets $\Omega_{v,P},$ for $v \in \cW,$ by
(\ref{e: defi Upsilon}) and Lemma \ref{l: holset v Q}.
In view of (\ref{e: defi Eisenstein}) and (\ref{e: defi j without H})
it therefore suffices to restrict to the case that $\eta \in V(\xi, e)$
and prove the result under the (weaker) assumption that $\gl \in \Omega_P.$

Write $\psi = \pr_e \psi_T.$
Then it follows from the proof of Proposition \ref{p: first intro Eisenstein} that,
for $x$ and $\gl$ as specified,
\begin{eqnarray*}
E_H(P\col  \psi_T \col \gl \col x) & =&
\int_{H_P\bs H}  \gf^* [h  \mapsto \psi_{P, \gl}(xh)\;dl_h(e)^{-1*}\omega]\\
&=&
\int_{H_P\bs H} \psi_{P, \gl}^\vee (h x^{-1})(e)\; dr_h(e)^{-1*}\omega.
\end{eqnarray*}
We now calculate the function
$\psi^\vee_{P,\gl} $ in this particular case. As it belongs to
$C^\infty(P\col \xi_M \col -\gl+\rhoPh)\otimes \Vtau$
it is sufficient to calculate its restriction to $K.$
Since $\psi = \psi_T,$ it follows from (\ref{e: formula psi T}) that 
$
\psi(m) = \psi_T(m) = \inp{f(e)}{\xi(m)\eta }.
$
This implies that
$$
\psi_{P,\gl}(k) = \tau(k) \inp{f(e)}{\xi(e)\eta} = \inp{f(k^{-1})}{\eta}.
$$
In turn, this implies that
$$
\psi^\vee_{P,\gl}(k)(e) = \inp{f(k)}{\eta}.
$$
We write $[\inj\!f]_{P,-\gl + \rhoPh }$ for the extension of the function $\inj \!f\in C(K:\xi_M:\tau_M)$ 
to a function in $C^\infty(P:\xi_M: -\gl + \rhoPh) \otimes \Vtau.$ 
Then 
$$ 
\psi^\vee_{P,\gl}(x)(e) = \inp{[\inj\! f]_{P, -\gl +  \rhoPh}(x)}{\eta}.
$$ 
Thus, in view of (\ref{e: integral for j H Q}) we find that 
\begin{eqnarray*}
E_H(P\col  \psi_T \col \gl \col x)
&=&   \int_{H_P\bs H}\big\langle[R(x^{-1})[\inj\! f]_{P,-\gl +\rhoPh}(h),\eta\big\rangle\; dr_h(e)^{-1*}\; \omega
\\
&=& \big\langle [R(x^{-1})[\inj\! f]_{P,-\gl +\rhoPh} ,j_H(P:\xi_M: \bar \gl )(\eta)\big\rangle\\
 &=& \big\langle\pirep_{P,\xi_M , - \gl + \rhoPh}(x^{-1})\inj \! f  ,j_H(P:\xi_M: \bar \gl )(\eta)\big\rangle\\
&=& \big\langle \inj \!  f,\pirep_{P,\xi_M , \bar \gl - \rhoPh}(x)j_H(P:\xi_M: \bar \gl )(\eta)\big\rangle.
\end{eqnarray*}\vspace{-12pt}
The proof is complete.
\end{proof}

\begin{cor}
\label{c: holomorphy Eisenstein integral}
Let $P\in \parabsA$ and let $\psi \in \cA_{M,2}.$ Then the Eisenstein integral $E(P:\psi:\gl)$
depends meromorphically on $\gl\in \faqdc$  as a function with values in
$C^\infty(G/H:\tau).$ As such, it is holomorphic on an open neighborhood of the set $\hatUpsilon_P.$
\end{cor}

\begin{proof}
The assertion about meromorphy follows from the previous result in view of (\ref{e: deco cA Mzero two}) and the linear dependence of the Eisenstein integral on $\psi.$
The statement about holomorphy now follows from Lemma
\ref{l: holomorphy of j} (b).
\end{proof}

 It  will sometimes be convenient to write
$E(P:\gl:x)\psi = E(P:\psi:\gl:x)$ and to adopt the viewpoint that
$E(P:\gl)$ is a meromorphic $\Hom(\cA_{M,2} , C^\infty(G:\tau))$-valued
function
of $\gl \in \faqdc.$

We proceed by relating the Eisenstein integrals defined above to
the Eisenstein integrals introduced earlier in \cite{Banps2} and \cite{BSft}
for minimal $\gs\Cartan$-stable parabolic subgroups.

\begin{cor}
\label{c: equality q extreme Eisenstein integral}
Let $P \in \parabsgsA$ and let $P_0 $ be the unique parabolic subgroup
from $\parabsgsAq$ containing $P.$
Then
\begin{equation}
\label{e: equality eisenstein P and P zero}
E(P: \gl) = E(P_0: \gl)
\end{equation}
as  $\Hom\big(\cA_{M_0,2}, C^\infty(G/H:\tau)\big)$-valued meromorphic functions of $\gl \in \faqdc.$
\end{cor}

\begin{proof}
Let $\xi \in \dMzerofu.$ Then it follows from Corollary \ref{c: equality of jP and jP0} that
$$
j(P : \xi_M: \gl) = \inj \after j(P_0: \xi: \gl).
$$
Let $T = f\otimes \eta \in C(K\col \xi \col \tau) \otimes \bar V(\xi).$ 
Then it follows by \cite[Lemma 4.2]{Banps2} and (\ref{e: i sharp on K}) 
that
\begin{eqnarray*}
E(P_0: \gl: x)\psi_T
&=& \inp{f}{\pirep_{P_0, \xi, \bar \gl}(x) j(P_0: \xi : \bar \gl)(\eta)}\\
&=&\inp{\inj \!f}{ \inj \pirep_{P_0, \xi, \bar \gl}(x) j(P_0: \xi : \bar \gl)(\eta)}\\
&=& \inp{\inj\! f}{\pirep_{P, \xi_M, \bar \gl - \rhoPh}(x) j(P: \xi_M: \bar \gl)(\eta)}\\
&=& E(P: \gl : x)\psi_T.
\end{eqnarray*} 
\vspace{-36pt}

\end{proof}
\medno\medno
The Eisenstein integrals for parabolic subgroups from $\parabsA$ can be related to each other as follows.

\begin{prop}
\label{p: relation eisensteins for dominating parab}
Let $Q \in \parabsA,$ $P \in \cP_\gs(A)$ and $P \succeq Q.$ Then for all $\xi \in \dMzerofu$, all $T \in C(K:\xi: \tau) \otimes \bar V(\xi)$
and generic $\gl \in \faqdc$, we have
\begin{equation}
\label{e: relation Eisenstein Q and P}
E(Q:\gl)\psi_{T}
=
E(P:\gl)\psi_{[\proj \after A(Q:P:\xi_M:-\gl + \rhoPh)^{-1} \after \inj \otimes I]T}.
\end{equation}
Here, $\proj$ is shorthand for the restriction of $\proj \otimes I_{\Vtau}$ 
to the subspace $C(K:\xi_M:\tau_M)$ of $C^\infty(K:M:\xi_M) \otimes \Vtau,$ see also
(\ref{e: defi proj}). 
Likewise, the intertwining operator acts on the first tensor component 
in $C(K: M: \xi_M) \otimes V_\tau.$ 
\end{prop}
\begin{proof}
By linearity it suffices to prove this for  $T = f \otimes \eta,$ with
$f \in C(K:\xi: \tau)$ and $\eta \in \bar V(\xi).$
It follows from Theorem \ref{t: eisenstein as matrix coefficient}
and (\ref{e: j Qprime and j Q}) that for generic $\lambda\in\faqdc$ we have
$$ 
 j(Q: \xi_{M} : \bar \gl )(\eta) = A(P:Q:\xi_M:\bar\gl - \rhoPh)^{-1} j(P:\xi_M: \bar \gl)(\eta).
 $$
For $x \in G$ we now obtain (with the pairing taken on first tensor components)
\begin{eqnarray}
\nonumber\lefteqn{
E(Q: \gl : x) \psi_T =}\\
\nonumber
&=&
\inp{\inj f}{\pirep_{Q, \xi_{M}, \bar \gl - \rhoQh}(x)
 j(Q: \xi_M : \bar \gl )(\eta)}\\
 \nonumber
 &=&  \inp{\inj f}{A(P:Q:\xi_{M}: \bar \gl - \rhoPh )^{-1} \pirep_{P, \xi_{M}, \bar \gl - \rhoPh}(x)
                           j(P: \xi_M : \bar \gl )(\eta)}
\\
\nonumber
&=&
\inp{ A(Q:P:\xi_{M}: - \gl +  \rhoPh )^{-1} \inj  f}{\pirep_{P, \xi_{M}, \bar \gl - \rhoPh}(x)
                          j(P: \xi_M : \bar \gl )(\eta)}\\
               \label{e: is Eisenstein}                       
                          &=&
\inp{p  \after A(Q:P:\xi_{M}: -  \gl +  \rhoPh )^{-1}  \inj f}{\pirep_{P, \xi_{M}, \bar \gl - \rhoPh}(x)
                          j(P: \xi_M: \bar \gl )(\eta)}
                          \end{eqnarray}
The  last identity follows by application of Corollary \ref{c: p and j P}.
Finally, since $p = \inj\after\proj,$ the expression in (\ref{e: is Eisenstein}) 
equals the expression on the right-hand side of (\ref{e: relation Eisenstein Q and P}),
in view of Theorem \ref{t: eisenstein as matrix coefficient}.
\end{proof}

Let $Q,P \in \cP(A)$ be as in Proposition \ref{p: relation eisensteins for dominating parab}. Then motivated by the proposition, 
we define the $C$-function $C(Q:P:\gl)$ to be the unique $\End(\cAMtwo)$-valued 
meromorphic function of $\gl \in \faqdc$ such that 
$$ 
C(Q: P: \gl) \psi_T = \psi_{[\proj \after A(Q:P:\xi_M:-\gl + \rhoPh)^{-1} \after \inj \otimes I]T}
$$ 
for all $\xi \in \dMzerofu$, all $T \in C(K: \xi: \tau) \otimes \bar V(\xi)$
and generic $\gl \in \faqdc.$  Then (\ref{e: relation Eisenstein Q and P})
may be abbreviated as 
\begin{equation}
\label{e: first appearance C}
E(Q: \gl) = E(P:\gl) C(P:Q:\gl).
\end{equation}

The following result is a variation of a result of Harish-Chandra, see 
\cite[Lemma 3, p.\ 47]{HC2}. The proof given below follows a different strategy, which also 
works in the setting of \cite{HC2}.

\begin{prop}
\label{p: det C non zero}
Let $Q \in \cP(A)$ and $P \in \cP_\gs(A)$ such that $P \succeq Q.$  Then the meromorphic
function $\faqdc \ni \gl \mapsto \det \; C(Q:P:\gl)$ is not identically zero.
\end{prop}
 
Before proceeding with the proof of this proposition, we first list a corollary.

\begin{cor}
\label{c: meromorphic inverse C}
Let $Q \in \cP(A)$ and $P \in \cP_\gs(A)$ such that $P \succeq Q.$ 
Then the endomorphism $C(Q:P:\gl) \in \End(\cAMtwo)$ is invertible 
for generic $\gl \in \faqdc$ and $\gl \mapsto C(Q:P:\gl)^{-1}$ is a meromorphic 
$\End(\cAMtwo)$-valued meromorphic function on $\faq.$ 
\end{cor}

\begin{proof}
This follows from Proposition \ref{p: det C non zero}
by application of Cramer's rule for the inversion of a matrix.
\end{proof}
The following lemma will play an important role in the proof of Proposition
\ref{p: det C non zero}.

\begin{lemma}
\label{l: diagram proj A}
 Let $P\in \cP_\gs(A)$ and let $P_0 $ be the unique parabolic subgroup from 
$\cP_\gs(\Aq)$ containing $P.$ Then the following diagram commutes:
$$ 
\begin{array}{ccc}
C(K: K\cap M_0 : \xi) &\buildrel \scriptscriptstyle{A(\bar P_0: P_0: \xi : \gl)}\over  \longrightarrow & C(K: K\cap M_0 : \xi) \\
\scriptstyle{\proj} \uparrow & & \uparrow \scriptstyle{\proj} \\
C(K: M : \xi_M) & \buildrel \scriptscriptstyle{A(\gs(P) : P: \xi_M :  \gl + \rhoPh )}\over  \longrightarrow  & C(K: M : \xi_M) 
\end{array}
$$
for generic $\gl \in \faqdc.$ 
\end{lemma}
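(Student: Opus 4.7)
The plan is to prove a ``downward'' version of the diagram and deduce the stated upward version by taking adjoints. Observe first that $\gs(P)$ is again $\fq$-extreme and is contained in $\bar P_0 \in \cP_\gs(\Aq)$, so the entire framework of Section~\ref{s: comparison of principal series representations} applies equally well to the pair $(\gs(P), \bar P_0)$; in the compact picture the resulting maps $\inj$ and $\proj$ coincide with those for $(P, P_0)$, since they depend only on the Levi $M_0$. I will establish the meromorphic identity
\begin{equation*}
A(P: \gs(P): \xi_M: \gl - \rhoPh) \after \inj \;=\; \inj \after A(P_0: \bar P_0: \xi: \gl)
\end{equation*}
in $\gl \in \faqdc$, and then take adjoints.

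This downward identity reduces to a triviality once one establishes the function-space inclusion $C^\infty(\bar P_0: \xi: \gl) \subset C^\infty(\gs(P): \xi_M: \gl - \rhoPh)$ on $G$, under which the map $\inj$, defined by $(\inj g)(x) = \ev_1 \after i \after g(x) = g(x)$ via (\ref{e: i sharp}), acts as the identity inclusion. Given $g \in C^\infty(\bar P_0: \xi: \gl)$, the task is to verify that $g$ satisfies the $\gs(P)$-equivariance rule with parameter $\gl - \rhoPh$. Decomposing a product $man \in M \cdot A \cdot N_{\gs(P)}$ using $A = A_0 (A \cap M_0)$ and $N_{\gs(P)} = (N_{\gs(P)} \cap M_0)\, \bar N_{P_0}$, and rearranging into the Langlands form of $\bar P_0 = M_0 A_0 \bar N_{P_0}$, the check reduces to two ingredients: (i) $\xi$ is trivial on $M_{0n}$, and both $A \cap M_0$ and $N_{\gs(P)} \cap M_0$ lie in $M_{0n}$, so those factors contribute trivially to $\xi$; (ii) $\rho_{P_0}|_\fah = 0$, which follows from the $\gs\Cartan$-invariance of $\gS(P_0)$ by pairing each $\alpha \in \gS(P_0)$ with $\gs\Cartan\alpha$, using that $\gs\Cartan = -\mathrm{Id}$ on $\fah$. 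With the inclusion in hand, both operators in the displayed identity act on $g$ by the same integral $\int_{N_{P_0}} g(n\,\cdot\,)\,dn$: trivially for $A(P_0: \bar P_0: \xi: \gl)$, since $N_{P_0} \cap \bar N_{\bar P_0} = N_{P_0}$; and for $A(P: \gs(P): \xi_M: \gl - \rhoPh)$ by the root-system calculation $\gS(P) \cap (-\gs\gS(P)) = \gS(P, \gs\Cartan) = \gS(P_0)$, valid for $\fq$-extreme $P$.

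Finally, take adjoints of the downward identity with respect to the sesquilinear pairings $\inp{\cdot}{\cdot}$ on the compact-picture spaces. The adjoint of $\inj$ is $\proj$ (see the text after (\ref{e: relations p proj and inj})), and the Hermitian conjugate of $A(Q:P:\xi:\nu)$ is $A(P:Q:\xi:-\bar\nu)$ by \cite[Prop.\ 7.1 (iv)]{KSII}; combining these yields
\begin{equation*}
\proj \after A(\gs(P): P: \xi_M: -\bar\gl + \rhoPh) \;=\; A(\bar P_0: P_0: \xi: -\bar\gl) \after \proj.
\end{equation*}
Relabeling $\mu := -\bar\gl$ gives exactly the diagram of the lemma for generic $\mu \in \faqdc$, and meromorphic continuation extends the identity to all $\mu$. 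The main obstacle in this approach is the function-space inclusion in the middle paragraph, in particular the vanishing $\rho_{P_0}|_\fah = 0$; without it, the $P$-equivariance rule would carry an unwanted factor on $\fa \cap \fm_0$ and the inclusion would fail.
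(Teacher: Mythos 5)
Your proposal is correct, and its overall skeleton is the same as the paper's: first establish the covariant (``downward'') commuting square for $\inj$ and the intertwining operators, then obtain the stated square by taking adjoints with respect to the sesquilinear pairings, using that $\proj$ is the adjoint of $\inj$ and that $A(Q:P:\xi:\nu)^* = A(P:Q:\xi:-\bar\nu)$. The one substantive difference is in how the downward square is obtained: the paper simply cites the assertion at the end of the proof of Theorem 4.2 in \cite{Banps1}, whereas you prove it directly. Your direct argument checks out: the inclusion $C^\infty(\bar P_0:\xi:\gl)\subset C^\infty(\gs(P):\xi_M:\gl-\rhoPh)$ follows from $\xi|_{M_{0n}}=1$ together with $A\cap M_0\subset M_{0n}$, $N_{\gs(P)}\cap M_0\subset M_{0n}$ and $\rho_{P_0}|_{\fah}=0$ (the last by pairing $\ga$ with $\gs\Cartan\ga$ in the $\gs\Cartan$-stable set $\gS(P_0)$), and the two intertwining integrals coincide because $N_{P_0}\cap \bar N_{\bar P_0}=N_{P_0}$ and $\gS(P)\cap\gs\Cartan\gS(P)=\gS(P)\setminus\fahd=\gS(P_0)$ for $\fq$-extreme $P$, so both are $\int_{N_{P_0}}g(n\,\cdot)\,dn$. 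This buys a self-contained proof of the step the paper outsources, at the cost of redoing a computation already available in \cite{Banps1}; the only caveat is the usual one about compatible normalizations of the Haar measures on $N_{P_0}$ appearing in the two intertwining operators, which the paper also glosses over.
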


\begin{proof}
Let $P_{M_0} = P \cap M_0.$ Then it follows that $P = P_{M_0} N_{P}$ 
and $\gs P = P_{M_0} N_{\gs P}.$ Furthermore, by the assertion at the end of the proof 
of Theorem 4.2 in \cite[p. 373]{Banps1}, with $P_1 = \bar P_0,$ $P_2 = P_0,$ 
$(P_1)_\fp = \gs(P)$ and $(P_2)_\fp = P,$ it follows that the following diagram
commutes for generic $\gl \in \faqdc,$ 
$$ 
\begin{array}{ccc}
C(K: K\cap M_0 : \xi) &\buildrel \scriptscriptstyle{A(P_0: \bar P_0: \xi : - \bar \gl)}\over  \longleftarrow & C(K: K\cap M_0 : \xi) \\
\scriptstyle{\inj} \downarrow & & \downarrow \scriptstyle{\inj } \\
C(K: M : \xi_M) & \buildrel \scriptscriptstyle{A(P: \gs(P) : \xi_M : - \bar \gl - \rho_{Ph})}\over  \longleftarrow  & C(K: M : \xi_M) .
\end{array}
$$ 
The desired result now follows by taking adjoints with respect to the given equivariant sesquilinear pairings on the spaces involved. 
\end{proof}

\begin{proof}[Proof of Proposition \ref{p: det C non zero}]
Let $\xi \in \dMzerofu$ and let $\vartheta \subseteq \widehat K$ be a finite set 
of $K$-types. Then it suffices to show that the restricted operator 
\begin{equation}
\label{e: A proj and inj}
\proj \after A(Q:P:\xi_M : -\gl + \rhoPh)^{-1} \after \inj|_{C(K:K\cap M_0:\xi)_\vartheta}
\end{equation}
has determinant not identically zero. For this it suffices to show that 
the composition of $A(\bar P_0: P_0 :\xi : -\gl)$ 
with (\ref{e: A proj and inj}) has determinant not-identically zero. By Lemma \ref{l: diagram proj A} this composition
may be rewritten as
\begin{equation}
\label{e: composition intertwiners and proj inj} 
\proj \after A(\gs(P): P: \xi : - \gl + \rhoPh) \after 
A(Q:P:\xi_M : -\gl + \rhoPh)^{-1} \after \inj|_{C(K:K\cap M_0:\xi)_\vartheta}.
\end{equation}
Since $\Sigma(\gs(P) \cap \Sigma(P) = \Sigma(P) \cap \fahd \subseteq \Sigma(Q) \cap \Sigma(P),$ 
it follows by the usual product decomposition of the standard intertwining operators,
see \cite[Cor. 7.7]{KSII}, that  (\ref{e: composition intertwiners and proj inj}) equals 
\begin{equation}
\label{e: A gs P Q}
\proj \after A(\gs P : Q : \xi_M : - \gl + \rhoPh) \after \inj|_{C(K:K\cap M_0:\xi)_\vartheta}.
\end{equation}
Thus it suffices to show that the determinant of the linear endomorphism 
of $C(K: K\cap M_0: \xi)_\vartheta$ given in (\ref{e: A gs P Q}) is not identically zero as a meromorphic 
function of $\gl.$  Now this is an immediate consequence of the following result.
\end{proof}

\begin{lemma}
\label{l: asymptotics A R Q}
Let $Q,R \in \cP(A)$ be such that $\gS(\bar R) \cap \gS(Q) \subseteq \gS(Q, \gs\Cartan).$ 
Then there exists an element $\eta \in \faqd$ such that $\inp{\eta}{\ga} > 0$ 
for all $\ga \in \gS(\bar R) \cap \gS(Q).$ Let $\eta$ be such an element
and $d := \dim(N_R \cap \bar N_Q).$ 
Then there exists a constant $c > 0$ such that for every $\mu \in \fadc$ 
and all $f \in C(K: M : \xi_M)$ 
$$ 
\lim_{t \to \infty} \;\; t^{d/2}A(R:Q:  \xi_M : \mu + t\eta) f = c f.
$$ 
in $C(K:M:\xi_M).$ 
\end{lemma}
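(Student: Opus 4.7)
The plan is to extract the asymptotic directly from the integral representation
\[
[A(R:Q:\xi_M:\mu+t\eta)f](x)=\int_{N_R\cap\bar N_Q} f_{\mu+t\eta}(\bar n x)\,d\bar n,
\]
which is valid for $t$ large since the region of absolute convergence is guaranteed by the positivity of $\eta$ on $\gS(\bar R)\cap\gS(Q)$. For the existence of $\eta$: the hypothesis $\gS(\bar R)\cap\gS(Q)\subset\gS(Q,\gs\Cartan)$ forces each such root to have non-zero restriction to $\faq$ (since $\gs\Cartan$ acts as $-I$ on $\fahd$, no root with nonzero $\fahd$-component can lie in $\gS(Q,\gs\Cartan)$), and these restrictions lie in a common open half-space of $\faqd$ because they are simultaneously positive for $Q$ and negative for $R$; any $\eta$ in the dual open cone works. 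For the main limit, the $K$-equivariance of $A(R:Q:\xi_M:\nu)$ reduces uniform convergence in $C(K:M:\xi_M)$ to pointwise convergence at $x=e$, applied to left $K$-translates of $f$.

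The central computation is a rescaling of the integration variable. I would substitute $\bar n=\exp X$ with $X\in\fn_R\cap\bar\fn_Q$, so that $d\bar n=c_0\,dX$ for a positive constant $c_0$, and then rescale $X=t^{-1/2}Y$, which introduces the Jacobian factor $t^{-d/2}$. Since $\bar\fn_Q\cap\fa=0$, the Iwasawa projection admits a quadratic expansion
\[
H_Q(\exp X)=\tfrac12\Phi(X,X)+O(|X|^3)
\]
for a symmetric bilinear form $\Phi\colon\bar\fn_Q\times\bar\fn_Q\to\fa$. Consequently
\[
t^{d/2}[A(R:Q:\xi_M:\mu+t\eta)f](e)=c_0\int_{\fn_R\cap\bar\fn_Q} f_{\mu+t\eta}\bigl(\exp(t^{-1/2}Y)\bigr)\,dY,
\]
and the integrand converges pointwise to $e^{\tfrac12\eta(\Phi(Y,Y))}f(e)$ as $t\to\infty$, the other Iwasawa factors $m(\exp(t^{-1/2}Y))$, $k(\exp(t^{-1/2}Y))$ tending to $e$.

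The main obstacle is verifying that $\eta\circ\Phi$ is strictly negative on $(\fn_R\cap\bar\fn_Q)\setminus\{0\}$, so that the limiting Gaussian integral
\[
c=c_0\int_{\fn_R\cap\bar\fn_Q} e^{\tfrac12\eta(\Phi(Y,Y))}\,dY
\]
converges and is positive. For $Y$ in a single root space $\fg_\alpha$ with $\alpha\in\gS(R)\cap\gS(\bar Q)$, the rank-one sub-computation in the subalgebra generated by $\fg_{\pm\alpha}$ shows that $\Phi(Y,Y)$ is a negative multiple of the coroot direction for $-\alpha\in\gS(\bar R)\cap\gS(Q)$, on which $\eta$ is positive, yielding $\eta(\Phi(Y,Y))<0$. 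For distinct roots $\alpha,\beta\in\gS(R)\cap\gS(\bar Q)$, the cross-term $\Phi(Y_\alpha,Y_\beta)$ is controlled by the Lie bracket $[\fg_\alpha,\fg_\beta]\subset\fg_{\alpha+\beta}$ projected to $\fa$, which vanishes unless $\alpha+\beta=0$; since $\alpha$ and $\beta$ both lie in $\gS(R)\cap\gS(\bar Q)$ this last case does not occur, so the form is essentially block-diagonal and the required negative-definiteness follows. Passage to the limit under the integral is then justified by dominated convergence, with the dominating majorant obtained from absolute convergence of the integral at a slightly smaller value of $t$, concluding the proof.
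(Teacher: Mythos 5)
Your local analysis at the identity is essentially sound, and your overall strategy (Laplace's method on the intertwining integral) is the same as the paper's; but the proof as written has a genuine gap at the point where you pass to the limit. After the substitution $Y=t^{1/2}X$, dominated convergence for the rescaled family requires a $t$-uniform integrable majorant, and this forces you to control the phase $h=\eta\,\cH_Q$ \emph{globally} on $N_R\cap\bar N_Q$: you need (i) $h\geq 0$ everywhere (in your sign convention, $\eta(\tilde H)\leq 0$), (ii) that $e$ is the unique zero, and (iii) that $h$ is bounded away from $0$ outside every neighborhood of $e$, so that the region $|Y|>\sqrt{rt}$ contributes $O(t^{d/2}e^{-crt})$ and the region $|Y|\leq\sqrt{rt}$ admits a Gaussian majorant. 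None of these follow from the Hessian computation at the identity: a function with a nondegenerate minimum at one critical point can perfectly well be negative, or have other critical points, elsewhere. Your proposed majorant ``from absolute convergence at a slightly smaller value of $t$'' does not supply this: the monotonicity $e^{-th}\leq e^{-t_0h}$ already presupposes $h\geq 0$, and even granting that, the majorant lives on the unrescaled integrand and does not dominate the rescaled family $g_t(Y)$ uniformly in $t$ without a quadratic-type lower bound $h(\exp X)\geq c\min(|X|^2,r)$. This is exactly what the paper's Lemma \ref{l: stationary phase} establishes, and its proof is not soft: it uses the Gindikin--Karpelevi\v{c} product decomposition of $N_R\cap\bar N_Q$ to reduce to real rank one and then an explicit formula for the rank-one Iwasawa projection to verify (i)--(iii). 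You would need to add this step (or an equivalent global convexity argument for $\cH_Q$ on $\bar N_Q$).

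Two smaller points. First, your identification of the cross-terms of the Hessian is off: the relevant bracket is not $[\fg_\ga,\fg_\gb]\subset\fg_{\ga+\gb}$ (that is only the BCH correction) but $[\Cartan\fg_\ga,\fg_\gb]\subset\fg_{\gb-\ga}$, arising because $\pr_{\fa}$ along $\fk\oplus\fn_Q$ sends $X\in\fg_\ga$ to $0$ via $X=(X+\Cartan X)-\Cartan X$; the conclusion (block-diagonality of $\Phi$ over the root spaces, since $\pr_\fa$ kills every nonzero root space) is still correct, but for this reason. Second, the reason the restrictions $\ga|_{\faq}$, $\ga\in\gS(\bar R)\cap\gS(Q)$, lie in an open half-space is not that they are ``positive for $Q$ and negative for $R$'' but that they lie in $\gS(Q,\gs\Cartan)$, for which $\faqp(Q)\neq\emptyset$; equivalently one may take $\eta=\xi+\gs\Cartan\xi$ with $\xi$ strictly positive on $\gS(Q)$, as the paper does.
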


\begin{proof}
Since $\gS(Q)$ is a positive system for $\gS(\fg, \fa),$ 
there exists $\xi \in \fad$ such that $\inp{\xi}{\ga} >0$ for $\ga \in \gS(Q).$ 
Let $\eta = \xi + \gs\Cartan \xi,$ then $\inp{\eta}{\ga} >0$ for $\ga \in \gS(Q,\ga \Cartan).$ 
Thus, $\eta$ satisfies the requirements. Let $\eta$ be any such element.

Replacing $\mu$ by $\mu + t_0 \eta$ for a suitable $t_0 > 0$ we see that we may 
as well assume that $\Re \inp{\mu}{\ga} > 0$ for all $\ga \in \gS(\bar R) \cap \gS(Q).$ 
In this case, we see that for all $t \geq 0$ the intertwining operator is given by the absolutely convergent integral
\begin{eqnarray*}
[A(R:Q:  \xi_M : \mu + t\eta) f ] (k)  & =& \int_{N_R \cap \bar N_Q} f_t(\bar n^{-1} k) d\bar n\\
&=& 
\int_{N_R \cap \bar N_Q} e^{- t \eta \cH_Q(\bar n)} e^{(-\mu - \rho_Q) \cH_Q(\bar n) }
f(\kappa_Q(\bar n)^{-1}k)  \; d\bar n.
\end{eqnarray*}
Here $f_t$ denotes the extension of $f$ to an element of $C^\infty(Q: \xi_M: \mu + t \eta).$ 
Moreover, the analytic maps $\cH_Q: G \to \fa$ and $\kappa_Q: G \to K$ are defined by 
$$ 
x \in \kappa_Q (x) \exp \cH_Q(x) N_Q,\qquad (x \in G),
$$ 
in accordance with the Iwasawa decomposition $G = K A N_Q.$ 

By using the properties of the function $h = \eta\cH_Q|_{N_R\cap \bar N_Q}$ 
stated in Lemma \ref{l: stationary phase} below we will be able to determine the asymptotic behavior for $t \to \infty$ by using the real version of the method of stationary phase. 

It follows from Lemma \ref{l: stationary phase} (a) that $h \geq 0.$ 
Hence, the intertwining operator is a continuous linear endomorphism of 
$C(K: M: \xi_M),$ with operator norm bounded by 
$$ 
\| A(R:Q:  \xi_M : \mu + t\eta)\| \leq \int_{N_R \cap \bar N_Q} e^{(-\Re \mu - \rho_Q) \cH_Q(\bar n)} \; d \bar n.
$$ 

It follows from Lemma \ref{l: stationary phase} (b) 
that there exists an open neighborhood $V$ of $0$ in $\R^d$ 
and an open embedding $\gf: \R^d \to  N_R\cap \bar N_Q,$ sending $0$ to $e$ such that 
$$ 
h(\gf(x)) = \inp{Sx}{x}
$$ 
with $S$ a positive definite matrix. Let $U$ be an open neighborhood of $e$ 
in $N_R \cap \bar N_Q$ with closure contained in  $\gf(V),$ and let $r >0$ be as in condition (c) of the mentioned lemma.
Fix $\chi \in C_c^\infty(\gf(V))$ such that $\chi = 1 $ on a neighborhood of the closure of $U.$ 
Then 
$$ 
A(R: Q : \xi_M: \mu + t\eta) f = I_t(f) + R_t(f)
$$ 
with 
$$ 
I_t(f) = \int_{N_R \cap \bar N_Q} e^{- t h(\bar n)} e^{(-\mu - \rho_Q) \cH_Q(\bar n) }
f(\kappa(\bar n)^{-1}k) \chi(\bar n)  \; d\bar n.
$$ 
The remainder term $R_t(f)$ is given by the same integral but with $\chi(\bar n)$ replaced
by $1 - \chi(\bar n).$ As the latter function is zero on $U$, it follows from the estimate in (b) that 
$$ 
\|R_t(f)\| \leq C_1 e^{-tr} \|f\|,
$$ 
with $C_1$ a positive constant independent of $f$ and $t.$ 
Accordingly, we may ignore this term and concentrate on $I_t(f).$ By substituting 
$\gf(x)$ for $\bar n$ we obtain
$$ 
I_t(f) = \int_{\R^n} e^{- t \inp{Sx}{x}} e^{(-\mu - \rho_Q) \cH_Q(\gf(x) )}
f(\kappa(\gf(x))^{-1}k) \chi(\gf(x)) J(x)\; dx,
$$ 
where $J(x)$ is a Jacobian. Substituting $t^{-1/2} x$ for $x$ and taking the limit for $t \to \infty,$ we see that
$$ 
t^{d/2} I_t(f) \to  \int_{\R^d} e^{-\inp{Sx}{x}}  f(k) \, J(0) \;dx,  
$$ 
uniformly in $k.$ This establishes the result with 
$$ 
c = J(0) \int_{\R^d} e^{-\inp{Sx}{x}} \;dx.
$$ 
\end{proof}

\begin{lemma}
\label{l: stationary phase}
 Let $Q,R \in \cP(A)$ and let $\eta \in\fad$ be such that $\inp{\eta}{\ga} >0$ 
for all $\ga \in \gS(\bar R) \cap \gS(Q).$ Let $\cH_Q: G \to \fa$ be the Iwasawa map determined by $x \in K \exp \cH_Q(x) N_Q,$ for 
$x \in G.$ Then the function $h = \eta \cH_Q|_{N_R \cap \bar N_Q}$ has the following properties,
\begin{enumerate}
\itema
$h \geq 0;$ \vspace{-5pt}
\itemb $h$ has an isolated critical point at $e$ with 
positive definite Hessian;\vspace{-5pt}
\itemc for each open neighborhood $U$ of $e$ in $N_R \cap \bar N_Q$ there exists a constant 
$r > 0$ such that
$$ 
\bar n\in (N_R\cap \bar N_Q) \setminus U \;\; \Longrightarrow  \;\; h(\bar n) > r.
$$ 
\end{enumerate}
\end{lemma}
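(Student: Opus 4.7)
My plan is to establish (b), (a), (c) in that order. Property (b) is a direct Lie-algebraic computation using the Iwasawa decomposition and the Baker--Campbell--Hausdorff formula; (a) combines classical representation theory with the extra information that $\bar n \in N_R$ in addition to $\bar n \in \bar N_Q$; and (c) is a properness statement that falls out of the cone description of $\cH_Q$ produced in (a). Together, the three properties provide exactly the input required by the real method of stationary phase invoked in the proof of Lemma \ref{l: asymptotics A R Q}.

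For (b), I would start from $\fn_R \cap \bar\fn_Q = \bigoplus_{\gb \in \gS(R) \cap \gS(\bar Q)} \fg_\gb$. For each such $\gb$ the negative $-\gb$ lies in $\gS(\bar R) \cap \gS(Q) \subset \gS(Q)$, hence $\Cartan X \in \fg_{-\gb} \subset \fn_Q$ whenever $X \in \fg_\gb$, and the splitting $X = (X + \Cartan X) + (-\Cartan X)$ writes $X$ as a sum of a $\Cartan$-fixed (hence $\fk$-valued) element and an element of $\fn_Q$. Since $\fg = \fk \oplus \fa \oplus \fn_Q$ is the Iwasawa decomposition, this kills the $\fa$-component of $d\cH_Q(e) X$, so $dh(e) = 0$. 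To extract the Hessian I would write $\exp(tX) = \exp(K(t))\exp(H(t))\exp(N(t))$ in the three Iwasawa factors and match $t^2$-coefficients via BCH, obtaining $\tilde H_2 = \pr_{\fa} \tfrac{1}{2}[X, \Cartan X]$. The standard identity $[X, \Cartan X] = -\|X\|^2 H_\gb \in \fa$ (with $H_\gb$ dual to $\gb$ under $B$) then gives $\eta(\tilde H_2) = -\tfrac{1}{2}\|X\|^2 \inp{\eta}{\gb}$, which is strictly positive because $-\gb \in \gS(\bar R) \cap \gS(Q)$ forces $\inp{\eta}{\gb} < 0$. The cross terms $\tfrac{1}{2}[X, \Cartan Y]$ for $X \in \fg_{\gb_1}$ and $Y \in \fg_{\gb_2}$ with $\gb_1 \neq \gb_2$ lie in $\fg_{\gb_1 - \gb_2}$, which meets $\fa$ only when $\gb_1 = \gb_2$; so the Hessian is diagonal across root spaces and positive-definite, yielding (b).

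For (a), the key classical identity is: for any finite-dimensional representation $(\pi, V)$ of $G$ equipped with a $K$-invariant Hermitian inner product and any $Q$-highest weight vector $v_\gl$ (so $\fn_Q v_\gl = 0$), the Iwasawa decomposition gives $\pi(\bar n)v_\gl = e^{\gl \cH_Q(\bar n)}\pi(k(\bar n))v_\gl$, hence $\|\pi(\bar n)v_\gl\| = e^{\gl \cH_Q(\bar n)} \|v_\gl\|$. Expanding $\pi(\bar n)v_\gl = v_\gl + \sum_{\mu < \gl} c_\mu v_\mu$ in an orthonormal basis of weight vectors (orthogonal because $\fa \subset \fp$ acts self-adjointly) yields $\|\pi(\bar n)v_\gl\|^2 \geq \|v_\gl\|^2$, so $\gl \cH_Q(\bar n) \geq 0$ for every $Q$-dominant integral $\gl$; by density and continuity this extends to the closed $Q$-dominant chamber, placing $\cH_Q(\bar n)$ in the cone $\Gamma_Q$ spanned by simple $Q$-coroots. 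The additional restriction $\bar n \in N_R$ provides further vanishing: whenever $\gl$ is simultaneously $Q$- and $R$-dominant, $\fn_R v_\gl = 0$ as well, so $\pi(\bar n)v_\gl = v_\gl$ and $\gl \cH_Q(\bar n) = 0$. A convex-duality argument then pins $\cH_Q(\bar n)$ inside the subcone $\Gamma_{Q,R} \subset \Gamma_Q$ dual to $\{\eta \in \fad : \inp{\eta}{\ga} \geq 0, \, \forall \ga \in \gS(\bar R) \cap \gS(Q)\}$, namely the cone generated by the coroots $\{H_\ga^\vee : \ga \in \gS(\bar R) \cap \gS(Q)\}$. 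The hypothesis on $\eta$ says precisely that it pairs positively with these coroots, giving $h = \eta \cH_Q \geq 0$. I expect the main obstacle to be this convex-duality step identifying $\Gamma_{Q,R}$: in degenerate configurations such as $R = \bar Q$ the simultaneous dominant chamber reduces to $\{0\}$ and the two representation-theoretic inputs alone do not pin down the cone, but fortunately in that case the hypothesis on $\eta$ collapses to ordinary $Q$-dominance and the classical statement applies directly; intermediate cases may require an inductive reduction through adjacent parabolic subgroups.

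For (c), the coefficients $c_\ga(\bar n) \geq 0$ in the expansion $\cH_Q(\bar n) = \sum_\ga c_\ga(\bar n) H_\ga^\vee$ from (a) are controlled by the rep-theoretic quantities $e^{\varpi_i \cH_Q(\bar n)} = \|\pi_i(\bar n) v_{\varpi_i}\|/\|v_{\varpi_i}\|$, which are square roots of polynomials in the exponential coordinates on $N_R \cap \bar N_Q \simeq \R^d$; these polynomials are bounded below by $1$ and nonconstant in every direction of infinity, so $\sum_\ga c_\ga(\bar n) \to \infty$ as $\bar n \to \infty$. Since every $\inp{\eta}{H_\ga^\vee}$ in the sum $h(\bar n) = \sum_\ga c_\ga(\bar n) \inp{\eta}{H_\ga^\vee}$ is strictly positive by hypothesis, the continuous function $h$ is proper. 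Combined with $h \geq 0$ and uniqueness of the zero at $e$ (from (a) and (b)), the compact sublevel sets $h^{-1}([0,r])$ shrink to $\{e\}$ as $r \downarrow 0$, so for any open neighborhood $U \ni e$ we have $h^{-1}([0,r]) \subset U$ for sufficiently small $r > 0$, which is equivalent to $\bar n \notin U \Rightarrow h(\bar n) > r$, giving (c).
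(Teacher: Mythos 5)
Your part (b) is a correct and self-contained computation (the vanishing of $dh(e)$ via the splitting $X=(X+\Cartan X)-\Cartan X$, the identity $[X,\Cartan X]=-\|X\|^2H_\gb$, and the block-diagonality across root spaces all check out), and it is genuinely different from the paper, which instead reduces everything to rank one via the Gindikin--Karpelevi\v{c} decomposition $N_R\cap\bar N_Q\simeq \bar N_{\ga_1}\times\cdots\times\bar N_{\ga_k}$ (with $\ga_j$ the indivisible roots in $\gS(\bar R)\cap\gS(Q)$), under which $h$ becomes the sum $\sum_j\eta\,\cH_{\ga_j}(\bar n_j)$, and then reads off (a), (b), (c) from the explicit rank-one formula for $\cH_\ga$ obtained by ${\rm SU}(2,1)$-reduction.

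The genuine gap is in your part (a), at exactly the step you flagged. Your two representation-theoretic inputs are: (i) $\cH_Q(\bar n)$ lies in the dual cone $C_Q$ of the $Q$-dominant chamber, and (ii) $\gl\,\cH_Q(\bar n)=0$ for every $\gl$ that is simultaneously $Q$- and $R$-dominant. These do \emph{not} pin $\cH_Q(\bar n)$ into the cone generated by $\{H_\ga:\ga\in\gS(\bar R)\cap\gS(Q)\}$, and your fallback (``the hypothesis on $\eta$ collapses to ordinary $Q$-dominance'') only covers $R=\bar Q$. Take $\gS(\fg,\fa)$ of type $A_2$ with simple roots $\ga_1,\ga_2$ for $Q$, and let $R$ correspond to $w=s_1s_2$, so that $\gS(\bar R)\cap\gS(Q)=\{\ga_1,\ga_1+\ga_2\}$. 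One checks that the set of simultaneously $Q$- and $R$-dominant $\gl$ is $\{0\}$ (modulo the center), so input (ii) is vacuous; yet the target cone $\mathrm{cone}\{H_{\ga_1},H_{\ga_1+\ga_2}\}$ is a \emph{proper} subcone of $C_Q=\mathrm{cone}\{H_{\ga_1},H_{\ga_2}\}$ (it does not contain $H_{\ga_2}$), and the hypothesis on $\eta$ allows $\inp{\eta}{\ga_2}<0$. So the convex-duality step cannot close, and positivity of $h$ is not established in this intermediate configuration. The ``inductive reduction through adjacent parabolic subgroups'' you mention is precisely the missing ingredient, and carried out it \emph{is} the Gindikin--Karpelevi\v{c} decomposition: each factor contributes $\cH_{\ga_j}(\bar n_j)\in\R_{\geq0}H_{\ga_j}$ with $\inp{\eta}{\ga_j}>0$, which gives (a), identifies $h^{-1}(0)=\{e\}$ (needed for your sublevel-set argument in (c), and not supplied by (a) and (b) alone, which only make $e$ an \emph{isolated} zero), and combined with properness of each rank-one factor yields (c). Once you have that decomposition, your separate properness argument for (c) becomes unnecessary.
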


\begin{proof}
For each indivisible root $\ga \in \gS(\fg, \fa)$ we write $\fn_\ga = \fg_\ga + \fg_{2\ga},$ 
$N_\ga = \exp(\fn_\ga),$ 
$\bar \fn_\ga = \Cartan \fn_\ga$ and $\bar N_\ga := \exp (\bar \fn_\ga).$ 
Furthermore, we write $\fg(\ga)$ for the split rank one subalgebra generated by 
$\fn_\ga + \bar \fn_{\ga}$  and $G(\ga)$ for the associated analytic subgroup of $G.$ 
Let $\cH_\ga: G(\ga) \to \fa\cap \fg(\ga) = (\ker\ga)^\perp$ be the Iwasawa projection
associated with the Iwasawa decomposition $G(\ga) = K\cap G(\ga) )(A \cap G(\ga)) N_\ga.$ 
Then $\cH_\ga = \cH_Q|_{G(\ga)}.$ 

Let $\ga_1, \ldots, \ga_k$ 
be the indivisible positive roots in $\gS(\bar R) \cap \gS(Q)$.
Then by the method of S.G.\ Gindikin and F.I.\  Karpelevic \cite{GK} (see \cite[Thm. 7.6]{KSII}
for the version for intertwining operators), there exists a diffeomorphism $\psi: \bar N_{\ga_1} \times \cdots 
\bar N_{\ga_k} \to N_R \cap \bar N_Q$ such that 
$$ 
h (\psi(\bar n_1 , \ldots, \bar n_k)) = \sum_{j=1}^k \eta \cH_{\ga_j}(\bar n_j).
$$ 
To show that $h$ has properties (a), (b) and (c), it suffices to show that each 
of the functions $h_j = \eta \after \cH_{\ga_j}|_{\bar N_{\ga_j} }$ has these properties,
with $\bar N_{\ga_j}$ in place of $N_R \cap  \bar N_Q.$ 

Let $\ga \in \gS(\fg, \fa)$ be any indivisible root such that $\inp{\eta}{\ga} > 0.$ Then it suffices
to show that $h_\ga: \bar N_\ga \to \R, \bar n \mapsto \eta \cH_\ga(\bar n)$ has properties 
(a) and (b) with $\bar N_\ga$ in place of $N_R \cap \bar N_Q.$ The function $h_\ga$ can 
be explicitly computed through ${\rm SU(2,1)}$-reduction, see \cite[Thm.\ IX.3.8]{HelDS}. 
From the explicit expression given in \cite{HelDS}, 
properties (a), (b) and (c) are readily verified.
\end{proof}

We will now describe the asymptotic behavior of the various Eisenstein integrals,
using the established relations (\ref{e: equality eisenstein P and P zero}) and (\ref{e: first appearance C}) between them.

For a parabolic subgroup $R \in \parabsgsAq$ and for $v \in \cW$ we define the functions
$$
\Phi_{R, v}(\gl : \cdot): \Aqp(R) \to \End(V_\tau^{M_0 \cap K \cap v H v^{-1}})
$$
as in \cite[Lemma 10.3]{BSexpans}. These functions are smooth on the chamber $\Aqp(R)$ and as such depend meromorphically on the parameter $\gl \in \faqdc.$ Moreover, for generic $\gl \in \faqdc$ they have an absolutely converging series expansion of the form
$$
\Phi_{R,v}(\gl: a) = a^{-\rho_R} \sum_{\mu \in \N\gS^+(R)}  a^{-\mu} \Gamma_{R,\mu}(\gl) ,
$$
where the $\Gamma_{R,\mu}$ are meromorphic $\End (V_\tau^{M_0 \cap K \cap vHv^{-1}})$-valued functions and $\Gamma_{R, 0} = I.$

Let $P_0 \in \parabsgsAq$ then by \cite[Thm.\ 11.1]{BSexpans}  and
(\ref{e: equality cA M and M zero}),
there exist unique $\End(\cAMtwo)$-valued meromorphic functions $C_{R|P_0}(s: \dotvar )$ on $\faqdc$ such
that for all $\psi \in \cAMtwo$ and each $v \in \cW$ and generic
$\gl \in \faqdc$ we have
$$
E(P_0: \gl: av)\psi = \sum_{s \in W(\faq)} \Phi_{R, v}(s \gl: a)[C_{R|P_0}(s:\gl)\psi]_v(e),  \qquad (a \in \Aqp(R)).
$$
Here $W(\faq)$ denotes the Weyl group of the root system $\gS(\fg, \faq).$

\begin{thm}
\label{t: asymptotics Eisenstein integrals}
Let $Q \in \parabsA$ and $R \in \parabsgsAq.$ Then there exist unique meromorphic $\End(\cA_{M,2})$-valued meromorphic functions
$C_{R|Q}(s:\dotvar)$ on $\faqdc,$ for $s \in W(\faq),$
such that for all $\psi \in \cAMtwo,$ each $v \in \cW$ and generic
$\gl \in \faqdc$ we have
$$
E(Q: \gl: av)\psi = \sum_{s \in W(\faq)} \Phi_{R, v}(s \gl: a)[C_{R|Q}(s:\gl)\psi]_v(e),  \qquad (a \in \Aqp(R)).
$$
These meromorphic $C$-functions are generically pointwise invertible,
with meromorphic inverses.
\end{thm}

\begin{proof}
Uniqueness follows by uniqueness of asymptotics, see, e.g.,
\cite[p.\ 305, Cor.]{HCsf1}
for details.

For the remaining statements on existence and invertibility, 
we first consider the case that $Q$ is $\fq$-extreme,
i.e., $Q\in \parabsgsA.$ Then there exists a unique $Q_0 \in \cP_\gs(\Aq)$ 
containing $Q.$ By applying Corollary \ref{c: equality q extreme Eisenstein integral} and the preceding discussion we find that 
$$ 
C_{R|Q}(s: \gl) = C_{R|Q_0}(s:\gl) 
$$ 
satisfies the asymptotic requirements. Invertibility follows from \cite[Cor.\ 15.11]{Banps2}.

We now assume that $Q \in \parabsA$ is general. 
Then there exists a
$P\in \parabsgsA$ such that $P \succeq Q.$ 

By Proposition \ref{p: relation eisensteins for dominating parab}  and 
(\ref{e: relation Eisenstein Q and P}) we have
$$
E(P:\gl:x) = E(Q:\gl:x) \after C(Q: P:\gl).
$$
\medbreak
In view of Proposition \ref{p: det C non zero} we see that 
$$
C_{R|Q}(s:\gl) = C_{R|P}(s: \gl) \after C(Q:P:\gl)^{-1}
$$
satisfies the asymptotic requirements. The invertibility requirements now follow from
the invertibility of $C_{R|P}(s:\gl),$ established earlier in this proof.
\end{proof}

\begin{cor}
\label{c: functional c functions}
Let $P,Q\in \parabsA.$
Then there exists a unique
meromorphic $\End(\cAMtwo)$-valued function
$C(P:Q:\dotvar)$ on $\faqdc$ such that
\begin{equation}
\label{e: general relation Eisensteins}
E(P:\gl:x) = E(Q:\gl: x) \after C(Q:P:\gl)
\end{equation}
for all $x \in G/H$ and generic $\gl \in \faqdc.$ Furthermore, the following identities 
are valid as identities of meromorphic $\End(\cAMtwo)$-valued functions in $\gl \in \faqdc.$
\begin{enumerate}
\itema
$
C(Q:P:\gl) = C_{R|Q}(s:\gl)^{-1} C_{R|P}(s:\gl),
\qquad (s \in W(\faq), \;R\in \parabsgsAq);$ \vspace{-5pt}
\itemb
$
C(P_1:P_2:\gl) \,C(P_2:P_3:\gl) = C(P_1:P_3:\gl),
\qquad (P_1,P_2, P_3 \in \parabsA);$\vspace{-5pt}
\itemc
$
C(P:Q:\gl) C(Q:P:\gl) = C(Q:P:\gl)C(P:Q:\gl) = I.
$
\end{enumerate}
\end{cor}

\begin{proof}
Uniqueness follows from Theorem \ref{t: asymptotics Eisenstein integrals} combined
with uniqueness of asymptotics. We will first establish the existence 
for $P, Q \in \cP_\gs(A).$ Let $P_0, Q_0$ be the unique minimal $\gs\Cartan$-stable 
parabolic subgroups  in $\cP_\gs(\Aq)$ with $P_0 \supset P$ and $Q_0 \supset Q.$ 
Then by \cite[(42) \& (70)]{BSft} there exists a meromorphic function 
$a: \faqdc \to \End(\cAMtwo)$ 
such that $E(P_0: \gl) = E(Q_0: \gl) a(\gl).$ In view of Corollary 
\ref{c: equality q extreme Eisenstein integral} 
it follows that (\ref{e: general relation Eisensteins}) is valid with $C(Q:P:\gl) = a(\gl).$ 

By using Proposition  \ref{p: relation eisensteins for dominating parab}, (\ref{e: relation Eisenstein Q and P}) and Corollary \ref{c: meromorphic inverse C}
the existence of $C(Q:P:\gl)$ can now be inferred for arbitrary $P,Q \in \cP(A).$ 

Now that the existence has been established, (a) follows from Theorem 
\ref{t: asymptotics Eisenstein integrals} combined with uniqueness of asymptotics. Finally, (b) and (c) follow
from the established uniqueness of the $C$-functions involved.
\end{proof}
\section{The case of the group}
\label{s: the case of the group}
In this section we will consider the case of the group, viewed as a symmetric space,
and compare our definition of the Eisenstein integral for a minimal parabolic subgroup
with the one given by Harish-Chandra \cite{HCha3}.

Let $\bp G$ be a group of the Harish-Chandra class, and let $G = \bp G \times \bp G$
and $H$ the diagonal in $G.$  Then $H$ equals the fix point group of the involution
$\gs: G \to G$ given by $\gs (x,y) = (y,x).$
The map $m: (x,y)\mapsto x y^{-1}$ induces a diffeomorphism
$G/H \to \bp G$ which is equivariant for the action of $G$ on $G/H$ by left translation
and the action on $\bp G$ by left times right translation. Accordingly, pull-back by $m$ induces
a $G$-equivariant topological linear isomorphism $m^*: C^\infty(\bp G) \to C^\infty(G/H).$

We fix a Cartan involution $\bp \theta$ for $\bp G.$ Let $\bp \fg = \bp \fk \oplus \bp \fp$ be
the associated infinitesimal Cartan decomposition and let $\bp \fa$ be a fixed choice
of a maximal abelian subspace of $\bp \fp$.
Then $\theta = \bp \theta \times \bp \theta$ is a Cartan involution for $G$ which commutes with $\gs.$
The associated Cartan decomposition is given by
$\fg = \fk \oplus \fp,$ where $\fk = \bp\fk \times \bp \fk$ and
$\fp = \bp \fp \times \bp \fp.$  Furthermore,
$\fa = \bp \fa \times \bp \fa$ is a maximal abelian subspace of $\fp.$

The infinitesimal involution $\gs$ on $\fg = \bp \fg \times \bp \fg$ is given by $(X,Y) \mapsto (Y,X),$
so that its $+1$ eigenspace $\fh$  equals the diagonal of $\fg,$ whereas the $-1$-eigenspace $\fq$
consists of the elements $(X, -X),$ $X \in \bp \fg.$ It follows that
$\fp \cap \fq = \{(X,-X)\mid X \in \bp \fp\},$ and that the subspace
$$
\faq: = \{(X, -X)  \mid X \in \bp \fa\}
$$
is maximal abelian in $\fp \cap \fq.$ Furthermore, $\fa = \fah \oplus \faq,$ where
$\fah = \{(X,X)\mid X \in \bp \fa\} = \fa \cap \fh.$ At the level of groups we accordingly have
$A = \Ah \Aq,$ where $\Ah = A \cap H = \{(a,a)\mid a \in \bp A\}$ and $\Aq=\{(a, a^{-1})\mid a \in \bp A\}.$
The root system $\gS$ of $\fa$ in $\fg$ equals $(\bp \gS \times \{0\} )\cup (\{0\} \times \bp\gS)$,
where $\bp \gS$ denotes the root system of $\bp  \fa$ in $\bp \fg.$ The associated root spaces are given by
$$
\fg_{(\ga, 0)} = \fg_\ga \times \{0\}, \quad{\rm and} \quad \fg_{(0, \gb)}= \{0\} \times \fg_\gb,\qquad (\ga, \gb \in \bp \gS).
$$
The positive systems for $\gS$ are the sets of the form $(\Pi_1 \times \{0\}) \cup (\{0\}\times \Pi_2$ where
$\Pi_1, \Pi_2$ are positive systems for $\bp \gS.$ Accordingly,
$$
\cP(A) = \{\bp P \times \bp Q \mid \bp P , \bp Q \in \cP(\bp A)\}.
$$
Let $\bp M$ denote the centralizer of $\bp A$ in $\bp K.$ Then the centralizer of
$A$ in $K$ is given by $M = \bp M \times \bp M$ and we see that the $\theta$-stable
Levi component of any parabolic in $\cP(A)$ is equal to $M A.$

Our first objective is to give a suitable description of the $H$-fixed distribution vector
$j(R:\xi:\gl)(\eta),$ for $R = \bp P \times \bp Q$ a minimal parabolic subgroup from
$\parabs(A),$ for $\gl \in \faqdc,$ and for $\xi \in \dM$ such that the space $V(\xi),$ defined
as in (\ref{e: defi V xi}), is non-trivial.

We observe that $N_K(\faq)$ and $N_{K\cap H}(\faq)$ have the same image in $\GL(\faq),$ so that
$\cW,$ defined as in (\ref{e: defi cW}), consists of the identity element $e = (\bp e, \bp e).$
It follows that $V(\xi) = V(\xi, e)$
as in (\ref{e: defi V xi}), so that
\begin{equation}
\label{e: defi V xi group case}
V(\xi) = \cH_\xi^{H_{M}}.
\end{equation}
Thus, $V(\xi)\neq 0$ if and only if $\xi$ has a non-trivial $H_{M}$-fixed vector.
The set of such (classes of) irreducible representations of $M$ is denoted by $\dM_{H_{M}}.$

 If $\xi \in \dM_{H_{M}},$ then
\begin{equation}
\label{e: xi as tensor prod}
\xi \simeq \bp \xi \hatotimes \bp \xi^\vee,
\end{equation}
for an irreducible unitary representation
$\bp \xi$ of $\bp M$ in a finite dimensional Hilbert space $\cH_{\bp\xi}.$
Using the canonical identification
\begin{equation}
\label{e: natural identification End H xi}
\cH_{\bp \xi} \otimes \cH_{\bp \xi}^* \simeq
\End(\cH_{\bp \xi})
\end{equation}
 we shall model $\xi$ as the representation in
$\cH_\xi:= \End(\cH_{\bp \xi})$ given by
$$
\xi(m_1, m_2) T = \bp\xi(m_1) \after T \after \bp\xi(m_2)^{-1},
$$
for $T \in \End(\cH_{\bp \xi})$ and $m_1, m_2 \in \bp M.$
In particular, we see that with this convention,
$$
V(\xi) = \C I_{\bp \xi}.
$$
The space $\faqdc$ is identified with the subspace of $\fadc$ consisting of linear functionals
on $\fadc$ of the form $(\bp \gl, -\bp\gl) : (X,Y) \mapsto \bp\gl(X) - \bp \gl (Y).$ We agree to write
$$
\gl = (\bp \gl, - \bp \gl), \qquad (\bp \gl \in \bp \fadc).
$$
As in Section \ref{s: induced reps and densities},
we write $C^{\pm \infty}(K:\xi)$ for $C^{\pm \infty}(K:M:\xi)$
and $C^{\pm \infty}(\bp K: \bp \xi)$ for $C^{\pm \infty}(\bp K: \bp M: \bp \xi). $ Then as indicated in Section \ref{s: induced reps and densities},
we have topological linear isomorphisms
$$
C^{-\infty}(K: \xi) \simeq C^\infty(K:\xi)' \quad {\rm and} \quad
C^{-\infty}(\bp K: \bp \xi) \simeq C^\infty(\bp K: \bp \xi)',
$$
which restricted to the subspaces of smooth functions are induced
by the pairings (\ref{e: bilinear pairing on K}) for $(K, \xi)$ and $(\bp K, \bp \xi).$

We now consider the topological linear isomorphism
$$
\Phi: \Cminf(K: \xi) \;\;
\buildrel\simeq \over \longrightarrow
\;\;
\Hom(C^\infty(\bp K: \bp \xi), \Cminf(\bp K: \bp \xi))
$$
determined by the Schwartz kernel theorem. It is given by
$$
\inp{\Phi(h)(f)}{g} = \inp{h}{g \otimes f},
$$
for $h \in C^{-\infty}(K:\xi),$ $f \in C^\infty(\bp K: \bp \xi)$ and
$g \in C^\infty(\bp K : \bp \xi^\vee),$ with $g \otimes f$ viewed as an
element of $C^\infty(K: \xi^\vee).$

According to the compact picture explained in Section \ref{s: induced reps and densities}, we may identify $\Phi$
with a uniquely determined topological linear isomorphism
$$
\Phi_\gl : \Cminf(R:\xi:\gl) \buildrel\simeq \over \longrightarrow
\Hom(C^\infty(\bp Q,\bp \xi, \bp \gl),
\Cminf(\bp P,\bp \xi, \bp \gl)).
$$
The isomorphism $\Phi_\gl$ is readily seen to be $G$-equivariant, by $G$-equivariance of the pairings involved in the definition of $\Phi,$ for the appropriate principal series representations.
It maps the $H$-invariants in the space on the left to the subspace of $\bp G$-intertwining operators on the right.

We write $\inp{\dotvar}{\dotvar}_\xi$ for the $K$-equivariant pre-Hilbert structure on $C^\infty(K:\xi)$
given by (\ref{e: sesquilinear pairing on K})
 and $\inp{\dotvar}{\dotvar}_{\bp \xi}$ for the similar $\bp K$-equivariant pre-Hilbert structure on
$C^\infty(\bp K : \bp \xi). $ The latter structure extends to continuous sesquilinear pairings
$
C^{\pm \infty}(\bp K:\bp \xi) \times C^{\mp \infty }(\bp K: \bp \xi) \to \C, $ also denoted by
$\inp{\dotvar}{\dotvar}_{\bp \xi}.$  As $C^\infty(\bp K : \bp \xi)$ is a Montel space, it is reflexive, and we may take adjoints with respect to these pairings. Accordingly, given
a continuous linear operator $T: C^\infty(\bp K: \bp \xi) \to \Cminf(\bp K :\bp \xi)$
we define  the continuous linear operator $T^*: C^\infty(\bp K: \bp \xi) \to \Cminf(\bp K :\bp \xi)$ by
$$
\inp{T^* f}{g}_{\bp \xi} = \inp{f}{Tg}_{\bp \xi},\qquad (f,g \in C^\infty(\bp K: \bp \xi)).
$$

\begin{lemma}
\label{l: Phi inv T star and trace}
Let $F \in C^\infty(K: \xi)$ and let $T: C^{\infty}(\bp K:\bp \xi) \to C^{\infty}(\bp K: \bp\xi)$ be a
continuous linear operator. Then
\begin{equation}
\label{e: Phi inv T star and trace}
\inp{F}{\Phi^{-1}(T^*)}_\xi = \int_{\bp K}\tr_{\bp \xi}[(T \otimes I) F (\bp k,\bp k) ]\; d\bp k.
\end{equation}
\end{lemma}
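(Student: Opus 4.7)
The plan is to verify the identity first when $T$ is a smoothing operator and then to extend by a density and continuity argument. Both sides of (\ref{e: Phi inv T star and trace}) are linear in $T$, and both depend continuously on $T$: on the left this uses continuity of $T \mapsto T^*$ in the appropriate strong operator topology (a direct consequence of the defining relation $\inp{T^* f}{g}_{\bp \xi} = \inp{f}{T g}_{\bp \xi}$) together with continuity of the Schwartz kernel isomorphism $\Phi^{-1}$; on the right it uses that $T \otimes I$ acts continuously on the completed tensor product $C^\infty(\bp K : \bp \xi) \hatotimes C^\infty(\bp K : \bp \xi^\vee)$, in which $F$ naturally sits, followed by the continuous operations of restriction to the diagonal, trace, and integration over $\bp K$. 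Since smoothing operators (those given by a smooth kernel) are dense in $\Hom(C^\infty(\bp K : \bp \xi), \Cminf(\bp K : \bp \xi))$, it suffices to establish the formula when $T$ has a smooth kernel $K \in C^\infty(K : \xi)$, with $(T\phi)(\bp k_1) = \int_{\bp K} K(\bp k_1, \bp k_2)\phi(\bp k_2)\, d\bp k_2$.

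For such $T$, a direct unfolding of the defining relation for $T^*$, combined with the unitarity of $\bp \xi$, shows that $T^*$ is also smoothing, with kernel
\[
L(\bp k_1, \bp k_2) = K(\bp k_2, \bp k_1)^{\!*},
\]
the star denoting the Hilbert adjoint in $\End(\cH_{\bp \xi})$. The $\bp M \times \bp M$-equivariance of $L$ follows at once from that of $K$ and the unitarity of $\bp \xi$, so $L \in C^\infty(K : \xi)$ and $\Phi^{-1}(T^*) = L$ as an element of $\Cminf(K : \xi)$ under the canonical embedding. Identifying the inner product on $\cH_\xi = \cH_{\bp \xi} \otimes \cH_{\bp \xi^\vee}$ with the Hilbert-Schmidt inner product on $\End(\cH_{\bp \xi})$ through $\inp{A}{B}_{\mathrm{HS}} = \tr(A B^{\!*})$, the left hand side of (\ref{e: Phi inv T star and trace}) unfolds to
\[
\int_{\bp K \times \bp K} \tr\bigl[F(\bp k_1, \bp k_2)\, K(\bp k_2, \bp k_1)\bigr]\, d\bp k_1\, d\bp k_2.
\]
For the right hand side, one writes $(T \otimes I)F(\bp k_1, \bp k_2) = \int_{\bp K} K(\bp k_1, \bp k_2')\, F(\bp k_2', \bp k_2)\, d\bp k_2'$ as a composition of $\End(\cH_{\bp \xi})$-valued kernels, restricts to $\bp k_1 = \bp k_2 = \bp k$, takes the trace, integrates over $\bp k$, and, after relabeling the integration variables and invoking the cyclicity of the trace, obtains exactly the same double integral.

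The principal difficulty is not conceptual but lies in the careful bookkeeping of the conventions: tracking the identification $\cH_\xi \simeq \End(\cH_{\bp \xi})$, the conversion between the sesquilinear pairings used to define $\Cminf$ and $T^*$ and the bilinear pairings appearing in the definition of $\Phi$, and the correct meaning of $T \otimes I$ as the operator acting on the first tensor factor of $C^\infty(\bp K : \bp \xi) \hatotimes C^\infty(\bp K : \bp \xi^\vee)$. Once these conventions are consistently applied, the identity reduces, in the smoothing case, to the classical fact that the trace of a composition of integral operators equals the integral over the diagonal of their pointwise product, combined with cyclicity of the trace; the general case then follows from the continuity and density argument outlined above.
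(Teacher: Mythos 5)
Your proof is correct in substance, but it takes a genuinely different route from the paper's. The paper first establishes the case $T = I$ by expanding $\Phi^{-1}(I)$ against a Peter--Weyl orthonormal basis of $L^2(\bp K)$ (showing, in effect, that the kernel of the identity pairs with $F$ by integration over the diagonal), and then reduces general $T$ to this case purely formally, via $\Phi\big((T^*\otimes I)u\big) = T^*\after\Phi(u)$ and the adjoint identity $\inp{F}{(T^*\otimes I)\Phi^{-1}(I)} = \inp{(T\otimes I)F}{\Phi^{-1}(I)}$; no approximation of $T$ is needed. You instead verify the identity directly for smoothing $T$ by an explicit kernel computation --- your formula $L(\bp k_1,\bp k_2) = K(\bp k_2,\bp k_1)^{*}$ for the kernel of $T^*$ is correct, and both sides do reduce to $\int\!\!\int \tr_{\bp\xi}[F(\bp k_1,\bp k_2)K(\bp k_2,\bp k_1)]\,d\bp k_1\,d\bp k_2$ after cyclicity of the trace --- and then extend by density. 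The one point in your argument that needs more care is the topology in which the density/continuity step runs: smoothing operators are indeed dense in $\Hom(C^\infty(\bp K:\bp\xi),\Cminf(\bp K:\bp\xi))$ for the topology transported by $\Phi$, but the right-hand side of the identity is \emph{not} continuous for that topology, since restriction to the diagonal does not extend to distributional kernels. You need the approximating operators to converge to $T$ in $\cL\big(C^\infty(\bp K:\bp\xi),C^\infty(\bp K:\bp\xi)\big)$, say uniformly on precompact sets (e.g.\ take $T_n = P_nTP_n$ with $P_n$ the Peter--Weyl truncations, using nuclearity of $C^\infty(\bp K)$); with that choice both sides converge and the argument closes. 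On balance, the paper's reduction to $T=I$ buys a shorter, approximation-free proof, while your computation makes explicit the mechanism that the trace of a composition of integral operators is the integral of the kernel product over the diagonal.
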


\begin{proof}
We first consider the isomorphism  $\gf: \Cminf(K) \to \Hom(C^{\infty}(\bp K), \Cminf(\bp K))$ given by the Schwartz kernel isomorphism. Let ${f_j}$ denote an $L^2(\bp K)$-orthonormal basis subordinate to the decomposition into the finite dimensional
$\bp K$-isotypical components with respect to the left regular representation.
Then for each smooth function $f \in C^\infty(\bp K)$ we have
$
f = \sum_j \inp{f}{ f_j}_{2} f_j = \sum_j \inp{f}{\bar f_j} f_j
$
with convergence in $L^2(\bp K).$ Here index $2$ indicates that the pairing corresponds to the
sesquilinear $L^2$-inner product.
It follows that for each $K$-finite function $F \in C^{\infty}(K)$ we have
$$
\inp{F}{\gf^{-1} (I)}_{2} = \inp{F}{\sum_j f_j \otimes \bar f_j}_{2}.
$$
For $F = f_k \otimes f_l$ this gives
$
\inp{F}{\gf^{-1}(I)}_{2} =   \inp{f_l}{f_k}= \int_K f_k(k) f_l(k) \; dk.
$
By continuous linearity and density this implies that
$$
\inp{F}{\gf^{-1}(I)}_2 = \int_K F(k,k)\; dk, \qquad (F \in C^\infty(\bp K\times \bp K)).
$$
We next consider the natural isomorphism $\psi$ from $\cH_{\xi} = \cH_{\bp \xi} \otimes \cH_{\bp\xi}^\vee$ onto $\End(\cH_{\bp \xi}). $ Then it is readily verified that
$$
\inp{U}{\psi^{-1}(I_{\bp \xi})}_{\xi} = \tr_\xi( \psi(U))
\qquad(U\in\cH_{\xi}).
$$
Here the index $\xi$ indicates that the natural sesquilinear inner product induced
by the inner product on $\cH_\xi$ is taken. We now consider the Schwartz kernel isomorphism $\widetilde \Phi$ from $\Cminf(K, \cH_\xi)$ onto $\Hom(C^{\infty}(\bp K, \cH_{\bp \xi}), \Cminf(\bp K, \cH_{\bp \xi})).$ Then $\widetilde \Phi$ is  identified with $\gf \otimes \psi$
in a natural way. Thus, for $F \in C^\infty(K, \cH_\xi))$ we have
\begin{equation}
\label{e: pairing F with Phi tilde inverse}
\inp{F}{\widetilde \Phi^{-1} (I)}_\xi = \int_{\bp K}{ \tr_\xi (\psi(F(\bp k, \bp k))}\; d\bp k.
\end{equation}
Identifying $\cH_\xi$ with $\End (\cH_{\bp \xi})$ via $\psi$ we agree to rewrite the above expression
without the $\psi.$ We view $C^\infty (K :\xi)$ as the space of $M = \bp M \times \bp M$-invariants in $C^\infty (K , \cH_\xi).$ Likewise we view $C^{\pm \infty}(\bp K : \bp \xi)$ as the space
of $\bp M$-invariants in $C^{\pm \infty}(\bp K, \cH_{\bp \xi})$ (for the right action of $\bp M$
on $C^{\pm \infty}(\bp K)$). The $\bp M$-equivariant inclusion maps and projection maps will
be denoted by $\rmi$ and $P$ respectively. Then $\Phi = \widetilde \Phi\after \rmi = P \after \widetilde \Phi \after \rmi ,$ and we find that for $F \in C^\infty(K: \xi)$
\begin{equation}
\label{e: equality pairing Phi and Phi tilde}
\inp{F}{\Phi^{-1}(I)}_2 = \inp{F}{\widetilde \Phi^{-1} (I) }.
\end{equation}
This implies (\ref{e: Phi inv T star and trace}) with $T = I.$ To obtain the general formula, we note that for a continuous linear operator $T \in \End( C^{\infty}(\bp K: \bp \xi))$ the Hermitian adjoint $T^*$ is a continuous linear operator in $\End(\Cminf(\bp K:\bp \xi))$ and
$$
\Phi\big((T^* \otimes I) u\big) = T^*\after \Phi(u)
\qquad\big(u\in \Cminf(K:\xi)\big).
$$
For $u=\Phi^{-1}(I)$ this yields
$$
(T^* \otimes I)\Phi^{-1}(I)= \Phi(T^*).
$$
It follows that
$$
\inp{F}{\Phi^{-1}(T^*)} = \inp{F}{(T^*\otimes I)\Phi^{-1}(I)} = \inp{(T \otimes I)(F)}{\Phi^{-1}(I)}.
$$
Hence, (\ref{e: Phi inv T star and trace}) follows by application of (\ref{e: pairing F with Phi tilde inverse}) and  (\ref{e: equality pairing Phi and Phi tilde}).
\end{proof}

\begin{lemma}
\label{l: j P Q and Phi intertwiner}
Let $\bp P,
\bp Q \in \cP(\bp A).$ Then for generic $\bp \gl \in \bp \fadc,$
\begin{equation}
\label{e: j P Q and Phi intertwiner}
j(\bp P \times \bp Q: \xi: \gl) (I_{\bp\xi})= \Phi^{-1}_\gl(A(\bp P : \bp  Q: \bp \xi : \bp \gl )).
\end{equation}
\end{lemma}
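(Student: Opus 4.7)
My plan is to deduce \eqref{e: j P Q and Phi intertwiner} by exploiting the uniqueness, up to a meromorphic scalar, of a $\bp G$-intertwiner between generic principal series, combined with a reduction to the $\fq$-extreme case $\bp Q = \bar{\bp P}$.

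I first observe that both sides of \eqref{e: j P Q and Phi intertwiner} are $H$-fixed, meromorphic $\Cminf(R:\xi:\gl)$-valued functions of $\bp\gl$. For the LHS this follows from Theorem \ref{t: meromorphy of j H Q} together with the fact that $\cW = \{e\}$ in the group case (so $j = j_H$ here). For the RHS, under $\Phi_\gl$ the diagonal $H = \bp G$-action on $\Cminf(R:\xi:\gl)$ corresponds to simultaneous pre- and post-composition of an element of $\Hom(C^\infty(\bp Q:\bp\xi:\bp\gl), \Cminf(\bp P:\bp\xi:\bp\gl))$ with the respective principal series representations of $\bp G$; hence $H$-fixed vectors correspond exactly to $\bp G$-intertwiners, and $A(\bp P:\bp Q:\bp\xi:\bp\gl)$ is manifestly such an intertwiner. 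By generic irreducibility of the principal series combined with Schur's lemma, the space of such intertwiners is one-dimensional, so there exists a meromorphic scalar $c(\bp\gl)$ on $\bp\fadc$ with
$$
j(\bp P\times \bp Q:\xi:\gl)(I_{\bp\xi}) = c(\bp\gl)\,\Phi_\gl^{-1}(A(\bp P:\bp Q:\bp\xi:\bp\gl)).
$$

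To show $c(\bp\gl)\equiv 1$, I reduce to $\bp Q = \bar{\bp P}$. Indeed, $P' := \bp P \times \bar{\bp P}$ is $\gs\Cartan$-stable (the unique element of $\parabsgsAq$ containing itself in the group case), and direct root-set computations show $P' \succeq \bp P\times \bp Q$ for every $\bp Q \in \cP(\bp A)$. The transformation formula \eqref{e: j H P and j H Q} then expresses $j(P':\xi:\gl)(I_{\bp\xi})$ as the image of $j(\bp P\times \bp Q:\xi:\gl)(I_{\bp\xi})$ under the standard intertwining operator $A(P':\bp P\times \bp Q:\xi:\gl)$, which factors in the $\bp G\times \bp G$-tensor decomposition as $\rmI \otimes A(\bar{\bp P}:\bp Q:\bp\xi^\vee:-\bp\gl)$. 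On the RHS, the chain rule $A(\bp P:\bar{\bp P}:\bp\xi:\bp\gl) = A(\bp P:\bp Q:\bp\xi:\bp\gl) \after A(\bp Q:\bar{\bp P}:\bp\xi:\bp\gl)$ (cf.\ \cite[Cor.\ 7.7]{KSII}), read through $\Phi_\gl$, corresponds to a dual transformation on the kernel side. A short calculation using the adjunction between $A(\bar{\bp P}:\bp Q:\bp\xi^\vee:-\bp\gl)$ and $A(\bp Q:\bar{\bp P}:\bp\xi:\bp\gl)$ (cf.\ \cite[Prop.\ 7.1(iv)]{KSII}) shows that these two transformation operators act compatibly under $\Phi_\gl$, so $c(\bp\gl)$ is unchanged under $\bp Q \mapsto \bar{\bp P}$.

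It therefore suffices to verify $c(\bp\gl) = 1$ in the base case $\bp Q = \bar{\bp P}$. Here $R = \bp P\times \bar{\bp P}$ itself belongs to $\parabsgsAq$, and by Corollary \ref{c: equality of jP and jP0} the vector $j(R:\xi:\gl)(I_{\bp\xi})$ is the meromorphic extension of the classical $\gs$-minimal principal series distribution $\eps_e(R:\xi:\gl:I_{\bp\xi})$ from \cite{Banps1}. Using the open Bruhat decomposition $\bp G = \bp N_{\bp P}\bp M\bp A\bar{\bp N}_{\bp P}$ and the explicit piecewise definition of $\eps_e$ on $R\cdot H$ (which via the diffeomorphism $H \simeq \bp G$ corresponds to the open cell $\{(x_1,x_2): x_2^{-1}x_1 \in \bar{\bp N}_{\bp P}\bp M\bp A\bp N_{\bp P}\}$), one writes out $\eps_e(R:\xi:\gl:I_{\bp\xi})(x_1,x_2)$ explicitly and, via Lemma \ref{l: Phi inv T star and trace}, recognizes it as the Schwartz kernel of $A(\bp P:\bar{\bp P}:\bp\xi:\bp\gl)$, yielding $c(\bp\gl) = 1$. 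The main obstacle is the bookkeeping in this final identification: matching the normalization constant $c_\omega$ in the definition of $j_H$, the bilinear-versus-sesquilinear dualities in the definition of $\Phi$, and the identification $\cH_{\bp\xi}\otimes\cH_{\bp\xi}^*\simeq \End(\cH_{\bp\xi})$ between the two explicit formulas.
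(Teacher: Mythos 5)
Your reduction to the case $\bp Q = \bar{\bp P}$ is exactly the paper's argument: the paper also applies (\ref{e: j Qprime and j Q}) with $Q' = \bp P\times\bp\bar P \succeq \bp P\times\bp Q$, notes that the resulting intertwiner factors as $I\otimes A(\bp\bar P:\bp Q:\bp\xi^\vee:-\bp\gl)$, and converts post-composition on the $j$-side into pre-composition by $A(\bp Q:\bp\bar P:\bp\xi:\bp\gl)$ on the kernel side via the transpose relation; the product formula and generic invertibility of $A(\bp Q:\bp\bar P:\bp\xi:\bp\gl)$ then finish the argument. Your Schur-lemma packaging (first establishing proportionality by one-dimensionality of the generic intertwiner space, then tracking the scalar) is a harmless variant; the paper gets the identity directly without invoking Schur, but the content of the reduction step is identical.

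The one place you genuinely diverge is the base case $\bp Q=\bar{\bp P}$, which the paper does not reprove: it cites it from \cite[Lemma 1]{BSmulti}. Your plan to rederive it by writing out $\eps_e(R:\xi:\gl:I_{\bp\xi})$ on the open orbit and recognizing the Schwartz kernel of $A(\bp P:\bar{\bp P}:\bp\xi:\bp\gl)$ is viable (it is essentially how that lemma is proved), but you have left it as a sketch, and the sketch contains a concrete slip: the open orbit $R\cdot H$ for $R=\bp P\times\bar{\bp P}$ is $\{(x_1,x_2): x_1x_2^{-1}\in \bp P\,\bar{\bp P}=N_{\bp P}\bp M\bp A\bar N_{\bp P}\}$, consistent with the identification $G/H\simeq\bp G$ via $m(x,y)=xy^{-1}$, not the set $\{(x_1,x_2): x_2^{-1}x_1\in\bar N_{\bp P}\bp M\bp A N_{\bp P}\}$ you wrote (the latter involves a conjugation by $g$ and is a different set). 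If you carry out this computation you also need to restrict first to the range of $\bp\gl$ where both $\eps_e$ is locally integrable and the intertwining integral converges, and then conclude by meromorphic continuation; with that and the corrected orbit description, your argument closes.
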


\begin{proof}
Put $R = \bp P \times \bp Q$ as before. Then in the present case of the group,
$\rho_{Rh} = 0,$ so that the distribution vector on the left-hand side  of
(\ref{e: j P Q and Phi intertwiner}) belongs to $\Cminf(R:\xi: \gl).$

It follows from
(\ref{e: j Qprime and j Q}) applied with $Q' = \bp P \times \bp \bar P$ and
$Q = \bp P \times \bp Q$ that
$$
j(\bp P \times \bp \bar P: \xi:  \gl)
= [I \otimes A(\bp \bar P : \bp Q : \bp \xi^\vee:-\bp\gl ) ]\after j(\bp P \times \bp Q : \xi:  \gl).
$$
Since $A(\bp \bar P : \bp Q : \bp \xi ^\vee: - \bp \gl )$ has transpose
$A(\bp Q : \bp \bar P: \bp \xi : \bp \gl)$ relative to the bilinear pairing
$C^\infty(\bp K:\bp\xi) \otimes C^\infty(\bp K: \bp\xi^\vee) \to \C,$
it follows that
\begin{equation}
\label{e: two times j and Phi}
\Phi_{\lambda}\big(j(\bp P \times \bp \bar P : \xi: \gl) (I_{\bp\xi})\big) =
\Phi_{\lambda}\big(j(\bp P \times \bp Q : \xi: \gl) (I_{\bp\xi})\big)\after A(\bp Q :\bp \bar P : \bp \xi : \bp\gl )
\end{equation}
For $\bp Q = \bp \bar P$ the equality (\ref{e: j P Q and Phi intertwiner}) has been established in
\cite[Lemma 1]{BSmulti}.
Combining this with (\ref{e: two times j and Phi}) we find that
\begin{equation}
\label{e: j as Phi inverse of A}
A(\bp P : \bp \bar P : \bp \xi : \bp\gl ) = \Phi_{\lambda}(j(\bp P \times \bp Q : \xi: \gl) (I_{\bp\xi}))\after
A(\bp Q :\bp \bar P : \bp \xi :\bp\gl ).
\end{equation}
The intertwining operator on the left-hand side of (\ref{e: j as Phi inverse of A})
decomposes as  the composition
$$
A(\bp P : \bp Q : \bp \xi : \bp\gl ) \circ A(\bp Q : \bp \bar P : \bp \xi : \bp\gl ),
$$
as an $\End(C^\infty(K:\xi))$-valued meromorphic function of
$\bp \gl\in \bp\fadc.$
Using the invertibility of the second intertwining operator for generic
$\gl \in \bp \fadc$ we obtain (\ref{e: j P Q and Phi intertwiner}).
\end{proof}

\begin{cor}
\label{c: first expression Eisenstein integral group case}
Let $f \in C^\infty(K: \xi).$ Then for generic $\bp \gl \in \bp \fadc,$
\begin{equation}
\label{e: j in pairing is trace}
\inp{f}{j(\bp P \times \bp Q : \xi: -\bar \gl) (I_{\bp\xi})} =
\int_{\bp K}\tr_{\bp \xi} \left([ A(\bp Q : \bp P :\bp \xi : \bp\gl)\otimes I)f ] (\bp k,\bp k) \right)\; d\,\bp k.
\end{equation}
\end{cor}

\begin{proof}
For generic $\bp \gl \in \bp\fadc,$ the continuous linear endomorphism
$T:= A(\bp Q : \bp P: \bp\xi : \bp\gl)$ of $C^\infty(\bp K:\bp\xi)$
has Hermitian adjoint $T^*= A(\bp P : \bp Q : \bp \xi: - \bp\bar \gl).$
The result now follows by combining
 Lemma \ref{l: j P Q and Phi intertwiner}, with $- \bp \bar  \gl$ in place
of $\bp \gl,$  and
Lemma \ref{l: Phi inv T star and trace}.
\end{proof}

The expression on the left-hand side of (\ref{e: j in pairing is trace}) is very closely
related to an Eisenstein integral for the parabolic subgroup $R = \bp P \times \bp Q,$
defined as in Definition \ref{t: eisenstein as matrix coefficient}.
This will allow us to express the Eisenstein integral in terms of the group structure of $\bp G.$

To be more precise, let $\xi$ be as in (\ref{e: xi as tensor prod}) and let $(\tau, \Vtau)$ be a finite dimensional unitary representation of $K.$  We recall the definition of the space $C(K:\xi:\tau)$ and the definition of the linear isomorphism $T\mapsto \psi_T$ from
$C(K:\xi:\tau) \otimes V(\xi)$ onto $\cA_{2, M, \xi}$ from (\ref{e: defi psi T}) and the surrounding text (note that $M_0 = M$).

Since $\cW = \{e\},$ we have
$$
\cA_{2, M, \xi} = C^\infty_\xi(M/H_{M}: \tau_M).
$$
Since $V(\xi) = \C I_{\bp \xi},$ it follows that the following map is a linear isomorphism;
\begin{equation}
\label{e: first iso f to psi f}
f \mapsto \psi_{f \otimes I_{\bp \xi}}, \quad C(K:\xi:\tau)
\;\;
\buildrel \simeq \over \longrightarrow \;\; 
 C^\infty_\xi(M/H_{M}: \tau_M).
\end{equation}
It follows from (\ref{e: formula psi T}) that
$$
\psi_{f \otimes I_{\bp \xi}}(m^{-1}) = \inp{f(m)}{I_{\bp \xi}}_{\rm HS} = \tr_{\bp \xi} (f(m))
\qquad(m\in M),
$$
where the subscript $HS$ means that the Hilbert-Schmidt inner product is taken.

\begin{cor}
With notation as in Corollary \ref{c: first expression Eisenstein integral group case},
let $f \in C^\infty(K:\xi:\tau).$ Then
\begin{eqnarray}
\label{e: second expression Eisenstein integral group case}
\lefteqn{E(\bp P \times \bp Q : \psi_{f \otimes I_{\bp \xi}}: \gl) (\bp x, e) =}\\
\nonumber
\qquad
&=&
\int_{\bp K}\tr_{\bp \xi} \left(
\Big[
\Big(A(\bp Q : \bp P :\bp\xi : - \bp\gl)
\otimes
\pirep_{\bp Q, \bp \xi^\vee , \bp \gl} (\bp x) \Big)
f
\Big]
(\bp k,\bp k) \right)\; d\bp k,
\end{eqnarray}
for $\bp x \in \bp G$ and generic $\gl \in \faqdc.$
\end{cor}

\begin{proof}
We note that $M_{0}=M,$ so that $K\cap M_0 = M,$ $\xi_M = \xi$ and the map 
$\inj$ introduced in (\ref{e: i sharp on K}) is just the identity map in the present setting.
By application of Theorem \ref{t: eisenstein as matrix coefficient} with $R = \bp P \times \bp Q$
in place of $P,$ we now  find, taking into account that $\rho_{Rh} = 0,$ that the Eisenstein integral
on the left-hand side of (\ref{e: second expression Eisenstein integral group case}) equals
\begin{equation}
\naam{e: eisenstein as pairing}
\inp{f}{\pirep_{R, \xi, \bar \gl} (\bp x, e) j(R: \xi :\bar \gl)(I_{\bp \xi})}
=
\inp{f}{\pirep_{R, \xi, \bar \gl} (e, \bp x^{-1}) j(R: \xi :\bar \gl)(I_{\bp \xi})},
\end{equation}
by $H$-invariance of $j.$
Here $\inp{\dotvar}{\dotvar}$ stands for the sesquilinear map
$C^{\infty}(K:\xi:\tau) \times \Cminf(K: \xi) \to \Vtau$ induced by
the sesquilinear pairing $C^{\infty}(K: \xi) \times \Cminf(K: \xi) \to \C.$
By equivariance of the pairing, (\ref{e: eisenstein as pairing}) equals
$$
\inp{\pirep_{R, \xi, - \gl}(e, \bp x) f}{j (R: \xi :\bar \gl)(I_{\bp \xi})}
=
\inp{[ I \otimes \pirep_{\bp Q, \bp \xi^\vee ,  \bp\gl}(\bp x) ] f}{j (R: \xi :\bar \gl)(I_{\bp \xi})}
$$
By application of (\ref{e: j in pairing is trace}) we infer that the last displayed expression
equals the integral on the right-hand side of (\ref{e: second expression Eisenstein integral group case}).
\end{proof}

We shall now relate the Eisenstein integral in (\ref{e: second expression Eisenstein integral group case})
to Harish-Chandra's Eisenstein integral for the group.
We agree to write
$$
\tau_1(k) v =  \tau(k, e) v, \quad {\rm and}\quad  v \tau_2(k) : = \tau(e, k^{-1}) v,\qquad (v \in \Vtau, k \in \bp K).
$$
Then $(\tau_1,$ $\tau_2)$ is a unitary bi-representation of  $\bp K$ in $\Vtau$ in the sense
that $\tau_1$ is a unitary left representation and $\tau_2$ a unitary right representation of $\bp K$
in $\Vtau$ and these two representations commute.
Clearly any such bi-representation $(\tau_1, \tau_2)$  of $\bp K$
comes from a unique unitary representation $\tau$ as above, and $ \tau(k_1, k_2) v = \tau(k_1) v \tau(k_2^{-1}),$ for $v \in \Vtau$ and $(k_1, k_2) \in K.$
Given $\tau$ as above, we agree to write $\tau_M$ for the restriction of $\tau$ to $M.$
Furthermore, we agree to write $\tau_{j\bp M}$ for the restriction of $\tau_j$ to $\bp M,$ for
$j =1,2.$  Then $\tau_M$ corresponds to the bi-representation $ (\tau_{1 \bp M}, \tau_{2\bp M})$
of $\bp M.$

Let $C^\infty(\bp M: \tau_M)$ denote the space of smooth functions $\gf : = \bp M \to \Vtau$
transforming according to the rule
$$
\gf(m_1 m m_2) = \tau_1(m_1) \gf(m) \tau_2(m_2),\qquad (m, m_1, m_2 \in \bp M).
$$
Then it is readily verified that pull-back under the map $m: (x,y) \mapsto xy^{-1}$ induces
a linear isomorphism
\begin{equation}
\label{e: m star for M}
m^*: \; C^\infty(\bp M: \tau_M)   \;\;{\buildrel \simeq \over \longrightarrow}
\;\;   C^\infty(M/H_{M}:\tau_M).
\end{equation}
The inverse of this isomorphism will be denoted by
\begin{equation}
\label{e: psi to bp psi}
\psi \mapsto \bp \psi, \quad    C^\infty(M/H_{M}:\tau_M)
\;\;{\buildrel \simeq \over \longrightarrow}
\;\;   C^\infty(\bp M: \tau_M).
\end{equation}
By $\bp M \times \bp M$-equivariance, it follows that the isomorphism (\ref{e: psi to bp psi})
restricts to an isomorphism
\begin{equation}
\label{e: m star for M, types}
 C^\infty_\xi(M/H_{M}:\tau_M) \simeq C^\infty_{\xi}(\bp M: \tau_M).
\end{equation}
Here the space on the right-hand side is defined as the intersection
of $C^\infty(\bp M: \tau_M)$ with the space $C_\xi(\bp M) \otimes \Vtau,$
where $C_\xi(\bp M)$ denotes the isotypical component of type $\xi$ for the representation
$L \times R$ of $M$ in $C(\bp M).$ Furthermore, the space on the left-hand side of
(\ref{e: m star for M, types}) is defined similarly.

Since (\ref{e: first iso f to psi f}) is an isomorphism, it now follows that the following
map is a linear isomorphism as well,
$$
f \mapsto \bp \psi_{f \otimes I_{\bp \xi}},  \quad  C(K:\xi: \tau) \;\;
\buildrel\simeq \over \longrightarrow\;\;
C^\infty_{\xi}(\bp M: \tau_M).
$$
We now recall the definition of Harish-Chandra's Eisenstein integral associated
with a parabolic subgroup $\bp Q \in \cP(\bp A).$
Given $\bp \psi \in C^\infty(\bp M: \tau_{\bp M})$ and $\bp \gl \in \bp \fadc,$
we define the function $\bp \psi_{\bp\gl} : \bp M \to \Vtau$ by
\begin{equation}
\label{e: definition bp psi gl}
\bp \psi_{\bp\gl} (\bp n \bp a \bp m \bp k) =
\bp a^{\bp \gl + \rho_{\bp Q}}  \bp \psi(\bp m) \tau_2(\bp k),
\end{equation}
for $\bp k \in \bp K,$ $ \bp m \in \bp M,$ $\bp a
\in \bp A$ and $\bp n \in N_{\bp Q}. $
The Harish-Chandra Eisenstein integral for the group $\bp G$ is now defined by
\begin{equation}
\label{e: first defi Eisenstein integral}
E_\HC(\bp Q : \bp \psi : \bp \gl)(\bp x) = \int_{\bp K}
\tau_1(\bp k)^{-1} \bp \psi_{\bp \gl} (\bp k \bp x)\; d\bp k,
\end{equation}
for $\bp \gl \in \bp \fadc$ and $\bp x \in \bp G.$ We will derive a formula for the present
type of Eisenstein integral, which will allow comparison with (\ref{e: second expression Eisenstein integral group case}). In the formulation of the
following lemma, we will use the natural identifications (\ref{e: natural identification End H xi}) and
$$
C(K:\xi:\tau) =
\left(
C^\infty(K:\xi) \otimes \Vtau
\right)^K
\simeq
\left(
C^\infty(\bp K: \bp \xi) \otimes C^\infty(\bp K : \bp \xi^\vee) \otimes \Vtau
\right)^K.
$$
Furthermore, we will write $\tr_{\bp \xi}$ as shorthand for the map
$$
\tr_{\bp \xi} \otimes I_{\Vtau}:
\End(\cH_{\bp \xi}) \otimes \Vtau \to \Vtau.
$$

\begin{lemma}
Let $\bp \xi \in \bp\dM$ and put $\xi  = \bp \xi \otimes \bp \xi^\vee.$
Furthermore, let $f \in C^\infty(K: \xi : \tau)$ and put $\bp \psi = \bp \psi_{f \otimes I_{\bp \xi}}.$
Then for all $\bp \gl \in \bp \fadc,$
\begin{equation}
\label{e: second expression HC Eisenstein}
E_\HC(\bp Q : \bp \psi_{f \otimes I_{\bp \xi}}: \bp \gl)(\bp x) = \int_{\bp K} \tr_{\bp \xi}\left( [\,
(I \otimes \pirep_{\bp Q, \bp \xi^\vee , \bp \gl}(\bp x)) f ](\bp k, \bp k)\right)\,
\; d\,\bp k.
\end{equation}
\end{lemma}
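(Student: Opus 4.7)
The plan is to reduce both sides of \eqref{e: second expression HC Eisenstein} to integrals over $\bp K$ whose integrands lie in $V_\tau$, and then match them by cyclicity of the partial trace $\tr_{\bp\xi}$. The first step will be to extract an explicit formula for $\bp\psi:=\bp\psi_{f\otimes I_{\bp\xi}}$. Since $\cW=\{e\}$ in the group case, the isomorphism $T\mapsto \psi_T$ from \eqref{e: defi psi T}--\eqref{e: formula psi T} reduces to $\psi_{f\otimes I_{\bp\xi}}(m)=\inp{f(e)}{\xi(m)I_{\bp\xi}}_{\cH_\xi}$; combining this with the pullback identity $\bp\psi(\bp m)=\psi(\bp m,e)$ along the diffeomorphism $M/H_M\simeq \bp M$, $(m_1,m_2)H_M\mapsto m_1m_2^{-1}$, and unwinding the Hilbert--Schmidt pairing on $\End(\cH_{\bp\xi})=\cH_\xi$, I obtain
\[
\bp\psi(\bp m)=\tr_{\bp\xi}\!\bigl[\bp\xi(\bp m)^{-1} f(e)\bigr],
\]
where $\bp\xi(\bp m)^{-1}$ acts on $f(e)\in\End(\cH_{\bp\xi})\otimes V_\tau$ by left composition on the $\End$-factor.

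Next I will expand the Harish-Chandra integrand, using the decomposition $\bp k\bp x=\bp n\bp a\bp m\bp k'$ underlying \eqref{e: definition bp psi gl}, so that
\[
\tau_1(\bp k)^{-1}\bp\psi_{\bp\lambda}(\bp k\bp x)=\bp a^{\bp\lambda+\rho_{\bp Q}}\,\tau_1(\bp k)^{-1}\bp\psi(\bp m)\tau_2(\bp k').
\]
Because $\tau_1,\tau_2$ act only on the $V_\tau$-factor, they commute with $\tr_{\bp\xi}$ and with left composition by $\bp\xi(\bp m)^{-1}$; pulling them inside the trace and using the right $K$-equivariance of $f\in C(K:\xi:\tau)$ in the explicit form $f(\bp k,\bp k')=\tau_1(\bp k)^{-1}f(e)\tau_2(\bp k')$ yields
\[
\tau_1(\bp k)^{-1}\bp\psi_{\bp\lambda}(\bp k\bp x)=\bp a^{\bp\lambda+\rho_{\bp Q}}\,\tr_{\bp\xi}\!\bigl[\bp\xi(\bp m)^{-1} f(\bp k,\bp k')\bigr].
\]

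For the right-hand side, I will compute the compact-picture action of $\pirep_{\bp Q,\bp\xi^\vee,\bp\lambda}(\bp x)$ on the second variable of $f$. Under the identification $\End(\cH_{\bp\xi})=\cH_{\bp\xi}\otimes\cH_{\bp\xi}^{*}$, the representation $\bp\xi^\vee(\bp m)$ on the second tensor factor becomes right composition with $\bp\xi(\bp m)^{-1}$ on $\End(\cH_{\bp\xi})$. Using the same decomposition $\bp k\bp x=\bp n\bp a\bp m\bp k'$, this gives
\[
[(I\otimes\pirep_{\bp Q,\bp\xi^\vee,\bp\lambda}(\bp x))f](\bp k,\bp k)=\bp a^{\bp\lambda+\rho_{\bp Q}}\,f(\bp k,\bp k')\,\bp\xi(\bp m)^{-1}.
\]
Applying $\tr_{\bp\xi}$ and invoking $\tr(AB)=\tr(BA)$, the RHS integrand coincides with the LHS integrand obtained above, and integrating over $\bp K$ closes the argument.

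The main obstacle is purely organizational: keeping straight which side of $\End(\cH_{\bp\xi})$ the operators $\bp\xi(\bp m)$ and $\bp\xi^\vee(\bp m)$ act on, aligning the precise $\bp N\bp A\bp M\bp K$-factorization used in \eqref{e: definition bp psi gl} with the one used in the compact picture of $\pirep_{\bp Q,\bp\xi^\vee,\bp\lambda}$, and exploiting the compatibility $\bp\xi(\bp m_1)f(e)=\tau_1(\bp m_1)^{-1}f(e)$ and $f(e)\bp\xi(\bp m_2)^{-1}=f(e)\tau_2(\bp m_2)$ forced on $f(e)$ by the inclusion $M\subset K$. Once these conventions are pinned down, the identity is essentially the cyclicity of the trace.
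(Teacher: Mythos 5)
Your proposal is correct and follows essentially the same route as the paper: the paper encodes your explicit Iwasawa decomposition $\bp k\bp x=\bp n\bp a\bp m\bp k'$ and the $M$-equivariance of $f$ into the identity $\bp\psi_{\bp\lambda}(\bp x)=\tr_{\bp\xi}[f_{-\gl}(e,\bp x)]$ for the extension $f_{-\gl}$ of $f$, and then matches integrands exactly as you do (the cyclicity of the partial trace being absorbed into that identity). The computations agree, including the identification of $I\otimes\bp\xi^\vee(\bp m)$ with right composition by $\bp\xi(\bp m)^{-1}$ on $\End(\cH_{\bp\xi}).$
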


\begin{proof}
We agree to write $f_{\gl}$ for the unique function in $C^\infty(G: \bp Q \times \bp Q: \xi: \gl)\otimes \Vtau$  whose restriction to $K$ equals $f.$

The function
$\bp \psi := \bp
\psi_{f \otimes I_{\bp\xi}}\in C^\infty_\xi(\bp M:\tau^0_M)$ is completely determined by
$$
\bp \psi(e) = \inp{f(e,e)}{I_{\bp \xi}}_{{\rm HS}}= \tr_{\bp \xi} [f(e,e)].
$$
In the second expression, we have used the bilinear map
$(\cH_\xi \otimes \Vtau) \times \bar \cH_\xi \to \Vtau$
induced by the Hilbert-Schmidt inner product
on $\cH_\xi = \End(\cH_{\bp \xi}).$

We now observe that the function $\bp \psi_\gl$ defined by (\ref{e: definition bp psi gl})
can be expressed in terms of $f_{-\gl}$ in the following fashion;
\begin{equation}
\label{e: psi in terms of f}
\bp\psi_\gl(\bp x) = \tr_{\bp \xi} [ f_{-\gl} (e,\bp x)], \qquad (\bp x \in \bp G).
\end{equation}
It follows from the sphericality of $f$ that
$$
\tr_{\bp \xi}[f_{-\gl} (\bp x \bp k_1, \bp y \bp k_2)] = \tau_1(\bp k_1)^{-1} \tr_{\bp \xi} [f_{-\gl}(\bp x, \bp y)] \tau_2(\bp k_2),
$$
for $\bp x, \bp y \in \bp G$ and $\bp k_1, \bp k_2 \in \bp K. $ We thus obtain from
(\ref{e: psi in terms of f}) that
$$
\tau_1(\bp k)^{-1}\bp\psi_{\bp \gl} (\bp k \bp x) =  \tr_{\bp \xi}[ f_{-\gl} (\bp k, \bp k \bp x) ]
= \tr_{\bp \xi}\big(\big[ (I \otimes \pirep_{\bp Q, \bp \xi^\vee , \bp \gl} (\bp x))f\big](\bp k, \bp k ) \big).
$$
Equation (\ref{e: second expression HC Eisenstein}) now follows from
(\ref{e: first defi Eisenstein integral}).
\end{proof}

The $\fh$-extreme parabolic subgroups in $\cP(A)$ are the parabolic
subgroups of the form  $\bp P \times \bp P$ with $\bp P \in \cP(\bp A).$ For these parabolic
subgroups our Eisenstein integrals essentially coincide with the unnormalized Eisenstein
integrals of Harish-Chandra. More precisely, the following result is valid.

\begin{cor}
\label{c: char of HC Eisenstein integral}
Let $\bp P \in \cP(\bp A)$ and $\psi \in C^\infty(M/H_{M}:\tau^0_M).$
Then for all $\bp x, \bp y \in \bp G$ we have
\begin{equation}
\label{e: final equality eisenstein}
E(\bp P \times \bp P : \psi: \gl) (\bp x, \bp y )
=E_\HC(\bp P : \bp\psi : \bp \gl)(\bp x\, \bp y^{-1}),
\end{equation}
with $\gl = (\bp \gl, - \bp \gl),$
as an identity of meromorphic $\Vtau$-valued functions of $\;\bp\gl \in \bp\fadc.$
\end{cor}

\begin{proof}
The space $C^\infty(M/H_{M}: \tau_M^0)$ is spanned by the functions of the
form $\psi_{f \otimes I_{\bp \xi}},$ where
$\bp \xi \in \bp M^\wedge,$ $\xi = \bp \xi \otimes \bp \xi^\vee$ and $f \in C(K:\xi:\tau).$
By linearity it therefore
suffices to establish (\ref{e: final equality eisenstein})
for $\psi = \psi_{f \otimes I_{\bp \xi}},$ with $\bp \xi$ and $f$ as mentioned.
Moreover, by right $H$-invariance of the Eisenstein integral on the left-hand side,
it suffices to prove the result for $\bp y =e.$ The claim now follows by comparison of (\ref{e: second expression Eisenstein integral group case}) and (\ref{e: second expression HC Eisenstein}).
\end{proof}

\begin{rem}
\label{r: holomorphy HC Eisenstein}
In particular, we see that the Eisenstein integral on the left is holomorphic as a function of
$\gl \in \faqdc.$ As $\gS(\bp P \times\bp P)_- = \emptyset,$ this can also be derived by combining Theorem \ref{t: eisenstein as matrix coefficient} with Remark \ref{r: holomorphy j in fh extreme case}.
\end{rem}

\begin{cor}
\label{c: general cor Eisenstein integral in group case}
With notation as in Corollary \ref{c: first expression Eisenstein integral group case},
let $f \in C^\infty(K:\xi:\tau).$  Let $\psi_{f \otimes I_{\bp \xi}} \in C^\infty_\xi(M/H_{M}; \tau_M)$ be defined as in (\ref{e: first iso f to psi f}). Then
\begin{eqnarray}
\label{e: third expression Eisenstein integral group case}
\lefteqn{\!\!\!\!\!\!\!\!E(\bp P \times \bp Q : \psi_{f \otimes I_{\bp \xi}}: \gl) (\bp x, \bp y ) =\qquad }\\
\nonumber
\qquad &=&
E_\HC(\bp Q : \bp\psi_{[(A(\bp Q : \bp P :\bp\xi : - \bp\gl)\otimes I)f ]\otimes I_{\bp \xi}}: \bp \gl)(\bp x\, \bp y^{-1}),
\end{eqnarray}
for generic $\bp \gl \in \bp \fadc,$ $\gl = (\bp \gl, - \bp \gl)$ and all $\bp x, \bp y \in \bp G.$
\end{cor}

\begin{proof}
By right $H$-invariance of the Eisenstein integral on the left-hand side,
it suffices to prove the result for $\bp y =e.$
It follows from (\ref{e: second expression Eisenstein integral group case}) that
$$
E(\bp P \times \bp Q : \psi_{f \otimes I_{\bp \xi}}: \gl) (\bp x, e)
=E(\bp Q \times \bp Q : \psi_{[(A(\bp Q : \bp P :\bp\xi : - \bp\gl)\otimes I)f ]\otimes I_{\bp \xi}}: \gl) (\bp x, e).
$$
The identity now follows from (\ref{e: final equality eisenstein}).
\end{proof}

\section*{Appendix: Fubini's theorem for densities}
\label{s: appendix}
\addcontentsline{toc}{section}{Appendix: Fubini's theorem for densities}
\setcounter{section}{1}
\setcounter{Thm}{0}
\setcounter{equation}{0}
\renewcommand{\thesection} {\Alph{section}}
In this appendix our purpose is to establish a Fubini type theorem for repeated integration in the setting of
a  Lie group  $G$ with two closed subgroups $H$ and $L$ such that $H \subseteq L.$ The Fubini theorem concerns repeated integration for densities on the total space of the natural fiber bundle
\begin{equation}
\label{e:  fiber bundle G H L}
\pi: L \bs G \to H \bs G,
\end{equation}
with fibers diffeomorphic to $H\bs L.$
It expresses the integral over the total space as an iterated integration, first over the fibers and then
over the base space. In case of unimodular groups there is a well known version
of such a Fubini theorem involving invariant densities on the quotient spaces.
In the case of non-unimodular groups such densities do not exist. Nevertheless,
in this setting an appropriate formulation of iterated integration can be given as well.

To describe it, we will first formulate and establish a Fubini theorem for general fiber bundles,
and then specialize to the above situation.

If $V$ is real linear space of finite dimension $n$, then by $\cD_V$ we denote
the space of complex-valued densities on $V,$ i.e., the (complex linear) space of functions
$\gl: \wedge^{n}(V) \to \C$ transforming according to the rule
$\gl(t \xi) = |t| \gl(\xi),$ for all $t \in \R$ and $\xi \in \wedge^{n} V.$ A density $\gl$ is said
to be positive if $\gl(\xi) > 0$ for all non-zero $\xi \in \wedge^n V.$
By pull-back under the natural map $V^n \to \wedge^n V$ we see that
a density may also be viewed as a map $V^n \to \C$ transforming according to the
rule
$\gl \after T^n = |\det T | \gl,$ for all $T \in \End(V).$ This will be our viewpoint from
now on. Note that $\cD_V$
has dimension $1$ over $\C.$
If $W$ is a second real linear space of the same dimension $n$ and $A: V \to W$
a linear map, then pull-back under $A$ is the map $A^*: \cD_W \to \cD_V $ defined by
$$
A^*\mu = \mu\after A^n, \qquad (\mu \in \cD_W).
$$

\begin{lemma}
Let $E, F$ be finite dimensional real linear spaces.
 Then $\cD_{E\oplus F} \simeq \cD_E \otimes \cD_F$ naturally.
\end{lemma}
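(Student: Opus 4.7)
The plan is to construct an explicit bilinear map $\cD_E \times \cD_F \to \cD_{E \oplus F}$, check it descends to a linear map on the tensor product, verify this map is an isomorphism of one-dimensional complex vector spaces, and then confirm naturality in $E$ and $F$.

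Let $p = \dim E$, $q = \dim F,$ and $n = p + q.$ Given $\mu_E \in \cD_E$ and $\mu_F \in \cD_F$, I would fix bases $e_1, \ldots, e_p$ of $E$ and $f_1, \ldots, f_q$ of $F$ and define $\mu_E \otimes \mu_F \in \cD_{E \oplus F}$ to be the unique density whose value on the basis $(e_1, \ldots, e_p, f_1, \ldots, f_q)$ of $E \oplus F$ equals $\mu_E(e_1, \ldots, e_p) \cdot \mu_F(f_1, \ldots, f_q).$ Such a unique density exists, since a density is determined by its value on any single basis via the transformation rule $\mu \after T^n = |\det T|\mu.$

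The one thing that genuinely needs verification is that this construction does not depend on the choice of bases; this is the main (and essentially only) technical point. For alternative bases $e'_i = \sum_k B_{ik} e_k$ and $f'_j = \sum_l C_{jl} f_l,$ the change-of-basis matrix from $(e_1, \ldots, e_p, f_1, \ldots, f_q)$ to $(e'_1, \ldots, e'_p, f'_1, \ldots, f'_q)$ is block-diagonal with blocks $B$ and $C,$ so its absolute determinant is $|\det B|\cdot|\det C|.$ This is precisely the factor that the transformation rules for $\mu_E$ and $\mu_F$ produce on their respective sides, so the defining value transforms consistently. Bilinearity in $(\mu_E, \mu_F)$ is immediate from the formula.

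The induced linear map $\cD_E \otimes \cD_F \to \cD_{E \oplus F}$ is then between one-dimensional complex vector spaces, and it is nonzero since for nonzero $\mu_E, \mu_F$ the image is a nonzero density. Hence it is an isomorphism. For naturality, given linear maps $A \col E \to E'$ and $B \col F \to F'$ of the appropriate dimensions, one checks $(A \oplus B)^* (\mu_{E'} \otimes \mu_{F'}) = A^* \mu_{E'} \otimes B^* \mu_{F'}$ by evaluating both sides on product bases and using again that the matrix of $A \oplus B$ in such bases is block-diagonal with blocks representing $A$ and $B.$
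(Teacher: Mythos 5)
Your proof is correct and follows essentially the same route as the paper: construct an explicit bilinear map $\cD_E\times\cD_F\to\cD_{E\oplus F}$, verify it is well defined, and conclude by observing that a nonzero linear map between one-dimensional complex spaces is an isomorphism. The only cosmetic difference is that you work with the frame picture (values on product bases, with the block-diagonal determinant computation), whereas the paper phrases the well-definedness via the canonical isomorphism $\wedge^p E\otimes\wedge^q F\simeq\wedge^{p+q}(E\oplus F)$; since the paper itself identifies the two descriptions of densities, this is the same argument.
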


\begin{proof}
Let $p$ and $q$ be the dimensions of $E$ and $F$ respectively and put $n = p+q.$
We consider the natural isomorphism $\mu: \wedge^p E \otimes \wedge^q F \to \wedge^n(E\oplus F).$
Given $\ga \in \cD_{E}$ and $\gb \in \cD_F,$ we define $\ga \boxtimes \gb: \wedge^p E \otimes \wedge^q F \to \C$
by $\ga\boxtimes \gb (\xi\times \eta) = \gl(\xi) \mu(\eta).$ We note that this definition is
unambiguous, and that $(\ga \boxtimes \gb) \after (t\dotvar) = |t|(\ga \boxtimes \gb),$
so that $(\ga, \gb) \mapsto \ga \boxtimes \gb \after \mu^{-1}$ defines a bilinear map
from $\cD_E \times \cD_F$ to $\cD_{E \oplus F}.$ The induced map $\cD_E \otimes \cD_F \to \cD_{E \oplus F}$
is a non-trivial linear map between one dimensional complex linear spaces, hence a linear isomorphism.
\end{proof}

From now on we shall identify $\cD_{E\oplus F}$ with $\cD_E \otimes \cD_F$ via
the isomorphism given in the proof of the above lemma.

The lemma can be generalized to the setting of short exact sequences as follows.
Let
\begin{equation}
\label{e: short exact sequence}
0 \to E'\; {\buildrel i \over \longrightarrow}\; E \;{\buildrel p \over \longrightarrow}  \; E'' \to 0
\end{equation}
 be a short exact sequence of finite dimensional real linear spaces of dimensions
$k, n$ and $n-k.$ We recall that a linear map $f: E'' \to E$ is said to be splitting if $p\after f = {\rm id}_{E''}.$
Associated with $f$ is an isomorphism $i \oplus f: E \oplus E'' \to E,$ which by pull-back
induces a natural isomorphism
\begin{equation}
\label{e: iso induced by splitting}
(i \oplus f)^*: \cD_{E'} \otimes \cD_{E''} = \cD_{E' \oplus E''} \;{\buildrel \simeq \over \longrightarrow}\; \cD_E.
\end{equation}

\begin{lemma}
\label{l: iso independent of splitting}
The isomorphism (\ref{e: iso induced by splitting})
is independent of the splitting map $f.$
\end{lemma}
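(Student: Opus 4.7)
The plan is to show directly that any two splittings differ by a ``shear'' automorphism of $E'\oplus E''$ whose determinant has absolute value $1$, so that it acts trivially on the one-dimensional space $\cD_{E'\oplus E''}$.

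First I would fix two splittings $f_{1}, f_{2}: E'' \to E$ and form the linear automorphism
$$
\Phi := (i\oplus f_{2})^{-1} \after (i\oplus f_{1}) : E' \oplus E'' \to E' \oplus E''.
$$
Since pull-back is contravariant, $(i\oplus f_{1})^{*} = \Phi^{*} \after (i\oplus f_{2})^{*}$, so it suffices to verify that $\Phi^{*} = \mathrm{id}$ on $\cD_{E'}\otimes \cD_{E''}= \cD_{E'\oplus E''}$; equivalently, that $|\det \Phi| = 1$.

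Next I would compute $\Phi$ explicitly. Because $p\after f_{1} = p\after f_{2} = \mathrm{id}_{E''}$, we have $p\after(f_{1}-f_{2}) = 0$, hence $f_{1}-f_{2}$ factors through $\ker p = \mathrm{im}(i)$; by injectivity of $i$ there is a unique linear map $g : E'' \to E'$ with $i\after g = f_{1}-f_{2}$. A direct calculation then gives, for $(x',x'')\in E'\oplus E''$,
$$
(i\oplus f_{1})(x',x'') = i(x') + f_{2}(x'') + i(g(x'')) = (i\oplus f_{2})(x'+g(x''),\, x''),
$$
so $\Phi(x',x'') = (x' + g(x''),\, x'')$. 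In block form relative to the decomposition $E'\oplus E''$ this is the upper-triangular unipotent map $\bigl(\begin{smallmatrix} I_{E'} & g \\ 0 & I_{E''}\end{smallmatrix}\bigr)$, whose determinant equals $1$.

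Finally, since $|\det\Phi| = 1$, the pull-back $\Phi^{*}$ is the identity on the one-dimensional space $\cD_{E'\oplus E''}$, and the desired equality $(i\oplus f_{1})^{*} = (i\oplus f_{2})^{*}$ on $\cD_{E'}\otimes \cD_{E''}$ follows. There is no real obstacle here: the statement is a routine piece of linear algebra, and the only point that requires a moment's thought is the identification of $f_{1}-f_{2}$ with a map $E'' \to E'$ via the injection $i$, after which everything reduces to the fact that a unipotent block matrix has unit determinant.
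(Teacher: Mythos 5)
Your proof is correct. It takes a slightly different route from the paper's: the paper writes the difference of the two pull-backs as $(i\oplus f)^* - (i\oplus g)^* = (i\oplus(f-g))^*$ and observes that the latter vanishes because $i\oplus(f-g)$ has image in the proper subspace $i(E')$ of $E$, whereas you compose, showing that $(i\oplus f_{2})^{-1}\after(i\oplus f_{1})$ is a unipotent shear of determinant $1$ and hence acts trivially on the one-dimensional density space. Both arguments hinge on the same observation — that $f_{1}-f_{2}$ factors through $i$ via a map $g:E''\to E'$ — but your version has the advantage of only using the (unambiguous) multiplicativity of pull-back under composition, $|\det\Phi|=1$, rather than a subtraction identity for pull-backs of densities; since $A\mapsto A^{*}$ is not linear in $A$ (it involves $|\det|$), the paper's displayed identity really encodes the fact that $\wedge^{n}(i\oplus f)=\wedge^{n}(i\oplus g)$ on the top exterior power, which your unipotent-determinant computation makes explicit. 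So your argument is, if anything, the more robust of the two.
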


\begin{proof}
Let $g$ be a second splitting map. Then
$(i \oplus f)^* - (i \oplus g)^* = (i \oplus (f-g))^*.$  Now $f - g$ maps $E''$ into $\ker p = i(E')$ so that
$i \oplus (f-g)$ maps $E' \oplus E''$ into the subspace $i(E')$ of $E.$ It follows that $(i \oplus (f-g))^* = 0$
so that $(i \oplus f)^* = (i \oplus g)^* .$
\end{proof}

From now on, given a short exact sequence of the form (\ref{e: short exact sequence}) we shall identify elements of
the spaces $\cD_{E'} \otimes \cD_{E''} $ and $\cD_{E}$ via the isomorphism $(i \oplus f)^*,$ which
is independent of the choice of $f.$

We now turn to manifolds. Let $M,N$ be smooth manifolds and  $\gf: M  \to N$ a smooth map.
Then by $T\gf: TM \to TN$ we denote the induced map between the tangent bundles.
For a given $x \in M,$ this map restricts to the tangent map $T_x \gf: T_x M \to T_{\gf(x)} N,$
which will also be denoted by $d\gf(x).$

By $\cD_M$ we denote the complex line bundle of densities on $M.$
The fiber of this bundle at a point $x \in M$ is equal to $\cD_{T_x M}.$
The space of continuous densities is denoted by $\Gamma(\cD_M).$
If $\dim M = \dim N$ then the smooth map $\gf: M \to N$ induces a pull-back map $\gf^*: \Gamma(\cD_N) \to \Gamma(\cD_M),$ given by
$$
\gf^*(\mu)_x = d\gf(x)^* \mu_{\gf(x)},\qquad (\mu \in \Gamma(\cD_N),\; x \in M).
$$
There is notion of integration of compactly supported continuous densities on manifolds
for which the substitution of variables theorem is valid. More precisely, if $\gf: M \to N$ is a diffeomorphism of smooth manifolds, then
\begin{equation}
\label{e: integral of density}
\int_N \mu = \int_M \gf^*(\mu), \qquad (\mu \in \Gamma(N)).
\end{equation}

Let $\pi: F\to B$ be a smooth fiber bundle. Let $\cD_F$ denote the density bundle on $F.$
We may introduce a bundle of fiber densities on $F$ as follows.
The map $\pi$ induces the homomorphism $T\pi: TF \to TB$ of vector bundles. The kernel $K = \ker T\pi$
of this bundle is a subbundle of $TF.$ Obviously, the fiber of $K$ at $p \in F$ may be viewed
as the tangent space of the fiber $F_{\pi(p)}$ at the point $p.$
The associated bundle $p \mapsto \cD_{K_p}$ is a smooth complex line bundle on $F,$ which we shall
call the bundle of fiber densities on $F.$ We shall denote this bundle by $\cD^B_F.$

On the other hand, the fiber product or pull-back bundle
 $\pi^*(\cD_B) := F \times_\pi \cD_B$ of $\cD_B$ under $\pi$ is
a complex line bundle on $F.$ We shall denote the associated canonical line bundle homomorphism
$\pi^*(\cD_B) \to \cD_B$ by $\tilde \pi.$

The short exact sequence  $0 \to K \to TF \to \pi^*(TB) \to 0$ of vector bundles on $F$
naturally induces a line bundle isomorphism
\begin{equation}
\label{e: natural isomorphism density bundle for fibration}
\cD^B_F \otimes \pi^*(\cD_B) \simeq \cD_F,
\end{equation}
via which we shall identify elements of these spaces. Here naturality means
that for a fiber bundle morphism $\gf$ from $\pi$ to a bundle $\pi': F' \to B'$ with
$\dim F' = \dim F$ and $\dim B' = \dim B$ the following diagram commutes:
\begin{equation}
\label{e: naturality tensor product}
\begin{array}{ccc}
\cD^{B}_{F}\otimes \pi^*(\cD_{B})\!\!\!\!\!\! \!&  \buildrel \simeq \over \longrightarrow & \!\!\!\cD_{F}\\
\scriptstyle{(T\gf)^* \otimes (T\gf)^*}\; \downarrow \qquad\qquad\qquad&& \qquad\downarrow \; \scriptstyle{(T\gf)^*}\\
 \cD^{B'}_{F'}  \otimes  (\pi')^*(\cD_{B'})&  \buildrel \simeq \over \longrightarrow & \cD_{F'}
\end{array}
\end{equation}
Let now $b \in B$ and let $F_b$ the fiber $\pi^{-1}(b)$ of $F$ above $b.$
Obviously, the restriction of $\cD^B_F$ to this fiber is naturally isomorphic
to $\cD_{F_b},$ the density bundle of the fiber.
On the other hand, via $\tilde \pi$ the restriction of the bundle $\pi^*(\cD_B)$ to $F_b$  may be identified with the trivial bundle $F_b \times \cD_{T_bB}.$
Accordingly, we obtain natural isomorphisms
$
\cD_{F_b} \otimes \cD_{T_bB} \simeq \cD_F|_{F_b},
$
and
$$
\Gamma(\cD_F|_{F_b}) \simeq \Gamma(\cD_{F_b}) \otimes_\C \cD_{T_b B}.
$$
Integration over the fiber gives a natural linear map
$$
I_b: \Gamma_c(\cD_{F_b}) \to \C, \qquad\mu \mapsto \int_{F_b} \mu.
$$
By transfer we obtain a natural map $I_b \otimes {\rm id}: \Gamma_c(\cD_F|_{F_b}) \to \cD_{T_bB}.$
We now define the push-forward map $\pi_*: \Gamma_c(\cD_F) \to {\rm sect}(\cD_B),$ by
\begin{equation}
\label{e: defi push forward}
\pi_*(\mu)(b) := (I_b \otimes {\rm id})(\mu|_{F_b}).
\end{equation}
Here ${\rm sect}(\cD_B)$ denotes the space of all (not-necessarily continuous) sections of $\cD_B.$

By the naturality of the constructions and the invariance of integration as formulated in
(\ref{e: integral of density}), one readily checks that the notion of push-ward of compactly supported densities is invariant under isomorphisms of bundles.

\begin{lemma}
\label{l: naturality push forward}
Let $\gf $ be an isomorphism from the fiber bundle $\pi: F \to B$ to a second fiber bundle
$\pi': F' \to B'$ and let $\gf_\circ$ denote the induced diffeomorphism $B \to B'.$ Then the following diagram commutes:
$$
\begin{array}{ccc}
\Gamma_c(\cD_{F'}) &{\buildrel \gf^* \over \longrightarrow }& \Gamma_c(\cD_F)\\
\pi'_*\downarrow&& \downarrow \pi_*\\
\Gamma_c(\cD_{B'}) & {\buildrel \gf_\circ^* \over \longrightarrow }& \Gamma_c(\cD_B)
\end{array}
$$
\end{lemma}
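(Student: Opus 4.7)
The plan is to verify the equality of densities on $B$ pointwise. Fix $b \in B$ and set $b' := \gf_\circ(b) \in B'.$ Since $\gf$ is an isomorphism of fiber bundles, it restricts to a diffeomorphism $\gf_b\maps F_b \to F'_{b'}$ of fibers, and its tangent map carries the fiber-tangent subbundle $\ker T\pi|_{F_b}$ isomorphically onto $\ker T\pi'|_{F'_{b'}}.$ In particular, the restriction $(\gf^*\mu)|_{F_b}$ equals the pullback of $\mu|_{F'_{b'}}$ under the fiber bundle isomorphism $\gf$ specialized over $b,$ viewed as a section of $\cD_F|_{F_b}.$

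Next, I would invoke the canonical decomposition $\cD_F|_{F_b} \simeq \cD_{F_b} \otimes \cD_{T_bB}$ from (\ref{e: natural isomorphism density bundle for fibration}), together with the naturality diagram (\ref{e: naturality tensor product}) applied to $\gf.$ This gives that if $\mu|_{F'_{b'}}$ corresponds to $\nu \otimes \rho$ with $\nu \in \Gamma(\cD_{F'_{b'}})$ and $\rho \in \cD_{T_{b'}B'}$ under the analogous decomposition on $F',$ then $(\gf^*\mu)|_{F_b}$ corresponds to $\gf_b^*(\nu) \otimes d\gf_\circ(b)^*(\rho).$ Combined with the invariance of integration of densities under diffeomorphisms, i.e.\ equation (\ref{e: integral of density}) applied to $\gf_b,$ which yields $I_b(\gf_b^*\nu) = I_{b'}(\nu),$ the defining formula (\ref{e: defi push forward}) produces
\[
\pi_*(\gf^*\mu)(b) = I_b(\gf_b^*\nu)\, d\gf_\circ(b)^*(\rho) = d\gf_\circ(b)^*\bigl(I_{b'}(\nu)\, \rho\bigr) = d\gf_\circ(b)^*\bigl(\pi'_*(\mu)(b')\bigr),
\]
which is exactly $\gf_\circ^*(\pi'_*\mu)(b).$ By linearity and continuity, once the identity is verified for pure tensors $\nu \otimes \rho,$ it extends to all of $\Gamma_c(\cD_{F'}).$

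The only real subtlety is bookkeeping: one has to check that under the canonical identification (\ref{e: natural isomorphism density bundle for fibration}) the tangent pullback $(T\gf)^*$ genuinely splits as the tensor product of the fiber pullback $\gf_b^*$ and the base pullback $d\gf_\circ(b)^*.$ This is precisely the content of the naturality diagram (\ref{e: naturality tensor product}), but one must take care to apply it pointwise over $B$ and to recall that the second tensor factor $\cD_{T_bB}$ is treated as a constant line over $F_b;$ after this, both sides collapse to a product of a fiber integral with a base density, and the change-of-variables formula (\ref{e: integral of density}) for densities finishes the argument.
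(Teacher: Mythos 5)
Your argument is correct and is precisely the elaboration the paper has in mind: the paper offers no written proof of this lemma beyond the remark that it follows from "the naturality of the constructions and the invariance of integration as formulated in (\ref{e: integral of density})," and your pointwise verification via the decomposition (\ref{e: natural isomorphism density bundle for fibration}), the naturality diagram (\ref{e: naturality tensor product}), and the change-of-variables formula applied to the fiber diffeomorphism $\gf_b$ is exactly that check. No gaps.
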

We can now establish the following Fubini type theorem for the
integration of densities over fiber bundles.

\begin{lemma}
\label{l: fubini for continuous densities}
The map $\pi_*$ maps $\Gamma_c(\cD_F)$ {\rm  (}respectively $\Gamma_c^\infty(\cD_F)${\rm )} continuous linearly to $\Gamma_c(\cD_B)$
{\rm (}respectively $\Gamma_c^\infty(\cD_B)${\rm)}.
Moreover, for all $\mu \in \Gamma_c(\cD_F),$
\begin{equation}
\label{e: fubini for densities}
\int_F \mu = \int_B \pi_*(\mu).
\end{equation}
\end{lemma}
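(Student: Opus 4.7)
The plan is to reduce the statement to the classical Fubini theorem for a product manifold and then patch by a partition of unity, exploiting the naturality of the push-forward from Lemma \ref{l: naturality push forward}.

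First I would treat the trivial case $F = V \times Y$ with $\pi$ the projection onto an open subset $V \subset B$. Under the identification (\ref{e: natural isomorphism density bundle for fibration}), which at each point $(v,y)$ specializes to the product isomorphism $\cD_{T_yY}\otimes \cD_{T_vV}\simeq \cD_{T_{(v,y)}(V\times Y)}$ via the canonical splitting provided by the projection, any $\mu\in\Gamma_c(\cD_F)$ can be written uniquely as $\mu=f\cdot(\omega_Y\otimes\omega_V)$, with $f$ continuous and compactly supported on $V\times Y$ and $\omega_Y$, $\omega_V$ fixed positive smooth densities on $Y$ and $V$. The definition (\ref{e: defi push forward}) then gives
$$
\pi_*\mu(v) \;=\; \Big(\int_Y f(v,y)\,\omega_Y(y)\Big)\, \omega_V(v),
$$
so continuity of $v\mapsto\pi_*\mu(v)$, and smoothness when $\mu$ is smooth, follow from the standard results on parameter-dependent integrals over the compact set $\mathop{\mathrm{supp}} f$; the Fubini equality $\int_F\mu=\int_V\pi_*\mu$ is then the classical theorem on $\R^{\dim V}\times\R^{\dim Y}$ expressed in coordinates.

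For the general case, I would pick an open cover $\{V_\ga\}$ of $B$ over which $\pi$ admits local trivializations $\Phi_\ga\col \pi^{-1}(V_\ga)\simto V_\ga\times Y$, and a locally finite smooth partition of unity $\{\gx_\ga\}$ on $B$ subordinate to $\{V_\ga\}$. Setting $\mu_\ga:=(\pi^*\gx_\ga)\mu$, each summand in $\mu=\sum_\ga\mu_\ga$ is compactly supported in $\pi^{-1}(V_\ga)$, and Lemma \ref{l: naturality push forward} applied to $\Phi_\ga$ reduces $\pi_*\mu_\ga$ to the trivial case already handled; this provides $\pi_*\mu_\ga\in\Gamma_c(\cD_B)$ (respectively $\Gamma_c^\infty(\cD_B)$), supported in $V_\ga$, together with $\int_F\mu_\ga=\int_B\pi_*\mu_\ga$. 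Local finiteness of $\{\gx_\ga\}$ then gives $\pi_*\mu=\sum_\ga\pi_*\mu_\ga\in\Gamma_c(\cD_B)$, and summing the local Fubini identities yields (\ref{e: fubini for densities}). Continuity of $\pi_*$ in the inductive-limit topology follows because on densities supported in any fixed compact $K\subset F$, the local expression above provides a $C^0$- (or $C^\infty$-) seminorm bound for $\pi_*\mu|_{\pi(K)}$ in terms of the corresponding seminorms of $\mu$.

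The only real point requiring care is verifying that the identification $\cD_F|_{F_b}\simeq \cD_{F_b}\otimes \cD_{T_bB}$ used in the definition (\ref{e: defi push forward}) of $\pi_*\mu$ at $b$ agrees with the product-bundle identification employed in the trivial case. This is immediate from Lemma \ref{l: iso independent of splitting}: both identifications arise from restricting the short exact sequence $0\to K\to TF\to \pi^*(TB)\to 0$ to $F_b$, and the resulting isomorphism of densities is independent of the choice of splitting. In particular, in a local trivialization one may take the splitting given by the product structure, so the pointwise definition (\ref{e: defi push forward}) reduces to the integral formula used in the trivial case, and the rest of the argument goes through without further bookkeeping.
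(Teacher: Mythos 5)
Your proposal is correct and follows essentially the same route as the paper: reduction via local trivializations and a partition of unity (using the naturality of $\pi_*$) to the product case $B\times V$ with $B$ open in $\R^n$, where the claim becomes parameter dependence of integrals plus classical Fubini. The extra verification you include at the end, that the splitting-independent identification of Lemma \ref{l: iso independent of splitting} makes the pointwise definition (\ref{e: defi push forward}) agree with the product-bundle formula, is a worthwhile detail the paper leaves implicit.
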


\begin{proof}
By using partitions of unity, and invoking invariance of integration, cf.\ (\ref{e: integral of density}),
and
Lemma \ref{l: naturality push forward},
we may reduce the proof to the case that $B$ is open in $\R^n$ and
that $F = B \times V,$ with $V$ an open subset of Euclidean space $\R^k.$ In that case the result
comes to down to continuous and smooth parameter dependence and Fubini's theorem for Riemann integrals
of continuous functions.
\end{proof}

\begin{cor}
\label{c: Fubini for integrable density}
Let $\mu$ be a measurable section of $\cD_F.$ Then the following statements are equivalent.
\begin{enumerate}
\itema
The density $\mu$ is absolutely integrable.\vspace{-5pt}
\itemb
For almost every $b \in B$ the integral for $\pi_*(\mu)_b$ is absolutely convergent and the resulting density $\pi_*(\mu)$ is absolutely integrable over $B.$
\end{enumerate}
If any of these conditions is fulfilled, then (\ref{e: fubini for densities}) is valid.
\end{cor}

\begin{proof}
This follows by reduction to Fubini's theorem through the use of
partitions of unity, as in the proof of Lemma \ref{l: fubini for continuous densities}
\end{proof}

We will now apply the above result to the particular setting of a Lie group
$G$ with closed subgroups $H$ and $L$ such that $H \subseteq L.$ As said at the start of this
appendix, this setting gives rise to the natural fiber bundle (\ref{e: fiber bundle G H L})
with fiber diffeomorphic to $L\bs H.$

Let $\Delta_{\scriptscriptstyle L\bs G}: L \to \R_+$ be the positive character given by
\begin{equation}
\label{e:  defi Delta sub}
\Delta_{\scriptscriptstyle L\bs G}(l) = |\det \Ad_G(l)_{\fg/\fl}|^{-1},\qquad (l \in L),
\end{equation}
where $\Ad_G(l)_{\fg/\fl} \in \GL(\fg/\fl)$ denotes the map induced by the adjoint map $\Ad_G(l) \in \GL(\fg).$
Given a character $\xi$ of $L$ we denote by  $C(G:L:\xi)$ the space of continuous  functions
$f: G \to \C$ transforming according to the rule
$$
f(l x) = \xi(l) f(x),
$$
for $x \in G$ and $l \in L.$
We denote by $\cM(G)$ the space of  measurable functions $G \to \C$ and by $\cM(G:L:\xi)$ the space of $f \in \cM(G)$ transforming according to the same rule.

Given  $f \in \cM(G)$  and
$\omega \in \cD_{\fg/\fl},$ we denote by $f_\omega$ the function $G \to \cD_{L\bs G}$ defined
by
$$
f_\omega(x) = f(x) \, dr_x(e)^{-1*} \omega.
$$

\begin{lemma}
Let $\omega  \in \cD_{\fg/\fl}\setminus \{0\}.$ Then the map $f \mapsto f_\omega$ defines
a continuous linear isomorphism from $C(G:L:\Delta_{\scriptscriptstyle L\bs G})$ onto $\Gamma(\cD_{L\bs G}).$
\end{lemma}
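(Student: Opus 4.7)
My plan is to view the map $f \mapsto f_\omega$ as a trivialization of the density bundle $\cD_{L\bs G}$ by a distinguished nowhere-vanishing section. The key observation is that $x \mapsto dr_x(e)^{-1*}\omega$ defines a continuous, nowhere-vanishing section of the pull-back bundle $\pi^*\cD_{L\bs G}$ on $G$, where $\pi\colon G \to L\bs G$ is the canonical projection. Sections of $\cD_{L\bs G}$ correspond bijectively with sections of $\pi^*\cD_{L\bs G}$ which are invariant under the natural left $L$-action on $G$, and under this trivialization the $L$-invariance translates precisely into the transformation rule $f(lx) = \Delta_{\ssLG}(l) f(x)$.

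The central computation is the following. Writing $r_y([z]) = [zy]$ on $L\bs G$, one has $r_{lx} = r_x \after r_l$, and since $r_l$ fixes $[e]$ the tangent map $dr_l([e])$ acts on $T_{[e]}(L\bs G) \simeq \fg/\fl$ as $\Ad(l^{-1})|_{\fg/\fl}$, so that $dr_l([e])^{-1} = \Ad_G(l)_{\fg/\fl}$. By the chain rule, the behaviour of densities under linear maps, and the definition of $\Delta_{\ssLG}$, this yields
\begin{equation*}
dr_{lx}(e)^{-1*}\omega \;=\; |\det\Ad_G(l)_{\fg/\fl}| \cdot dr_x(e)^{-1*}\omega \;=\; \Delta_{\ssLG}(l)^{-1}\cdot dr_x(e)^{-1*}\omega.
\end{equation*}
Multiplying by the transformation rule on $f$ gives $f_\omega(lx) = f_\omega(x)$, so $f_\omega$ descends to a continuous section of $\cD_{L\bs G}$, and $f \mapsto f_\omega$ is well defined and $\C$-linear.

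Injectivity is immediate, since $dr_x(e)^{-1*}\omega$ is a nonzero element of the one-dimensional complex line $\cD_{T_{[x]}(L\bs G)}$ for every $x$. For surjectivity, I would take $\mu \in \Gamma(\cD_{L\bs G})$, lift it to $\pi^*\mu$ on $G$, and define $f(x) \in \C$ as the unique scalar satisfying $(\pi^*\mu)(x) = f(x) \cdot dr_x(e)^{-1*}\omega$; continuity of $f$ follows from the smoothness of the trivializing section, and the required transformation rule is obtained by running the above computation in reverse using the $L$-invariance of $\pi^*\mu$. Equipping both sides with the topology of uniform convergence on compact subsets, continuity in both directions then follows because the trivializing section is smooth and nowhere vanishing, hence locally bounded above and bounded away from zero on compact sets.

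The main obstacle is getting the modular factor right in the identity for $dr_{lx}(e)^{-1*}\omega$ above: one must carefully compose the chain rule with the pull-back of a density under an endomorphism (which introduces the absolute-value determinant) and reconcile this with the reciprocal in the definition of $\Delta_{\ssLG}$. Once this identity is established with the correct factor, the rest of the proof is a routine correspondence between $L$-invariant sections of the pull-back bundle and scalar-valued functions on $G$.
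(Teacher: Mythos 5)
Your proof is correct and follows essentially the same route as the paper: one checks left $L$-invariance of $f_\omega$ via the identity $dr_{lx}([e])^{-1*}\omega = \Delta_{\ssLG}(l)^{-1}\,dr_x([e])^{-1*}\omega$ (your computation of $dr_l([e])^{-1}$ as $\Ad(l)$ on $\fg/\fl$ and the resulting modular factor is exactly the paper's chain-rule step), so that $f_\omega$ descends to a section of $\cD_{L\bs G}$. You are somewhat more explicit about bijectivity and the topologies, which the paper dismisses as obvious, but there is no difference in substance.
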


\begin{proof}
Write $\Delta = \Delta_{\scriptscriptstyle L\bs G}.$ In the proof we will use the notation $[e]$ for the image of $e$ in $L\bs G.$
Moreover, we will use the canonical identification $T_{[e]}(L\bs G) \simeq \fg/\fl.$
Let $\omega$ be as stated, and let $f \in C(G:L:\Delta).$ Then for $x \in G$ we have
$f_\omega(x) \in \cD_{T_{[x]}(L\bs G)}.$ Let $l \in L,$ then
\begin{eqnarray*}
f_\omega(lx) & = & \Delta(l) f(x) \,dr_{lx}([e])^{-1*}\omega\\
& =& \Delta(l) f(x) dr_x([e])^{-1*}\, dr_l([e])^{-1*}\omega\\
&=& \Delta(l) f(x) \,dr_x([e])^{-1*} \Ad(l)^* \omega\\
&=& f_\omega(x)
\end{eqnarray*}
It follows that $f_\omega$ factors through a smooth map $L\bs G \to \cD_{L\bs G},$ with
$f_\omega(x)$ a density on $T_{[x]}(L\bs G).$ Accordingly, $f_\omega$ defines a section
of $\cD_{L\bs G},$ which clearly is continuous. The bijectivity of the map $f \mapsto f_\omega$
from $C(G:L:\Delta)$ onto $\Gamma(\cD_{L \bs G})$ is obvious.
\end{proof}

Our next goal is to calculate the push-forward $\pi_*(f_\omega),$ for
$\omega \in \cD_{\fg/\fl}$ and $f \in C(G:L: \Delta_{\scriptscriptstyle L\bs G}),$ and $\pi: L \bs G \to H \bs G$
the canonical projection.

We note that $\pi$ is a fiber bundle with total space $F = L\bs G,$ base space $B = H\bs G$ and fiber diffeomorphic to $L \bs H.$
Thus, we have the natural isomorphism (\ref{e: natural isomorphism density bundle for fibration}).

If $x \in G,$ then the diffeomorphism $r_x^F: F \to F, z \mapsto zx$ defines an isomorphism of fiber bundles over the diffeomorphism $r_x^B$ defined by right multiplication on $B,$ i.e.,
the following diagram commutes,
$$
\begin{array}{ccc}
F&  {\buildrel r^F_x \over \longrightarrow }&  F\\
\downarrow && \downarrow\\
B & {\buildrel r^B_x \over  \longrightarrow} & B.
\end{array}
$$
In the sequel we shall use the
commutativity of the diagram (\ref{e: naturality tensor product}) with $F = F',$ $B= B'$ and $\gf = r^F_x.$

In particular, it follows that $(dr_x^F)^* \otimes (dr_x^F)^*\in \End(\cD^B_F\otimes \pi^*\cD_B)$
corresponds to the naturally induced automorphism
$(dr_x^F)^*$ of $\cD_F.$

We fix non-zero elements $\omega_{\scriptscriptstyle L \bs G}\in \cD_{\fg/\fl},$ $\omega_{\scriptscriptstyle L \bs H} \in \cD_{\fh/\fl}$
and $\omega_{\scriptscriptstyle H\bs G} \in \cD_{\fg/\fh}$ such that
\begin{equation}
\label{e: tensor product of omegas}
\omega_{\scriptscriptstyle L\bs H}  \otimes \omega_{\scriptscriptstyle H\bs G}= \omega_{\scriptscriptstyle L\bs G}
\end{equation}
with respect to the identification determined by the short
exact sequence $0 \to \fh/\fl \to \fg/\fl \to \fg/\fh \to 0.$
This short exact sequence may be identified with the
short exact sequence of tangent spaces
$$
0 \to T_{[e]}(L\bs H) \;{\buildrel di([e]) \over\longrightarrow}\; T_{[e]}(L\bs G)\; {\buildrel d\pi([e]) \over \longrightarrow} \;
T_{[e]}(H\bs G) \to 0,
$$
where $i: L \bs H \embeds L\bs G $ denotes the natural embedding
of $L\bs H$ onto the fiber $\pi^{-1}([e]).$
Accordingly, formula (\ref{e: tensor product of omegas}) may be viewed as an identity of elements associated with
the decomposition
$$
(\cD^B_F)_{[e]} \otimes (\cD_B)_{[e]} = (\cD_F)_{[e]}.
$$
\begin{lemma}
Let $\omega_{\scriptscriptstyle L \bs H} ,\omega_{\scriptscriptstyle H\bs G}$ and $\omega_{\scriptscriptstyle L\bs H}$ satisfy (\ref{e: tensor product of omegas}). Then for all $h \in H$ and $x \in G,$
\begin{equation}
\label{e: decomposition translate omega L}
dr_{hx}([e])^{-1*}\omega_{\scriptscriptstyle  L\bs H} = \Delta_{\scriptscriptstyle H\bs G}(h)^{-1} \left( dr_{hx}([e])^{-1*}\omega_{\scriptscriptstyle L\bs H}  \otimes dr_{x}([e])^{-1*}\omega_{\scriptscriptstyle H\bs G} \right),
\end{equation}
in accordance with the decomposition
$(\cD^B_F)_{[hx]} \otimes (\cD_B)_{[x]} = (\cD_F)_{[hx]},$ corresponding to (\ref{e: natural isomorphism density bundle for fibration}).

\end{lemma}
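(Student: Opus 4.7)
My plan is first to observe the crucial identifications that make the claim well-posed. Since $H \subset L$, for every $h \in H$ we have $[h] = [e]$ in $L\bs G$, hence $r_h([e]) = [e]$ on $F = L\bs G$ and consequently $[hx] = [x]$ and $T_{[hx]}F = T_{[x]}F$. On the base $B = H\bs G$, the right translation $r_h$ fixes $[e]$ but is not the identity on $T_{[e]}B$. Both $r_h$ and $r_x$ are fiber bundle isomorphisms of $\pi\colon F\to B$ (over the right translations $r_h$ and $r_x$ on $B$), since right multiplication is equivariant with respect to the projection. Consequently so is $r_{hx} = r_x \circ r_h$.

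The key step is then to invoke the naturality diagram (\ref{e: naturality tensor product}) with $\gf = r_{hx}$ at the point $[e]$. This gives, under the identification (\ref{e: natural isomorphism density bundle for fibration}),
\[
dr_{hx}([e])^{-1*} \,\omega_{\scriptscriptstyle L\bs G}
\;=\;
\bigl(dr_{hx}([e])|_K\bigr)^{-1*}\omega_{\scriptscriptstyle L\bs H}
\;\otimes\;
\bigl(dr_{hx}([e])_B\bigr)^{-1*}\omega_{\scriptscriptstyle H\bs G},
\]
where $dr_{hx}([e])|_K\colon K_{[e]}\to K_{[hx]}$ is the fiber component and $dr_{hx}([e])_B\colon T_{[e]}B\to T_{[x]}B$ the base component of the differential. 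The fiber factor is, by definition, what the notation $dr_{hx}([e])^{-1*}\omega_{\scriptscriptstyle L\bs H}$ in the statement refers to.

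For the base factor I would use the chain rule $dr_{hx}([e])_B = dr_x([e])_B\circ dr_h([e])_B$, together with the fact that $r_h$ fixes $[e]$ in $B$, so that $dr_h([e])_B$ is an endomorphism of $T_{[e]}B\simeq \fg/\fh$. A short computation along the lines already used in the proof of the lemma preceding the statement (cf.~the paragraph establishing $dr_l([e])^{-1*}\omega = \Ad(l)^*\omega$) shows that under the identification $T_{[e]}B\simeq \fg/\fh$ the endomorphism $dr_h([e])_B$ corresponds to $\Ad(h)^{-1}$ on $\fg/\fh$. Pullback by its inverse therefore acts on $\cD_{\fg/\fh}$ by multiplication by $|\det\Ad(h)|_{\fg/\fh}|$, which by definition (\ref{e: defi Delta sub}) is $\Delta_{\scriptscriptstyle H\bs G}(h)^{-1}$. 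Pulling this scalar out yields precisely formula (\ref{e: decomposition translate omega L}).

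The main obstacle is not conceptual but bookkeeping: one must be consistent about which tangent space each density sits in, and keep track of the fact that the three natural identifications $[hx]=[x]$ in $F$, $r_h([e]) = [e]$ in $B$, and $\pi([hx])=[x]$ in $B$ all play a role in correctly matching the factors on the two sides of the equation. Once the naturality of the tensor product decomposition (\ref{e: natural isomorphism density bundle for fibration}) is applied and the modular character is recognized as arising from $dr_h([e])_B$, the proof is a short verification.
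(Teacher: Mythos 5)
Your argument is correct and is essentially the paper's own proof: both rest on the naturality of the decomposition (\ref{e: natural isomorphism density bundle for fibration}) under the bundle isomorphisms $r_g$, together with the computation $dr_h([e])^{-1*}\omega_{\scriptscriptstyle H\bs G}=\Ad(h)^*\omega_{\scriptscriptstyle H\bs G}=\Delta_{\scriptscriptstyle H\bs G}(h)^{-1}\omega_{\scriptscriptstyle H\bs G}$. The only difference is organizational — the paper first proves the case $x=e$ and then applies $dr_x([h])^{-1*}$, while you factor $r_{hx}=r_x\after r_h$ at the outset — which amounts to the same chain-rule step.
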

\begin{proof}
Let $h \in H,$ then
$
dr_h(e)^{-1*}(\omega_\ssHG) = \Ad(h)^*\omega_\ssHG = \Delta_{\scriptscriptstyle H\bs G }(h)^{-1} \omega_\ssHG
$
and we see that
\begin{equation}
\label{e: tensor product deco at e}
dr_h([e])^{-1*}\omega_\ssLG = \Delta_{\scriptscriptstyle H\bs G}(h)^{-1} \left( dr_h([e])^{-1*}\omega_\ssLH \otimes \omega_\ssHG \right).
\end{equation}
Let now $x \in G,$ then in view of the $G$-equivariance of
the fiber bundle $F \to B$ formula  (\ref{e: decomposition translate omega L}) follows by application of $dr_x([h])^{-1*}$ to both sides of the identity
 (\ref{e: tensor product deco at e}).
\end{proof}

\begin{thm}
\label{t: fubini for group fibering}
Let $\omega_\ssLG ,\omega_\ssHG $ and $\omega_\ssLH$ satisfy (\ref{e: tensor product of omegas}). Let $\gf \in \cM(G:L:\Delta_{\scriptscriptstyle L\bs G})$ and let $\gf_{\omega_\ssLG}$ be the associated measurable density on $L\bs G.$
Then the following assertions (a) and (b) are equivalent
\begin{enumerate}
\itema
The density $\gf_{\omega_\ssLG}$ is absolutely integrable.\vspace{-5pt}
\itemb
There exists a left $H$-invariant subset $\cZ$ of measure zero in $G$ such that
\begin{enumerate}
\item[{\rm (1)}]
for every $x \in G\setminus \cZ,$ the integral
\begin{equation}
\label{e: fiber integral Ix}
I_x(\gf) = \int_{L\bs H\ni [h]} \Delta_{\scriptscriptstyle H\bs G}(h)^{-1}\gf(hx)\, d r_{h}([e])^{-1*} \omega_\ssLH,
\end{equation}
is  absolutely convergent;
\item[{\rm (2)}]
the function $I(\gf): x \mapsto I_x(\gf)$ belongs to
$\cM(G:H:\Delta_{\scriptscriptstyle H\bs G});$
\item[{\rm (3)}]
the associated density
$I(\gf)_{\omega_{\scriptscriptstyle H\bs G}}$ is absolutely integrable.
\end{enumerate}
\end{enumerate}
Furthermore, if any of the conditions (a) and (b) are fulfilled, then
\begin{equation}
\label{e: fubini identity G L H}
\int_{L\bs G} \gf_{\omega_{\scriptscriptstyle L \bs G}} = \int_{H \bs G} I(\gf)_{\omega_\ssHG }.
\end{equation}
\end{thm}
\begin{proof}
We retain the notation introduced before the statement of the theorem.
Then for $x \in G$ and $h \in H$ the associated density at $Lhx$ is given by
\begin{equation}
\label{e: formula for gf omega LG}
\gf_{\omega_{\ssLG}}(hx) =
\Delta_{\scriptscriptstyle L\bs H}(h)^{-1} \gf(hx) \left( dr_{hx}([e])^{-1*}\omega_{\scriptscriptstyle L\bs H}  \otimes dr_{x}([e])^{-1*}\omega_\ssHG \right),
\end{equation}
in accordance with the decomposition corresponding to (\ref{e: natural isomorphism density bundle for fibration}).

We
will deduce the result from applying Corollary \ref{c: Fubini for integrable density}
to the fibre bundle given by the canonical projection
 $\pi: F:= L\bs G \to B:= H \bs G$ and to the measurable density
 $\mu := \gf_{\omega_\ssLG}$ on $F.$

 The crucial step is to prove the claim that for $x \in G,$ the integral  for the push-forward $\pi_*(\gf_{\omega_\ssLG})(Hx)$ converges absolutely if and only
if the integral for $I_x(\gf)$ converges absolutely. We will first establish this
claim.

It follows from (\ref{e: defi push forward}) that the  push-forward of $\gf_{\omega_{\ssLG}}$ under $\pi$ is the
 density on $H\bs G$ given by the following fiber integral
\begin{equation}
\label{e: fiber integral}
\pi_*(\gf_{\omega_{\ssLG}})(Hx) = \left( \int_{\pi^{-1}(Hx)} \nu_x \right) \otimes  dr_{x}([e])^{-1*}\omega_\ssHG,
\end{equation}
where $\pi^{-1}(Hx) = r_x(L\bs H),$ and where $\nu_x$ is the density on $r_x(L\bs H)$ given by
$$
\nu_x(L hx) = \Delta_{\scriptscriptstyle L\bs H}(h)^{-1} \gf(hx)\, dr_{hx}([e])^{-1*}\omega_{\scriptscriptstyle L\bs H}.
$$
The convergence and value of this integral depends on $x$ through its class
$H x.$
We now observe that $r_x$ defines a diffeomorphism from the fiber $\pi^{-1}(He)$ onto the fiber $\pi^{-1}(Hx).$ Moreover,
$$
[r_x^* \nu_x](Lh) = dr_x([h])^{-1*} \nu_x(Lhx) =
\Delta_{\scriptscriptstyle L\bs H}(h)^{-1} \gf(hx) \, dr_{h}([e])^{-1*}\omega_{\scriptscriptstyle L\bs H}.
$$
Thus,  $I_x(\gf)$ equals the integral of $r_x^*\nu_x$ over $L\bs H,$
and by invariance of integration we see that it converges absolutely
if and only if the integral for $\pi_*(\gf_{\omega_\ssLG})(Hx)$
converges absolutely. Moreover, in case of convergence we have
$$
I_x(\gf) = \int_{L\bs H} r_x^*(\nu_x) = \int_{\pi^{-1}(Hx)} \nu_x,
$$
so that
\begin{equation}
\label{e: I x and pi star}
\pi_*(\gf_{\omega_\ssLG})(Hx) = I_x(\gf) \,dr_x([e])^{-1*}\omega_\ssHG = I(\gf)_{\omega_\ssHG}(Hx).
\end{equation}
This establishes the claim.

The equivalence of (a) and (b) now readily follows from the similar equivalence in 
Corollary \ref{c: Fubini for integrable density}. Finally, if any of these conditions is fulifilled, both are, and in view of
(\ref{e: I x and pi star}), the identity (\ref{e: fubini identity G L H}) follows from the final assertion of Corollary \ref{c: Fubini for integrable density}.
\end{proof}

\bibliographystyle{plain}
\newcommand{\SortNoop}[1]{}

\def\adritem#1{\hbox{\small #1}}
\def\distance{\hbox{\hspace{3.5cm}}}
\def\apetail{@}
\def\addVdBan{\vbox{
\adritem{E.~P.~van den Ban}
\adritem{Mathematical Institute}
\adritem{Utrecht University}
\adritem{PO Box 80 010}
\adritem{3508 TA Utrecht}
\adritem{The Netherlands}
\adritem{E-mail: E.P.vandenBan{\apetail}uu.nl}
}
}
\def\addKuit{\vbox{
\adritem{J.~J.~Kuit}
\adritem{Department of Mathematical Sciences}
\adritem{University of Copenhagen}
\adritem{Universitetsparken 5}
\adritem{2100 K\o benhavn \O}
\adritem{Denmark}
\adritem{E-mail: j.j.kuit{\apetail}gmail.com}
}
}
\mbox{}
\vfill
\hbox{\vbox{\addVdBan}\vbox{\distance}\vbox{\addKuit}}

\end{document}